\newtheorem{theorem}{Theorem}[section]
\newtheorem{claim}[theorem]{Claim}
\newtheorem{lemma}[theorem]{Lemma}
\newtheorem{conj}[theorem]{Conjecture}
\newtheorem{corollary}[theorem]{Corollary}
\newtheorem{proposition}[theorem]{Proposition}
\newtheorem{fact}[theorem]{Fact}
\theoremstyle{definition}
\newtheorem{definition}[theorem]{Definition}
\newcommand{\eps}{\varepsilon}
\title{A step toward Chen-Lih-Wu conjecture}
\author{
Yangyang Cheng\thanks{{Faculty of Computer Science and Mathematics, University of Passau, Passau, Germany. Email: {\tt yangyang.cheng@uni-passau.de}}. Partially supported by the Deutsche Forschungsgemeinschaft (DFG, German Research Foundation)-542321564.}
\and 
Zhenyu Li \thanks{School of Mathematics, Shandong University, Jinan, 250100, China. Email: {\tt zy.li@mail.sdu.edu.cn}.}
\and
Wanting Sun \thanks{Data Science Institute, Shandong University, Jinan, 250100, China. Email: {\tt wtsun@sdu.edu.cn}. Supported by the Natural Science Foundation of China (12501488) and the Natural Science Foundation of Shandong Province (ZR2024QA023).}
\and
Guanghui Wang \thanks{School of Mathematics, Shandong University, Jinan, 250100, China. Email: {\tt ghwang@sdu.edu.cn}. Supported by the National Key Research and Development Program (2023YFA1009603) and the Natural Science Foundation of China (12231018).
}
}
\date{}
\begin{document}
\maketitle
\begin{abstract}
 An equitable $k$-coloring of a graph is a proper $k$-coloring where the sizes of any two different color classes differ by at most one. In 1973, Meyer conjectured that every connected graph $G$  has an equitable {$k$-coloring for some $k\leq \Delta(G)$}, unless $G$ is a complete graph or an odd cycle. Chen, Lih, and Wu strengthened this in 1994 by conjecturing that 
 for $k\geq 3$, the only connected graphs of maximum degree at most $k$ with no equitable $k$-coloring 
 are the complete bipartite graph $K_{k,k}$ for odd $k$ and the complete graph $K_{k+1}$. A more refined conjecture was proposed by Kierstead and Kostochka, relaxing the maximum degree condition to an Ore-type condition. 
 Their conjecture states the following: for $k\geq 3$, if $G$ is an $n$-vertex graph such that $d(x) + d(y)\leq 2k$ for every edge $xy\in E(G)$, and $G$ admits no equitable $k$-coloring, then $G$ contains either $K_{k+1}$ or $K_{m,2k-m}$ for some odd $m$. We prove that for any constant $c>0$ and all sufficiently large $n$, the latter two  conjectures hold for every $k\geq cn$. Our proof yields an algorithm with polynomial time 
that decides whether $G$ has an equitable $k$-coloring, thereby answering a conjecture of Kierstead, Kostochka, Mydlarz, and Szemer\'{e}di when $k \ge cn$. 
 
\end{abstract}
\maketitle

\section{Introduction}
A \textit{proper  $k$-coloring} of a graph is defined as a vertex coloring from a set of $k$
 colors such that no two adjacent vertices share a common color.  The \textit{chromatic number} of a graph  $G$, denoted by $\chi(G)$, is the minimum number of colors required in a proper coloring.  An easy observation deduces that $\chi(G)\leq \Delta(G)+1$, where $\Delta(G)$ is the maximum degree of $G$. The classical Brooks' theorem states that the chromatic number of a connected graph is at most its maximum degree, unless the graph is a complete graph or an odd cycle. 


An \textit{equitable $k$-coloring} of a graph $G$ is a proper $k$-coloring where the sizes of any two different color classes differ by at most one.  A classical problem in graph coloring theory is to ask whether a graph $G$ has an equitable $k$-coloring if $G$ has some degree constraints with respect to $k$. In 1968, Gr\"{u}nbaum \cite{Gr1968} conjectured that every graph with maximum degree less than $k$ has an equitable $k$-coloring. Remarkably, via graph complements, this is equivalent to a conjecture of Erd\H{o}s from 1964 \cite{E1964}: every graph on $n = rk$ vertices with minimum degree at least $(1 - \frac{1}{r})n$ admits a $K_r$-factor. 
In 1970, Hajnal and Szemer\'{e}di confirmed the conjecture \cite{HS1970}, establishing what is now known as the Hajnal-Szemer\'{e}di Theorem. Kierstead and Kostochka \cite{KK-algor,KKMS2008} found a polynomial time algorithm for such colorings.


Observe that the degree condition in Gr\"{u}nbaum's conjecture is tight, as $G$ admits no equitable $k$-coloring when it contains $K_{k+1}$ as a subgraph. Perhaps inspired by this and the Brooks' theorem,  Meyer \cite{M1973} further conjectured that every connected graph $G$  has an equitable {$k$-coloring for some integer $k\leq \Delta(G)$}, unless $G$ is a complete graph or an odd cycle. Lih and Wu \cite{LW1991} proved that Meyer's conjecture is true for connected bipartite graphs. In fact, they showed a stronger result that a connected bipartite graph $G$ has an equitable $\Delta(G)$-coloring if $G$ is not the complete bipartite graph $K_{2m+1,2m+1}$ for some $m\geq 0$. This line of research led Chen, Lih, and Wu \cite{CLW1994} to propose the following conjecture, which is considered one of the most significant open problems in equitable coloring according to~\cite{kierstead-survey}.

\begin{conj}[Chen-Lih-Wu Conjecture \cite{CLW1994}] \label{CLW}
    Let $G$ be a connected
graph with $\Delta(G)\leq k$. Then $G$ has no equitable $k$-coloring if and only if  $G=K_{k+1}$, or  $k=2$ and $G$ is an odd cycle,
or $k$ is odd and $G=K_{k,k}$.
\end{conj}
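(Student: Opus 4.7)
The plan is to prove the conjecture by a minimum counterexample argument. The \emph{if} direction is immediate: $K_{k+1}$ admits no proper $k$-coloring at all; an odd cycle is not properly $2$-colorable; and in $K_{k,k}$ with $k$ odd, every proper color class lies entirely in one part, so an equitable $k$-coloring of the $2k$ vertices would require each class to have exactly $2$ vertices, which is impossible since $k$ is odd. For the \emph{only if} direction I would let $G$ be a counterexample with the minimum number of vertices $n$, so $G$ is connected, has $\Delta(G)\le k$, is none of the listed exceptional graphs, and admits no equitable $k$-coloring.

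Starting from any proper $k$-coloring of $G$ (which exists by Brooks' theorem), I would select one that is \emph{almost equitable}: all classes have size $\lfloor n/k\rfloor$ or $\lceil n/k\rceil$ except for a single surplus class $V_\ell$ of size $\lceil n/k\rceil+1$ and a single deficient class $V_s$ of size $\lfloor n/k\rfloor-1$. Such a coloring can be produced by iterative Kempe swaps from any proper $k$-coloring, choosing one that minimizes a suitable potential function measuring deviation from equitability. The task then reduces to exhibiting a further sequence of swaps that moves a vertex from $V_\ell$ into $V_s$. Following the Kierstead--Kostochka framework, I would set up an auxiliary digraph on the color classes in which an arc $V_i\to V_j$ records that some vertex of $V_i$ can be safely recolored with $j$ after a controlled shuffle inside $V_j$; reachability from $V_\ell$ to $V_s$ directly rebalances the coloring and produces an equitable $k$-coloring.

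When no such path exists, the set of classes unreachable from $V_\ell$ forms a rigid obstruction in which every vertex of $V_\ell$ has blocking neighbors saturating each class in the obstruction. A careful discharging and double-counting argument would then force the neighborhood structure to coincide with either a copy of $K_{k+1}$ (when the blocking is ``monolithic'', concentrated within a single clique-like configuration) or a copy of $K_{k,k}$ with $k$ odd (when the blocking naturally splits across two balanced sides, with the parity of $k$ triggering the same obstruction as the exceptional graph). Connectivity of $G$ together with the minimality of the counterexample would then propagate this subgraph to all of $G$, contradicting that $G$ is none of the exceptional graphs.

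The main obstacle is the \emph{sparse} regime where $k$ is much smaller than $n$ and color classes are large. There, the degree bound $\Delta(G)\le k$ barely interacts with class-internal structure, so swap sequences tend to succeed for generic reasons without revealing a rigid $K_{k+1}$ or $K_{k,k}$ substructure. The present paper resolves the regime $k\ge cn$, where bounded class size $n/k\le 1/c$ permits an exhaustive case analysis of blocking configurations. Extending the argument to all $k$ appears to require either a new absorbing mechanism that concentrates imbalances into a few classes regardless of $k$, or a refined invariant of the auxiliary digraph that is sensitive to the bipartite-versus-complete-graph dichotomy. I expect the intermediate range $\log n \ll k \ll n$ to be the decisive and hardest step in settling the conjecture in full generality.
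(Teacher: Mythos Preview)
The statement you are attempting to prove is labeled in the paper as a \emph{conjecture}, and the paper does not prove it in full. The paper establishes only the partial case $k\ge cn$ for any fixed $c>0$ and $n$ sufficiently large (Corollary~\ref{Dirac}, via Theorem~\ref{mainore}), and it does so by an entirely different route: pass to the complement, reformulate equitable $k$-coloring as the existence of a $K_r$-factor (with $r=n/k$ bounded), and then run an absorption/regularity argument in the non-extremal case and a direct structural analysis in the extremal case. There is no Kempe-chain or auxiliary-digraph argument in the paper at all.

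Your proposal, as written, is not a proof but an outline with two genuine gaps. First, the reduction to an ``almost equitable'' coloring with exactly one surplus class of size $\lceil n/k\rceil+1$ and one deficient class of size $\lfloor n/k\rfloor-1$ is asserted, not proved; Kempe swaps and potential-function minimization do not automatically yield this shape, and in fact producing such a near-equitable starting point is already close in difficulty to the full problem. Second, and more seriously, the step ``a careful discharging and double-counting argument would then force the neighborhood structure to coincide with either $K_{k+1}$ or $K_{k,k}$'' is precisely the open core of the conjecture: nobody currently knows such an argument for general $k$, and the known partial results (including this paper) succeed only because bounding $r=n/k$ makes the blocking configurations finite and analyzable. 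You yourself flag the sparse regime as the obstacle, which is an honest assessment, but it means what you have written is a plausible strategy rather than a proof.
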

Combined with the Hajnal-Szemer\'{e}di Theorem, it suffices to consider the case $k=\Delta(G)$ in Chen-Lih-Wu Conjecture. 
This conjecture has been verified for several special cases: $k\leq 3$ or $k\geq \frac{|V(G)|}{2}$ \cite{CLW1994}; $\frac{|V(G)|+1}{3}\leq k<\frac{|V(G)|}{2}$ \cite{chenhuang}; $k\leq 4$ or $k\geq \frac{|V(G)|}{4}$ \cite{KK2012,KK2015}, and 
some sparse graphs \cite{cly,KLX2024,LZ2024,Na2004,Na2012,YZ1997,Zh2016,YZ1998}. Moreover, Kierstead and Kostochka \cite{KK-algo} proved that there exists a polynomial time algorithm for deciding whether a graph $G$
 with $\Delta(G)\leq k$ admits an equitable  $k$-coloring.


One may view the Hajnal-Szemer\'{e}di Theorem as a Dirac-type result, as it guarantees the existence of a $K_r$-factor in a graph under  the minimum degree condition. 
About two decades ago, Kostochka and Yu \cite{KY2007} conjectured that every graph $G$ in which $d(x)+d(y)\le 2k$ for every edge $xy\in E(G)$ has an equitable $(k+1)$-coloring. In 2008, Kierstead and Kostochka \cite{KK2008} proved this conjecture with a weaker degree condition. 

\begin{theorem}[\cite{KK2008}]\label{KK2008}
    Every graph satisfying $d(x)+d(y)\le 2k+1$ for every edge $xy$, has an equitable $(k+1)$-coloring.
\end{theorem}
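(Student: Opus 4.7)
The plan is to argue by minimum counterexample, using the exchange / augmenting-path machinery behind the Hajnal-Szemer\'edi theorem, adapted to the Ore setting. Take $G$ to be a counterexample minimizing $|V(G)|+|E(G)|$. Adding isolated vertices preserves both the Ore hypothesis and the non-existence of an equitable $(k+1)$-coloring, so I may assume $n:=|V(G)|$ is divisible by $k+1$; write $n=q(k+1)$, so every class of an equitable coloring has size exactly $q$. By minimality, $G-e$ admits an equitable $(k+1)$-coloring for every $e\in E(G)$; putting $e$ back and, if necessary, shifting one endpoint of $e$ into a different class, yields a \emph{nearly equitable} proper $(k+1)$-coloring $V_1,\dots,V_{k+1}$ of $G$ itself, with $|V_{k+1}|=q+1$, $|V_1|=q-1$, and $|V_i|=q$ for $2\le i\le k$.

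Next I would introduce the auxiliary exchange digraph $H$ on the class set: draw an arc $V_j\to V_i$ whenever some $v\in V_j$ has no neighbor in $V_i$, so that $v$ can be legally recolored $i$. A directed path from $V_{k+1}$ to $V_1$ in $H$ immediately produces an equitable $(k+1)$-coloring of $G$ by shifting one vertex along each arc; since $G$ is a counterexample, no such path exists. Let $\mathcal{A}$ be the set of classes reachable from $V_{k+1}$ in $H$ and $\mathcal{B}$ its complement, so $V_1\in\mathcal{B}$. By construction, every vertex in any class of $\mathcal{A}$ has at least one neighbor in every class of $\mathcal{B}$, hence $d(v)\ge|\mathcal{B}|$ for all such $v$. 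Combined with the Ore bound $d(x)+d(y)\le 2k+1$ on edges from $A:=\bigcup\mathcal{A}$ to $B:=\bigcup\mathcal{B}$, this yields sharp upper bounds on degrees in $B$ which progressively restrict the structure of $G$.

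To close the argument I would enrich $H$ by allowing two-step swaps: instead of moving only a single $v\in V_j$ to $V_i$, simultaneously relocate a blocking neighbor $u\in V_i$ to a class $V_h$ in which $u$ has no neighbor. Reachability in this augmented digraph is strictly larger, and iterating forces $\mathcal{A}$ to eventually contain $V_1$, producing the desired augmenting path and an equitable coloring. The main obstacle is controlling the \emph{heavy} vertices, those of degree strictly greater than $k$, which are permitted by the Ore hypothesis but excluded from the max-degree Hajnal-Szemer\'edi setting. The Ore bound forces every neighbor of a heavy vertex to be light, so heavy and light vertices cluster within classes; the delicate step is to show that each heavy vertex participates in some (possibly multi-step) exchange, ruling out configurations in which heavy vertices simultaneously block every augmenting path. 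Making this counting go through is exactly where the extra slack of the bound $2k+1$ (as opposed to the conjectured $2k$) is used, and it is the technical core of the proof.
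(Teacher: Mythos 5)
This statement is not proved in the paper: Theorem~\ref{KK2008} is quoted from Kierstead--Kostochka as an external result and used as a black box (e.g.\ in Claim~\ref{Independentset} and Claim~\ref{a-almost}). So your sketch has to be measured against the argument in the cited reference, not against anything in this manuscript.

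Your outline does capture the skeleton of the Kierstead--Kostochka proof: minimal counterexample, a nearly equitable coloring obtained by deleting an edge and recoloring one endpoint, an exchange digraph on the color classes, the accessibility dichotomy $\mathcal{A}/\mathcal{B}$, and the observation that heavy vertices (those of degree $>k$, which the Ore condition permits) are the new obstacle absent from Hajnal--Szemer\'edi. One small point you should make explicit in the setup: after restoring the edge $e=xy$, the Ore bound only guarantees $\min(d(x),d(y))\le k$; the other endpoint could have degree up to $2k$ and hit every class, so you must recolor the \emph{lower-degree} endpoint. This is already the first place the Ore hypothesis is used differently from a maximum-degree hypothesis, and glossing over it hides where the argument diverges from the classical one.

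The genuine gap is the final paragraph. Asserting that two-step swaps strictly enlarge $\mathcal{A}$ and that ``iterating forces $\mathcal{A}$ to eventually contain $V_1$'' is exactly the conclusion one needs, not a mechanism for it; nothing in the sketch rules out that every multi-step swap is also blocked. The Kierstead--Kostochka proof closes this by a careful counting/discharging argument over the bipartition $(A,B)$ together with a finer classification of classes and vertices than the heavy/light split, and the extra ``$+1$'' of slack in $2k+1$ is consumed at a specific inequality in that count. Identifying and verifying that inequality \emph{is} the theorem; as you yourself note, it is ``the technical core of the proof,'' and the proposal leaves it entirely to the reader. So this is a correct roadmap of the intended approach, but it is not a proof and does not become one without substantial additional work.
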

Notice that the degree sum bound in Theorem \ref{KK2008} is tight, as the complete graph $K_{k+2}$ admits no proper $(k+1)$-coloring. 
 Moreover,  for each odd $m\le k+1$, the graph $K_{m,2k-m+2}$ satisfies $d(x)+d(y)\le 2k+2$ for every edge $xy$, yet has no equitable $(k+1)$-coloring.  
 In 2008, Kierstead and Kostochka \cite{KK2008}  
 conjectured that the following Ore-type analogue of the Chen-Lih-Wu Conjecture holds. 
\begin{conj}
[\cite{KK2008}]\label{cKK2008}
Let $k\ge 3$. If $G$ is a graph  satisfying $d(x)+d(y)\le 2k$ for every edge $xy$, and $G$ has no equitable $k$-coloring, then $G$ contains either $K_{k+1}$ or $K_{m,2k-m}$ for some odd $m$.
\end{conj}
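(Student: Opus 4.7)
The plan is to induct on $n=|V(G)|$, using Theorem~\ref{KK2008} to obtain an equitable $(k+1)$-coloring and then ``compressing'' it to an equitable $k$-coloring, unless a copy of $K_{k+1}$ or $K_{m,2k-m}$ (odd $m$) can be extracted along the way. Fix $G$ satisfying $d(x)+d(y)\le 2k$ on every edge, suppose $G$ has no equitable $k$-coloring, and write $n=qk+r$ with $0\le r<k$. A preprocessing step adds a few isolated dummy vertices to reduce to the divisibility case $k\mid n$; these have degree $0$, do not affect the Ore condition, and can be deleted afterwards without destroying the target subgraph. For the base of the induction I would invoke the already known cases ($k\le 4$, sparse graphs, and $k$ large relative to $n$) cited above.

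By Theorem~\ref{KK2008}, $G$ has an equitable $(k+1)$-coloring $V_1,\ldots,V_{k+1}$. The central step is a \emph{merge-and-rebalance} procedure: I would fix a class $V_j$ and attempt to recolor every $v\in V_j$ with a color in $\{1,\ldots,k+1\}\setminus\{j\}$ via a sequence of Kempe-type class swaps and augmenting alternating walks, thereby producing a proper $k$-coloring with class sizes very close to balanced. Each $v\in V_j$ has at most $k$ neighbors, and the Ore bound $d(x)+d(y)\le 2k$ limits the number of $v\in V_j$ whose neighborhood meets every other class; these \emph{solo} vertices are the only obstruction to relocation. If for no choice of $j$ and no swap sequence $V_j$ can be emptied, I would analyze the bipartite ``move graph'' of solo vertices against the available classes, which is exactly a Hall-type certificate of failure.

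At this point I expect a sharp structural dichotomy: either (i) some solo vertex $v$ has $k$ neighbors $u_1,\ldots,u_k$ which are pairwise adjacent, giving $K_{k+1}$; or (ii) the solo vertices, together with their common neighborhoods, concentrate into a balanced bipartite block spanning $K_{m,2k-m}$ for some $m$. Parity enters crucially here, since $K_{m,2k-m}$ with $m$ even \emph{does} admit an equitable $k$-coloring; one must therefore show, using the failure of all Kempe swaps, that the offending block must have odd~$m$. This should follow from iterating the Ore inequality around the solo set $S$ and its neighborhood $N(S)$, in the spirit of the discharging in the proof of Theorem~\ref{KK2008}. Once such a subgraph is located, removing it leaves a graph that still satisfies the Ore condition by a short degree accounting, and induction on $n$ closes the argument.

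The main obstacle is the frozen-configuration analysis when $k$ is small compared to $n$. For $k$ linear in $n$, regularity and absorbing-method tools give a clean extremal/non-extremal dichotomy and let one locate $K_{k+1}$ or $K_{m,2k-m}$ directly; for small $k$ each class has size $\Theta(n/k)$, solo vertices may be scattered across the graph, and the Kempe-swap graph can be highly unstructured with many local obstructions that individually resemble $K_{k+1}$ or $K_{m,2k-m}$ but do not globally realize either. Designing a purely local charge redistribution that always collapses the frozen pattern into $K_{k+1}$ or an \emph{odd} $K_{m,2k-m}$, ruling out all spurious dense subgraphs still compatible with the Ore bound, is the technical heart of the argument and, in my view, the reason Conjecture~\ref{cKK2008} has resisted resolution in full generality.
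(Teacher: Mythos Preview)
The statement you are attempting to prove is Conjecture~\ref{cKK2008}, which is \emph{open}. The paper does not prove it; Theorem~\ref{mainore} establishes it only under the additional hypothesis $k\ge cn$ for a fixed constant $c>0$ and $n$ sufficiently large. So there is no ``paper's own proof'' to compare against, and any purported full proof would be a new result.

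Your proposal is not a proof but a heuristic outline, and you essentially acknowledge this in your final paragraph. The concrete gaps are: (a) the ``base case'' of your induction appeals to known results, but the known cases ($k\le 4$, or $k\ge |V(G)|/4$) leave the entire range $5\le k<|V(G)|/4$ uncovered, so the induction never gets off the ground; (b) the central dichotomy---that a frozen Kempe configuration must collapse to $K_{k+1}$ or to a single bipartite block $K_{m,2k-m}$ with $m$ odd---is asserted, not proved, and this is precisely the hard part. The Ore condition $d(x)+d(y)\le 2k$ is far too weak to force solo vertices into one dense clique or one complete bipartite block; nothing in your argument rules out several loosely interacting obstructions, none of which individually contains the target subgraph. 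Your parity step (``one must therefore show\ldots that the offending block must have odd $m$'') is likewise a restatement of what needs to be proved.

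For contrast, the paper's route in the regime $k\ge cn$ is entirely different: it passes to the complement, reformulates the problem as finding a $K_r$-factor under an Ore-type lower bound on $\sigma(\overline G)$, and then splits into a non-extremal case (handled by regularity plus absorption) and an extremal case (a delicate partition-and-cover analysis pinning down the two extremal configurations \ref{EX1} and \ref{EX2}). None of this is inductive on $n$, none of it uses Kempe chains, and the dependence on $k=\Omega(n)$ is essential at several points (absorber counts, regularity parameters, the hierarchy $\gamma_1\ll\cdots\ll\gamma_r$). That dependence is exactly why the full conjecture remains open.
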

A conjecture concerning the algorithmic version of the Ore-type condition was proposed by Kierstead, Kostochka, Mydlarz, and Szemer\'{e}di \cite{KKMS2008}. 
\begin{conj}[\cite{KKMS2008}]\label{cKKMS2008}
    Let $G$ be a graph with $d(x)+d(y)< 2k$ for all $xy\in E(G)$. Then there exists a polynomial-time algorithm to determine an  equitable $k$-coloring for $G$.
\end{conj}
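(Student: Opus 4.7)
The existence of an equitable $k$-coloring under $d(x)+d(y)<2k$ is already guaranteed by Theorem~\ref{KK2008} applied with $k$ in place of $k+1$: for integer degrees, $d(x)+d(y)<2k$ means $d(x)+d(y)\leq 2k-1=2(k-1)+1$, so the theorem hands us an equitable $k$-coloring. Conjecture~\ref{cKKMS2008} therefore asks purely for a polynomial-time algorithm that \emph{constructs} such a coloring. The plan is to convert the existence proof of \cite{KK2008} into a constructive procedure, in the spirit of the algorithmic versions of the Hajnal-Szemer\'edi theorem by Kierstead and Kostochka \cite{KK-algor,KKMS2008}. A useful preliminary observation is that the strict inequality rules out both ``bad'' subgraphs in Conjecture~\ref{cKK2008}: inside a $K_{k+1}\subseteq G$ every edge has endpoints of degree $\geq k$, and inside a $K_{m,2k-m}\subseteq G$ every crossing edge has endpoints summing to $\geq 2k$, each contradicting $d(x)+d(y)<2k$. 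So the algorithm never has to report ``no such coloring''; it always succeeds, and the only question is runtime.

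My outline is as follows. First, produce any proper $k$-coloring $\varphi_0$; this exists because $K_{k+1}\not\subseteq G$ together with the Ore-type degree sum bound gives $\chi(G)\leq k$ via a standard Ore-Brooks argument, and greedy search finds it. Second, iteratively balance the color classes. At each round pick a largest class $B$ and a smallest class $S$ with $|B|\geq|S|+2$, and attempt to transfer one vertex from $B$ to $S$. If the direct move fails (every $v\in B$ is adjacent to $S$), grow a rooted tree whose nodes are color classes and whose edges encode Kempe-type swap opportunities, using the per-edge slack $2k-(d(x)+d(y))\geq 1$ to create a ``free'' recoloring at each growth step. A successful leaf yields a sequence of recolorings that moves one vertex from $B$ to $S$; complete failure of the tree should, mirroring the inductive case analysis of \cite{KK2008}, expose a forbidden $K_{k+1}$ or $K_{m,2k-m}$, contradicting the strict hypothesis and hence never actually occurring.

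To make this polynomial I would combine two ingredients. A monovariant such as $\sum_i |C_i|^2$ over the color classes $C_i$ decreases by a fixed amount after each successful balancing step, giving at most $O(n)$ outer rounds. Inside a round, the swap tree has at most $k\leq n$ nodes (one per color class), and each node is expanded in polynomial time by scanning local degree and neighborhood information licensed by the Ore-type slack, so each round costs $\mathrm{poly}(n)$. Overall one expects an $O(n^{O(1)})$ bound, matching what \cite{KK-algor} achieved for the maximum-degree setting.

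The main obstacle will be faithfully algorithmizing the case analysis in the existence proof of Theorem~\ref{KK2008}. That proof uses a delicate induction on $n$ in which certain ``near-extremal'' configurations are handled separately by ad hoc surgery, and converting these into local combinatorial checks with bounded backtracking is the real work; alternatively one could hope to sidestep the case analysis by designing a new potential function that strictly drives the procedure without recursion, but this likely requires fresh combinatorial insight. A secondary obstacle is that the natural recursive invocation on subgraphs can blow up the runtime: keeping the algorithm polynomial globally will require an amortized argument that bounds the total work spent in recursive subcalls, rather than a crude per-subcall bound --- a difficulty that the regime $k\geq cn$ in the present paper sidesteps because color classes are of bounded size, but which reappears in full force for the general conjecture.
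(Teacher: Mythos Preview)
The statement you are attempting is Conjecture~\ref{cKKMS2008}, which the paper does \emph{not} prove in general: it remains open. The paper's contribution (Theorem~\ref{mainore}) resolves the conjecture only in the regime $k\geq cn$ for a fixed constant $c>0$ and $n$ sufficiently large, and its method is entirely different from yours. Rather than working with colorings directly, the paper passes to the complement graph, where an equitable $k$-coloring becomes a $K_r$-factor with $r=n/k$ bounded; it then runs the absorption/regularity machinery (Lemmas~\ref{almost-cover} and~\ref{absorbing}) in the non-extremal case and a bespoke structural analysis (Section~\ref{section5}) in the extremal case. The polynomial runtime comes from the algorithmic Regularity Lemma and the derandomized absorber construction of Garbe--Mycroft, all of which crucially use that $r$ is a constant. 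Your proposal, by contrast, aims at the full conjecture via swap trees in the coloring itself; the two approaches share essentially nothing.

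Your outline is a reasonable plan of attack, and you correctly identify the main obstacle: the existence proof in~\cite{KK2008} is an intricate induction with several near-extremal subcases, and nobody has succeeded in turning it into a polynomial-time procedure---which is precisely why Conjecture~\ref{cKKMS2008} is still a conjecture. One additional gap worth flagging: your first step (``greedy search finds'' a proper $k$-coloring) is not obviously justified. Under the Ore condition a vertex can have degree as large as $2k-2$, so naive greedy fails; you would need to exploit that the high-degree vertices $\{x:d(x)\geq k\}$ form an independent set, and even then inserting them into a partial coloring of the low-degree part requires care. This is fixable, but it is not free. The deeper issue remains the one you name at the end: controlling recursion depth and amortizing work across subcalls is exactly the difficulty that the $k\geq cn$ regime sidesteps by keeping $r$ bounded, and it is where a genuine new idea would be needed for the general conjecture.
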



Our work answers Conjecture \ref{cKK2008} and Conjecture \ref{cKKMS2008} when $|G|$ is sufficiently large and $k$ is linear with respect to $|G|$.

\begin{theorem}\label{mainore}
    For a positive constant $c$, there exists an integer $n_0:=n_0(c)$ such that the following holds for every $n\geq n_0$. Let $k$ be an integer such that $k\geq cn$, and let $G$ be an $n$-vertex graph with  $d(x)+d(y)\le 2k$ for every edge $xy\in E(G)$. 
    Then either $G$ has an equitable $k$-coloring, or $G$ contains $K_{k+1}$ or $K_{m,2k-m}$ for some odd integer $m\in [k]$. Moreover, there
is an algorithm with polynomial time 
that decides whether $G$ has an equitable $k$-coloring. 
\end{theorem}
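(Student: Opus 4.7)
The plan is to start from the equitable $(k+1)$-coloring supplied by Theorem~\ref{KK2008} (and its polynomial-time realization in \cite{KKMS2008}) and to transform it into an equitable $k$-coloring by a class-merging step, extracting $K_{k+1}$ or $K_{m,2k-m}$ as a witness whenever the transformation cannot be completed. Write $n = qk + r$ with $0 \le r < k$; since $k \ge cn$ one has $q \le \lfloor 1/c \rfloor$, so every color class in the target equitable $k$-coloring has constant size $q$ or $q+1$, and the whole problem reduces to a bounded-parameter tiling/matching question. I would also split $V(G)$ into \emph{heavy} $H = \{v : d(v) > k\}$ and \emph{light} $L = V(G)\setminus H$; the Ore-type hypothesis makes $H$ independent and forces $N(H)\subseteq L$, which already shapes where the two forbidden substructures can appear ($K_{k+1}$ in the all-light regime, $K_{m,2k-m}$ once heavy vertices are plentiful).

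First, invoke the polynomial-time procedure of \cite{KKMS2008} to obtain an equitable $(k+1)$-coloring $\mathcal{C} = \{C_1,\ldots,C_{k+1}\}$. Second, attempt a redistribution: delete one class $C_{k+1}$ and reassign its vertices to the remaining $k$ classes so that the resulting sizes match the equitable $k$-profile ($r$ classes of size $q+1$ and $k-r$ of size $q$). This is a bipartite assignment problem --- vertices of $C_{k+1}$ on one side, the other $k$ classes with prescribed capacities on the other side, an edge whenever the vertex has no $G$-neighbor in the class --- which I would solve by Hall's theorem, resolving local conflicts by constant-length augmenting alternations that move vertices between classes (including swapping a vertex back into the role of the deleted class).

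The structural heart is the analysis of an irreparable failure. A blocked Hall condition yields a set $S$ of vertices together with a collection $\mathcal{T}$ of target classes such that every $v \in S$ has a $G$-neighbor in every $T \in \mathcal{T}$ and no augmenting swap of bounded length fixes this. Using $k \ge cn$ and the $O(1)$ class sizes, counting forces $\mathcal{T}$ to contain almost every class and each $v \in S$ to have degree close to $k$; combined with $d(x)+d(y)\le 2k$ this pins down the neighborhoods almost exactly. A finite case analysis in $q$ then separates two extremal regimes: an all-light regime in which $S \cup \bigcup \mathcal{T}$ carries a $K_{k+1}$, and a regime involving many heavy vertices in which the parity built into equitability (the reason $m$ is \emph{odd}) produces a $K_{m,2k-m}$.

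The hard part will be the third step. The matching/augmenting scheme itself is fairly standard, but propagating a failure through augmentations of bounded length and then translating the residual defect into the exact form $K_{k+1}$ or $K_{m,2k-m}$ requires careful bookkeeping, with the parity obstruction (which forces $m$ odd) being the most delicate piece: equitability prohibits certain odd-sized neighborhoods around heavy vertices from being partitioned into $q$- and $(q+1)$-sized blocks in specific arithmetical regimes, and this is precisely where the odd-$m$ witness should emerge. The algorithmic claim follows because every ingredient --- the $(k+1)$-coloring from \cite{KKMS2008}, the Hall/augmenting procedure, and the extraction of the forbidden subgraph when augmentation fails --- runs in polynomial time.
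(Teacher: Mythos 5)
Your plan is a genuinely different route from the paper's: the paper moves to the complement graph, rephrases the problem as finding a $K_r$-factor with $r=n/k$ (constant), and then runs the absorption method in the non-extremal case and a ``good partition'' plus structural surgery in the extremal case. You instead stay on the coloring side, start from the equitable $(k+1)$-coloring that Theorem~\ref{KK2008} provides, and try to merge one class away by a Hall-type reassignment with augmenting alternations. The preliminary observations are fine: $q=\lfloor n/k\rfloor\le 1/c$ is $O(1)$, the heavy set $H=\{v:d(v)>k\}$ is independent under the Ore-type bound, and the redistribution of the deleted class can be phrased as a capacitated assignment problem.

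The gap is in the third step, and it is not a bookkeeping gap but the entire content of the theorem. Two points in particular are asserted rather than argued.

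First, why do alternations of \emph{bounded length} suffice? The underlying relaxation is not a fixed bipartite matching problem: besides placing the vertices of $C_{k+1}$, you may also move vertices between the surviving classes and even change which class plays the role of the deleted one. This is closer to a recoloring/Kempe-chain search on a state space of size polynomial in $n$ but with moves that are \emph{not} bounded-length a priori; nothing in the Ore condition or in $q=O(1)$ immediately bounds the length of an augmenting alternation, and you cannot invoke Hall/Berge without first pinning the move set down to an honest bipartite matching instance. The student's phrase ``no augmenting swap of bounded length fixes this'' is precisely the claim that needs a lemma, not an assumption.

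Second, even granting a blocked Hall set $S$ with targets $\mathcal{T}$ containing almost every class and every $v\in S$ of degree $\approx k$, the step from ``degrees near $k$, neighbors spread over almost all classes'' to the exact configurations $K_{k+1}$ or $K_{m,2k-m}$ with $m$ odd is far from automatic. Having one neighbor in each of $\approx k$ classes of size $O(1)$ says nothing about \emph{which} vertex of each class you hit, and the complete bipartite structure $K_{m,2k-m}$ has to be manufactured by a much finer analysis. The parity of $m$ comes from an obstruction to perfect matchings in a residual graph (in the paper it appears via Proposition~\ref{PMatching}, which classifies near-Dirac graphs with no perfect matching into two odd components or a large almost-independent set), and that classification is a nontrivial lemma that your outline does not touch. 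As written, the proposal defers exactly the hard extremal classification to ``careful bookkeeping,'' which in this problem is where the theorem lives --- Kierstead and Kostochka already had the $(k+1)$-coloring starting point in 2008 and, in their own partial results, needed far more than bounded augmentation to make progress. I would therefore not accept this as a proof without the third step being carried out in full, including a precise definition of the move set, a proof that unbounded augmentations collapse to a structured obstruction, and the case analysis producing the odd-$m$ complete bipartite witness.
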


By replacing the Ore-type condition in Theorem~\ref{mainore} with the maximum degree constraint, we immediately obtain the following corollary. This resolves Conjecture~\ref{CLW} for all sufficiently large 
$n$ when the maximum degree is linear in $n$.

\begin{corollary}\label{Dirac}
   For a positive constant $c$, there exists an integer $n_0:=n_0(c)$ such that the following holds for every $n\geq n_0$. Let $k$ be an integer such that $k \geq cn$ and $G$ be an $n$-vertex graph with $\Delta(G)\le k$ that admits no equitable $k$-coloring. Then $G$ contains either $K_{k+1}$ or $K_{k,k}$ (with $k$ odd). If, in addition, $G$ is connected, then $G$ is either $K_{k+1}$ or $K_{k,k}$ (with $k$ odd).
\end{corollary}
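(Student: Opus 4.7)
The plan is to derive Corollary~\ref{Dirac} as a direct specialization of Theorem~\ref{mainore}. First I would observe that the hypothesis $\Delta(G)\leq k$ immediately implies $d(x)+d(y)\leq 2k$ for every edge $xy\in E(G)$, so the Ore-type degree assumption of Theorem~\ref{mainore} is automatically met. Since we are also assuming $G$ admits no equitable $k$-coloring, Theorem~\ref{mainore} forces $G$ to contain either $K_{k+1}$ or $K_{m,2k-m}$ for some odd integer $m\in[k]$.

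Next I would eliminate the bipartite alternative $K_{m,2k-m}$ whenever $m<k$. Any vertex in the part of size $m$ of such a subgraph has degree $2k-m>k$ inside the subgraph, which contradicts the global bound $\Delta(G)\leq k$. Hence the only surviving possibility is $m=k$, and the parity requirement then yields $K_{k,k}$ with $k$ odd. This establishes the first conclusion.

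For the "moreover" part I would assume $G$ connected and argue that the exceptional subgraph must be all of $G$. Every vertex of a $K_{k+1}$ subgraph has $k$ neighbors inside it; every vertex of a $K_{k,k}$ subgraph likewise has $k$ neighbors inside it. Since $\Delta(G)\leq k$, no such vertex can have any neighbor outside the subgraph, so the subgraph is a union of connected components of $G$. Connectivity then forces $G=K_{k+1}$ or $G=K_{k,k}$.

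There is no substantial obstacle here: the corollary is a straightforward consequence of Theorem~\ref{mainore} combined with two elementary degree-counting observations. All the work is hidden in Theorem~\ref{mainore} itself.
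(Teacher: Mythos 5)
Your proposal is correct and matches the paper's intended derivation: the authors state that Corollary~\ref{Dirac} follows ``immediately'' from Theorem~\ref{mainore} upon replacing the Ore-type hypothesis with the maximum degree bound, and you have supplied exactly the two routine degree-counting observations (ruling out $K_{m,2k-m}$ for $m<k$ via the degree of a vertex in the small side, and noting that in $K_{k+1}$ or $K_{k,k}$ every vertex already has $k$ neighbors inside, so no edges leave the subgraph and connectivity forces equality) that the authors left implicit.
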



\subsection{Reformulation of Theorem \ref{mainore}}

We introduce a new approach to prove Theorem \ref{mainore}, which differs from standard coloring techniques by treating it as a problem in extremal graph theory. Without loss of generality, we may assume that $k$ divides $n$. Indeed, if $k\nmid n$, we consider the graph $G'$ obtained by adding a clique on $q$ vertices to $G$, where $q$ satisfies $k\mid (n+q)$ and $1\leq q\leq k-1$. The conclusion for $G'$ immediately implies the result for $G$ since the addition of a complete graph preserves both the degree condition and the nonexistence of equitable colorings. 

Let $\overline{G}$ denote the complement graph of $G$. Then the following are equivalent:
\begin{itemize}
\item $G$ admits no equitable $k$-coloring;
\item $\overline{G}$ contains no $K_{\frac{n}{k}}$-factor.
\end{itemize}
We now define two types of extremal graphs. Let $n,\,r$ be two  nonnegative integers with $r\mid n$,  and let $s$ be an odd integer satisfying $1\leq s\le \frac{n}{r}$. 
We say an $n$-vertex graph $G$ is \textit{extremal} if one of the following holds. 
 \begin{enumerate}
[label =\rm  (EX\arabic{enumi})]
    \item\label{EX1} The graph $G$ has an independent set of size $\frac{n}{r}+1$. 
    
    \item\label{EX2} 
    There exists a partition  $A_1\cup \cdots\cup A_{r-2}\cup B_0\cup B_1$ of $V(G)$ with $|A_i|=\frac{n}{r}$ for each $i\in [r-2]$ and $|B_0|=s$ such that the edge set of $G$ is 
$$
E(G)=\bigcup_{i\in [r-2]}\{uv:u\in A_i, v\notin A_i\}\cup \bigcup_{j\in \mathbb{Z}_2}\{uv:u\in B_j, v\notin B_{j+1}\}\ \text{(see Figure \ref{fig:extremal})}.
$$   
    
\end{enumerate}
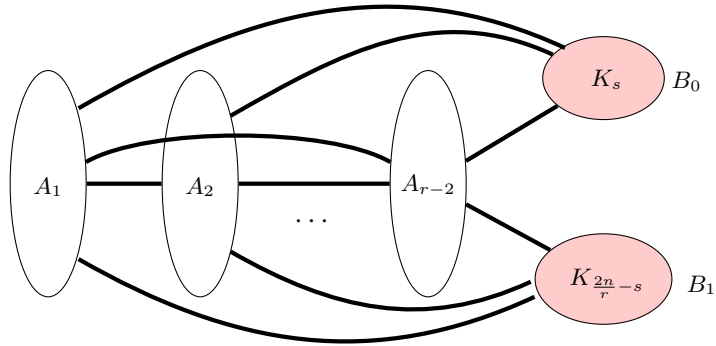
\begin{figure}[ht!]
    \centering
    \begin{tikzpicture}[
    node distance=1.5cm,
    set/.style={ellipse, draw, minimum width=1cm, minimum height=3cm},
    smallset/.style={ellipse, draw, minimum width=1.8cm, minimum height=1.2cm},
    ssmallset/.style={ellipse, draw, minimum width=1.6cm, minimum height=1.1cm},
    label/.style={above, font=\footnotesize},
]

\foreach \i in {1,...,2} {
    \node[set, fill=blue!0] (A\i) at (\i*2,0) {};
    \node[label] at (\i*2,-0.3) {$A_\i$};
}
\node at (5.5,-0.5) {$\cdots$};
\node[set, fill=blue!0] (A3) at (7,0) {};
\node[label] at (7,-0.3) {$A_{r-2}$};
\draw[-, ultra thick, black] (A1) 
    to[out=30, in=150, looseness=0.6] 
    node[above, pos=0.5] {} (A3);
\node[ssmallset, fill=red!20, right=of A3] at (7,1.4) (B0) {};
\node[label] at (9.35,1.12) {$K_s$};
\node[label] at (10.4,1.1) {$B_0$};

\node[smallset, fill=red!20, below=of B0] (B1) {};
\node[label] at (9.35,-1.65) {$K_{\frac{2n}{r}-s}$};
\node[label] at (10.6,-1.6) {$B_1$};

\draw[ultra thick,black] (A1)--(A2);
\draw[ultra thick,black] (A2)--(A3);
\draw[ultra thick,black] (A3)--(B0);
\draw[ultra thick,black] (B1)--(A3);
\draw[-, ultra thick, black] (4.4,0.9) 
    to[out=30, in=155, looseness=1] 
    node[above, pos=0.5] {} (B0);
\draw[-, ultra thick, black] (4.4,-0.9) 
    to[out=-30, in=-155, looseness=1] 
    node[below, pos=0.5] {} (8.35,-1.3);
\draw[-, ultra thick, black] (2.4,1) 
    to[out=30, in=155, looseness=1] 
    node[above, pos=0.5] {} (8.8,1.8);
\draw[-, ultra thick, black] (2.4,-1) 
    to[out=-30, in=-155, looseness=1] 
    node[below, pos=0.5] {} (8.4,-1.5);





\end{tikzpicture}
    \caption{(EX2). Each thick line indicates that all possible edges exist between the two corresponding parts.}
    \label{fig:extremal}
\end{figure}

Define $\sigma(G):=\min \left\{d(x)+d(y): x,y\in V(G) \ \text{and} \ xy\notin E(G)\right\}$. Hence the condition  $d(x)+d(y)\leq 2k$ for every edge $xy\in E(G)$ is equivalent to $\sigma(G)\geq 2(n-k)-2$. Having established the above, we now only need to prove the following result, as Theorem~\ref{mainore}  follows directly by considering the complement graph. 
\begin{theorem}\label{FVersion}
    Let $r$ be a positive integer. There exists an integer $n_0:=n_0(r)$ such that the following holds for every integer $n\geq n_0$ with $r\mid n$. Let $G$ be an $n$-vertex graph with $\sigma(G)\geq 2(1-\frac{1}{r})n-2$. Then either $G$ admits a $K_r$-factor or $G$ is extremal. Moreover, there
is an algorithm with polynomial time 
that decides whether $G$ has a $K_r$-factor. 
\end{theorem}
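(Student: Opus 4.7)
The plan is to establish Theorem \ref{FVersion} via a stability-type dichotomy. Fix a sufficiently small constant $\eps > 0$ depending only on $r$, and call $G$ \emph{$\eps$-extremal} if it can be made into one of the configurations \ref{EX1} or \ref{EX2} by editing at most $\eps n^2$ edges. The argument splits according to whether $G$ is $\eps$-extremal or not.

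In the \emph{non-extremal regime}, I would implement the R\"{o}dl--Ruci\'{n}ski--Szemer\'{e}di absorbing method. First, build an absorbing set $Z \subseteq V(G)$ with $|Z| \le \eps n$ such that for every $R \subseteq V(G)\setminus Z$ with $|R| \le \eps^2 n$ and $r \mid |R|$, the graph $G[Z \cup R]$ admits a $K_r$-factor. The standard route is to show that for each $r$-tuple $T$ of vertices the number of ``absorbers for $T$''---namely $2r$-sets $S$ such that both $G[S]$ and $G[S \cup T]$ admit $K_r$-factors---is at least $\Omega(n^{2r})$, and then probabilistically select a small absorbing set simultaneously good for every $T$; non-extremality combined with the Ore-type hypothesis yields the required absorber lower bound by producing many copies of $K_r$ through any prescribed pair of vertices. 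Next, apply the algorithmic version of Szemer\'{e}di's regularity lemma to $G - Z$, pass to a reduced graph that inherits a suitable density condition, find a nearly perfect fractional $K_r$-tiling there, and blow it up via a blow-up argument to a $K_r$-tiling of $G - Z$ covering all but at most $\eps^2 n$ vertices, which are then absorbed by $Z$.

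In the \emph{extremal regime} I would convert approximate structure into exact structure. If $G$ is $\eps$-close to \ref{EX1}, identify a near-maximum independent set $I$ of size at least $(1-o(1))(n/r + 1)$: if $|I|\ge n/r+1$, then \ref{EX1} holds and no $K_r$-factor exists; otherwise $|I|\le n/r$, and one constructs the factor by designating each vertex of $I$ as a distinct tile's ``$I$-representative'' and completing each tile using the Ore-type degree slack to handle the few exceptional non-edges in $V(G)\setminus I$. If $G$ is $\eps$-close to \ref{EX2}, clean the partition by re-assigning any vertex whose neighbourhood disagrees with its nominal role on more than $\sqrt{\eps}\,n$ vertices; then exploit the parity obstruction intrinsic to \ref{EX2}: each $K_r$-tile must take either two vertices from $B_0$ or two from $B_1$, so a $K_r$-factor forces $|B_0|$ and $|B_1|$ to both be even. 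If $|B_0|$ is odd after cleaning, show that $G$ coincides with the extremal graph in \ref{EX2}; otherwise construct the factor greedily.

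The chief obstacle I foresee is the absorber construction under the Ore-type hypothesis. In the Dirac-type setting every vertex lies in $\Omega(n^{r-1})$ copies of $K_r$ and absorbers are abundant, but under the Ore-type condition a small set $L$ of deficient low-degree vertices may participate in very few $K_r$'s. The plan is to first use the Ore-type inequality to force $L$ to be small and pairwise adjacent (a non-edge inside $L$ would violate $\sigma(G) \ge 2(1-1/r)n-2$), and then to build absorbers for vertices of $L$ by anchoring on their high-degree neighbours and showing that such anchored absorbers still appear in quantity $\Omega(n^{2r-1})$ whenever $G$ is non-extremal. Establishing a clean equivalence between non-$\eps$-extremality and abundance of absorbers is the most delicate point; once in place, polynomial-time decidability follows since the algorithmic regularity lemma, derandomised absorber selection via conditional expectations, the fractional tiling linear program, and the explicit case check in the extremal regime are all polynomial-time.
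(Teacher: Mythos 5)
Your high-level framework matches the paper's: a dichotomy between a "non-extremal" regime handled by the R\"odl--Ruci\'nski--Szemer\'edi absorbing method plus regularity, and an "extremal" regime handled by exact structural analysis. The non-extremal half of your proposal is broadly sound in outline, though the paper phrases the dichotomy not as edit-distance to \ref{EX1} or \ref{EX2} but as the existence of a $\gamma$-independent set of size $n/r$, which plugs directly into the counting argument for absorbers (one needs the negation to produce $\Omega(n^2)$ edges inside common neighbourhoods); with your definition you would still need to derive that consequence, adding a step. Your absorber size ($2r$-sets rather than the paper's $r^2$-sets) is a detail, but your treatment of the low-degree set $L$ is on the right track and corresponds to the paper's case split on $|S|$ in the absorbing lemma.

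The genuine gap is in the extremal regime. You sketch only two sub-cases --- a single near-independent set (for \ref{EX1}) and a $B_0/B_1$ parity obstruction (for \ref{EX2}) --- but the paper shows this regime is substantially richer: a graph admitting a $\gamma$-independent set of size $n/r$ may in fact contain many near-independent sets, and the proof has to iteratively extract an $(r,s)$-good partition $\{A_1,\ldots,A_s,B\}$ for \emph{any} $s\in[r]$, correcting exceptional vertices by moving them between parts and controlling the low-degree set $S$ throughout (Lemma~\ref{Lpartition}). Your plan of "designating each vertex of $I$ as a distinct tile's $I$-representative'' does not scale to $s\geq 2$ independent-like parts, nor does it address the distinct behaviour of vertices in $S$ versus vertices outside it. Moreover, the hardest point of the entire extremal argument --- deducing that $G$ \emph{coincides} with \ref{EX2} when the parity obstruction arises --- is dispatched in a single clause of your sketch, while the paper needs a dedicated matching-theoretic analysis (Proposition~\ref{PMatching}) of the two odd components of $G[B']$ and a delicate reshuffling of base structures when $r-s=2$ (Section~\ref{section7.2}). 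You also do not address the cases $r=1,2$: for $r=2$ the absorbing machinery is inapplicable and the paper runs a separate longest-path argument. These omissions are not cosmetic; as written, the extremal half of your proposal would not close.
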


\subsection{Sketch of the proof of Theorem \ref{FVersion}}

Suppose that $r\ge 3$. Given $\gamma>0$ and  a vertex subset $S\subseteq V(G)$, we say $S$ is a $\gamma$-\textit{independent set} of an $n$-vertex graph $G$ if $|E(G[S])|\leq \gamma n^2$. Let $G$ be an $n$-vertex graph with $\sigma(G)\geq 2(1-\frac{1}{r})n-2$. We divide the proof of Theorem \ref{FVersion} into two parts according to $G$ contains a $\gamma$-independent set of size $\frac{n}{r}$ (extremal case) or not (non-extremal case). 

\medskip
\textbf{Non-extremal case: $G$ contains no $\gamma$-independent sets of size $\frac{n}{r}$.}
\medskip

For the non-extremal case, our proof primarily adopts the framework of the absorption method proposed by R\"odl, Ruci\'nski and Szemer\'edi~\cite{RRS2009}, which provides an efficient framework for constructing spanning subgraphs.
The absorption method typically decomposes the problem of finding perfect tilings into two steps. First, one constructs a `small' absorbing set $M$ in the host graph $G$, which can `absorb' any small set of `remaining' vertices. The second step is to find an almost perfect $K_r$-tiling covering almost all vertices in $G-M$.

{\bf Absorbing.} 
We are to find a `small'
$K_r$-absorbing set $M$ in $G$ that absorbs any `very small' set of vertices in $G$. To prove that such a $K_r$-absorbing set $M$ exists, we refine  the widely used method by R\"odl, Ruci\'nski, and Szemer\'edi~\cite{RRS2009} and H\`an, Person, and Schacht  \cite{Han2009}. That is, we show that
$$
\textit{almost all}\ r\text{-subset}\ T\subseteq V(G)\ \text{has}\  \Omega(n^{r^2})\ K_r\text{-absorbers for}\ T.
$$

 
{\bf Almost cover.} 
 We apply the Regularity Lemma to $G$ and thus obtain a reduced graph $R$ that inherits the Ore's condition of $G$. Our goal is to find an almost perfect $K_r$-tiling of $R$. We begin by greedily selecting a maximal family of vertex-disjoint $K_r$ copies in $G$. 
 Then, for each 
$s=r-1,r-2,\ldots,1$ in descending order, we iteratively maximize the number of vertex-disjoint $K_s$-copies in the remaining graph. This process constructs  a maximum $(K_r,K_{r-1},\ldots,K_1)$-factor in $R$ (see Definition \ref{maximum-factor}). 

Suppose that $R$ contains no almost perfect $K_r$-tiling. Assume the largest
$K_r$-tiling in $R$ covers exactly  $q\leq (1-o(1))|R|$  vertices. We then prove that the blow-up graph  $R[2^{r-1}]$ admits a maximum $(K_r, K_{r-1}, \ldots, K_1)$-factor covering significantly more than $q \cdot 2^{r-1}$ vertices. 
Thus,
crucially, the largest $K_r$-tiling in $R[2^{r-1}]$ covers a larger proportion of vertices than that in $R$. By repeating this process, we obtain a blow-up graph $R'$ of $R$ that
contains an almost perfect $K_r$-tiling.  Our approach adapts a proof technique of Treglown \cite{Treg2016}, developed for finding $K_r$-factors under a degree sequence condition.

\medskip
\textbf{ Extremal case: $G$ contains a $\gamma$-independent set of size $\frac{n}{r}$.}
\medskip


For the extremal case, we will make use of a result 
concerning $K_r$-factors in balanced $r$-partite graphs (Lemma~ \ref{GHH2024}) as well as the Graph Removal Lemma  (Lemma~\ref{GRL}). 
Define $S:=\{v:\in V(G):d(x)<(1-\frac{1}{r})n-1\}$. Clearly, $G[S]$ is a clique by Ore's condition. Since $G$ has a $\gamma$-independent set of size $\frac{n}{r}$, one may construct a \textit{good partition} $\mathcal{Q}=\{A_1,A_2,\ldots,A_s,B\}$ of $V(G)$ (see Definition~\ref{Partition}) such that each $|A_i|=\frac{n}{r}$ and $S\subseteq B$, where each $A_i$ is an almost independent set while $G[B]$ has no $\gamma$-independent set of size $\frac{n}{r}$. 

Under the partition $\mathcal{Q}$, the process of using Lemma~\ref{GHH2024} to construct a $K_r$-factor in $G$ consists of the following two steps:

Step 1. Cover all non-excellent vertices (i.e., those failing the minimum degree condition) using vertex-disjoint $K_r$ copies. To achieve this, we first identify a small structural `base' that covers all such vertices. This base is selected with extension properties that guarantee it can be extended into a $K_r$-tiling while maintaining the original vertex proportion across all parts. Denote the resulting $K_r$-tiling obtained in this step by $\mathcal{T}$. 

Step 2. Find a $K_{r-s}$-tiling in $G[B\setminus V(\mathcal{T})]$. For $r-s\geq 3$, such a $K_{r-s}$-tiling can be obtained by using the non-extremal case. Now, suppose that $r-s=2$ and $G[B\setminus V(\mathcal{T})]$ contains no perfect matching. Assume that $G$ is not extremal.  Proposition \ref{PMatching} implies that $G[B\setminus V(\mathcal{T})]$ consists of two odd components. 
By using some structure analysis, we can perform  minor adjustments to the bases obtained in Step 1 to construct a new $K_r$-tiling $\mathcal{T}'$ that covers all non-excellent vertices and $G[B\setminus V(\mathcal{T}')]$ contains a perfect matching.

After the above steps, we contract each $K_{r-s}$ copy in the $K_{r-s}$-tiling of  $G[B\setminus V(\mathcal{T})]$ into a single vertex. This allows us to apply Lemma~\ref{GHH2024} to obtain a $K_{s+1}$-factor in the remaining graph, which in turn yields a $K_r$-factor in the graph $G- V(\mathcal{T}')$. Combining this with $\mathcal{T}$ or $\mathcal{T}'$ yields the desired $K_r$-factor in $G$.

\subsection{Organization}
In Section \ref{section2}, we present some preliminaries and results on matchings in graphs. Section~3 is devoted to the proof of Theorem~\ref{FVersion}, which is divided into two parts: the non-extremal case (Theorem~\ref{non-extremal}) and the extremal case (Theorem~\ref{Extremal case}). The non-extremal case is proved in Section~\ref{section4} by using the almost cover lemma (Lemma~\ref{almost-cover}, proved in Section \ref{section4.2}) and the absorbing lemma (Lemma~\ref{absorbing}, proved in Section \ref{section4.3}). Section~\ref{section5} deals with the extremal case and provides a proof of Theorem~\ref{Extremal case}. The proof consists of two main steps: constructing a good partition of $G$ (Lemma~\ref{Lpartition}, proved in Section~\ref{section6}) and covering all non-excellent vertices (Lemma~\ref{K_r-tiling}, proved in Section~\ref{section7}). In the last section, we show that there
is an algorithm with polynomial time 
that decides whether $G$ has a $K_r$-factor, and some concluding remarks are also given in this  section.

\section{Notation and Preliminaries}\label{section2}

\subsection{Notation}
Throughout the paper we follow the standard graph-theoretic notation~\cite{diestel2017graph}. Let $G=(V(G),E(G))$ be a  graph, where $V(G)$ and $E(G)$ are the vertex set and edge set of $G$, respectively.
The \textit{order} of a graph $G$, denoted by $|G|$, is the  number of vertices, and its \textit{size}, denoted by $e(G)$, is the  number of edges.   
Let $U\subseteq V(G)$ be a vertex subset. Denote by $G[U]$ the induced 
subgraph of $G$ on $U$. The notation $G-U$ is used to denote the induced subgraph after removing $U$,
that is $G-U:=G[V(G)\setminus U]$. Given two vertex subsets $A,B\subseteq V(G)$, we use $G[A,B]$ to denote the subgraph of $G$ with vertex set $A\cup B$ and edge set consisting of all edges having one endpoint in $A$ and the other in $B$. For a family $\mathcal{G}$ of graphs, denote by $|\mathcal{G}|$ the number of graphs in $\mathcal{G}$, and let $V(\mathcal{G}) = \bigcup_{G \in \mathcal{G}} V(G)$.

The \textit{neighborhood} of a vertex $v$ in a graph $G$, denoted $N_G(v)$, is the set of all vertices adjacent to $v$. The \textit{degree} of $v$ in $G$, denoted $d_G(v)$, is the number of vertices in $N_G(v)$; when there is no ambiguity, we abbreviate these as $N(v)$ or $d(v)$.  For two vertex subsets $S, A \subseteq V(G)$, denote  $N(S):=\bigcap_{v\in S}N(v)$, $N(v, A):=N(v)\cap A$ and $N(S, A):=N(S)\cap A$, 
 and denote by $d(v,A), d(S)$ and $d(S,A)$ the sizes of $N(v,A),N(S)$, and $N(S,A)$, respectively. When $A$ is a subgraph of $G$, we always abbreviate $V(A)$ as $A$ in the notations above.  Let $\delta(G):=\min\{d(v):v\in V(G)\}$ and $\Delta(G):=\max \{d(v):v\in V(G)\}$ be the \textit{minimum degree} and \textit{maximum degree} of $G$ respectively.   

A \textit{partition} of $V(G)$ is a family $\mathcal{P}=\{A_1,\ldots,A_s\}$ of disjoint subsets satisfying $V(G)=\bigcup_{i=1}^s A_i$. Given two graphs $H$ and $G$, an  \textit{$H$-tiling} of $G$ is a collection of vertex-disjoint copies of $H$ in the graph $G$. A \textit{perfect $H$-tiling} of $G$, or an \textit{$H$-factor}, is an $H$-tiling that covers all vertices of $G$. 
A \textit{matching} in a graph is a set of edges such that no two edges share a common vertex. A \textit{maximum matching} is a matching that contains the largest possible number of edges. We call a matching $M$ of $G$ a \textit{$d$-matching} if $e(M)=d$.


Consider a graph $F$ and an integer $t\in \mathbb{N}$. Let $F[t]$ denote the graph obtained from $F$ by replacing every vertex of $F$ 
 with an independent set of size $t$, and connecting two vertices from different sets if and only if their original vertices are adjacent in $F$. 
We will refer to $F[t]$ as a \textit{blow-up} copy of $F$. 
The \textit{join} of two vertex-disjoint graphs $G_1$ and $G_2$, denoted $G_1\vee G_2$, is formed by taking their disjoint union and then adding every possible edge between vertices of $G_1$ and vertices of $G_2$. 
 

Given a set $U$ 
and an integer $k$, we write
$\binom{U}{k}$ the collection of all 
$k$-element subsets of $U$. For any integers $a\leq b$, define $[a,b]:=\{i\in \mathbb{N}:a\leq i\leq b\}$ and $[b]:=[1,b]$. When we write  $\alpha\ll \beta\ll \gamma$, we always mean that $\alpha, \beta, \gamma$  are constants in $(0,1)$; and $\alpha\ll \beta$ means that there exists $\alpha_0=\alpha_0(\beta)$ such that the subsequent arguments hold for all $0<\alpha\leq \alpha_0$. Hierarchies of other lengths are defined analogously.

\subsection{Preliminaries}

In this subsection, we will give some preliminary results that will be used in our proof. The following fact from Ore's condition will be used throughout this paper. 

\begin{fact}\label{clique}
   Let $t$ be a positive integer. Assume that $G$ is an  $n$-vertex graph with $\sigma(G)\ge t$. Define $S:=\{x\in V(G): d(x)<\frac{t}{2}\}$. Then $G[S]$ is a complete graph.
\end{fact}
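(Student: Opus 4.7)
The plan is a direct proof by contradiction, using only the definitions of $\sigma(G)$ and of the set $S$. The statement essentially says: two low-degree vertices cannot be non-adjacent, because the Ore-type condition $\sigma(G)\ge t$ forces any non-edge to have degree sum at least $t$, whereas two vertices of $S$ together have degree sum strictly less than $t$.

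Concretely, I would argue as follows. Suppose for contradiction that $G[S]$ is not a complete graph. Then there exist two distinct vertices $x,y \in S$ with $xy \notin E(G)$. By the definition of $S$, both satisfy $d(x) < t/2$ and $d(y) < t/2$, hence
\[
d(x) + d(y) < \tfrac{t}{2} + \tfrac{t}{2} = t.
\]
On the other hand, since $xy \notin E(G)$, the definition of $\sigma(G)$ gives
\[
d(x) + d(y) \ge \sigma(G) \ge t,
\]
which contradicts the inequality above. Therefore every two vertices of $S$ must be adjacent, i.e.\ $G[S]$ is complete.

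There is essentially no obstacle here: the argument is a one-line contradiction derived immediately from unpacking the definitions. The only mild care needed is the edge case $|S|\le 1$, where the claim holds vacuously (a graph on at most one vertex is trivially complete), and the observation that the strict inequality $d(x)<t/2$ for each of $x,y$ is what produces the strict inequality $d(x)+d(y)<t$, which is what clashes with $\sigma(G)\ge t$.
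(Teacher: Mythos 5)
Your proof is correct and is exactly the one-line unpacking of the definitions that the paper itself implicitly relies on (the paper states Fact~\ref{clique} without proof precisely because the argument is this immediate). Nothing further is needed.
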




We next recall the definition of $\gamma$-independent set.  Given an $n$-vertex graph $G$ and a constant $\gamma > 0$, we define a vertex subset $S \subseteq V(G)$ as a $\gamma$-\textit{independent set} if $e(G[S])\leq \gamma n^2$. 
Now, we will use the following graph removal lemma to construct a large $\gamma$-{independent set} in graphs. 
\begin{lemma}[Graph Removal Lemma \cite{ADLR1994}]\label{GRL}

For any graph $H$ with $h$ vertices and any $\eps>0$, there exists $\delta>0$ such that any graph on $n$ vertices which contains at most $\delta n^h$ copies of $H$ can be made $H$-free by removing at most $\eps n^2$ edges.
\end{lemma}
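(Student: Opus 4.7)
The plan is to derive the lemma from Szemer\'edi's Regularity Lemma together with a Counting (or Key) Lemma for $H$. Given $H$ on $h$ vertices and $\eps>0$, I would fix auxiliary constants in a hierarchy $1/M\ll \eps'\ll d\ll \eps,1/h$, and apply the Regularity Lemma to $G$ with parameter $\eps'$ and lower bound $m_0$ on the number of parts, obtaining an equitable $\eps'$-regular partition $V(G)=V_0\cup V_1\cup\cdots\cup V_m$ with $|V_0|\le \eps' n$, $|V_i|=N$ for $i\ge 1$, and $m_0\le m\le M$.

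Next I would \emph{clean} $G$ by deleting four families of edges: (i) all edges incident to $V_0$; (ii) all edges lying inside some $V_i$; (iii) all edges between pairs $(V_i,V_j)$ that fail to be $\eps'$-regular; (iv) all edges between $\eps'$-regular pairs of density less than $d$. A routine count bounds the number of edges removed in (i)--(iv) by $\eps' n^2$, $n^2/m_0$, $\eps' n^2$, and $d\binom{m}{2}N^2\le dn^2/2$ respectively, so provided the constants satisfy $\eps'+1/m_0+d\le \eps$, the total is at most $\eps n^2$. Let $G'$ denote the resulting graph.

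It now suffices to prove that if $G'$ contains even a single copy of $H$, then $G$ must contain at least $\delta n^h$ copies. Fix such a copy in $G'$ and let $V_{i_1},\ldots,V_{i_h}$ be the clusters receiving its vertices. Since $G'$ has no edges inside clusters, the map sending the $a$-th vertex of $H$ to $V_{i_a}$ is a (possibly non-injective) homomorphism from $H$ into the reduced graph $R$ whose edges are the surviving $\eps'$-regular pairs of density at least $d$. The Counting / Key Lemma then yields at least $(d/2)^{e(H)} N^h/2$ labeled copies of $H$ in $G$ with the $a$-th vertex of $H$ placed in $V_{i_a}$, which exceeds $\delta n^h$ once one sets, say, $\delta:=(d/2)^{e(H)}/(4M^h)$. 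This contradicts the hypothesis.

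The delicate point is applying the Counting Lemma when the homomorphism $V(H)\to V(R)$ is not injective, i.e.\ when two non-adjacent vertices of $H$ are sent to the same cluster $V_i$: the produced embeddings must still pick distinct images inside $V_i$. This is handled by a standard greedy embedding argument, where at each step the set of candidates in a cluster differs from the ideal common neighborhood by only an $\eps'$-fraction and removing previously chosen vertices costs a further negligible $h/N$ loss. Choosing the hierarchy $1/M\ll \eps'\ll d$ is precisely what keeps this accumulated loss smaller than $(d/2)^{e(H)}$ and preserves the counting lower bound; this balancing of the Regularity parameters is the main technical obstacle of the proof.
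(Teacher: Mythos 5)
The paper does not prove this lemma at all: it is imported verbatim from the literature as a black box, with the citation to Alon, Duke, Lefmann, R\"odl, and Yuster. So there is no ``paper's own proof'' to compare against. Your proposal is the standard regularity-based proof of the Graph Removal Lemma, and it is essentially correct: apply the Regularity Lemma, clean up by deleting edges incident to $V_0$, inside clusters, in irregular pairs, and in low-density pairs (with the edge count indeed bounded by $\eps n^2$ under the stated hierarchy), and then argue that any surviving copy of $H$ forces, via the Counting/Embedding Lemma, at least $\delta n^h$ copies in the original graph. You also correctly flag the one genuinely delicate point, namely that the induced map $V(H)\to V(R)$ need not be injective, so one must embed greedily with bookkeeping to guarantee distinct images within a shared cluster; this is handled exactly as you describe, and the hierarchy $1/M\ll\eps'\ll d$ is what absorbs the accumulated $\eps'$- and $h/N$-losses. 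Your argument would serve as a self-contained proof of the cited lemma.
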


\begin{proposition}\label{SuperT}
    Let $0< {1}/{n} \ll \eps \ll \alpha \ll {1}/{r}$ where $r\in \mathbb{N}$ and $r\ge 2$. Suppose that $G$ is an $n$-vertex graph that contains at most $\eps n^r$ copies of $K_r$ and 
$$\sigma(G)\ge 2\Big(1-\frac{1}{r-1}-\alpha\Big)n.$$
Then $G$ admits a $\sqrt{\alpha}$-independent set of size at least $\frac{n}{r-1}$.
\end{proposition}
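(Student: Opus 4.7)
The proof strategy is to use the Graph Removal Lemma (Lemma~\ref{GRL}) to eliminate the few $K_r$-copies in $G$, and then combine the Ore-type degree condition with a classical chromatic theorem for $K_r$-free graphs (the Andr\'{a}sfai--Erd\H{o}s--S\'{o}s theorem) to extract a color class that, after a small amount of padding, furnishes the desired $\sqrt{\alpha}$-independent set of size at least $n/(r-1)$.

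First, I would localize the low-degree vertices. Let $S:=\{v\in V(G):d_G(v)<(1-\tfrac{1}{r-1}-\alpha)n\}$. By Fact~\ref{clique}, $G[S]$ is a clique, and it already contributes $\binom{|S|}{r}$ copies of $K_r$; combined with the hypothesis of at most $\eps n^r$ such copies and the hierarchy $\eps\ll\alpha\ll 1/r$, this forces $|S|\le\alpha^2 n$. Every vertex $v\notin S$ therefore satisfies $d_G(v)\ge(1-\tfrac{1}{r-1}-\alpha)n$. Next, apply Lemma~\ref{GRL} with a sufficiently small removal parameter to obtain $E'\subseteq E(G)$ with $|E'|\le\alpha^2 n^2$ such that $G':=G-E'$ is $K_r$-free, and set $T:=\{v:d_{E'}(v)>\alpha n\}$, for which $|T|\le 2\alpha n$ by averaging.

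Now put $V^*:=V(G)\setminus(S\cup T)$ and $G'':=G'[V^*]$. A direct computation shows $d_{G''}(v)\ge(1-\tfrac{1}{r-1}-5\alpha)n$ for every $v\in V^*$, so the relative minimum degree of $G''$ strictly exceeds the Andr\'{a}sfai--Erd\H{o}s--S\'{o}s threshold $1-\tfrac{3}{3r-4}$ provided $\alpha\ll 1/r$. Invoking that theorem yields $\chi(G'')\le r-1$; a largest color class $I$ satisfies $|I|\ge|V^*|/(r-1)\ge(1-3\alpha)n/(r-1)$. Padding $I$ with at most $(|S|+|T|)/(r-1)\le 3\alpha n$ vertices from $S\cup T$ produces a set $I'$ with $|I'|\ge n/(r-1)$. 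Since $I$ is independent in $G''=(G-E')[V^*]$, all edges of $G[I']$ lying inside $V^*$ come from $E'$ and contribute at most $\alpha^2 n^2$, while the padded vertices contribute at most $3\alpha n\cdot n$ further edges; altogether $e(G[I'])\le 4\alpha n^2\le\sqrt{\alpha}\,n^2$, giving the required set. The main obstacle is calibrating the hierarchy $\eps\ll\alpha\ll 1/r$ sharply enough that (i) the relative minimum degree of $G''$ genuinely clears the Andr\'{a}sfai--Erd\H{o}s--S\'{o}s threshold and (ii) the cumulative error $|E'|+(|S|+|T|)\cdot n$ remains dominated by $\sqrt{\alpha}\,n^2$.
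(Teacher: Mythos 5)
Your argument is correct, but it takes a genuinely different route from the paper's. The paper also applies the Graph Removal Lemma to pass to a $K_r$-free spanning subgraph $G'$, but then discards a set $L$ of vertices that lost many edges, observes that the resulting graph $G''$ still satisfies the Ore-type bound $\sigma(G'')\ge 2(1-\frac{1}{r-1}-2\alpha)|G''|$, and uses Fact~\ref{clique} together with $K_r$-freeness to conclude the low-degree set has at most $r-1$ vertices. It then invokes Hajnal--Szemer\'edi (in the form of Theorem~\ref{KK2008}) to place each remaining vertex in a $K_{r-2}$ and takes the common neighbourhood of that $K_{r-2}$, which by $K_r$-freeness is a large independent set. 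You instead bound $|S|$ by the clique-count hypothesis (since $G[S]$ is complete, $|S|>\alpha^2 n$ would produce $\gg \eps n^r$ copies of $K_r$), discard the vertices $T$ that lose more than $\alpha n$ edges to the removal set, and then invoke the Andr\'asfai--Erd\H{o}s--S\'os theorem to colour the resulting $K_r$-free graph with $r-1$ colours and extract the largest colour class. Both arguments are sound; yours imports a classical chromatic threshold result that the paper does not otherwise use, while the paper's keeps the toolkit limited to results already introduced there. Your approach also has a cleaner conceptual structure (colour, then take the biggest class), whereas the paper's route is more hands-on but self-contained. Two minor points worth flagging: the bound $|S|\le\alpha^2 n$ depends on choosing $\eps$ small enough that $\eps<\alpha^{2r}/r!$, which the $\ll$-hierarchy permits but which you should state explicitly; and the Andr\'asfai--Erd\H{o}s--S\'os theorem requires $r\ge 3$, so the $r=2$ case should be treated separately (as the paper does, noting it is trivial since $\eps\ll\alpha$).
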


\begin{proof}
    Choose constants satisfying 
    $$\frac{1}{n} \ll \eps\ll \gamma \ll \alpha \ll \frac{1}{r},
    $$
    let $G$ be a graph as in the assumption. The result is trivial when $r=2$ as $\eps\ll \alpha$. Thus, one may assume that $r\ge 3$. 
    Since $G$ contains at most $\eps n^r$ copies of $K_r$, Lemma~\ref{GRL} implies that one can remove at most $\gamma n^2$ edges from $G$ to obtain a spanning subgraph $G'$ that is $K_r$-free. 
    Define
    $$
    L:=\left\{v\in V(G):d_G(v)-d_{G'}(v)>2\sqrt{\gamma}n\right\}\ \text{and}\ G'':=G'-L.
    $$
    Clearly, $|L|\leq \sqrt{\gamma}n$. 
    Since $\gamma\ll \alpha$, 
    one has 
    $$
    n'':=|G''|\ge (1-\alpha)n,\ \text{and}\ \sigma(G'')\ge 2\Big(1-\frac{1}{r-1}-2\alpha\Big)n''.
    $$ 
    \begin{claim}\label{Independentset}
        The graph $G''$ contains an independent set $S$ of size at least 
$\big(\frac{1}{r-1}-3r \alpha\big)n.$
    \end{claim}
    \begin{proof}[Proof of Claim \ref{Independentset}]
        Let $S:=\left\{v\in V(G''):~d(v)< (1-\frac{1}{r-1}-2\alpha)n''\right\}$ and $\Tilde{G}:= G''-S$. Fact \ref{clique} indicates that $G''[S]$ is a clique. Notice that $G''$ is $K_r$-free, we have $|S|\le r-1$.  Then 
\begin{align}\label{MinKfree}
\delta(\Tilde{G})\ge \Big(1-\frac{1}{r-1}-2\alpha\Big)n''-|S|\ge \Big(1-\frac{1}{r-1}-2\alpha\Big)|\Tilde{G}|-r.
\end{align} 
Thus, by Theorem \ref{KK2008} (or by  Hajnal-Szemer\'{e}di Theorem), for each vertex $v\in V(\Tilde{G})$, there exists a copy of $K_{r-2}$  containing $v$ in $\Tilde{G}$, say $K^v$. 
It follows from \eqref{MinKfree} that 
\begin{align*}
\Big|\bigcap_{w\in V(K^v)}N_{\Tilde{G}}(w)\Big|&\ge \left(1-\frac{r-2}{r-1}-2(r-2)\alpha\right)|\Tilde{G}|-r(r-2)\\
&\ge \left(\frac{1}{r-1}-2(r-1) \alpha\right)n\ge \left(\frac{1}{r-1}-3r \alpha\right)n.
\end{align*}
Since $G''$ is $K_r$-free, we obtain that $\bigcap_{w\in V(K^v)}N_{G''}(w)$ is an independent set of size at least  $\big(\frac{1}{r-1}-3r \alpha\big)n$ in $G''$, as desired.
    \end{proof}

    
    Let $S$ be an independent set of size at least 
$\big(\frac{1}{r-1}-3r \alpha\big)n$ in $G''$. 
The construction of $G''$ ensures that $S$ is a $\gamma$-independent set in $G$. By augmenting $S$ with at most $3r\alpha n$ additional vertices, we obtain a $\sqrt{\alpha}$-independent set in $G$ of size at least $\frac{n}{r-1}$, as required.
\end{proof}


The following theorem provides a degree condition that guarantees the existence of $K_r$-factors in a balanced $r$-partite graph, which follows from \cite[Lemma 6.3]{GHH2024} by using Edmonds algorithm. 
\begin{lemma}[\cite{GHH2024}]\label{GHH2024} 
For any $r,n\in \mathbb{N}$, let $G$ be an $r$-partite graph on vertex classes $V_1,\ldots,V_r$ where $|V_i|=n$ for each $1\le i\le r$. If  
$$
\min_{i\in [r]}\min_{v\in V_i}\min_{j\in [r]\setminus \{i\}} |N(v)\cap V_j|\geq \Big(1-\frac{1}{2r}\Big)n,
$$
then in time $O(rn^4)$ we can find a $K_r$-factor of $G$.
\end{lemma}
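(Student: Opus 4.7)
The plan is to build the $K_r$-factor iteratively, reducing the task to $r-1$ perfect matching computations in auxiliary balanced bipartite graphs, each handled by Edmonds' matching algorithm. For every $k \in \{2, 3, \ldots, r\}$, I will maintain a collection $\mathcal{T}_k$ of $n$ vertex-disjoint copies of $K_k$, with each copy containing exactly one vertex from every $V_j$ with $j \leq k$; the final collection $\mathcal{T}_r$ is then the required $K_r$-factor.

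To initialize, observe that the bipartite graph $G[V_1, V_2]$ has minimum degree at least $(1-\frac{1}{2r})n > \frac{n}{2}$, so Hall's theorem yields a perfect matching, which Edmonds' algorithm produces in $O(n^4)$ time, giving $\mathcal{T}_2$. For the inductive step, suppose $\mathcal{T}_{k-1}$ has been constructed. Define an auxiliary bipartite graph $B_k$ with one side $\mathcal{T}_{k-1}$ and the other side $V_k$, where a clique $T \in \mathcal{T}_{k-1}$ is joined to a vertex $v \in V_k$ exactly when $v$ is a common neighbor of every vertex of $T$; a perfect matching in $B_k$ then extends each $T$ to a $K_k$ copy via its matched vertex, yielding $\mathcal{T}_k$.

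To see that $B_k$ has a perfect matching, I count degrees on each side. Each $v \in V_k$ has at most $\frac{n}{2r}$ non-neighbors in each $V_j$ with $j < k$, so the number of cliques in $\mathcal{T}_{k-1}$ containing some non-neighbor of $v$ is at most $(k-1) \cdot \frac{n}{2r} \leq \frac{r-1}{2r}n$; thus $v$ has at least $\frac{r+1}{2r}n$ neighbors in $B_k$. A symmetric calculation on the common neighborhood in $V_k$ of the $k-1$ vertices of $T$ shows that each $T \in \mathcal{T}_{k-1}$ likewise has at least $\frac{r+1}{2r}n$ neighbors in $B_k$. Since $\frac{r+1}{2r}n > \frac{n}{2}$, the balanced bipartite graph $B_k$ on $2n$ vertices satisfies Hall's condition, admits a perfect matching, and that matching can be computed in $O(n^4)$ time by Edmonds' algorithm.

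Iterating through $k = 3, \ldots, r$ invokes the matching routine $r-1$ times, yielding the claimed $O(rn^4)$ total runtime. The only delicate point is the tightness of the hypothesis: the worst case $k = r$ leaves $B_r$ with minimum degree exactly $\frac{r+1}{2r}n$, which only barely exceeds $\frac{n}{2}$, so the bound $(1-\frac{1}{2r})n$ is precisely what is needed for Hall's condition to survive at every step of the iteration; anything weaker would break the inductive step when extending into the last part.
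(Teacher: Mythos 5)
Your argument is correct and complete. The paper does not actually prove this lemma; it simply cites it as a consequence of Lemma~6.3 in~\cite{GHH2024} combined with Edmonds' algorithm, so there is no in-paper proof to compare against. Your iterative construction — building a transversal $K_k$-tiling for $k = 2,\ldots,r$ via a perfect matching in an auxiliary balanced bipartite graph between the cliques of $\mathcal{T}_{k-1}$ and $V_k$ — is the natural approach, and it matches the spirit of the paper's remark that the result ``follows by using Edmonds algorithm.'' Your degree count is right: a vertex $v\in V_k$ misses at most $\frac{n}{2r}$ vertices in each of the $k-1$ earlier parts, so it is a common neighbor of at least $n - (k-1)\frac{n}{2r} \ge \frac{r+1}{2r}n > \frac{n}{2}$ cliques in $\mathcal{T}_{k-1}$, and symmetrically each $T\in\mathcal{T}_{k-1}$ has at least $\frac{r+1}{2r}n$ common neighbors in $V_k$; minimum degree strictly above $n/2$ on both sides of a balanced bipartite graph gives Hall's condition and hence a perfect matching, computable in $O(n^4)$ time, for a total of $O(rn^4)$ over the $r-1$ rounds. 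The one stylistic remark is that your closing sentence about the bound $(1-\frac{1}{2r})n$ being ``precisely what is needed'' is an observation about the tightness of \emph{your} degree analysis at $k=r$, not a proof that the lemma is false for weaker hypotheses — it would be cleaner to phrase it that way, but it does not affect the correctness of the argument.
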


We will use the following version of Chernoff's bound.
\begin{lemma}\label{chernoff}
    Let $X$ be a random variable with binomial or hypergeometric distribution, and let $0<\eps<\frac{3}{2}$. Then 
    $$
    \mathbb{P}[|X-\mathbb{E}(X)|\geq \eps \mathbb{E}(X)]\leq 2e^{-\frac{\eps^2}{3}\mathbb{E}(X)}.
      $$
\end{lemma}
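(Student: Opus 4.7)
The plan is to establish the upper tail $X\ge (1+\eps)\mu$ and the lower tail $X\le (1-\eps)\mu$ separately via the standard Chernoff moment-generating-function method, and then combine them by the union bound, which accounts for the factor of $2$ in the statement.

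For the binomial case, I would write $X=\sum_{i=1}^n X_i$ as a sum of independent Bernoulli variables with $\mathbb{E}[X_i]=p_i$ and set $\mu=\mathbb{E}[X]$. For any $t>0$, Markov's inequality applied to $e^{tX}$ together with the elementary bound $1+p(e^t-1)\le e^{p(e^t-1)}$ yields
$$\mathbb{P}[X\ge (1+\eps)\mu]\le e^{-t(1+\eps)\mu}\prod_{i=1}^n \mathbb{E}[e^{tX_i}]\le \exp\bigl((e^t-1)\mu-t(1+\eps)\mu\bigr).$$
Optimizing at $t=\ln(1+\eps)$ gives the classical bound $\bigl(e^{\eps}/(1+\eps)^{1+\eps}\bigr)^{\mu}$. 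A symmetric argument with $t<0$ produces $\mathbb{P}[X\le(1-\eps)\mu]\le e^{-\eps^2\mu/2}$, which is even stronger than what is claimed.

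The remaining task is a single-variable calculus inequality: showing that $\eps-(1+\eps)\ln(1+\eps)\le -\eps^2/3$ on the interval $(0,3/2)$. Setting $f(\eps)$ equal to the difference of the two sides, one verifies $f(0)=0$ and then checks $f'(\eps)\le 0$ throughout the interval by a short derivative computation. This converts the classical Chernoff bound into the stated form $e^{-\eps^2\mu/3}$, and it is precisely here that the hypothesis $\eps<3/2$ is used.

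Finally, the hypergeometric case reduces to the binomial case by a classical theorem of Hoeffding asserting that the moment generating function of a hypergeometric random variable is pointwise dominated by the MGF of the corresponding binomial random variable. Since every step above depends on the random variable only through its MGF, the argument transfers verbatim. I do not anticipate any serious difficulty: the entire lemma is a textbook estimate, and the mildly delicate point is the one-variable inequality in the second step, which is the reason the threshold $\eps<3/2$ appears in the hypothesis.
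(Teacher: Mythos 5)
The paper states Lemma~\ref{chernoff} without proof, citing it as a standard form of Chernoff's bound, so there is no in-paper argument to compare against. Your overall strategy (upper and lower tails via the moment-generating function, union bound for the factor of $2$, Hoeffding's MGF domination to transfer the binomial bound to the hypergeometric case) is the standard one and is sound.

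There is, however, a concrete error in the calculus step. Writing $f(\eps):=\eps-(1+\eps)\ln(1+\eps)+\eps^2/3$, one gets $f'(\eps)=\frac{2\eps}{3}-\ln(1+\eps)$. This vanishes at $\eps=0$, is negative for small $\eps$, but becomes \emph{positive} before $\eps=3/2$: indeed $f'(3/2)=1-\ln(5/2)\approx 0.08>0$, and the sign change occurs around $\eps\approx 1.15$. So the claim ``$f'(\eps)\le 0$ throughout the interval'' is false, and if it were true the argument would not explain why an upper bound on $\eps$ is needed at all. The correct completion is to observe that $f$ decreases on $(0,\eps_0)$ and increases on $(\eps_0,3/2)$ for some $\eps_0\in(1,3/2)$, and hence $\sup_{[0,3/2]}f=\max\{f(0),f(3/2)\}$; one then checks $f(0)=0$ and $f(3/2)=\tfrac{3}{2}-\tfrac{5}{2}\ln\tfrac{5}{2}+\tfrac{3}{4}\approx -0.04<0$. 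It is in verifying this second endpoint value that the hypothesis $\eps<3/2$ is actually used. Alternatively, one can avoid the optimization at $t=\ln(1+\eps)$ entirely and use the intermediate bound $\mathbb{P}[X\ge\mu+t]\le\exp\bigl(-t^2/(2(\mu+t/3))\bigr)$, which with $t=\eps\mu$ gives $\exp\bigl(-\eps^2\mu/(2(1+\eps/3))\bigr)\le\exp(-\eps^2\mu/3)$ for $\eps\le 3/2$, cleanly isolating where the threshold enters.
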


\subsection{Matchings in graphs}
In this subsection, we  introduce some preliminaries that are related to matchings in graphs. 
\begin{lemma}[\cite{Treg2015}]\label{matching}
    Let $d,n \in \mathbb{N}$. Suppose that $G$ is a graph on $n\ge 2d$ vertices with  $\delta(G)\ge d$. Let $X\subseteq V(G)$ be a subset of size $d$.  Then $G$ contains a $d$-matching that covers all vertices in $X$. 
\end{lemma}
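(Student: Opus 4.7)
The plan is to take a matching $M$ of $G$ that lexicographically maximizes the pair $(|V(M)\cap X|,\,|M|)$ and to argue that $M$ automatically satisfies $V(M)\supseteq X$ and $|M|\geq d$; a routine truncation then yields the claimed $d$-matching covering $X$. For $V(M)\supseteq X$, suppose for contradiction some $x\in X$ is unmatched. Maximality of $|V(M)\cap X|$ forces $N(x)\subseteq V(M)$, since otherwise adding the edge from $x$ to a free neighbor would enlarge $V(M)\cap X$. For each $w\in N(x)$ with partner $w'=M(w)$, the swap $M':=(M\setminus\{ww'\})\cup\{xw\}$ is a matching of the same size as $M$ with $|V(M')\cap X|=|V(M)\cap X|+1-|\{w'\}\cap X|$; by maximality this forces $w'\in X$, that is, $M(w)\in X$ for every $w\in N(x)$. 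Splitting $M$ according to how many endpoints of each edge lie in $X$ shows that the set $\{w\in V(M):M(w)\in X\}$ has cardinality exactly $|V(M)\cap X|$, so $d\leq |N(x)|\leq |V(M)\cap X|\leq |X|=d$, forcing $X\subseteq V(M)$ and contradicting $x\notin V(M)$.

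For $|M|\geq d$, suppose $|M|\leq d-1$. Then $|V(G)\setminus V(M)|=n-2|M|\geq 2$, so we may pick two distinct free vertices $y,y'\in V(G)\setminus V(M)$; both lie outside $X$ by the previous step. Again $N(y),N(y')\subseteq V(M)$, while $|N(y)|,|N(y')|\geq d$ and $|V(M)|=2|M|<2d$. The goal is to find an edge $e=vv'\in M$ with $v\in N(y)$, $v'\in N(y')$, and $v\neq v'$; the double augmentation $(M\setminus\{e\})\cup\{yv,y'v'\}$ is then a matching of size $|M|+1$ still covering $X$, contradicting the maximality of $|M|$. Partition $V(M)$ into $B_1=N(y)\setminus N(y')$, $B_2=N(y')\setminus N(y)$, $B_3=N(y)\cap N(y')$, and $B_4=V(M)\setminus(N(y)\cup N(y'))$. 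A short case analysis on the possible types of each $M$-edge shows that in the absence of a good edge, every vertex of $B_3$ must be matched to a vertex of $B_4$, whence $|B_3|\leq |B_4|$. But $|B_1|+|B_3|\geq d$, $|B_2|+|B_3|\geq d$, and $|V(M)|<2d$ together give $|B_3|>|B_4|$, a contradiction.

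Once both properties are in hand, if $|M|>d$ let $k$ denote the number of $M$-edges with both endpoints in $X$; then the number of $M$-edges with no endpoint in $X$ equals $\ell=|M|-d+k\geq |M|-d$, and deleting any $|M|-d$ of these $\ell$ edges yields the required $d$-matching covering $X$. I expect the main technical hurdle to be the second step, where the double-swap augmentation crucially relies on having two distinct free vertices; extracting progress from a single free vertex under the sole hypothesis $\delta(G)\geq d$ does not seem to work, and the assumption $n\geq 2d$ is exactly what guarantees the second free vertex when $|M|<d$.
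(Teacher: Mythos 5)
Your proof is correct, and since the paper only cites this lemma from Treglown's work \cite{Treg2015} without reproducing a proof, there is no in-paper argument to compare against. Your approach—taking a matching $M$ lexicographically maximizing $(|V(M)\cap X|,|M|)$ and then arguing $X\subseteq V(M)$ via single-edge swaps, $|M|\geq d$ via a two-free-vertex double augmentation, and finally pruning $X$-disjoint edges—is a clean, self-contained elementary argument. The key steps all check out: in Step 1 the identity $|\{w\in V(M):M(w)\in X\}|=|V(M)\cap X|$ holds edge by edge, giving $d\leq|N(x)|\leq|V(M)\cap X|\leq d$ and the contradiction; in Step 2 the only bad edge type meeting $B_3$ is $\{3,4\}$, so the absence of a good edge injects $B_3$ into $B_4$, contradicting $|B_1|+|B_2|+2|B_3|\geq 2d>|V(M)|$; and in Step 3 the count $\ell=|M|-d+k\geq|M|-d$ of $X$-disjoint edges suffices for the truncation. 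Your closing remark correctly identifies that the hypothesis $n\geq 2d$ is used precisely to guarantee a second free vertex when $|M|<d$, which is what makes the double augmentation available.
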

Let $M$ be a maximum matching of a graph $G$ and $v$ be a vertex in $V(G)\setminus V(M)$, define 
$$
SN(v):=\{z\in V(M):wz\in E(M) \text{ for some } w\in N(v)\}.
$$
The following fact follows immediately from the definition of a maximum matching; the proof is omitted.  
\begin{fact}\label{MMProperty}
   Let $M$ be a maximum matching of a graph $G$ and $x,y$ be two distinct vertices in $V(G)\setminus V(M)$. 
   Then $E(G[SN(x),SN(y)])=\emptyset$. 
\end{fact}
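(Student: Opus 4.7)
The plan is to prove the fact by contradiction, exhibiting an $M$-augmenting path from $x$ to $y$ whenever an edge between $SN(x)$ and $SN(y)$ exists. Assume towards a contradiction that there is an edge $z_1 z_2 \in E(G)$ with $z_1 \in SN(x)$ and $z_2 \in SN(y)$. By definition of $SN$, there exist $w_1 \in N(x)$ and $w_2 \in N(y)$ such that $w_1 z_1, w_2 z_2 \in E(M)$. Since $M$ is maximum and $x, y \notin V(M)$, no edge incident to $x$ or $y$ can go outside $V(M)$; in particular $w_1, w_2, z_1, z_2 \in V(M)$ and all four vertices differ from $x$ and $y$.

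Next I would verify that the six vertices $x, w_1, z_1, z_2, w_2, y$ are pairwise distinct. We already have $x \neq y$ and $\{x,y\} \cap V(M) = \emptyset$. Within $V(M)$, if $w_1 = w_2$ then $z_1 = z_2$ (each vertex has a unique partner in $M$), which contradicts $z_1 z_2 \in E(G)$. If $w_1 = z_2$, then $z_1$ and $w_2$ would both be matched to $z_2$ in $M$, again impossible; the symmetric case $w_2 = z_1$ is excluded in the same way. Hence all six vertices are distinct.

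Finally, consider the path
\[
P = x \,\text{--}\, w_1 \,\text{--}\, z_1 \,\text{--}\, z_2 \,\text{--}\, w_2 \,\text{--}\, y.
\]
The edges $w_1 z_1$ and $z_2 w_2$ lie in $M$, while $x w_1$, $z_1 z_2$, and $w_2 y$ are non-matching edges (the endpoints $x, y$ are unmatched, and $z_1 z_2$ cannot lie in $M$ because $z_1$ is already matched to $w_1 \neq z_2$). Therefore $P$ is an $M$-augmenting path, and swapping matching and non-matching edges along $P$ yields a matching of size $|M|+1$, contradicting the maximality of $M$. This completes the proof. The only subtle point is the case analysis ensuring the six vertices are distinct; once that is handled the augmenting path argument is immediate.
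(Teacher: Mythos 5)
Your augmenting-path strategy is the right one, but the distinctness argument has a genuine hole. You claim that $w_1 = z_2$ is impossible because ``$z_1$ and $w_2$ would both be matched to $z_2$,'' but this only yields a contradiction if $z_1 \neq w_2$. In fact the two supposedly impossible cases $w_1 = z_2$ and $w_2 = z_1$ can occur \emph{simultaneously}: this is exactly the situation where $z_1 z_2 \in E(M)$, so that $z_1$'s partner is $z_2$ (forcing $w_1 = z_2$) and $z_2$'s partner is $z_1$ (forcing $w_2 = z_1$). Your ``impossible in the same way'' argument for $w_2 = z_1$ tacitly assumes $w_1 \neq z_2$, and vice versa, so neither rules the joint case out. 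Hence the assertion ``all six vertices are distinct'' is not actually established, and the five-edge path you write down collapses.

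The fix is short and keeps the spirit of your proof. Split on whether $z_1 z_2 \in M$. If $z_1 z_2 \notin M$, then indeed $w_1 \neq z_2$, $w_2 \neq z_1$, and $w_1 \neq w_2$, so your path $x\,\text{--}\,w_1\,\text{--}\,z_1\,\text{--}\,z_2\,\text{--}\,w_2\,\text{--}\,y$ is a valid $M$-augmenting path. If $z_1 z_2 \in M$, then $w_1 = z_2 \in N(x)$ and $w_2 = z_1 \in N(y)$, so $x\,\text{--}\,z_2\,\text{--}\,z_1\,\text{--}\,y$ is a shorter $M$-augmenting path. Either way $M$ is not maximum, giving the contradiction. (The paper itself omits the proof of this fact, so there is nothing to compare beyond noting that this augmenting-path argument is the standard and presumably intended one.)
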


    

\begin{lemma}\label{MStab}
    Let $d\ge 1$ and $n\ge 3d+2$. Assume that $G$ is an $n$-vertex graph with $d\leq \delta(G)\le \Delta(G)\leq n-2d-1$. Then $G$ admits a matching of size $d+1$. 
\end{lemma}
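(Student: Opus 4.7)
The plan is to argue by contradiction. Suppose some maximum matching $M$ of $G$ has $|M| \le d$, and set $X := V(G) \setminus V(M)$. By the maximality of $M$, the set $X$ is independent, and $|X| = n - 2|M| \ge n - 2d \ge d + 2 \ge 3$.

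The central observation is that for every edge $uv \in M$,
$$|N(u) \cap X| + |N(v) \cap X| \le |X|.$$
Indeed, if there existed $x \in X \cap N(u)$ and $y \in X \cap N(v)$ with $x \ne y$, then $xuvy$ would be an $M$-augmenting path, contradicting maximality (this is essentially the content of Fact~\ref{MMProperty} applied to $x$ and $y$). Hence for each $uv \in M$, either one of its endpoints has no neighbor in $X$, or else $N(u) \cap X = N(v) \cap X = \{x\}$ for a single $x \in X$; in both situations the sum above is at most $|X|$, using $|X| \ge 3$.

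Summing and invoking $\delta(G) \ge d$ to double-count edges between $X$ and $V(M)$ yields
$$(n - 2|M|)\,d \;\le\; \sum_{x \in X} d(x) \;=\; \sum_{uv \in M}\bigl(|N(u) \cap X| + |N(v) \cap X|\bigr) \;\le\; |M|\,(n - 2|M|),$$
which forces $|M| = d$, hence $|X| = n - 2d \ge d + 2 \ge 3$, and simultaneously forces every inequality above to be tight. Since the ``shared single neighbor'' case contributes only $2 < |X|$, every edge $uv \in M$ must therefore have one endpoint, say $u$, adjacent to all of $X$ and the other, $v$, adjacent to none of $X$.

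But then $d(u) \ge |X| + 1 = n - 2d + 1$, since $u$ is also adjacent to its matching partner $v$, contradicting the hypothesis $\Delta(G) \le n - 2d - 1$. I do not foresee a serious obstacle: the crux is the per-edge augmenting-path case analysis, and the maximum-degree hypothesis is used precisely to destroy the tight extremal configuration at the very end.
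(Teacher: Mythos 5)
Your proof is correct. Both you and the paper argue by contradiction from a maximum matching $M$ of size at most $d$, use augmenting paths from the independent set $X = V(G)\setminus V(M)$, and finish by exhibiting a vertex of too-high degree; but the counting steps differ in a substantive way. The paper works with the sets $SN(x)$ of matching-partners-of-neighbors, fixes three unmatched vertices $x_1,x_2,x_3$, introduces the auxiliary counts $a_i$ of matching edges both of whose endpoints are adjacent to $x_i$, and derives $d(x_i)\le 2a_i + d - (a_1+a_2+a_3)$ before summing. You instead bound, for each individual edge $uv\in M$, the quantity $|N(u)\cap X| + |N(v)\cap X|$ by $|X|$, using the augmenting-path $x\text{--}u\text{--}v\text{--}y$ directly; summing these per-edge bounds and double-counting $e(X,V(M))$ immediately gives $|M|=d$ and, by tightness (and $|X|\ge 3$ ruling out the shared-singleton case), that one endpoint of every matching edge is complete to $X$. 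Your route avoids the $SN$-set machinery and the $a_i$ bookkeeping entirely, which makes the argument more streamlined; the paper's version, on the other hand, is tied to the $SN$/Fact~\ref{MMProperty} framework that it reuses elsewhere (notably in Proposition~\ref{PMatching}), so the extra formalism there is not gratuitous.

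One small point worth spelling out in a final write-up: in the tightness step you need both that $\sum_{x\in X}d(x)=|X|\,d$ (forcing $d(x)=d$ for every $x\in X$, though you do not use this) and that each per-edge sum equals $|X|$; since $|X|\ge d+2\ge 3>2$, the ``shared singleton'' configuration with sum $2$ is excluded, so one endpoint $u$ of each $uv\in M$ satisfies $N(u)\supseteq X$, whence $d(u)\ge |X|+1 = n-2d+1 > n-2d-1\ge\Delta(G)$. You state all of this, so there is no gap; I am only noting that the chain of deductions is a little compressed as written.
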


\begin{proof}
   Choose a maximum matching, say $M,$ inside $G$. Suppose that $e(M)\leq d+1$. Together with Lemma \ref{matching}, we obtain $e(M)=d$.  Let $x_1,x_2,x_3$ be three distinct vertices in $V(G)\setminus V(M)$. Clearly, $N(x_i)\subseteq V(M)$ for each $i\in [3]$.  We are to prove that $N(x_1)=N(x_2)=N(x_3)$. 

   For each $i\in [3]$, assume that there are $a_i$ edges in $M$ such that both endpoints of them are adjacent to $x_i$. If follows from Fact \ref{MMProperty} that $E(SN(x_i),SN(x_j))=\emptyset$ for $1\leq i<j\leq 3$. 
   Therefore, 
   $d(x_i)\leq 2a_i+d-(a_1+a_2+a_3)$ for each $i\in [3]$. Together with $\delta(G)\geq d$, one has 
   $$
   3d\leq d(x_1)+d(x_2)+d(x_3)\leq 3d-(a_1+a_2+a_3).
   $$
   Hence $a_1=a_2=a_3=0$ and $d(x_1)=d(x_2)=d(x_3)=d$. 
   Consequently, every vertex in $V(G)\setminus V(M)$ is adjacent to exactly one vertex in each edge of $M$. Applying Fact \ref{MMProperty} again yields that every two distinct vertices in $V(G)\setminus V(M)$ share the same neighbors in $V(M)$. Hence each vertex $v\in N(x_1)$ satisfies 
   $
d(v)\geq n-2d
    $, which contradicts the maximum degree  condition of $G$. 
\end{proof}
The main goal of this subsection is to prove the following proposition. 
\begin{proposition}\label{PMatching}
Let $n$ be a positive even integer and $\gamma$ be a constant such that $0<{1}/{n}\ll\gamma\ll 1$. Suppose that $G$ is a graph on $n$ vertices so that 
\begin{align}\label{B2Ore}
\sigma(G)\ge n-\gamma n.
\end{align}
Then one of the following holds:
\begin{itemize}
\setlength{\parskip}{0pt}
    \item[{\rm (i)}] $G$ admits a perfect matching;
    \item[{\rm (ii)}] $G$ contains a $2\gamma$-independent set of size at least $\frac{n}{2}$;
    \item[{\rm (iii)}] $G$ consists of two odd components $C_1$ and $C_2$. Moreover, if $|C_i|\leq \frac{1-\gamma}{2}n$ for some $i\in [2]$, then 
    $C_i$ is a clique.
\end{itemize}
\end{proposition}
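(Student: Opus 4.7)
The plan is to assume $G$ has no perfect matching and to derive either (ii) or (iii). Since $n$ is even, the Tutte--Berge formula yields a set $S\subseteq V(G)$ with $o(G-S)\ge |S|+2$, where $o(\cdot)$ counts the odd components. Set $s:=|S|$, $t:=o(G-S)$, and let $D_1,\dots,D_t$ be the odd components of $G-S$. For distinct $i\ne j$, any vertices $u\in D_i$ and $v\in D_j$ are non-adjacent in $G$, so by \eqref{B2Ore} we have $d(u)+d(v)\ge (1-\gamma)n$; combining with $d(u)\le |D_i|-1+s$ and $d(v)\le |D_j|-1+s$ yields the pairwise size bound
$$
|D_i|+|D_j|\;\ge\;(1-\gamma)n+2-2s \qquad\text{for all } i\ne j.
$$

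The core of the proof is a dichotomy on $s$. Summing this pairwise bound over the $\binom{t}{2}$ pairs and using $\sum_i |D_i|\le n-s$ gives $t\bigl((1-\gamma)n+2-2s\bigr)\le 2(n-s)$; combining with $t\ge s+2$ rearranges to the quadratic inequality
$$
2s^2 - (1-\gamma)n\,s + 2\gamma n - 4 \;\ge\; 0.
$$
By Vieta's formulas the two roots $s_\pm$ of this quadratic sum to $(1-\gamma)n/2$ and have product $\gamma n - 2$; hence $s_-=O(\gamma)$ while $s_+ = (1-\gamma)n/2 - O(\gamma)$. For $\gamma$ small and $n$ large, the only integer values of $s$ consistent with the inequality are $s=0$ and $s\ge (1-\gamma)n/2-1$.

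Suppose first $s=0$. Then $G$ itself has at least two odd components. If $G$ had three or more components, then choosing $u_1,u_2,u_3$ from three distinct components and summing the three instances of \eqref{B2Ore} applied to the non-edges $u_iu_j$ would force $\sum_{i\in[3]}|H_i|\ge \tfrac32(1-\gamma)n+3$, impossible since these components are disjoint and $\gamma\ll 1$. Hence $G$ has exactly two components, necessarily both odd because $n$ is even. Finally, for any $C_i$ with $|C_i|\le (1-\gamma)n/2$, a non-edge inside $C_i$ would satisfy $d(u)+d(v)\le 2(|C_i|-1)<(1-\gamma)n$, contradicting \eqref{B2Ore}; so $C_i$ is a clique, establishing (iii).

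In the remaining case $s\ge (1-\gamma)n/2-1$, we have $t\ge s+2\ge (1-\gamma)n/2+1$. Picking one vertex from each odd component produces an independent set $I$ of size $t$. Adding any $k\le n/2-|I|+1\le \gamma n/2+1$ arbitrary vertices from $V(G)\setminus I$ yields a set $I'$ of size at least $n/2$. Since $I$ is independent, every edge of $G[I']$ is incident to one of the added vertices, so $e(G[I'])\le kn\le \tfrac12\gamma n^2+n\le 2\gamma n^2$ once $n$ is large, giving (ii). The main technical obstacle is the quadratic dichotomy in the second paragraph: one must check that for $\gamma\ll 1$ the gap between $s_-$ and $s_+$ really contains no integer, so that the Tutte witness is forced into one of the two extreme regimes; once this is verified, the two terminal cases fall out quickly from the short direct arguments above.
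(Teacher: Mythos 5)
Your proof is correct, and it takes a genuinely different route from the paper's. The paper works directly with a maximum matching $M$ and the structure of the ``shifted neighborhoods'' $SN(x)$ of uncovered vertices, dividing on whether $SN(x)\cap SN(y)=\emptyset$ and then painstakingly assigning the residual vertex set $R$ to the two components, with several nested subcases. You instead invoke the Tutte--Berge formula to obtain a witness set $S$, sum the Ore inequality over all $\binom{t}{2}$ pairs of odd components, and combine with $t\ge s+2$ to land on the quadratic
$$2s^2-(1-\gamma)ns+2\gamma n-4\ge 0.$$
Since the roots multiply to $\gamma n-2$ and sum to $(1-\gamma)n/2$, the small root is below $1$ (for $\gamma\ll 1$) and the large one sits just under $(1-\gamma)n/2$, so a valid Tutte set $S$ must have $|S|=0$ or $|S|\gtrsim(1-\gamma)n/2$. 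The $|S|=0$ branch gives two odd components (ruling out a third via the three-vertex argument, and the clique refinement via the Ore condition inside a small component), while the large-$|S|$ branch gives an independent set of one vertex per odd component of size $\gtrsim (1-\gamma)n/2$, which pads out to a $2\gamma$-independent set of size $n/2$. All the constants check out: $s_-\approx 2\gamma/(1-\gamma)<1$, $2(|C_i|-1)<(1-\gamma)n$ for the clique step, and the padded set has at most $\tfrac12\gamma n^2+O(n)<2\gamma n^2$ edges since $1/n\ll\gamma$. What your approach buys is a cleaner global dichotomy driven by a single quadratic inequality in place of the paper's local $SN$-analysis with its three subcases for assigning $R$; what the paper's approach buys is self-containment (it never appeals to Tutte) and the same maximum-matching machinery is reused for Lemma~\ref{MStab} elsewhere in Section~\ref{section2}, so the authors amortize the setup cost.
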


\begin{proof}
Suppose that $G$ has no perfect matching. Let $M$ be a maximum matching in $G$. Then, $|V(M)|\leq n-2$ and there are two  distinct vertices $x,y\in V(G)\setminus V(M)$. 
The maximality of $M$ implies that $N(x)\cup  N(y)\subseteq  V(M)$, and so $SN(x)\cup SN(y)\subseteq V(M)$. Since $xy\notin E(G)$,  
by \eqref{B2Ore} we have 
\begin{align}\label{SNOre}
   d(x)+d(y)= |SN(x)|+|SN(y)|\ge n-\gamma n.
\end{align}
Define $SN(x,y):=SN(x)\cap SN(y)$. Fact \ref{MMProperty} implies that $SN(x,y)$ is an independent set of $G$ and  
\begin{align}\label{degreesum}
    d(x)+d(y)\leq 2e(M)\leq n-2.
\end{align}
We divide our proof into two cases.

\medskip
{\bf Case 1. } 
$SN(x,y)\ne \emptyset$. 
\medskip

In this case, we will show that (ii) holds. 
Without loss of generality, assume that $d(x)\le d(y)$. Then \eqref{degreesum} implies $d(x)<\frac{n}{2}$. Choose $z\in SN(x,y)$ and assume $zz'\in E(M)$. Hence  $z'\in N(x)\cap N(y)$ and  $N(z)\subseteq V(M)$.  Thus, \eqref{B2Ore} implies that 
$$
d(z)\ge n-\gamma n-d(x)>\frac{n}{2}-\gamma n.
$$
Notice that $N(z)\cap (SN(x)\cup SN(y))=\emptyset$. Otherwise, there exists a matching of size $|E(M)|+1$, a contradiction. Thus, 
$|SN(x)\cup SN(y)|\le n-d(z)<\frac{n}{2}+\gamma n$. Together  with \eqref{SNOre}, one has  
$$|SN(x)\cap SN(y)|= |SN(x)|+|SN(y)|-|SN(x)\cup SN(y)|\ge \frac{n}{2}-2\gamma n.$$
Recall that $SN(x,y)$ is an independent set in $G$. By adding at most $2\gamma n$ arbitrary vertices to $SN(x,y)$, we obtain a $2\gamma$-independent set of size at least $\frac{n}{2}$ in $G$, as desired.

\medskip
{\bf Case 2.} 
$SN(x,y)=\emptyset.$
\medskip

In this case, one may assume that 
$SN(x,y)=\emptyset$ for every maximum matching $M$ of $G$ and any two distinct  vertices $x,y\in V(G)\setminus V(M)$. Suppose that  $|V(G)\setminus V(M)|>2$. Then there is a vertex $z\in V(G)\setminus (V(M)\cup \{x,y\})$. It follows from  \eqref{B2Ore} that 
$$d(x)+d(y)+d(z)\ge \frac{3}{2}(n-\gamma n)>|V(M)|.$$ 
Recall that $N(x)\cup N(y)\cup N(z)\subseteq V(M)$. Therefore, there are two vertices $u,v\in \{x,y,z\}$ such that $N(u)\cap N(v)\neq \emptyset$, thus $SN(u,v)\neq \emptyset$, a contradiction. Hence, $|V(G)\setminus V(M)|=2$. We will show that (iii) holds. 

Since $SN(x,y)=\emptyset$ and $E(G[SN(x),SN(y)])=\emptyset$, there are two vertex-disjoint subgraphs $G_1:=G[\{x\}\cup SN(x)]$ and $G_2:=G[\{y\}\cup SN(y)]$ in $G$ such that $E(G[V(G_1),V(G_2)])=\emptyset$. Define $R:=V(G)\setminus (V(G_1)\cup V(G_2))$. If $R=\emptyset$, then the two endpoints of each edge in $M$ are adjacent to the same vertex in $\{x, y\}$. It follows that $G$ consists of two connected odd components $G_1$ and $G_2$, as desired. Suppose that $R\neq \emptyset$. In order to prove (iii) holds, it is necessary to assign vertices in $R$ to $G_1$ or $G_2$. Choose a vertex $v \in R$ with $vv'\in E(M)$. Hence $xv',yv'\notin E(G)$. Together with $xy\notin E(G)$ and \eqref{B2Ore}, we have 
$$
d(x)+d(y)+d(v')=|SN(x)|+|SN(y)|+|N(v')|\geq \frac{3}{2}(n-\gamma n).
$$
Recall that $SN(x,y)=\emptyset$. Hence, 
\begin{align}\label{eitheror}
    \text{either}\ SN(x)\cap N(v')\ne \emptyset\ \text{or}\ SN(y)\cap N(v')\ne \emptyset.
\end{align}

$\bullet$\ Assume  $v'\notin R$. Then $v'\in SN(x)\cup SN(y)$. Without loss of generality, assume that $v'\in SN(x)$
, that is, $xv\in E(G)$ and $yv\notin E(G)$. Suppose that $N(v)\cap SN(y)\ne \emptyset$. Then choose $u\in N(v)\cap SN(y)$ with $uu'\in E(M)$. Hence $M':=(M\setminus \{vv',uu'\})  \cup \{vu,u'y\}$  is also a maximum matching of $G$. Notice that $v\in N(x)\cap N(v')$. Therefore, under the matching $M'$ we have $SN(x,v')\neq \emptyset$, contradicting our initial assumption in Case 2. Thus, $N(v)\cap SN(y)= \emptyset$. Together with $vy\notin E(G)$, we know $N(v)\cap V(G_2)=\emptyset$. Thus, one may add such a vertex $v$ to $V(G_1)$. 

$\bullet$\ Assume  $v'\in R$. Then $xv,yv\notin E(G)$. By a similar argument as \eqref{eitheror}, we obtain that
\begin{align}\label{either2}
    \text{either}\ SN(x)\cap N(v)\ne \emptyset\ \text{or}\ SN(y)\cap N(v)\ne \emptyset. 
\end{align}
We claim that 
none of the following holds:
\begin{enumerate}
    \item[{\rm (1)}] $SN(x)\cap N(v)\ne \emptyset$ and $SN(y)\cap N(v')\neq \emptyset$;
    \item[{\rm (2)}] $SN(x)\cap N(v')\ne\emptyset$ and $SN(y)\cap N(v)\ne \emptyset$. 
\end{enumerate}
Otherwise, without loss of generality, suppose the former case happens. Let $a\in SN(x)\cap N(v)$ and $b\in SN(y)\cap N(v')$, and assume  $aa',bb'\in E(M)$. Then $M':=(M\setminus\{aa',bb',vv'\})\cup \{xa',av,v'b,b'y\}$ is a perfect matching of $G$, a contradiction. Together with \eqref{eitheror} and \eqref{either2}, we obtain that 
\begin{align}\label{belong}
    \text{either}\ SN(x)\cap (N(v)\cup N(v'))= \emptyset\ \text{or}\ SN(y)\cap (N(v)\cup N(v'))= \emptyset.
\end{align}
Together with $xv,xv',yv,yv'\notin E(G)$, we have either $(N(v)\cup N(v'))\cap V(G_1)=\emptyset$ or $(N(v)\cup N(v'))\cap V(G_2)=\emptyset$. Therefore, $v$ and $v'$ can be assigned to $G_1$ or $G_2$ based on where their neighbors are located, which preserves the absence of edges between $G_1$ or $G_2$. 

By sequentially applying the above operation to each vertex in 
$R$, we can assign them to either 
$G_1$ or $G_2$ accordingly. In order to preserve the disconnectivity  between $G_1$ and $G_2$, we only need to show that in the above process, for each edge in $G[R]$, 
its two endpoints are assigned to the same part. Let $uv\in E(G[R])$ with $uu',vv'\in E(M)$. We proceed our proof by considering the following three subcases.



\medskip
{\bf Subcase 2.1.} $u',v'\notin R$.
\medskip

We claim that there exists a vertex $z\in \{x,y\}$ such that 
$
u',v'\in SN(z).
$ 
Otherwise, suppose that $u'\in SN(x)$ and $v'\in SN(y)$. By the argument in the above,  we obtain that $N(u)\cap SN(y)=\emptyset$ and $N(v)\cap SN(x)=\emptyset$. 
Without loss of generality, assume that $d(x)\le d(y)$. Then $d(x)\le \frac{n}{2}$. It follows from $v\in R$ that $xv'\notin E(G)$. Together with  \eqref{B2Ore} one has $d(v')\geq \frac{(1-2\gamma)n}{2}$. By $v'\in SN(y)$ and Fact~\ref{MMProperty}, we obtain $N(v')\cap SN(x)=\emptyset$. Thus,  $N(v')\subseteq SN(y)\cup R$. Notice that $|R|\leq \gamma n$.  Hence, there exists a vertex $a\in SN(y)\cap N(v')$. Therefore, $M':=(M\setminus \{aa',uu',vv'\})\cup \{uv,v'a,a'y\}$ is also a maximum matching in $G$. But $SN(x,u')\neq \emptyset$, a contradiction. This implies that there exists a vertex $z\in \{x,y\}$ such that 
$
u',v'\in SN(z). 
$
Therefore, $u$ and $v$ are assigned to the same part by using the above argument. 


\medskip
{\bf Subcase 2.2.} $u'\in R,\,v'\notin R$.
\medskip

By a similar discussion as Subcase 2.1, we can show that $u$ and $v$ are assigned to the same part. 

\medskip
{\bf Subcase 2.3.} $u',v'\in R$.
\medskip

We claim that there exists a vertex $z\in \{x,y\}$ such that 
$$
SN(z)\cap (N(v)\cup N(v'))=\emptyset\ \text{and}\ SN(z)\cap (N(u)\cup N(u'))=\emptyset.
$$ 
Otherwise, based on \eqref{belong}, one may assume that $SN(x)\cap (N(v)\cup N(v'))=\emptyset$ and $SN(y)\cap (N(u)\cup N(u'))=\emptyset$. Furthermore, $SN(y)\cap (N(v)\cup N(v'))\neq \emptyset$ and $SN(x)\cap (N(u)\cup N(u'))\neq \emptyset$. By $uv\in E(G)$ and the maximality of $M$, one has either $SN(y)\cap N(v')=\emptyset$ or $SN(x)\cap N(u')=\emptyset$. 
Assume, without loss of generality, that $SN(y)\cap N(v')=\emptyset$. 
Together with $SN(x)\cap N(v')=\emptyset$, one has $N(v')\subseteq R$. Notice that  $xv',yv'\notin E(G)$. By  \eqref{B2Ore}, one has $$d(x),d(y)\geq n-2\gamma n,$$
which contradicts \eqref{degreesum}. 

With the above discussion, by assigning vertices in $R$ to $G_1$ or $G_2$ accordingly, we know that $G$ can be decomposed into two connected components, say $C_1$ and $C_2$, each of which has odd order.
Furthermore, by Ore's condition, if $|C_i|\leq \frac{1-\gamma}{2}n$ for some $i\in [2]$, then $C_i$ is a clique, as desired.
\end{proof}

\section{Proof of Theorem \ref{FVersion}}\label{section3}

In this section, we will give a quick proof of Theorem \ref{FVersion}. The proof is divided into two cases, depending on whether the graph contains a $\gamma$-independent set of size ${n}/{r}$ (the extremal case) or not (the non-extremal case). 
\begin{theorem}[Non-extremal case]\label{non-extremal}
   Let $r\geq 3$ be an integer. For any $0< \alpha\ll \gamma\ll \frac{1}{r}$, there exists $n_0\in \mathbb{N}$ such that the following holds for all $n\geq n_0$ with $r\mid n$. 
   Assume that $G$ is a graph on $n$ vertices satisfying  $\sigma(G)\geq 2(1-\frac{1}{r}-\alpha)n$. If $G$ contains no $\gamma$-independent set of size $\frac{n}{r}$, then $G$ has a $K_r$-factor. 
\end{theorem}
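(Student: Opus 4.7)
The plan is to execute the absorption framework sketched in the paper. Fix auxiliary constants $\alpha\ll \beta\ll \gamma\ll \frac{1}{r}$ and prove two auxiliary lemmas. The \emph{absorbing lemma} produces a vertex set $M\subseteq V(G)$ with $|M|\le \beta n$ and $r\mid |M|$, such that $G[M]$ has a $K_r$-factor and, for every $W\subseteq V(G)\setminus M$ with $|W|\le \beta^2 n$ and $r\mid |W|$, the graph $G[M\cup W]$ also admits a $K_r$-factor. The \emph{almost cover lemma} states that $G-M$ contains a $K_r$-tiling covering all but at most $\beta^2 n$ vertices. Combining them finishes the proof: if $\mathcal{T}$ is the almost-covering tiling from the second lemma and $W$ is its leftover, then the absorbing property of $M$ gives a $K_r$-factor of $G[M\cup W]$, and joining it with $\mathcal{T}$ produces a $K_r$-factor of $G$.

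For the absorbing lemma, call an $r^2$-set $S$ an \emph{absorber} for an $r$-set $T$ if $G[S]$ and $G[S\cup T]$ both admit $K_r$-factors; the standard gadget picks, for each $v_i\in T$, an $(r-1)$-clique $U_i$ in $N(v_i)$ together with a further vertex $w_i$ adjacent to all of $U_i$, such that $\{w_1,\dots,w_r\}$ spans a $K_r$, and sets $S=\bigcup_i U_i\cup\{w_1,\dots,w_r\}$. The central counting claim, following the sketch, is that every $r$-set $T$ admits $\Omega(n^{r^2})$ absorbers. The key input is that $G$ contains $\Omega(n^{r+1})$ copies of $K_{r+1}$, which follows from the contrapositive of Proposition~\ref{SuperT} applied with parameter $r+1$: the hypothesis $\sigma(G)\ge 2(1-\frac{1}{r}-\alpha)n$ matches exactly, so if $G$ had only $o(n^{r+1})$ copies of $K_{r+1}$ it would contain a $\sqrt{\alpha}$-independent set of size $\frac{n}{r}$, contradicting non-extremality. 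A supersaturation-and-greedy construction then produces the required number of absorbers. The set $M$ is built by sampling each vertex of $G$ independently with probability $p=n^{-1+\eta}$, applying Chernoff bounds (Lemma~\ref{chernoff}) and a union bound to ensure that with positive probability every $r$-set retains many absorbers in the sample, and finally deleting conflicts and adjusting the parity of $|M|$.

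For the almost cover lemma, I would apply Szemer\'edi's Regularity Lemma to $G$ to obtain a reduced graph $R$ which, after a routine cleaning step, inherits the approximate Ore condition $\sigma(R)\ge 2(1-\frac{1}{r}-2\alpha)|R|$ and still contains no $2\gamma$-independent set of size $\frac{|R|}{r}$. A balanced selection inside the regular pairs converts any almost perfect $K_r$-tiling of $R$ into an almost $K_r$-tiling of $G-M$, so the task reduces to producing an almost perfect $K_r$-tiling of $R$. Following Treglown's technique as sketched, take the greedy layered family $\mathcal{F}=\mathcal{F}_r\cup\mathcal{F}_{r-1}\cup\cdots\cup\mathcal{F}_1$, where each $\mathcal{F}_s$ is a maximum $K_s$-tiling of what remains after removing $V(\mathcal{F}_r)\cup\cdots\cup V(\mathcal{F}_{s+1})$. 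If the top layer $\mathcal{F}_r$ covers at most $(1-\mu)|R|$ vertices, I would pass to the blow-up $R[2^{r-1}]$ and show that the cliques in layers $\mathcal{F}_s$ with $s<r$ can be regrouped, using edges guaranteed by the Ore condition, into additional $K_r$-copies, yielding a layered family of strictly greater $K_r$-density. Iterating this operation a bounded number of times produces an almost perfect $K_r$-tiling in some fixed blow-up of $R$, which pulls back to an almost $K_r$-tiling of $G-M$.

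The main technical obstacle lies in the iterative blow-up step of the almost cover lemma. Verifying that each iteration strictly increases the $K_r$-covered proportion requires a careful enumeration of how cliques from the smaller layers can be recombined into $K_r$-copies in the blow-up, together with quantitative estimates showing that the gain dominates any rounding or double-counting loss. The non-extremality hypothesis feeds into this step precisely to rule out the sole structural obstruction to regrouping, namely large near-independent sets in $R$, so the most delicate bookkeeping is the propagation of the ``no large $\gamma$-independent set'' property from $G$ through $R$ and its blow-ups.
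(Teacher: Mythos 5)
Your high-level framework---absorbing lemma, almost-cover lemma, combine---matches the paper's proof exactly, and your description of the almost-cover step (regularity lemma, greedy layered family, iterated blow-up with regrouping) is essentially the paper's proof of Lemma~\ref{almost-cover}.

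The gap is in the absorber count. You claim that \emph{every} $r$-set $T$ admits $\Omega(n^{r^2})$ absorbers; the paper (and its own sketch) only proves this for \emph{almost all} $r$-sets, and the distinction is load-bearing. The paper's proof of Lemma~\ref{absorbing} splits on the size of $S:=\{v: d(v)<(1-\tfrac{1}{r}-\alpha)n\}$, which is a clique by Ore's condition. When $|S|>\xi n$, the count is proved for every $r$-set, using hubs drawn from the clique $S$. When $|S|\le\xi n$, the count is proved only for $r$-sets disjoint from $S$: for those, every $v_i$ has degree at least $(1-\tfrac{1}{r}-\alpha)n$, so the iterated common neighborhood $N(u_i)\cap N(v_i)\cap\bigcap N(w^\ell)$ still has size above $\tfrac{n}{r}$ at step $r-2$, and the no-$\gamma$-independent-set hypothesis gives at least $\tfrac{\gamma}{2}n^2$ edges there for the last two petal vertices. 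If $T$ contains a low-degree vertex $v\in S$ (Ore's condition allows $d(v)$ as small as roughly $(1-\tfrac{2}{r})n$), this chain of estimates collapses---already for $r=3$ the intersection $N(v)\cap N(w)$ with a generic hub vertex $w$ can have size $O(\alpha n)$---and the paper does not attempt to count absorbers for such $T$. Instead, when $|S|\le\xi n$ it simply forces $S\subseteq M$: it finds, for each $w\in S$, a copy of $K_r$ through $w$ disjoint from the sampled family $F'$, and adds $S$ together with these extension vertices to $M$, guaranteeing that the eventual leftover set $W\subseteq V(G)\setminus M$ never contains a vertex of $S$. Your plan omits this step, and without it the random sampling with a union bound over all $r$-sets has nothing to union over for the bad $r$-sets.

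A second, related imprecision: you identify ``$\Omega(n^{r+1})$ copies of $K_{r+1}$'' (correctly obtained from the contrapositive of Proposition~\ref{SuperT}) as the key input and propose to finish by ``supersaturation-and-greedy.'' But global $K_{r+1}$-supersaturation does not, on its own, produce the petals: those must be $(r-1)$-cliques lying inside the \emph{constrained} set of common neighbors of $u_i$, $v_i$, and the previously chosen petal vertices, which is a local density requirement. The paper meets it by applying the non-extremality hypothesis directly to that residual set $W^{r-2}$, not by counting $K_{r+1}$'s globally. Your route could probably be repaired by applying Proposition~\ref{SuperT} to the induced subgraph on $N(v_i)\cap N(w_i)$ after choosing $w_i\notin N(v_i)$ (so that $w_i$ has nearly full degree and the intersection has size about $\tfrac{n}{r}$), but this is a nontrivial additional argument that the proposal does not carry out.
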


\begin{theorem}
    [Extremal case]\label{Extremal case}
   Let $r\geq 3$ be an integer. 
   There exist $\gamma > 0$ and $n_0\in \mathbb{N}$ such that the following holds for each $n\ge n_0$ with $r \mid n$. Suppose that $G$ is a graph on $n$ vertices with $\sigma(G) \ge 2(1-\frac{1}{r})n-2$. 
If $G$ contains a $\gamma$-independent set of size $\frac{n}{r}$, then 
\begin{itemize}
\setlength{\parskip}{0pt}
    \item[$\mathrm{(i)}$] either $G$ has a $K_r$-factor, or 
    \item[$\mathrm{(ii)}$] $G$ is an extremal graph,  i.e., \ref{EX1} or \ref{EX2}. 
\end{itemize}
\end{theorem}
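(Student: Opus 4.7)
The plan is to reduce the existence of a $K_r$-factor in $G$ to an application of Lemma~\ref{GHH2024}, following the three-step strategy sketched in the introduction. First I construct a \emph{good partition} $\mathcal{Q} = \{A_1, \ldots, A_s, B\}$ of $V(G)$ (Lemma~\ref{Lpartition}) in which each $|A_i| = n/r$, each $A_i$ is a $\sqrt{\gamma}$-independent set, $G[B]$ contains no $\gamma$-independent set of size $n/r$, and the low-degree set $S := \{v \in V(G) : d(v) < (1-\frac{1}{r})n-1\}$ is contained in $B$. This partition is obtained by iteratively extracting $\gamma$-independent sets of size $n/r$ from the remaining graph until no such set exists. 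By Fact~\ref{clique}, $G[S]$ is a clique of size at most $r-1$, and since any clique of size at least $2$ cannot be contained in a $\gamma$-independent set of size $n/r$ once $\gamma$ is small enough, pushing $S$ into $B$ is automatic.

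Second, I cover the \emph{non-excellent} vertices $W$, i.e., vertices whose cross-part degrees are too small to satisfy the hypothesis of Lemma~\ref{GHH2024}. The Ore-type condition on $G$ together with the near-independence of each $A_i$ yields $|W| = o(n)$ with $W \subseteq B$ up to negligibly many vertices. Lemma~\ref{K_r-tiling} then produces a $K_r$-tiling $\mathcal{T}$ with $W \subseteq V(\mathcal{T})$ in which every $K_r$-copy is a transversal meeting each $A_i$ in exactly one vertex and $B$ in exactly $r-s$ vertices. Concretely, for each $w \in W$ a small ``base'' containing $w$ is located, and since every excellent vertex has almost full neighborhood into each $A_j$ and into $B$, each base admits $\Omega(n^{r-1})$ disjoint extensions which can be picked greedily because $|W|$ is small. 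It remains to find a $K_{r'}$-factor of $G[B'] := G[B \setminus V(\mathcal{T})]$, where $r' := r - s$. The graph $G[B']$ inherits the Ore-type condition up to an $O(1)$ loss and still contains no $\gamma$-independent set of size $|B'|/r'$; hence, when $r' \ge 3$, Theorem~\ref{non-extremal} supplies the required factor directly.

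The main obstacle is the case $r' = 2$, which demands a perfect matching of $G[B']$. Proposition~\ref{PMatching} leaves only one genuine obstruction: $G[B']$ may decompose into two odd components $C_1, C_2$. I handle this by performing minor adjustments to the bases used in the previous step, swapping a single vertex of one of the transversal $K_r$-copies of $\mathcal{T}$ with a carefully chosen vertex in $C_1$ or $C_2$ so that the parities of $|C_1|$ and $|C_2|$ both become even, thereby restoring a perfect matching. If no such adjustment is available, the rigidity forced by Proposition~\ref{PMatching}(iii) (which makes a small component a clique) combined with Fact~\ref{clique} pins $G$ down as an extremal graph of type~\ref{EX2} with $|B_0| = |C_1|$ odd, or alternatively produces the independent-set configuration~\ref{EX1}. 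Once a $K_{r'}$-tiling $\mathcal{T}'$ of $G[B']$ is secured, I contract each $K_{r'}$-copy to a single supervertex, producing a balanced $(s+1)$-partite auxiliary graph $H$ with $n/r$ vertices per class. Each supervertex aggregates the $A_i$-neighborhoods of $r'$ excellent vertices, and so by a union bound has at least $(1 - \frac{1}{2(s+1)}) \frac{n}{r}$ neighbors in every $A_i$. Lemma~\ref{GHH2024} then yields a $K_{s+1}$-factor of $H$, which lifts to a $K_r$-factor of $G - V(\mathcal{T})$; together with $\mathcal{T}$ this gives the desired $K_r$-factor of $G$.
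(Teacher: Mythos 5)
Your proposal is correct and follows essentially the same route as the paper: construct the good partition (Lemma~\ref{Lpartition}), cover non-excellent vertices by extending bases to a small $K_r$-tiling (Lemma~\ref{K_r-tiling}), resolve the $r-s=2$ obstruction via Proposition~\ref{PMatching} and base adjustments, then contract the $K_{r-s}$-factor and apply Lemma~\ref{GHH2024}. One small inaccuracy worth flagging: non-excellent vertices need not sit almost entirely in $B$ (there can be up to $\sqrt{\alpha}n$ in each $A_i$), and the tiling $\mathcal{T}$ need not consist of individual transversal copies --- the $(2,\beta/4)$-bases produce \emph{pairs} of $K_r$'s that are jointly balanced across the parts --- but neither point affects the validity of the overall argument, since only the aggregate count $|V(\mathcal{T})\cap D|$ per part matters for preserving the $(r,s)$-partition.
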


We now prove the first part of Theorem~\ref{FVersion}; the algorithmic part will be presented in the final section.

\begin{proof}[\bf Proof of Theorem \ref{FVersion}]
The case $r=1$ is trivial. If $r\geq 3$, then by combining Theorem~\ref{non-extremal} and Theorem~\ref{Extremal case},  we obtain Theorem \ref{FVersion} immediately. In the following, we consider the case that $r=2$ and $n\geq 4$ is even. 


Suppose that $G$ is an $n$-vertex graph with $\sigma(G)\ge n-2$. 
Let  $P:=u_1u_2\ldots u_t$ be a longest path in $G$. Hence $N(u_1)\cup N(u_t)\subseteq V(P)$. If $t=n$, then $G$ admits a $K_2$-factor, as desired. In the following, we assume that $t\le n-1$. 

We first assume that $G[V(P)]$ contains a Hamilton cycle. 
Then any vertex in $P$ is not adjacent to any vertex outside $P$. Hence for each $i\in [t]$ and each $w\in V(G)\setminus V(P)$, we have 
$$
n-2\leq d(u_i)+d(w)\leq (t-1)+(n-t-1)=n-2.
$$
Thus, $d(u_i)=t-1$ for each $i\in [t]$ and $d(w)=n-t-1$ for each $w\in V(G)\setminus V(P)$. It follows that $G=K_t\cup K_{n-t}$. Therefore, $G$ has a $K_2$-factor if $t$ is even, and $G$ satisfies \ref{EX2} otherwise, as desired. 

Next, we assume that $G[V(P)]$ contains no Hamilton cycles. Then $u_1u_t\notin E(G)$. 
Let
$$
A=\{j\in [t-2]:u_1u_{j+1}\in E(G)\}\ \text{and}\ B=\{j\in [2,t-1]:u_tu_{j}\in E(G)\}.
$$
Obviously, $A\cap B=\emptyset$ as $G[V(P)]$ contains no  Hamilton cycles. 
Together with Ore's condition, one has
$$
n-2\leq d(u_1)+d(u_t)=|A|+|B|=|A\cup B|\leq t-1\leq n-2.
$$
Thus, $t=n-1$ and $A\cup B=[t-1]$.  Without loss of generality, assume that $|A|\leq |B|$. Hence  $|A|\leq \frac{n}{2}-1\leq |B|.$ 
Define 
$$
C=\{j\in [t]:u_tu_{j-1}\in E(G)\}.
$$
Then $C\subseteq [3,t]$ and $|C|=|B|$. Moreover,
$$
t\in C\setminus A,\ \text{and}\ 1\in A\setminus C.
$$
For each $j\in A$, there is a path $u_ju_{j-1}\ldots u_1u_{j+1}\ldots u_t$ with endpoints $u_j$ and $u_t$; for each $j\in C$, there is a path $u_1u_2\ldots u_{j-1}u_{t}u_{t-1}\ldots u_{j}$ with endpoints $u_1$ and $u_j$. Let $x$ be the unique vertex in $V(G)\setminus V(P)$. Recall that the longest path in $G$ has order $n-1$. Hence $xu_j\notin E(G)$ for every $j\in A\cup C$. 
In particular, $xu_1\notin E(G)$. The Ore's condition implies that 
$$
n-2\leq d(u_1)+d(x)\leq |A|+(n-1-|A\cup C|)\leq 
n-1-|C\setminus A|\leq n-2. 
$$
It follows that $d(x)=n-1-|A\cup C|$ and $C\setminus A=\{t\}$. 
Together with the fact that $1\in A\setminus C$, we have $C\setminus \{t\}\subseteq A\setminus \{1\}$. Therefore, 
$$
n-2=|A|+|B|=|A|+|C|=|A|+|C\setminus \{t\}|+1\leq |A|+|A\setminus \{1\}|+1 =2|A|\leq n-2.
$$
It follows that $C\setminus \{t\}=A\setminus \{1\}$ and   $|A|=|B|=|C|=\frac{n}{2}-1$. 

Choose $j\in A\cap C$. Then $j\in [3,n-3]$ and $j-1\in B$.  Together with $A\cap B=\emptyset$, one has ${j-1}\notin A$. Since $|A\cap C|=\frac{n}{2}-2$, we obtain $A\cap  C=\{3,5,\ldots,n-3\}$. Therefore, $N(u_1)=N(u_{n-1})=\{u_2,u_4,\ldots,u_{n-2}\}$. For each even integer $i\in [n-1]$, define  $P^i:=u_1u_2\ldots u_{i}u_{n-1}u_{n-2}\ldots u_{i+1}$  to be a new path with endpoints $u_1$ and $u_{i+1}$. By a similar discussion as above, we obtain that for each even integer $i\in [n-1]$,
$$
N(u_1)=N(u_{i+1})=\{u_2,u_4,\ldots,u_{n-2}\}\ \text{and}\ xu_{i+1}\notin E(G).
$$
It follows that $\{x,u_1,u_3,\ldots,u_{n-1}\}$ forms an independent set of $G$ with size $\frac{n}{2}+1$, as desired.   
\end{proof}

\section{Non-extremal case}\label{section4}


For the non-extremal case, our proof primarily adopts the framework of the absorption method proposed by R\"odl, Ruci\'nski and Szemer\'edi~\cite{RRS2009}. 
The absorption method typically decomposes the problem of finding perfect tilings into two steps: almost cover and the absorption of the remaining vertices.

\begin{lemma}[Almost cover]\label{almost-cover}
    Let $0<\frac{1}{n}\ll\mu\ll \alpha\ll \gamma\ll \frac{1}{r}$ and $r\ge 3$. Assume that $G$ is a graph on $n$ vertices such that $\sigma(G)\geq 2(1-\frac{1}{r}-\alpha)n$. If $G$ does not contain any $\gamma$-independent set of size $\frac{n}{r}$, then $G$ has a $K_r$-tiling that covers all but at most $\mu n$ vertices. 
\end{lemma}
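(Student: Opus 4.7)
The plan is to apply Szemer\'{e}di's Regularity Lemma to $G$ with parameters $\eps \ll d \ll \mu$, yielding a partition $V_0 \cup V_1 \cup \cdots \cup V_t$ with exceptional set $|V_0| \le \eps n$ and clusters of equal size $m$, together with the reduced graph $R$ on $[t]$ whose edges encode $\eps$-regular pairs of density at least $d$. Standard counting arguments show that $R$ inherits an Ore-type condition of the form $\sigma(R) \ge 2(1 - \tfrac{1}{r} - 2\alpha) t$, and that the non-existence of a $\gamma$-independent set of size $n/r$ in $G$ transfers to the analogous non-extremal property in $R$: no subset of size $t/r$ induces fewer than $\gamma' t^{2}$ edges in $R$, for a suitable $\gamma'$ satisfying $\alpha \ll \gamma' \ll \gamma$. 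Once an almost perfect $K_{r}$-tiling of $R$ is in hand, a greedy vertex-by-vertex embedding using regularity lifts it to a $K_{r}$-tiling of $G$ that misses at most $\mu n$ vertices, the waste coming from $V_{0}$, uncovered clusters, and the $O(\eps m)$ slack per cluster during embedding.

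The combinatorial heart is therefore to produce an almost perfect $K_{r}$-tiling in $R$. Following Treglown's strategy~\cite{Treg2016}, I would build a \emph{maximum $(K_{r}, K_{r-1}, \ldots, K_{1})$-factor} $\mathcal{F}$ of $R$: greedily take a maximum $K_{r}$-tiling, then a maximum $K_{r-1}$-tiling of the remainder, then a maximum $K_{r-2}$-tiling, and so on down to singletons. Assume for contradiction that the $K_{r}$-part of $\mathcal{F}$ covers only $q \le (1 - \eta) t$ vertices for some fixed $\eta > 0$. I would then pass to the balanced blow-up $R^{\ast} := R[2^{r-1}]$: each $K_{s}$ in $\mathcal{F}$ becomes a copy of $K_{s}[2^{r-1}]$ in $R^{\ast}$, and the key claim is that a maximum $(K_{r}, K_{r-1}, \ldots, K_{1})$-factor of $R^{\ast}$ covers \emph{strictly more} than $q \cdot 2^{r-1}$ vertices by $K_{r}$-copies. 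Iterating this blow-up-and-improve step at most $O(1/\eta)$ times produces a bounded blow-up $R[N]$ that admits an almost perfect $K_{r}$-tiling; since the clusters $V_{i}$ of $G$ satisfy $m \gg N$, this blown-up tiling can be realized inside $R$ by partitioning each cluster into $N$ equal parts during the regularization, and hence yields an almost perfect $K_{r}$-tiling of $R$ itself.

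The main obstacle is the blow-up gain lemma. After removing the maximum $K_{r}$-tiling, the remainder of $R$ is covered by lower-rank cliques $K_{r-1}, \ldots, K_{1}$ in $\mathcal{F}$; the Ore-type condition on $R$, together with the absence of $\gamma'$-independent sets of size $t/r$, forces (via Proposition~\ref{SuperT} applied to appropriate induced subgraphs) that the remainder, together with neighbors drawn from the $K_{r}$-part of $\mathcal{F}$, contains $\Omega(t^{r})$ copies of $K_{r}$. Inside the blow-up $R^{\ast}$ each vertex has $2^{r-1}$ copies, which gives just enough flexibility to perform a local swap: replace the blow-up of a lower-rank piece and part of a nearby $K_{r}[2^{r-1}]$ by a richer collection of $K_{r}$'s drawn from the abundant $K_{r}$-copies guaranteed above, producing a net gain of a positive fraction of $2^{r-1}$ newly covered vertices. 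The technical crux will be calibrating this swap so that the gain per iteration is a definite constant depending only on $\eta$ and $r$, and so that the iteration terminates in a number of rounds independent of $n$; routine but careful bookkeeping then closes the argument.
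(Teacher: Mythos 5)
Your high-level strategy mirrors the paper's: regularize, transfer the Ore condition and the no-large-almost-independent-set condition to the reduced graph $R$, build a maximum $(K_r,\ldots,K_1)$-factor, pass to a blow-up $R[2^{r-1}]$ and show strict improvement, iterate a bounded number of times, and lift back via regularity. The high-level skeleton, including the $O(1/\eta)$ iteration bound and the realization of the blow-up as a refined regularity partition, is exactly what the paper does.

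The genuine gap is the blow-up gain step, which you correctly flag as the crux but then leave as a counting heuristic. You propose to invoke Proposition~\ref{SuperT} (in contrapositive form) to conclude that the uncovered remainder together with the $K_r$-part contains $\Omega(t^r)$ copies of $K_r$, and then to use this abundance for a ``local swap.'' This does not work as stated: having many $K_r$-copies does not by itself allow a maximum $(K_r,\ldots,K_1)$-factor of $R[2^{r-1}]$ to cover strictly more vertices, because those copies may overlap each other and the existing $K_r$-pieces in ways that cannot be resolved greedily. The paper's actual argument is more structural. It introduces an auxiliary bipartite graph between copies of $K_r$ in $U_r$ and lower-rank cliques $K_t$ disjoint from $S$, where an edge records $|N_R(K_t)\cap V(K_r)|\ge r-t+1$; it takes a maximum matching there. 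For matched pairs one checks by hand (Fact~\ref{claim-b1}) that $(K_r\cup K_t)[2]$ decomposes as $2K_r\cup K_{t+1}\cup K_{t-1}$. For unmatched $K_t$ one uses the degree/Ore counts together with the transferred non-extremality of $R$ (Lemma~\ref{blow-up}, not Proposition~\ref{SuperT}) to locate two $K_r$'s and an edge so that $(2K_r\cup K_t)[2]$ decomposes as $4K_r\cup K_{t+1}\cup K_{t-1}$. Only this local, vertex-disjoint augmentation gives a guaranteed net gain of $\Omega(\mu)$ per doubling round; without it the ``$\Omega(t^r)$ copies'' observation is insufficient. You would also need to verify, as the paper does via the final $S$-avoidance and the maximality of the factor, that at most $r-1$ of the lower-rank pieces meet $S$, so that essentially all uncovered vertices are available for augmentation.
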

We need the following notation of
 absorbers and absorbing sets.
 \begin{definition}[Absorbing set, abosrber]
      Let $G$ be an $n$-vertex graph and $r\in \mathbb{N}$. 
      \begin{itemize}
          \item A subset $M\subseteq V(G)$ is an \textit{$\eps$-absorbing set} in $G$ for some constant $\eps$ if for any subset $U\subseteq V(G)\setminus M$ of size at most $\eps n$ and $|M\cup U|\in r\mathbb{N}$, $G[M\cup U]$ contains a $K_r$-factor.
          \item For an $r$-set $Q\subseteq V(G)$, a  subset $S\subseteq V(G)$ is called a $K_r$-\textit{absorber} 
for $Q$ if $|S|=r^2$ and both $G[S]$ and $G[S\cup Q]$ contain $K_r$-factors.
      \end{itemize}
 \end{definition}

For example, in Figure \ref{fig:absorber}, $\{v_1,\ldots,v_9\}$ is a $K_3$-absorber for $\{u_1,u_2,u_3\}$. 

\begin{figure}[ht!]
    \centering
    \begin{tikzpicture}[scale=0.8,
    node distance=1.5cm,
    set/.style={ellipse, draw, minimum width=1cm, minimum height=2cm},
    smallset/.style={ellipse, draw, minimum width=0.23cm, minimum height=0.9cm},
    label/.style={above, font=\normalsize},
    point/.style={circle, fill=black, inner sep=1pt},
    box/.style={draw, rectangle, minimum width=5.2cm, minimum height=3cm},
    verysmallset/.style={ellipse, draw, minimum size=0pt,  
    inner sep=0pt,     
    outer sep=0pt, minimum width=0.2cm, minimum height=0.5cm},
    box/.style={draw, rectangle, minimum width=9cm, minimum height=3.3cm},
]
\node[box, fill=blue!0] (A1) at (3.5,-0.5) {};
    \coordinate (a) at (0, 3);
    \fill (a) circle (2pt);  
    
    \coordinate (b) at (-1, 1);
    \fill (b) circle (2pt);
    
    \coordinate (c) at (1, 1);
    \fill (c) circle (2pt);
    
    \coordinate (d) at (3.5, 3);
    \fill (d) circle (2pt);
    
    \coordinate (e) at (2.5, 1);
    \fill (e) circle (2pt);
    
    \coordinate (f) at (4.5, 1);
    \fill (f) circle (2pt);
    
    \coordinate (g) at (7, 3);
    \fill (g) circle (2pt);
    
    \coordinate (h) at (6, 1);
    \fill (h) circle (2pt);
    
    \coordinate (i) at (8, 1);
    \fill (i) circle (2pt);
    
    \coordinate (j) at (0, -1);
    \fill (j) circle (2pt);
    
    \coordinate (k) at (3.5, -1);
    \fill (k) circle (2pt);
    
    \coordinate (l) at (7, -1);
    \fill (l) circle (2pt);

    \draw[blue] (a) -- (b) -- (c) -- cycle;
    \draw[blue] (d) -- (e) -- (f) -- cycle;
    \draw[blue] (g) -- (h) -- (i) -- cycle;

    \draw[green] (b) -- (c) -- (j) -- cycle;
    \draw[green] (e) -- (f) -- (k) -- cycle;
    \draw[green] (h) -- (i) -- (l)--cycle;
    
    \draw[red] (j) -- (k);

    \draw[red] (k) -- (l);

    \draw[red]
    (j) to[out=-30, in=-150] (l);

    \node[above] at (a) {$u_1$};
    \node[left] at (b) {$v_4$};
    \node[right] at (c) {$v_5$};
    \node[above] at (d) {$u_2$};
    \node[left] at (e) {$v_6$};
    \node[right] at (f) {$v_7$};
    \node[above] at (g) {$u_3$};
    
    \node[left] at (h) {$v_8$};
    \node[right] at (i) {$v_9$};
    \node[below] at (j) {$v_1$};
    \node[below] at (k) {$v_2$};
    \node[below] at (l) {$v_3$};
\end{tikzpicture}
    \caption{A $K_3$-absorber for $\{u_1,u_2,u_3\}$.}
    \label{fig:absorber}
\end{figure}
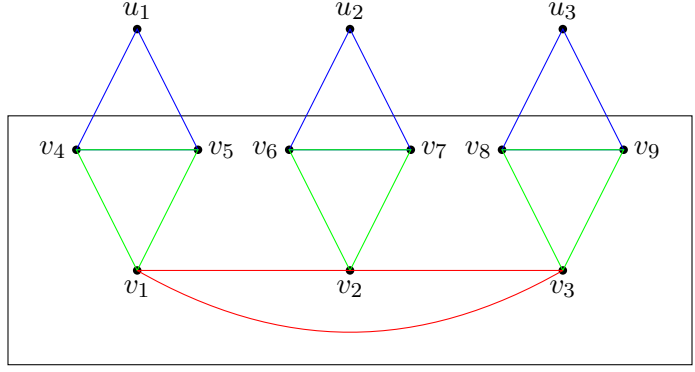
\begin{lemma}[Absorbing lemma]\label{absorbing}
    Let $0<\frac{1}{n}\ll \eps\ll \alpha\ll \xi\ll \gamma\ll \frac{1}{r}$ and $r\ge 3$. Assume that $G$ is a graph on $n$ vertices such that $\sigma(G)\geq 2(1-\frac{1}{r}-\alpha)n$. If $G$ does not contain any $\gamma$-independent set of size $\frac{n}{r}$, then $G$ contains an  $\eps$-absorbing set $M$ of size at most $2\xi n$. 
\end{lemma}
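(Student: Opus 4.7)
The plan is to follow the R\"odl--Ruci\'nski--Szemer\'edi / H\`an--Person--Schacht absorbing framework, with the absorber-counting step tailored to the Ore-type, non-extremal setting. I fix the absorber structure illustrated in Figure~\ref{fig:absorber}: a $K_r$-absorber for an $r$-set $T=\{u_1,\dots,u_r\}\subseteq V(G)$ is a set $S$ of $r^2$ vertices, disjoint from $T$, that partitions as $V_1\cup\cdots\cup V_r$ so that each $V_i$ spans a $K_r$, for suitable representatives $v_i\in V_i$ the set $\{v_1,\dots,v_r\}$ spans a $K_r$, and for each $i$ the set $(V_i\setminus\{v_i\})\cup\{u_i\}$ spans a $K_r$. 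Any such $S$ satisfies both that $G[S]$ and $G[S\cup T]$ admit $K_r$-factors (for the second, swap each $v_i$ with $u_i$).

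The heart of the argument is to prove that for some $\eta=\eta(r,\gamma)>0$, every $r$-set $T$ has at least $\eta n^{r^2}$ absorbers. I split the count into a choice of base $\{v_1,\dots,v_r\}$ followed by an independent choice of each $W_i:=V_i\setminus\{v_i\}\subseteq N(u_i)\cap N(v_i)$. For the base, Proposition~\ref{SuperT} applied to $G$ itself (whose hypothesis is comfortably implied by $\sigma(G)\ge 2(1-\tfrac{1}{r}-\alpha)n$) gives $\ge\eps n^r$ copies of $K_r$: otherwise $G$ would contain a $\sqrt{\alpha}$-independent set of size $n/(r-1)\ge n/r$, which since $\sqrt{\alpha}\ll\gamma$ contradicts the non-extremal hypothesis. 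For each $W_i$, whenever both $u_i$ and $v_i$ have degree at least $(1-\tfrac{1}{r}-\alpha)n$, the common-neighborhood subgraph $G':=G[N(u_i)\cap N(v_i)]$ has order $m\ge(1-\tfrac{2}{r}-O(\alpha))n$ and inherits $\sigma(G')\ge 2(1-\tfrac{1}{r-2}-O(\alpha))m$; a second application of Proposition~\ref{SuperT}, now with $r-1$ in place of $r$, combined with the same contradiction (a $\sqrt{\alpha}$-independent set of size $m/(r-2)\ge n/r-O(\alpha n)$ pads to a $\gamma$-independent set of size $n/r$), produces $\Omega(n^{r-1})$ copies of $K_{r-1}$ in $G'$. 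Multiplying the $\Omega(n^r)$ bases by the $\Omega(n^{r-1})$ choices of each $W_i$, and paying only a lower-order price to ensure that all chosen vertices are pairwise distinct, yields $\Omega(n^{r^2})$ absorbers for every $T$ whose vertices are all high-degree; for $T$ meeting the low-degree clique $S$ supplied by Fact~\ref{clique}, one recovers the same count by choosing the relevant $W_i$'s inside $S$ itself, which is a clique and therefore contains many $K_{r-1}$'s.

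With the $\eta n^{r^2}$ bound in hand, the last stage is the standard randomised construction. Include each vertex of $V(G)$ independently in $M_0$ with probability $p:=\xi$. Lemma~\ref{chernoff} gives $|M_0|\le\tfrac{3}{2}\xi n$ with high probability, and for every fixed $T$ the number of absorbers of $T$ lying entirely in $M_0$ is concentrated at $p^{r^2}\eta n^{r^2}$, hence is at least $\tfrac{1}{2}p^{r^2}\eta n^{r^2}$. A first-moment bound shows that for each $T$ the expected number of pairs of intersecting absorbers of $T$ in $M_0$ is $O(p^{2r^2-1}n^{2r^2-1})$; a union bound over all $T$ followed by Markov's inequality makes this quantity $o(p^{r^2}n^{r^2})$ for every $T$ simultaneously. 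Deleting one vertex from each intersecting pair produces a set $M\subseteq M_0$ with $|M|\le 2\xi n$ in which every $T$ still has at least $\tfrac{1}{4}p^{r^2}\eta n^{r^2}\ge\eps n$ pairwise vertex-disjoint absorbers. Given $U\subseteq V(G)\setminus M$ with $|U|\le\eps n$ and $|M\cup U|\in r\mathbb{N}$, I partition $U$ into $|U|/r$ blocks of size $r$ and absorb them one at a time, consuming one disjoint absorber from $M$ per block; since fewer than $\eps n$ absorbers are used in total and each $T$ has $\ge\eps n$ available, the procedure never stalls, and tiling the unused part of $M$ by its built-in $K_r$-factor completes a $K_r$-factor of $G[M\cup U]$.

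The principal technical obstacle I anticipate is the counting of absorbers for $r$-sets meeting the low-degree clique $S$: the Ore condition alone does not force a large common neighborhood for low-degree vertices, so one has to combine the clique structure of $S$ with the non-extremality of $G$ to still produce $\Omega(n^{r-1})$ choices of each $W_i$ (and disjointness across the $W_i$'s) in this case. Controlling the dependence on $|S|$, which could in principle be as large as $(1-\tfrac{1}{r}-\alpha)n$, is where the hierarchy $\alpha\ll\xi\ll\gamma$ is most delicate.
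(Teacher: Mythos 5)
There is a genuine gap in the step claiming that \emph{every} $r$-set $T$ has $\Omega(n^{r^2})$ absorbers. Your proposed fix for $T$ meeting $S:=\{v:d(v)<(1-\frac{1}{r}-\alpha)n\}$ is to ``choose the relevant $W_i$'s inside $S$ itself,'' but this fails for two reasons. First, if $|S|$ is small (say $|S|=O(1)$, which the Ore condition permits), then the clique $S$ contains only $O(1)$ copies of $K_{r-1}$, nowhere near the $\Omega(n^{r-1})$ you need. Second, even when $|S|$ is large, it is not enough that $W_i\subseteq S$; you need $W_i\subseteq N(u_i)\cap N(v_i)$, so when $u_i\in S$ you need $W_i\subseteq N(v_i)\cap S$, and $|N(v_i)\cap S|$ can be tiny (even zero) with no control from the hypotheses alone. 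The common-neighborhood route you use in the high-degree case also collapses here: if $u_i\in S$, the best lower bound available from Ore is $d(u_i)\ge(1-\frac{2}{r}-2\alpha)n$, which gives $|N(u_i)\cap N(v_i)|\ge(1-\frac{3}{r}-O(\alpha))n$; for $r=3$ this is vacuous and for $r\ge 4$ it is too small for the induced $\sigma$-bound needed to invoke Proposition~\ref{SuperT} again. So the ``$\eta n^{r^2}$ absorbers for every $T$'' claim, which your probabilistic step relies on, is simply false when $|S|$ is small and $T\cap S\neq\emptyset$.

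The paper circumvents this by \emph{not} trying to count absorbers for those $T$. It splits on $|S|$ versus $\xi n$. When $|S|>\xi n$, one really can get $\Omega(n^{r^2})$ absorbers for every $T$, but the argument requires the further case split you did not make: if $|N(v_i)\cap S|\ge\tfrac{\xi}{4}n$ take $W_i$ inside that set; otherwise pick $u_i\in S\setminus N(v_i)$ (a large set) so that $u_iv_i\notin E(G)$, which triggers the Ore condition on a \emph{non}-adjacent pair and yields a large common neighborhood outside $S$. When $|S|\le\xi n$, the paper only proves that all $r$-sets in $V(G)\setminus S$ have many absorbers, and then forces the randomly built absorbing set $M$ to contain $S$ together with a small set $S'$ so that the $S$-part is covered by a $K_r$-tiling inside $M$; this guarantees that every block $Q_i$ one later needs to absorb lies in $V(G)\setminus S$, where the absorber count is valid. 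If you keep your structure, you must either add the analogous ``absorb $S$ directly into $M$'' step, or prove a quantitatively weaker but still sufficient absorber count for $T$ meeting a small $S$ (which I do not see how to do).
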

Using Lemmas \ref{almost-cover} and \ref{absorbing}, whose proofs are deferred to later subsections, we now prove Theorem~\ref{non-extremal}.
\begin{proof}[\bf Proof of Theorem \ref{non-extremal}]
    Let $0<\frac{1}{n}\ll \mu\ll\eps\ll\alpha\ll \xi\ll \gamma\ll \frac{1}{r}$. Assume that $G$ is a graph on $n$ vertices and $\sigma(G)\geq 2(1-\frac{1}{r}-\alpha)n$. By Lemma \ref{absorbing}, $G$ contains an $\eps$-absorbing set $M$ such that $|M|\leq 2\xi n$. 
    Let $G':=G-M$ and $n':=|G'|\geq (1-2\xi)n$. Therefore, 
    $$
    \sigma(G')\geq 2\Big(1-\frac{1}{r}-\alpha\Big)n-4\xi n\geq 2\Big(1-\frac{1}{r}-4\xi\Big)n'. 
    $$
    Applying Lemma \ref{almost-cover} on $G'$ yields that $G'$ has a $K_r$-tiling, say $\mathcal{K}$, that covers all but at most $\mu n'$ vertices. Denote by $U$ the set of vertices in $G'$ that are not covered by $\mathcal{K}$.  Together with the absorbing property of $M$, one has $G[M\cup U]$ contains a $K_r$-factor $\mathcal{K}'$. Hence, $\mathcal{K}\cup \mathcal{K}'$ is a $K_r$-factor of $G$, as desired. 
\end{proof}

\subsection{Regularity}
We begin by defining regularity for graphs.
\begin{definition}
  Given a graph $G$ and a pair $(X,Y)$ of vertex-disjoint subsets in $V(G)$, the \textit{density} of $(X,Y)$ is defined as 
  $$
  d(X,Y)=\frac{e(G[X,Y])}{|X||Y|}.
  $$
  Given a constant $\eps>0$, we say that $(X,Y)$ is an $\eps$-\textit{regular pair} in $G$ if for all $X'\subseteq X$, $Y'\subseteq Y$ with $|X'|\geq \eps |X|$ and $|Y'|\geq \eps |Y|$, we have 
        $$
          \left|d(X',Y')-d(X,Y)\right|\leq \eps.
        $$
Moreover, a pair $(X,Y)$ is called $(\eps,d)$-\textit{regular} for some $d>0$ if $(X,Y)$ is $\eps$-regular and $d(X,Y)\geq d$. 
\end{definition}

\begin{lemma}[Slicing lemma  \cite{Komlos2000}]\label{regular-subgraph}
    Let $0<\eps<\alpha$ and $\eps':=\max\{\eps/\alpha,2\eps\}.$ Let $(X,Y)$ be an $\eps$-regular pair of density $d$. Suppose $X'\subseteq X$ and $Y'\subseteq Y$ with $|X'|\geq \alpha |X|$ and $|Y'|\geq \alpha |Y|$. Then $(X',Y')$ is an $\eps'$-regular pair with density $d'$ where $|d'-d|<\eps$. 
\end{lemma}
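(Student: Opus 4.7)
The plan is to verify the two conclusions---that $|d'-d|<\eps$ and that $(X',Y')$ is $\eps'$-regular---directly from the definition of $\eps$-regularity applied to $(X,Y)$, with the two branches of the max in $\eps'=\max\{\eps/\alpha,2\eps\}$ playing complementary roles. First I would observe that since $|X'|\geq \alpha|X|>\eps|X|$ and $|Y'|\geq \alpha|Y|>\eps|Y|$, the pair $(X',Y')$ itself qualifies as a pair of ``sufficiently large'' subsets in the $\eps$-regularity condition for $(X,Y)$. This immediately yields $|d'-d|=|d(X',Y')-d(X,Y)|\leq \eps$, giving the density statement with $d':=d(X',Y')$.

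Second, I would verify $\eps'$-regularity of $(X',Y')$. Given arbitrary $X''\subseteq X'$ with $|X''|\geq \eps'|X'|$ and $Y''\subseteq Y'$ with $|Y''|\geq \eps'|Y'|$, I exploit $\eps'\geq \eps/\alpha$ together with $|X'|\geq \alpha|X|$ to conclude that $|X''|\geq \eps'\cdot\alpha|X|\geq \eps|X|$, and analogously $|Y''|\geq \eps|Y|$. The $\eps$-regularity of $(X,Y)$ therefore gives $|d(X'',Y'')-d(X,Y)|\leq \eps$. Combining this with the first step via the triangle inequality produces
\[
|d(X'',Y'')-d(X',Y')|\leq |d(X'',Y'')-d(X,Y)|+|d(X,Y)-d(X',Y')|\leq 2\eps\leq \eps',
\]
which is exactly the $\eps'$-regularity condition.

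There is no real obstacle here: the argument is a two-step unpacking of the regularity definition. The form $\eps'=\max\{\eps/\alpha,2\eps\}$ is engineered precisely so that the $\eps/\alpha$ branch absorbs the loss from restricting to $X'\subseteq X$ of relative size at least $\alpha$ (ensuring $|X''|\geq \eps|X|$ when $|X''|\geq \eps'|X'|$), while the $2\eps$ branch accommodates the triangle-inequality loss incurred by comparing $d(X'',Y'')$ and $d(X',Y')$ through the original density $d(X,Y)$. Nothing in the argument uses the particular structure of $G$, so the same proof applies verbatim to arbitrary graphs.
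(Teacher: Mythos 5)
Your proof is correct and is the standard argument for the Slicing Lemma; the paper itself cites this from Koml\'os--Simonovits without reproducing a proof, and your two-step unpacking of the regularity definition (apply $\eps$-regularity of $(X,Y)$ to $(X',Y')$ to control $d'$, then to $(X'',Y'')$ and use the triangle inequality, with $\eps/\alpha$ guaranteeing $|X''|\geq\eps|X|$ and $2\eps$ absorbing the triangle-inequality loss) is precisely that proof. The only cosmetic discrepancy is that with the paper's $\leq\eps$ definition of $\eps$-regularity your argument yields $|d'-d|\leq\eps$ rather than the strict inequality in the statement, but this is just a convention mismatch between the lemma as quoted and the definition as set up, and has no bearing on how the lemma is used.
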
 

We next state the degree form of Szemer\'{e}di's Regularity Lemma, which can be easily derived from the classical version \cite{Szeme1978}.

\begin{lemma}[Degree form of the Regularity Lemma  \cite{ADLR1994,Szeme1978}]\label{thm: degree form of RL}
For every $\eps>0$, there exist integers $M$ and $n_0$ such that the following holds for any $d\in [0,1]$ and any positive integer $n\ge n_0$. Let $G$ be a graph on $n$ vertices. Then we can find in time $O(n^{2.376})$ a partition $\mathcal{P}:=\{V_0,V_1, \dots,V_k\}$ of $V(G)$ and a spanning subgraph $G'\subseteq G$ with the following properties:
\begin{enumerate}[label = (\arabic{enumi})]
\rm \item\label{pro1} $\frac{1}{\eps}\le k \le M$;
\rm \item\label{pro2} $|V_0|\le \eps n$ and $|V_1|=|V_2|=\dots=|V_k|=:m$; 
\rm \item\label{pro3} $d_{G'}(v)>d_{G}(v)-(d+2\eps)n$ for every $v\in V(G)$;
\rm \item\label{pro4} $e(G'[V_i])=0$ for each $i\in [k]$;
\rm \item\label{pro5} for $1\le i<j\le k$, each pair $(V_i,V_j)$ is $\eps$-regular in $G'$ with density {either} $0$ or at least $d$.
\end{enumerate}
\end{lemma}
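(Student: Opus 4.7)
The plan is to derive this degree form as a standard corollary of the algorithmic Szemer\'edi Regularity Lemma of Alon--Duke--Lefmann--R\"odl--Yuster~\cite{ADLR1994}. First, fix an auxiliary parameter $\eps_0$ with $\eps_0 \ll \eps^2$ and apply the algorithmic regularity lemma to $G$ with parameter $\eps_0$. This produces, in time $O(n^{2.376})$, an equitable partition $\{V_0', V_1, \dots, V_k\}$ of $V(G)$ with $1/\eps_0 \le k \le M_0$ for some $M_0 = M_0(\eps_0)$, $|V_0'| \le \eps_0 n$, $|V_1| = \cdots = |V_k| = m$, and at most $\eps_0 \binom{k}{2}$ pairs $(V_i, V_j)$ that fail to be $\eps_0$-regular.

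To obtain pointwise control, I would perform two reassignments into the exceptional set. First, call a cluster $V_i$ \emph{bad} if it lies in more than $\sqrt{\eps_0}\,k$ irregular pairs; double-counting shows fewer than $\sqrt{\eps_0}\,k$ such clusters exist, contributing at most $\sqrt{\eps_0}\,n$ extra vertices. Second, for every $\eps_0$-regular pair $(V_i, V_j)$ a standard consequence of regularity is that at most $2\eps_0 m$ vertices of $V_i$ are \emph{atypical}, meaning $|d_G(v, V_j) - d(V_i, V_j)\,m| > \eps_0 m$; another double-counting argument over low-density pairs then shows that fewer than $2\sqrt{\eps_0}\,n$ vertices are atypical in more than $\sqrt{\eps_0}\,k$ low-density pairs, and I would move these into the exceptional set as well. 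Calling the resulting exceptional set $V_0$, one has $|V_0| \le (\eps_0 + 3\sqrt{\eps_0})\,n \le \eps n$, which gives~(2); meanwhile the surviving clusters still number at least $(1-\sqrt{\eps_0})\,k$, which lies in $[1/\eps, M]$ after setting $M := M_0$ and enlarging $n_0$, giving~(1).

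Next, I would define $G' \subseteq G$ by deleting three types of edges while leaving every edge incident to $V_0$ untouched: (a) every intra-cluster edge of each surviving $V_i$, which yields~(4); (b) every edge of each irregular pair between two surviving clusters; and (c) every edge of each regular pair $(V_i, V_j)$ whose density in $G$ is strictly below $d$. Every surviving nonempty pair of $G'$ is then $\eps_0$-regular (hence $\eps$-regular) with density at least $d$, and since no edges internal to the surviving pair are deleted, property~(5) follows immediately.

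The main verification is the pointwise condition~(3). For $v \in V_0$ we trivially have $d_{G'}(v) = d_G(v)$. For $v$ in a surviving (hence good and typical) cluster $V_i$, the edges removed at $v$ number at most: $m \le \eps_0 n$ intra-cluster edges; $\sqrt{\eps_0}\,k \cdot m \le \sqrt{\eps_0}\,n$ edges to irregular neighbours (by goodness of $V_i$); $\sqrt{\eps_0}\,k \cdot m \le \sqrt{\eps_0}\,n$ edges to those low-density neighbours $V_j$ for which $v$ is atypical (by typicality of $v$); and $(k-1)(d+\eps_0)\,m \le (d+\eps_0)\,n$ edges to the remaining low-density neighbours, where typicality and $d(V_i, V_j) < d$ give $d_G(v, V_j) \le (d + \eps_0)\,m$. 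Summing yields a total loss strictly less than $(d + 2\sqrt{\eps_0} + 2\eps_0)\,n < (d + 2\eps)\,n$ by the choice of $\eps_0$, which establishes~(3). The only real obstacle is exactly this conversion from the global regularity guarantees into a pointwise degree bound, which is precisely what the twofold reassignment handles; all subsequent edge manipulations run in $O(n^2)$ time, so the overall $O(n^{2.376})$ runtime is inherited from~\cite{ADLR1994}.
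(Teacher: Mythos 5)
The paper itself does not prove this lemma---it is stated as a known result and cited to \cite{ADLR1994,Szeme1978}, so there is no in-paper proof to compare against. I therefore assess your derivation on its own merits. Your overall strategy---apply the algorithmic regularity lemma with a much smaller parameter $\eps_0$, absorb problematic clusters and vertices into the exceptional set, then obtain the pointwise degree bound by splitting the loss across intra-cluster, irregular, and low-density contributions---is the standard route, and the arithmetic behind the final degree estimate (intra-cluster $\le \eps_0 n$, irregular $\le \sqrt{\eps_0}\,n$ via goodness, atypical low-density $\le \sqrt{\eps_0}\,n$ via typicality, typical low-density $\le (d+\eps_0)n$) is correct.

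However, there is a genuine gap at the point where you move individual atypical vertices from the clusters into $V_0$. Property~(2) does not merely require $|V_0|\le \eps n$; it also requires that the surviving clusters $V_1,\dots,V_k$ all have \emph{exactly} the same size $m$. Discarding whole bad clusters preserves this, but deleting a data-dependent set of atypical vertices from each remaining cluster destroys it, since different clusters lose different numbers of vertices. You declare ``which gives~(2)'' after bounding $|V_0|$, but never address the equal-size constraint. The standard fix is a re-equalisation step: after removing the atypical vertices, trim each surviving cluster down to the minimum surviving size by discarding a few more arbitrary vertices into $V_0$ (this costs at most another $O(\sqrt{\eps_0}\,n)$ vertices, so $|V_0|\le\eps n$ survives). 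But once you shrink the clusters you must also revisit properties~(3) and~(5): by the Slicing Lemma (Lemma~\ref{regular-subgraph} in the paper) the shrunk pairs are only $O(\eps_0)$-regular rather than $\eps_0$-regular, and---more importantly---the density of a retained pair restricted to the shrunk clusters can drop by up to $\eps_0$, so a pair you kept with original density just above $d$ may now have density below $d$ in $G'$, violating~(5). The usual remedy is to delete pairs with $G$-density below $d+\eps_0$ rather than below $d$, absorbing the extra $\eps_0 n$ into the degree slack. None of this is hard, but as written your construction violates~(2) and potentially~(5), and the pointwise verification in~(3) is performed against the unshrunken clusters rather than the sets that actually appear in the final partition.
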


The sets $V_i$ are called \textit{clusters} and  $V_0$ is the \textit{exceptional set}. 
A widely used auxiliary graph associated with the regular partition is the \textit{reduced graph} defined as follows. 
\begin{definition}[Reduced graph]
Let $G$ be an $n$-vertex graph. Given parameters $\eps>0$ and $d\in [0,1]$, let $G'$ be a spanning subgraph of $G$ and $\mathcal{P}=\{V_0,V_1, \dots,V_k\}$ be a partition of $V(G)$, as given in Lemma \ref{thm: degree form of RL}. We define the \textit{reduced graph} $R:=R(\eps,d)$ for $\mathcal{P}$ as follows: 
$R$ is a graph on vertex set $\{V_1,\ldots,V_k\}$ in which $V_i$ and $V_j$ are adjacent if and only if $(V_i,V_j)$ has density at least $d$ in $G'$.
\end{definition}


The next lemma states that clusters inherit the Ore-type condition in the reduced graph.
\begin{lemma}[\cite{Kuhn}]\label{reduced graph}
    Given a constant $c$, let $G$ be a graph on $n$ vertices such that $\sigma(G)\geq cn$. Suppose we have applied Lemma \ref{thm: degree form of RL} with parameters $\eps$ and $d$
 to $G$. Let $R$ be the corresponding reduced graph. Then $\sigma(R)\geq (c-2d-4\eps)|R|$.
\end{lemma}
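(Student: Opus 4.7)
My plan is to transfer the Ore-type degree condition of $G$ to the reduced graph $R$ by controlling each $R$-degree via a per-vertex bound in $G$. The key observation is that for every cluster $V_i$ and every $x\in V_i$, the degree $d_G(x)$ is controlled by $m\cdot d_R(V_i)$ up to a small additive error.

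Concretely, for any $x\in V_i$, property~(3) of Lemma~\ref{thm: degree form of RL} gives $d_G(x)<d_{G'}(x)+(d+2\eps)n$, and the structure of the regular partition forces
$$d_{G'}(x) \;\le\; |V_0| + m\cdot d_R(V_i) \;\le\; \eps n + m\cdot d_R(V_i),$$
since property~(4) forbids edges inside $V_i$ and property~(5) forces every pair $(V_i,V_\ell)$ with $V_iV_\ell\notin E(R)$ to have density $0$ in $G'$, so the only contributions to $d_{G'}(x)$ come from $V_0$ and from clusters $V_\ell$ with $V_iV_\ell\in E(R)$. Combining these bounds yields $d_G(x)\le m\cdot d_R(V_i)+(d+3\eps)n$ for every $x\in V_i$, and symmetrically for every $y\in V_j$.

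Now fix a non-edge $V_iV_j$ of $R$, and suppose we can select $x\in V_i$, $y\in V_j$ with $xy\notin E(G)$. Ore's condition then gives $d_G(x)+d_G(y)\ge \sigma(G)\ge cn$, and summing the two per-vertex bounds yields
$$cn \;\le\; m\bigl(d_R(V_i)+d_R(V_j)\bigr)+2(d+3\eps)n.$$
Since $km\le n$, i.e.\ $n/m\ge |R|$, this rearranges to $d_R(V_i)+d_R(V_j)\ge (c-2d-6\eps)|R|$. The stated $(c-2d-4\eps)|R|$ follows from a sharper calibration (for instance, using the sharper form $d_{G'}(v)>d_G(v)-(d+\eps)n$ of the degree regularity lemma, or absorbing the $|V_0|$-contribution into the $(d+2\eps)n$ deficit), which is a routine constant adjustment.

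The only step that requires real care is the existence of such a non-edge $xy$. If $G[V_i,V_j]$ were complete bipartite, then every $x\in V_i$ would lose all $m$ of its $V_j$-neighbors in passing from $G$ to $G'$, so property~(3) would force $m<(d+2\eps)n$; clusters would then be very small relative to $n$ and $|R|$ correspondingly large. In this degenerate regime one recovers the bound either by exploiting $V_j\subseteq N_G(x)$ (which already forces $d_G(x)\ge m$ and hence a direct constraint on $d_R(V_i)$ via the per-vertex inequality), or by applying Ore's condition between $y\in V_j$ and a third cluster $V_\ell$ (a non-neighbor of $y$ is guaranteed, since $V_i\subseteq N_G(y)$ uses up only $m<(d+2\eps)n$ of $y$'s neighborhood). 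Handling this corner case cleanly is the main technical wrinkle; in the non-degenerate case the argument is the direct two-line calculation above.
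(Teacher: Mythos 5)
The paper cites this lemma from \cite{Kuhn} without giving a proof, so there is no in-paper argument to compare against; I will assess your proposal on its own. Your core strategy --- bounding $d_G(x)\le m\,d_R(V_i)+(d+3\eps)n$ for $x\in V_i$ and then summing over a $G$-non-edge $xy$ with $x\in V_i$, $y\in V_j$ --- is the right one, and it works whenever such a non-edge exists (your $6\eps$ vs.\ the stated $4\eps$ is a secondary bookkeeping matter). The genuine gap is exactly the one you flag: producing that non-edge. Neither offered workaround closes it. The first ($d_G(x)\ge m$ combined with the per-vertex bound) gives only $m\bigl(1-d_R(V_i)\bigr)<(d+3\eps)n$, which says nothing once $m<(d+3\eps)n$ --- in particular it does not exclude $d_R(V_i)=0$. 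The second (Ore's condition between $y\in V_j$ and some non-neighbor $z\in V_\ell$) controls $d_R(V_j)+d_R(V_\ell)$, not $d_R(V_i)+d_R(V_j)$, and nothing forces $d_R(V_\ell)\le d_R(V_i)$.

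In fact, properties $(1)$--$(5)$ of Lemma~\ref{thm: degree form of RL} alone do \emph{not} imply the claim. Take $G=K_{2m}\cup K_{n-2m}$, so $\sigma(G)=n-2$; split $K_{2m}$ into two clusters $V_1,V_2$ of size $m$, split $K_{n-2m}$ into the remaining clusters, and let $G'$ delete \emph{all} edges inside $K_{2m}$ together with the intra-cluster edges. If $k>2/(d+2\eps)$ then $(1)$--$(5)$ all hold, yet $V_1V_2\notin E(R)$ with $d_R(V_1)=d_R(V_2)=0$, so $\sigma(R)=0$ while $(c-2d-4\eps)|R|\approx(1-2d-4\eps)k>0$. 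What rescues the lemma is a feature of the \emph{specific} $G'$ built in the proof of the regularity lemma: a pair only loses its edges if it is irregular in $G$ or has $G$-density below $d$. An irregular pair cannot have density $1$, and a density-$<d$ pair has density $<1$; so every non-edge of $R$ corresponds to a pair with $e_G(V_i,V_j)<m^2$, which supplies the non-edge $xy$ your calculation needs. That is the missing observation; with it in hand, your "degenerate regime" never occurs, and the two-line computation is the whole proof.
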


The following result will be used to convert an almost perfect $K_r$-tiling in a reduced graph into an almost perfect $K_r$-tiling in the original graph $G$.
\begin{lemma}[\cite{Alon1992}]\label{reduce-original}
    Let $\eps,d>0$ and $m,r\in \mathbb{N}$ such that $0<{1}/{m}\ll \eps\ll d\ll 1/r$. Let $H$ be a graph obtained from $K_r$ by replacing every vertex with $m$ independent  vertices and every edge with an $\eps^2$-regular pair of density at least $d$. Then $H$ admits a $K_r$-tiling that covers all but at most $\eps m r$ vertices.
\end{lemma}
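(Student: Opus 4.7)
The plan is to build the required $K_r$-tiling greedily, one transversal at a time. Label the parts of $H$ by $V_1,\dots,V_r$ (each of size $m$) and maintain shrinking subsets $V_i' \subseteq V_i$, initially $V_i' = V_i$. At each step I extract a single copy of $K_r$ with exactly one vertex from each $V_i'$ and remove those vertices from the $V_i'$'s. Because one vertex is deleted from each class per step, the $V_i'$'s always have a common size $m'$, and I halt as soon as $m' < \eps m$. The uncovered vertices then number at most $r\eps m = \eps m r$, exactly the bound the lemma asks for.

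The core subtask is therefore: whenever $|V_1'|=\cdots=|V_r'|=m'\ge \eps m$, produce a $K_r$ transversal in the induced subgraph. I plan to pick $v_1, v_2, \dots, v_r$ in turn, restricting at each stage to the common neighborhood of the vertices already chosen. Applying the slicing lemma (Lemma~\ref{regular-subgraph}) with $\alpha = m'/m \ge \eps$ and original regularity parameter $\eps^2$, each $(V_i',V_j')$ becomes $\eps$-regular with density at least $d-\eps^2 \ge d/2$. By $\eps$-regularity of $(V_1',V_j')$ for $j\ge 2$, all but at most $(r-1)\eps|V_1'|$ choices of $v_1\in V_1'$ satisfy $|N(v_1)\cap V_j'|\ge (d/3)|V_j'|$, so such a $v_1$ can be chosen. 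Set $V_j^{(1)} := N(v_1)\cap V_j'$, each of size at least $(d/3)m'$; reapplying the slicing lemma, the pairs among the $V_j^{(1)}$'s are $(3\eps/d)$-regular with density at least $d/3$. Iterating $r-1$ times produces, at stage $t$, sets of size at least $(d/3)^t m'$ with regularity parameter at most $(3/d)^t\eps$, together with a valid choice of $v_{t+1}$. At the final stage $V_r^{(r-1)}$ is nonempty, so a suitable $v_r$ completes the transversal $\{v_1,\dots,v_r\}$.

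The principal obstacle is parameter drift across the $r-1$ nested restrictions: each round inflates the regularity constant by roughly a factor $3/d$, shrinks set sizes by a factor $d/3$, and may drop the density by roughly the current regularity. To close the induction one needs the final regularity $(3/d)^{r-1}\eps$ to stay much smaller than $d/3$, and the final set size $(d/3)^{r-1}\eps m$ to be at least one. Both are comfortably delivered by the hierarchy $1/m \ll \eps \ll d \ll 1/r$, which permits $\eps$ to be taken small enough with respect to $d$ and $r$. With this parameter control in hand, the greedy procedure succeeds at every step while $m'\ge \eps m$, producing the claimed $K_r$-tiling that misses at most $\eps m r$ vertices.
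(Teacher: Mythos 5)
Your proof is correct. The paper does not actually prove this lemma --- it is stated as a black box with a citation to Alon--Yuster --- but the argument you give (greedily extracting transversal copies of $K_r$, restricting at each step to nested common neighborhoods, and controlling the degradation of regularity and density via repeated application of the slicing lemma) is precisely the standard proof of this embedding/counting lemma, and your parameter bookkeeping (regularity inflating by a factor of roughly $3/d$ per round, set sizes shrinking by $d/3$, density remaining above $d/3$) closes correctly under the hierarchy $1/m \ll \eps \ll d \ll 1/r$.
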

Our next lemma shows that the reduced graph inherits the property of having no large almost independent set. 
\begin{lemma}\label{blow-up}
    Let $r,n,t\in \mathbb{N}$ with  $0<1/n\ll \eps\ll d\ll\gamma\ll 1/r$, and 
    let $G$ be an $n$-vertex graph containing no  $\gamma$-independent set of size ${n}/{r}$. 
    Suppose we have applied Lemma \ref{thm: degree form of RL} with parameters $\eps$ and $d$ 
 to $G$, yielding a reduced graph $R$ for a partition $\mathcal{P}$ of $V(G)$. 
 Then,  $R[t]$ can be viewed as a subgraph of some reduced graph $R':=R(2\eps t,d/2)$ for a refinement partition of $\mathcal{P}$. Moreover, $R[t]$ contains no  ${\gamma}/2$-independent set of size ${tk}/{r}$. 
\end{lemma}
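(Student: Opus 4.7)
The plan is to handle the two assertions separately, both via the natural refinement of the regular partition obtained by splitting each cluster $V_i$ evenly into $t$ subclusters $V_{i,1},\ldots,V_{i,t}$ of size $m/t$ each (assuming $t\mid m$; otherwise the few leftover vertices are absorbed into $V_0$, contributing only negligible extra slack). The refined partition is $\mathcal{P}':=\{V_0\}\cup\{V_{i,a}:i\in[k],\,a\in[t]\}$.

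For the first assertion I would apply the Slicing Lemma (Lemma~\ref{regular-subgraph}) with $\alpha=1/t$ to every edge $V_iV_j$ of $R$: the pair $(V_i,V_j)$, which is $\eps$-regular of density at least $d$ in $G'$, yields for every $a,b\in[t]$ a subpair $(V_{i,a},V_{j,b})$ that is $\max\{\eps t,2\eps\}$-regular of density at least $d-\eps\ge d/2$. Since $t\ge 2$, this makes every such subpair $(2\eps t, d/2)$-regular, so it becomes an edge of $R':=R(2\eps t, d/2)$. Conversely, for $V_iV_j\notin E(R)$ (including the diagonal case $i=j$), property~\ref{pro4} or property~\ref{pro5} of Lemma~\ref{thm: degree form of RL} forces the density in $G'$ to be $0$, so no cross-pair becomes an edge of $R'$. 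Therefore $R'$ coincides with $R[t]$ as an abstract graph, establishing the first assertion.

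For the second assertion I would argue by contradiction. Assume $R[t]$ contains a $\gamma/2$-independent set $S$ with $|S|=tk/r$ and $e(R[t][S])\le(\gamma/2)(tk)^2$. Lift $S$ to $T:=\bigcup_{V_{i,a}\in S}V_{i,a}\subseteq V(G)$, which has size $|S|\cdot(m/t)=km/r\ge(1-\eps)n/r$. I would estimate $e(G[T])$ by splitting the edges into two types: (a) those contained in pairs $(V_{i,a},V_{j,b})$ with $V_{i,a}V_{j,b}\in E(R[t])$, whose total count is at most $e(R[t][S])\cdot(m/t)^2\le(\gamma/2)(km)^2\le(\gamma/2)n^2$; and (b) those not of this form, i.e.\ either within a single subcluster or across a non-edge of $R[t]$, which are necessarily edges in $G\setminus G'$ and so by property~\ref{pro3} of Lemma~\ref{thm: degree form of RL} contribute in total at most $(d+2\eps)n^2/2\le dn^2$. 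Padding $T$ with at most $\lceil n/r\rceil-|T|\le\eps n$ arbitrary extra vertices adds at most $\eps n^2$ new edges, so the resulting set of size at least $n/r$ has at most $(\gamma/2+d+\eps)n^2<\gamma n^2$ edges. Taking any subset of size exactly $n/r$ yields a $\gamma$-independent set of that size in $G$, contradicting the hypothesis on $G$.

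The technical heart is precisely the edge-counting in the second assertion: the hierarchy $\eps\ll d\ll\gamma$ is exactly what makes the losses from the cleaning step $G\to G'$ and from the padding to size $n/r$ both strictly dominated by $\gamma n^2$. The routine bookkeeping -- divisibility of $m$ by $t$ and the disposal of $V_0$ -- is absorbed by the $\eps$-slack built into the hierarchy; since $t$ is bounded in terms of $r$ (in the proof sketch the lemma is applied with $t=2^{r-1}$), the hypothesis $\eps<1/t$ required by the Slicing Lemma is automatic.
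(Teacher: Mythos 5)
Your proof follows essentially the same route as the paper's: split each cluster evenly into $t$ sub-clusters (discarding the remainder into the exceptional set), invoke the Slicing Lemma to identify $R[t]$ inside a reduced graph with parameters $2\eps t$ and $d/2$, and lift a putative $\gamma/2$-independent set of $R[t]$ to a vertex set $T\subseteq V(G)$ that, after $O(\eps n)$ padding, would be a $\gamma$-independent set of size $n/r$ in $G$. You are in fact slightly more careful than the paper in the edge-count: you explicitly split $e(G[T])$ into edges lying in $G'$-regular pairs (bounded by $(\gamma/2)n^2$) and edges of $G\setminus G'$ (bounded by $(d+2\eps)n^2/2$ via property (3) of the Regularity Lemma), whereas the paper's displayed inequality $e(G[T])\le\frac{\gamma}{2}(tk)^2m'^2$ ostensibly counts only the former; the hierarchy $\eps\ll d\ll\gamma$ absorbs this discrepancy, so both proofs land in the same place, but your bookkeeping is the cleaner one.
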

\begin{proof}
Let $\mathcal{P}:=\{V_0,V_1, \dots,V_k\}$ be a partition of  $V(G)$ obtained in Lemma \ref{thm: degree form of RL}. For each $i\in [k]$, partition $V_i$ into $V_{i,0}\cup V_{i,1}\cup \cdots\cup V_{i,t}$, where $m':=|V_{i,j}|=\lfloor\frac{m}{t}\rfloor\geq \frac{m}{2t}$ for each $1\leq j\leq t$. Since $mk\geq (1-\eps)n$, one has
$$
m'\cdot |R[t]|=\lfloor\frac{m}{t}\rfloor\cdot kt\geq mk-kt\geq (1-2\eps)n.
$$
Lemma \ref{regular-subgraph} implies that if $(V_{i_1},V_{i_2})_{G'}$ is $\eps$-regular with density at least $d$, then $(V_{i_1,j_1},V_{i_2,j_2})_{G'}$ is $2\eps t$-regular with density at least $d-\eps\geq d/2$ for all $1\leq j_1,j_2\leq t$. In particular, we can label the vertex set of $R[t]$ so that $V(R[t])=\{V_{i,j}:1\leq i\leq k,1\leq j\leq t\}$, where $V_{i_1,j_1}V_{i_2,j_2}\in E(R[t])$ implies that $(V_{i_1,j_1},V_{i_2,j_2})_{G'}$ is $2\eps t$-regular with density at least $d-\eps\geq d/2$. That is, $R[t]$ can be viewed as a subgraph of some reduced graph $R':=R(2\eps t,d/2)$ for the partition $\big\{(\bigcup_{i\in [k]}V_{i,0}\cup V_0), V_{1,1}, \ldots,  V_{1,t},\ldots, V_{k,1}, \ldots,  V_{k,t}\big\}$.

    Next, we prove that $R[t]$ contains no ${\gamma}/2$-independent set of size $t{k}/{r}$.  Suppose that 
    there exists a set $S:=\{V_{i_1,j_1},\ldots,V_{i_{{tk}/{r}},j_{{tk}/{r}}}\}\subseteq V(R[t])$ such that $|E((R[t])[S])|\leq \frac{\gamma}{2} (tk)^2$. Therefore, 
    $$
    |V_{i_1,j_1}\cup \cdots\cup V_{i_{{tk}/{r}},j_{{tk}/{r}}}|\geq \frac{(1-2\eps)n}{r}\ \text{and}\ |E(G[V_{i_1,j_1}\cup \cdots\cup V_{i_{{tk}/{r}},j_{{tk}/{r}}}])|\leq \frac{{\gamma}}{2} (tk)^2m'^2\leq \frac{{\gamma}}{2} n^2. 
    $$
     By augmenting $V_{i_1,j_1}\cup \cdots\cup V_{i_{{tk}/{r}},j_{{tk}/{r}}}$ with at most $2\eps n/r$ additional vertices, we obtain a $\gamma$-independent set in $G$ of size at least $\frac{n}{r}$, 
    a contradiction. 
\end{proof}
\subsection{Proof of Lemma \ref{almost-cover}}\label{section4.2}

We first prove the almost cover lemma. 

\begin{definition}\label{maximum-factor}
    A \textit{maximum $(K_r,K_{r-1},\ldots,K_1)$-factor} of a graph $G$ is a collection of vertex-disjoint subgraphs ${H_1, \ldots, H_k}$ of $G$ such that
\begin{itemize}
    \item each $H_i$ is isomorphic to some $K_s$ where $1 \leq s \leq r$; 
    \item the vertex sets of these subgraphs form a partition $V(G) = U_r \cup U_{r-1} \cup \cdots \cup U_1$, where $U_s$ consists of the vertices covered by $K_s$ components;
    \item for each $s\in \{r,r-1,\ldots,1\}$, the number of $K_s$ copies in $G\left[V(G) \setminus \left( \bigcup_{i=s+1}^r U_i \right) \right]$ is maximized.
\end{itemize}
\end{definition}

\begin{proof}[\bf Proof of Lemma \ref{almost-cover}]
   Given parameters satisfying 
   $$0<\frac{1}{n}\ll \eps\ll d\ll\mu\ll \alpha\ll \gamma\ll \frac{1}{r},
   $$
   assume that $G$ is an $n$-vertex graph with $\sigma(G)\geq 2(1-{1}/{r}-\alpha)n$ and $G$ contains no  $\gamma$-independent set of size $n/r$. By applying Lemma \ref{thm: degree form of RL} on $G$ with parameters $\eps$ and $d$, we obtain  a partition $\mathcal{P}:=\{V_0,V_1,\ldots,V_k\}$ with $k \geq \frac{1}{\eps}$ and a spanning subgraph $G'\subseteq G$ with properties $(1)$-$(5)$ as stated. Let $R$ be the reduced graph corresponding to $\mathcal{P}$. Based on Lemma \ref{reduced graph}, we have $\sigma(R)\geq 2(1-\frac{1}{r}-2\alpha)k$. Define $S:=\{v\in V(R):d_R(v)<(1-\frac{1}{r}-2\alpha)k\}$.

   Choose a maximum $(K_r,K_{r-1},\ldots,K_1)$-factor of $R$ and assume $V(R) = U_r \cup U_{r-1} \cup \cdots \cup U_1$, where $U_s$ consists of the vertices covered by $K_s$ components for each $s\in [r]$. Hence $|(V(G)\setminus U_r)\cap S|\leq r-1$. Therefore, there are at most $r-1$ components $K_t$ with $t<r$ that intersect with $S$. In the following, if $K_t$ is a clique in the maximum $(K_r,K_{r-1},\ldots,K_1)$-factor of $R$, then we simply say $K_t\in U_t$ for each $t\in [r]$.  
\begin{claim}\label{a-almost}
    $|U_r|\geq (1-\alpha r^2)k.$
\end{claim}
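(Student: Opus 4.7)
The strategy is to argue by contradiction: suppose $|W| := k - |U_r| > \alpha r^2 k$. Since $U_r$ is chosen to maximize the number of vertex-disjoint $K_r$ copies in $R$, the induced subgraph $R[W]$ is $K_r$-free. Combined with Fact~\ref{clique} (which says $R[S]$ is a clique), this forces $|W \cap S| \leq r-1$, so $W_0 := W \setminus S$ has size at least $|W|-(r-1)$, and every $v \in W_0$ satisfies $d_R(v) \geq (1-\tfrac{1}{r}-2\alpha)k$.

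The goal is to convert the $K_r$-freeness of $R[W]$ into a contradiction with the non-extremality assumption, which via Lemma~\ref{blow-up} (applied with $t=1$) says $R$ contains no $(\gamma/2)$-independent set of size $k/r$. The natural first move is to apply Proposition~\ref{SuperT} to $R[W_0]$: it is trivially $K_r$-free, and the Ore condition is inherited as $\sigma(R[W_0]) \geq \sigma(R) - 2(|U_r| + |S\cap W|) \geq 2|W| - \tfrac{2k}{r} - O(\alpha k)$. Whenever this bound exceeds $2(1-\tfrac{1}{r-1}-\alpha)|W_0|$, the proposition produces a $\sqrt{\alpha}$-independent set in $R[W_0]$ of size $\geq |W_0|/(r-1)$, and as soon as this size is at least $k/r$ we contradict Lemma~\ref{blow-up}.

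The principal obstacle is that the direct route via Proposition~\ref{SuperT} applied to $R[W_0]$ only works when $|W|$ is already close to $(r-1)k/r$; in the intermediate regime $\alpha r^2 k < |W| < (r-1)k/r$ the inherited Ore bound on $R[W_0]$ is too weak to trigger the proposition. To handle this regime, I would exploit the full maximality of the $(K_r, K_{r-1}, \ldots, K_1)$-factor rather than only the maximality of $|U_r|$. For every $(r-1)$-clique $Q \subseteq R[W]$ (which exists in abundance inside $R[W_0]$ by Kruskal--Katona applied to the high-degree vertices in $W_0$), the common neighborhood $N_R(Q)$ has size at least $\tfrac{k}{r} - O(\alpha k)$ by inclusion--exclusion on the minimum-degree hypothesis, and is entirely contained in $U_r$ since $R[W]$ is $K_r$-free. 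Pigeonholing $N_R(Q)$ across the $|U_r|/r$ blocks of $U_r$, together with the hierarchy of maximality conditions on $U_{r-1}, U_{r-2}, \ldots, U_1$, should enable a swap/augmentation that either strictly increases $|U_r|$ (contradicting maximality) or reveals a $K_r$-copy inside the updated $W$-set (same contradiction). The technical crux is that a single swap only rotates vertices between $U_r$ and $W$ without changing $|W|$; making the argument work requires chaining several swaps, or combining two disjoint $K_{r-1}$'s in $R[W]$ whose attachments in $U_r$ overlap in a single block, in order to extract a net gain while respecting the lower-tier structure.
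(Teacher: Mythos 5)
Your proposal takes a genuinely different route from the paper, but it contains a gap you yourself identify and do not close. The paper's proof of Claim~\ref{a-almost} is short and self-contained: it forms the join $R^*:=R\vee K_{\zeta k}$ with $\zeta$ slightly larger than $r\alpha$, observes that the added universal vertices boost the degree-sum of every non-edge so that $\sigma(R^*)\ge 2(1-\tfrac1r)|R^*|$, and then invokes Theorem~\ref{KK2008} (Kierstead--Kostochka) to get a $K_r$-factor of $R^*$. Since at most $\zeta k$ of the $K_r$-copies in this factor touch the added clique, at least $k-r\zeta k\ge(1-\alpha r^2)k$ vertices of $R$ are covered by a $K_r$-tiling lying entirely in $R$, which by maximality of $U_r$ gives the claim. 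Crucially, this argument uses only the Ore condition on $R$; it does not need the no-large-almost-independent-set hypothesis at all.

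Your approach tries instead to reach a contradiction via Proposition~\ref{SuperT} and Lemma~\ref{blow-up}, and you correctly compute that this only works when $|W|$ is already close to $(r-1)k/r$ — far above the threshold $\alpha r^2 k$ you need to rule out. For the wide intermediate regime $\alpha r^2 k<|W|<(r-1)k/r$ you sketch a swap/augmentation scheme based on Kruskal--Katona and pigeonholing common neighborhoods of $(r-1)$-cliques in $W$ over the blocks of $U_r$, but you acknowledge that a single swap does not change $|W|$ and that "chaining several swaps" is the "technical crux," which you do not carry out. This is exactly the part that is missing: nothing in the proposal establishes that such an augmentation actually increases $|U_r|$, and there is no analysis of why the maximality of the lower tiers $U_{r-1},\dots,U_1$ would rescue it. As stated, the argument does not prove the claim. (It is also worth noting that the difficult regime is precisely where the paper's join-with-a-clique trick bypasses all of this: instead of chasing local augmentations, one adds enough universal vertices to push the Ore condition over the threshold for the known factor theorem, and then counts.)
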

\begin{proof}[Proof of Claim \ref{a-almost}]
    Choose a minimum constant $\zeta$ satisfying $\zeta\geq r\alpha$ and $r|(1+\zeta)k$, let $R^*:=R\vee K_{\zeta k}$ and $k^*:=|V(R^*)|=(1+\zeta)k$. Notice that if $xy\notin E(R^*)$, then $xy\notin E(R)$. Hence, for each edge $xy\notin E(R^*)$, we have  
    $$
    d_{R^*}(x)+d_{R^*}(y)=d_{R}(x)+d_{R}(y)+2\zeta k\geq 2\Big(1-\frac{1}{r}-\alpha+\zeta\Big)k\geq 2\Big(1-\frac{1}{r}\Big)k^*,
    $$
    where the last inequality holds as $\zeta \geq r\alpha$ and $r\ge 3$. 
    Together with Theorem \ref{KK2008}, we obtain that $R^*$ contains a $K_r$-factor. It follows from $\zeta k\leq r\alpha k+r$ that $|U_r|\geq (1-\alpha  r^2)k$, as desired. 
\end{proof}
In the following claim, we consider $K_r$-tilings in the blow-up graph of $R$. 
\begin{claim}\label{reduced-blow}
    There exists an integer $z$ such that the blow-up graph $R[2^{z(r-1)}]$, has a $K_r$-tiling covering at least $(1-\mu/2)|R[2^{z(r-1)}]|$ vertices.
\end{claim}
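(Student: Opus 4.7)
The plan is to repeatedly blow up $R$ and prove an improvement lemma: each blow-up by a factor of $2^{r-1}$ strictly raises the maximum $K_r$-tiling density by a constant $\beta=\beta(r,\mu,\gamma)>0$, until that density reaches $1-\mu/2$. Writing $\phi(H)$ for the maximum $K_r$-tiling density of $H$, Claim~\ref{a-almost} gives $\phi(R)\ge 1-\alpha r^2$; since $\mu\ll \alpha$ forces $\alpha r^2\gg \mu$, this baseline is insufficient, and genuine blow-up gain is needed. By Lemma~\ref{blow-up}, the Ore-type condition on the reduced graph and the absence of a $\gamma/2$-independent set of size $|H|/r$ both persist in every blow-up of $R$, so the improvement lemma may be applied iteratively.

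I would prove the improvement lemma as follows. Take a maximum $(K_r,K_{r-1},\ldots,K_1)$-factor $V(H)=U_r\cup\cdots\cup U_1$ of $H$, and set $W:=V(H)\setminus U_r$, so $|W|\ge (\mu/2)|H|$. The inherited Ore-type condition forces every $w\in W$ outside a negligible low-degree ``bad'' set (analogous to the $S$ preceding Claim~\ref{a-almost}) to have at least $(1-1/r-O(\alpha))|H|$ neighbors, most of which lie in $U_r$. Averaging over $w\in W$ and the $|U_r|/r$ components of $U_r$ yields $\Omega(|W|\cdot |U_r|/r)$ \emph{augmenting pairs} $(T,w)$: $T\in U_r$, $w\in W$, and $w$ is adjacent to all but one vertex $v\in T$, so that $(T\setminus\{v\})\cup\{w\}$ is again a $K_r$.

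In the blow-up $H[2^{r-1}]$, each $T\in U_r$ gives $T[2^{r-1}]$, which decomposes into $2^{r-1}$ disjoint $K_r$-copies; an augmenting pair $(T,w)$ permits substituting blown-up copies of $w$ for those of $v$ within this decomposition. The genuine gain comes from \emph{chaining} such substitutions: I would arrange chains that start at a leftover clique in $U_{r-1}\cup\cdots\cup U_1$ and terminate by absorbing a blown-up $K_{r-1}$-component, together with vertices liberated along the chain, into fresh $K_r$-copies. The factor $2^{r-1}$ provides enough parallel room at each depth to route the chains without conflicts, and a careful count shows the net gain is at least $\Omega(|W|\cdot 2^{r-1}/r)$ covered vertices, giving $\beta\ge c(r,\gamma)\mu$ and hence $z=O_r(1/\mu)$ iterations suffice.

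The main obstacle is the bookkeeping of the chained substitutions in the blow-up: one must show that augmenting chains can be routed to terminate on leftover components without conflicts, guaranteeing strict improvement in each round. This is where Treglown's blow-up technique~\cite{Treg2016} enters. The $\gamma/2$-independence hypothesis preserved through Lemma~\ref{blow-up} is essential throughout, since without it $W$ could aggregate into a near-independent set in which no augmenting pairs exist, blocking the improvement step.
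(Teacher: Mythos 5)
Your top-level framework is right---iterate the blow-up, show each round $R\to R[2^{r-1}]$ gains a definite proportion of coverage, use Claim~\ref{a-almost} as the baseline, and terminate after $z=O_r(1/\mu)$ rounds. But the heart of the matter, the improvement lemma itself, is not established, and the mechanism you sketch is not the one the paper uses.

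Your proposed mechanism runs on ``augmenting pairs'' $(T,w)$---a $w\in W$ that can be swapped into some $T\in U_r$---followed by chains of such swaps that you hope to terminate on a leftover clique. Two problems. First, a single augmenting pair is a mere exchange: it does not increase the number of $K_r$-copies. The gain can only come from closing a chain so that a \emph{new} $K_r$ is created out of liberated vertices plus leftover components; you acknowledge this but do not show how the chains terminate, how conflicts are avoided in the blow-up, or how the $\gamma/2$-independence hypothesis is actually invoked in the count. Second, your claimed gain $\Omega(|W|\cdot 2^{r-1}/r)$ per round (proportional gain $\Omega(\mu/r)$) is exponentially larger in $r$ than what the paper's argument yields ($\eta=\tfrac{\mu}{4(r-1)2^{r-1}}$), which should be a warning sign that the count is not actually being done; indeed, to turn a single leftover vertex into part of a $K_r$ the paper needs roughly $r-1$ blow-up steps, which is precisely where the $2^{r-1}$ loss comes from.

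The paper's improvement step is a cleaner ``promotion via blow-up'' rather than augmenting chains. Take a maximum $(K_r,\ldots,K_1)$-factor $U_r\cup\cdots\cup U_1$ and build an auxiliary bipartite graph $H$ between $K_r$-copies and $K_t$-copies ($t<r$, disjoint from the low-degree set $S$), joining $v_{K_r}$ to $v_{K_t}$ when $|N_R(K_t)\cap V(K_r)|\ge r-t+1$. For a matched $K_t$, Fact~\ref{claim-b1} shows that $(K_r\cup K_t)[2]$ contains $2K_r\cup K_{t+1}\cup K_{t-1}$, i.e.\ the $K_t$ is promoted one level after doubling. For an unmatched $K_t$, the degree-sum counts combined with the fact that $R$ contains no $\gamma/2$-independent set of size $k/r$ (Lemma~\ref{blow-up}) produce two copies of $K_r$ with an edge $w_1w_2$ between them, giving $4K_r\cup K_{t+1}\cup K_{t-1}$ inside $(2K_r\cup K_t)[2]$. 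The $\gamma$-independence hypothesis enters exactly here, to guarantee that edge---this is the step you only allude to. Then one selects the level $j$ maximizing $|U_j|$, promotes those cliques up one level per doubling, and after $r-j\le r-1$ doublings arrives at $K_r$-copies, gaining at least $|U_j|\ge\tfrac{\mu k}{4(r-1)}$ covered vertices in $R[2^{r-1}]$. This gives the quantitative improvement your proposal asserts but does not derive. You should either prove the chain-termination bookkeeping you describe, or switch to this promotion mechanism.
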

\begin{proof}[Proof of Claim \ref{reduced-blow}]
If $|V(R)\setminus U_r|\leq \frac{\mu}{2}k$, then  we are done. So, we assume that $|U_r|:=q$ with $(1-\alpha r^2)k\leq q\leq (1-\mu/2)k$. Then there are at least $\frac{\mu}{2}k-(r-1)^2$ vertices that are  in $K_t\in U_t$ with $t<r$ and $V(K_t)\cap S=\emptyset$. 

We construct an auxiliary bipartite graph $H=(A,B)$, where each $K_r\in U_r$ corresponds to a vertex $v_{K_r}$ in $A$ and each $K_t\in U_t$ with $t<r$ and $V(K_t)\cap S=\emptyset$ corresponds to a vertex $v_{K_t}$ in $B$; $v_{K_r}v_{K_t}\in E(H)$ if and only if $|N_R(K_t)\cap V(K_r)|\geq r-t+1$. Choose a maximum matching $M$ in $H$. Hence, for each $v_{K_t}\in B\setminus V(M)$, we have $|N_R(K_t)\cap V(K_r)|\leq r-t$ for each $v_{K_r}\in A\setminus V(M)$. Furthermore, Claim~\ref{a-almost} implies that $|E(M)|\leq \alpha r^2 k$. Next, we will show that each $K_t\in U_t$ with $t<r$  and $V(K_t)\cap S=\emptyset$ can be augmented to a copy of $K_{t+1}$ in the blow-up graph $R[2]$.  

\begin{fact}\label{claim-b1}
    If $v_{K_r}v_{K_t}\in E(M)$, then $(K_r\cup K_t)[2]$ contains $2K_r\cup K_{t+1}\cup K_{t-1}$ as a subgraph. 
\end{fact}
\begin{proof}[Proof of Fact  \ref{claim-b1}]
  Notice that $|N_R(K_t)\cap V(K_r)|\geq r-t+1$.  Let $V(K_r)=\{u_1,\ldots,u_r\}$, $V(K_t)=\{w_1,\ldots,w_t\}$,  and assume $\{u_1,\ldots,u_{r-t+1}\}\subseteq N_R(K_t)\cap V(K_r)$. Consider the blow-up graph $(K_r \cup K_t)[2]$ with vertex set $\{u^1,u^2:u\in V(K_r\cup K_t)\}$. It is straightforward to verify that $(K_r \cup K_t)[2]$ contains four disjoint subgraphs induced by the following vertex sets: $\{u_1^1,\ldots,u_r^1\}$, $\{u_1^2,\ldots,u_{r-t}^2,w_1^1,\ldots,w_t^1\}$, $\{w_1^2,\ldots,w_t^2,u_{r-t+1}^2\}$, and $\{u_{r-t+2}^2,\ldots, u_r^2\}$. That is, $(K_r\cup K_t)[2]$ contains $2K_r\cup K_{t+1}\cup K_{t-1}$ as a subgraph. (see Figure \ref{fig:almost-1} for $r=3$ and $t=2$). 
\end{proof}


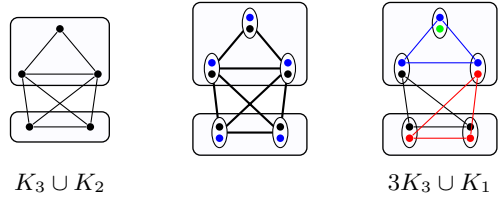
\begin{figure}[ht!]
    \centering
    \begin{tikzpicture}[
bigroundbox/.style={
        draw, 
        rectangle, 
        minimum width=1.3cm, 
        minimum height=0.9cm,
        rounded corners=1mm,  
        line width=0.5pt,       
        fill=blue!1          
    },
   roundbox/.style={
        draw, 
        rectangle, 
        minimum width=1.3cm, 
        minimum height=0.4cm,
        rounded corners=1mm,  
        line width=0.5pt,       
        fill=blue!1          
    },
    bbigroundbox/.style={
        draw, 
        rectangle, 
        minimum width=1.5cm, 
        minimum height=1.1cm,
        rounded corners=1mm,  
        line width=0.5pt,       
        fill=blue!1          
    },
    bbbigroundbox/.style={
        draw, 
        rectangle, 
        minimum width=1.5cm, 
        minimum height=0.55cm,
        rounded corners=1mm,  
        line width=0.5pt,       
        fill=blue!1          
    },
    point/.style={circle, fill=black, inner sep=1pt},
    set/.style={ellipse, draw, minimum size=0pt,  
    inner sep=0pt,     
    outer sep=0pt, minimum width=0.2cm, minimum height=0.4cm},
    label/.style={above, font=\footnotesize},
]
\node[bigroundbox] (box) at (0.5,0.3) {};
\node[roundbox] (box2) at (0.5,-0.7) {};

\node[point] (A1) at (0,0) {};
\node[point] (A2) at (1,0) {};
\node[point] (A3) at (0.5,0.6) {};
\draw (A1)--(A2)--(A3)--(A1);

\node[point] (B1) at (0.1,-0.7) {};
\node[point] (B2) at (0.9,-0.7) {};

\draw (B1)--(B2);

\draw (A1)--(B1)--(A2)--(B2)--(A1);
\node[label] at (0.5,-1.7) {{{$K_3\cup K_2$}}};
\node[bbigroundbox] (box) at (3,0.4) {};
\node[bbbigroundbox] (box2) at (3,-0.8) {};

\node[point] (A1) at (2.5,0) {};
\node[set] (cycle1) at (2.5,0.075) {};
\node[point,blue] (A01) at (2.5,0.15) {};
\node[point] (A2) at (3.5,0) {};
\node[set] (cycle2) at (3.5,0.075) {};
\node[point,blue] (A02) at (3.5,0.15) {};
\node[point] (A3) at (3,0.6) {};
\node[set] (cycle3) at (3,0.675) {};
\node[point,blue] (A03) at (3,0.75) {};
\draw[thick] (cycle1)--(cycle2)--(cycle3)--(cycle1);

\node[point] (B1) at (2.6,-0.7) {};
\node[set] (cycle4) at (2.6,-0.775) {};
\node[point,blue] (B01) at (2.6,-0.85) {};
\node[point] (B2) at (3.4,-0.7) {};
\node[set] (cycle5) at (3.4,-0.775) {};
\node[point,blue] (B02) at (3.4,-0.85) {};

\draw[thick] (cycle4)--(cycle5);

\draw[thick] (cycle1)--(cycle4)--(cycle2)--(cycle5)--(cycle1);

\node[bbigroundbox] (box) at (5.5,0.4) {};
\node[bbbigroundbox] (box2) at (5.5,-0.8) {};

\node[point] (A1) at (5,0) {};
\node[set] (cycle1) at (5,0.075) {};
\node[point,blue] (A01) at (5,0.15) {};
\node[point,red] (A2) at (6,0) {};
\node[set] (cycle2) at (6,0.075) {};
\node[point,blue] (A02) at (6,0.15) {};
\node[point,green] (A3) at (5.5,0.6) {};
\node[set] (cycle3) at (5.5,0.675) {};
\node[point,blue] (A03) at (5.5,0.75) {};

\node[point] (B1) at (5.1,-0.7) {};
\node[set] (cycle4) at (5.1,-0.775) {};
\node[point,red] (B01) at (5.1,-0.85) {};
\node[point] (B2) at (5.9,-0.7) {};
\node[set] (cycle5) at (5.9,-0.775) {};
\node[point,red] (B02) at (5.9,-0.85) {};
\draw[blue] (A01)--(A02)--(A03)--(A01);

\draw (A1)--(B1)--(B2)--(A1);
\draw[red] (A2)--(B01)--(B02)--(A2);

\node[label] at (5.5,-1.7) {{{$3K_3\cup K_1$}}};
\end{tikzpicture}
    \caption{$|N_R(K_t)\cap V(K_r)|\geq r-t+1$ for $r=3$ and $t=2$.}
    \label{fig:almost-1}
\end{figure}


Choose a clique $K_t\in U_t$ with $v_{K_t}\in B\setminus V(M)$, say $K$. 
Denote 
\begin{align*}
   &X:=\{v_{K_r}\in  A\setminus V(M): |N_R(K)\cap V(K_r)|=r-t\}\ \text{and}\ Y:=A\setminus (V(M)\cup X),
\end{align*}
assume $x:=|X|$ and $y:=|Y|$. It follows from Claim \ref{a-almost} that  $x+y\geq (\frac{1}{r}-\alpha r-\alpha r^2)k$. Moreover, 
$$
tx+(t+1)y\leq |V(R)\setminus N_R(K)|=k-|N_R(K)|\leq  t\Big(\frac{1}{r}+2\alpha\Big)k,
$$
the last inequality follows by $V(K)\cap S=\emptyset$. 
Therefore, $y\leq t(2\alpha+\alpha r+\alpha r^2)k$ and $x\geq (\frac{1}{r}-2\alpha r^3)k$.

If $|N_R(w)\cap V(K_r)|=r$ for some $w\in V(K)$ and $v_{K_r}\in X$, then a similar proof to Fact~\ref{claim-b1} shows that $(K_r\cup K)[2]$ contains $2K_r\cup K_{t+1}\cup K_{t-1}$ as a subgraph. Now, we assume that $|N_R(w)\cap V(K_r)|\leq r-1$ for each $w\in V(K)$ and $v_{K_r}\in X$. 
Choose $w\in V(K)$, denote 
\begin{align*}
&X^w:=\{v_{K_r}\in  A\setminus V(M): |N_R(w)\cap V(K_r)|=r-1\}\ \text{and}\\
&Y^w:=\{v_{K_r}\in  A\setminus V(M): |N_R(w)\cap V(K_r)|<r-1\},
\end{align*}
assume $x^w:=|X^w|$ and $y^w:=|Y^w|$.  Similarly, we have 
$$
x^w+y^w\geq \Big(\frac{1}{r}-\alpha r-\alpha r^2\Big)k\ \text{and}\ x^w+2y^w\leq \Big(\frac{1}{r}+2\alpha\Big)k.
$$
Hence $y^w\leq (2\alpha+2\alpha r+2\alpha r^2)k$ and $x^w\geq (\frac{1}{r}-4\alpha r^2)k$. Note that $X\cup (\bigcup_{u\in V(K)}X^u)\subseteq A\setminus V(M)$ and $|A|\leq \frac{|U_r|}{r}<\frac{k}{r}$. Thus,  $$\ell:=\Big|X\cap (\bigcap_{u\in V(K)}X^u)\Big|\geq \Big(\frac{1}{r}-7\alpha r^3\Big)k.$$  

For each $i\in [\ell]$, choose a vertex $w_i$ from each copy of  $K_r$, say $K^i$, with $v_{K^i}\in X\cap (\bigcap_{u\in V(K)}X^u)$ such that $w_iw\notin E(R)$. 
It follows from Lemma \ref{blow-up} that $R$ contains no $\gamma/2$-independent set of size ${k}/{r}$. Thus, there is an edge, say $w_1w_2$, in $R[\{w_1,\ldots,w_{\ell}\}]$. We claim that $w_1,w_2$ are adjacent to all vertices in $V(K)\setminus \{w\}$. Otherwise, suppose that $w_1w'\notin E(R)$ for some $w'\in V(K)\setminus \{w\}$. By the choice of $X\cap (\bigcap_{u\in V(K)}X^u)$, we know that at most $t$ edges are missing between $V(K)$ and $V(K^1)$ in $R$. However, at least two vertices in $V(K)$ are not adjacent to the vertex $w_1$. Hence,  at most $t-1$ vertices in $K^1$ has non-neighbors in $K$. It follows that $|N_R(K)\cap V(K^1)|\geq r-t+1$, a contradiction. Therefore, $(K^1\cup K^2\cup K)[2]$ contains $4K_r\cup K_{t+1}\cup K_{t-1}$, as shown by an argument similar to Fact~\ref{claim-b1}  (see Figure \ref{fig:almost-2} for $r=3$ and $t=2$). 
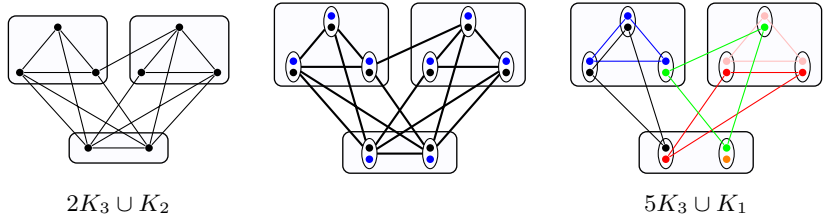
\begin{figure}[ht!]
    \centering
    \begin{tikzpicture}[
bigroundbox/.style={
        draw, 
        rectangle, 
        minimum width=1.3cm, 
        minimum height=0.9cm,
        rounded corners=1mm,  
        line width=0.5pt,       
        fill=blue!1          
    },
   roundbox/.style={
        draw, 
        rectangle, 
        minimum width=1.3cm, 
        minimum height=0.4cm,
        rounded corners=1mm,  
        line width=0.5pt,       
        fill=blue!1          
    },
    bbigroundbox/.style={
        draw, 
        rectangle, 
        minimum width=1.5cm, 
        minimum height=1.1cm,
        rounded corners=1mm,  
        line width=0.5pt,       
        fill=blue!1          
    },
    bbbigroundbox/.style={
        draw, 
        rectangle, 
        minimum width=1.5cm, 
        minimum height=0.55cm,
        rounded corners=1mm,  
        line width=0.5pt,       
        fill=blue!1          
    },
    point/.style={circle, fill=black, inner sep=1pt},
    set/.style={ellipse, draw, minimum size=0pt,  
    inner sep=0pt,     
    outer sep=0pt, minimum width=0.2cm, minimum height=0.4cm},
    label/.style={above, font=\footnotesize},
]
\node[bigroundbox] (box) at (-0.3,0.3) {};
\node[bigroundbox] (box1) at (1.3,0.3) {};
\node[roundbox] (box2) at (0.5,-1) {};

\node[point] (A1) at (-0.8,0) {};
\node[point] (A2) at (0.2,0) {};
\node[point] (A3) at (-0.3,0.6) {};
\draw (A1)--(A2)--(A3)--(A1);

\node[point] (0A1) at (0.8,0) {};
\node[point] (0A2) at (1.8,0) {};
\node[point] (0A3) at (1.3,0.6) {};
\draw (0A1)--(0A2)--(0A3)--(0A1);
\draw (A2)--(0A3);
\node[point] (B1) at (0.1,-1) {};
\node[point] (B2) at (0.9,-1) {};

\draw (B1)--(B2);

\draw (A3)--(B1)--(A1)--(B2)--(A2);
\draw (0A1)--(B1)--(0A2)--(B2)--(0A3);
\node[label] at (0.5,-2) {{{$2K_3\cup K_2$}}};
\node[bbigroundbox] (box) at (3.3,0.37) {};
\node[bbigroundbox] (box2) at (5.1,0.37) {};
\node[bbbigroundbox] (box3) at (4.2,-1.06) {};

\node[point] (A1) at (2.8,0) {};
\node[set] (cycle1) at (2.8,0.075) {};
\node[point,blue] (A01) at (2.8,0.15) {};
\node[point] (A2) at (3.8,0) {};
\node[set] (cycle2) at (3.8,0.075) {};
\node[point,blue] (A02) at (3.8,0.15) {};
\node[point] (A3) at (3.3,0.6) {};
\node[set] (cycle3) at (3.3,0.675) {};
\node[point,blue] (A03) at (3.3,0.75) {};
\draw[thick] (cycle1)--(cycle2)--(cycle3)--(cycle1);

\node[point] (0A1) at (4.6,0) {};
\node[set] (cycle01) at (4.6,0.075) {};
\node[point,blue] (0A01) at (4.6,0.15) {};
\node[point] (0A2) at (5.6,0) {};
\node[set] (cycle02) at (5.6,0.075) {};
\node[point,blue] (0A02) at (5.6,0.15) {};
\node[point] (0A3) at (5.1,0.6) {};
\node[set] (cycle03) at (5.1,0.675) {};
\node[point,blue] (0A03) at (5.1,0.75) {};
\draw[thick] (cycle01)--(cycle02)--(cycle03)--(cycle01);
\draw[thick] (cycle2)--(cycle03);

\node[point] (B1) at (3.8,-1) {};
\node[set] (cycle4) at (3.8,-1.075) {};
\node[point,blue] (B01) at (3.8,-1.15) {};
\node[point] (B2) at (4.6,-1) {};
\node[set] (cycle5) at (4.6,-1.075) {};
\node[point,blue] (B02) at (4.6,-1.15) {};

\draw[thick] (cycle4)--(cycle5);

\draw[thick] (cycle3)--(cycle4)--(cycle1)--(cycle5)--(cycle2);
\draw[thick] (cycle01)--(cycle4)--(cycle02)--(cycle5)--(cycle03);

\node[bbigroundbox] (box) at (7.2,0.37) {};
\node[bbigroundbox] (box2) at (9,0.37) {};
\node[bbbigroundbox] (box3) at (8.1,-1.06) {};

\node[point] (A1) at (6.7,0) {};
\node[set] (cycle1) at (6.7,0.075) {};
\node[point,blue] (A01) at (6.7,0.15) {};
\node[point,green] (A2) at (7.7,0) {};
\node[set] (cycle2) at (7.7,0.075) {};
\node[point,blue] (A02) at (7.7,0.15) {};
\node[point] (A3) at (7.2,0.6) {};
\node[set] (cycle3) at (7.2,0.675) {};
\node[point,blue] (A03) at (7.2,0.75) {};

\node[point,red] (0A1) at (8.5,0) {};
\node[set] (cycle01) at (8.5,0.075) {};
\node[point,pink] (0A01) at (8.5,0.15) {};
\node[point,red] (0A2) at (9.5,0) {};
\node[set] (cycle02) at (9.5,0.075) {};
\node[point,pink] (0A02) at (9.5,0.15) {};
\node[point,green] (0A3) at (9,0.6) {};
\node[set] (cycle03) at (9,0.675) {};
\node[point,pink] (0A03) at (9,0.75) {};

\node[point] (B1) at (7.7,-1) {};
\node[set] (cycle4) at (7.7,-1.075) {};
\node[point,red] (B01) at (7.7,-1.15) {};
\node[point,green] (B2) at (8.5,-1) {};
\node[set] (cycle5) at (8.5,-1.075) {};
\node[point,orange] (B02) at (8.5,-1.15) {};
\draw[blue] (A01)--(A02)--(A03)--(A01);
\draw[pink] (0A01)--(0A02)--(0A03)--(0A01);

\draw (A1)--(B1)--(A3)--(A1);
\draw[red] (0A2)--(B01)--(0A1)--(0A2);
\draw[green] (A2)--(B2)--(0A3)--(A2);

\node[label] at (8.1,-2) {{{$5K_3\cup K_1$}}};
\end{tikzpicture}
    \caption{$|N_R(K_t)\cap V(K_r)|\leq r-t$ for $r=3$ and $t=2$.}
    \label{fig:almost-2}
\end{figure}

By the above discussion, for each $K_t\in U_t$ with $t<r$ and $V(K_t)\cap S=\emptyset$, we conclude that 
\begin{itemize}
    \item either there exists a copy of $K_r\in U_r$ such that $(K_r\cup K_t)[2]$ contains $2K_r\cup K_{t+1}\cup K_{t-1}$,
    \item or there are two copies of $K_r\in U_r$ such that $(2K_r\cup K_t)[2]$ contains $4K_r\cup K_{t+1}\cup K_{t-1}$.
\end{itemize}
Moreover, we can choose the $K_r$ copies for different $K_t$ so that they are pairwise disjoint. Let $q$ be the number of vertices covered by $U_r$. Assume $|U_j|=\max\{|U_i|:i\in[r-1]\}$. Then $|U_j|\geq \frac{k-q}{2(r-1)}$. By applying the augmentation process to the cliques in $U_j$, we obtain a maximum $(K_r,K_{r-1},\ldots,K_1)$-factor of $R[2]$ such that copies of $K_i$ cover at least $2|U_i|$ vertices for all $i\in [r]\setminus \{j,j+1\}$, and copies of $K_{j+1}$ cover at least $2|U_{j+1}| + |U_j|$ vertices. (Here,  $2|U_{j+1}|$ and $|U_j|$  come from the original cliques in $U_{j+1}$  and the augmentation of cliques in $U_j$, respectively). 
By iterating this augmentation--first on the new $K_{j+1}$ copies, then on the resulting $K_{j+2}$ copies, and so forth, up to the $K_{r-1}$ copies, we ultimately construct a $K_r$-tiling within $R[2^{r-j}]$ of order at least 
$$
2^{r-j}|U_r|+2^{r-j-1}|U_{r-1}|+\cdots+|U_j|\geq 2^{r-j}q+|U_j|.
$$
Therefore, there exists a $K_r$-tiling in $R[2^{r-1}]$ covering at least 
$$
2^{r-1}q+\frac{|U_j|}{2^{r-1}}\cdot 2^{r-1}\geq q2^{r-1}+\frac{\mu k}{4(r-1)2^{r-1}}\cdot 2^{r-1}
$$
vertices, as $k-q\ge \mu k/2$. Thus, an additional  $\eta$-proportion of the vertices in $R[2^{r-1}]$ are covered, where $\eta=\frac{\mu}{4(r-1)2^{r-1}}$.

By Claim \ref{a-almost} and Lemma \ref{blow-up}, we repeat this argument until the blow-up graph of $R$ has a  $K_r$-tiling covering all but at most $\mu/2$-proportion of its vertices. Clearly, this process will terminate within $z:=\frac{1}{\eta}(\alpha r^2-\mu/2)$ iterations. 
\end{proof}
Denote $R':=R[2^{z(r-1)}]$ and $z':=2^{z(r-1)}$. For each $1\leq i\leq k$, partition $V_i$ into $V_{i,0}\cup V_{i,1}\cup \cdots\cup V_{i,z'}$, where $m':=|V_{i,j}|=\lfloor\frac{m}{z'}\rfloor\geq \frac{m}{2z'}$ for each $1\leq j\leq z'$. 
By Lemma \ref{blow-up}, we can label the vertex set of $R'$ so that $V(R')=\{V_{i,j}:1\leq i\leq k,1\leq j\leq z'\}$, where $V_{i_1,j_1}V_{i_2,j_2}\in E(R')$ implies that $(V_{i_1,j_1},V_{i_2,j_2})_{G'}$ is $2\eps  z'$-regular with density at least $d-\eps \geq d/2$.

By Claim \ref{reduced-blow}, $R'$ has a $K_r$-tiling $\mathcal{M}$ that contains at least $(1-\mu/2)|R'|$ vertices. Consider any copy of $K_r$, say $K$ in $\mathcal{M}$ and let $V(K)=\{V_{i_1,j_1},V_{i_2,j_2},\ldots, V_{i_r,j_r} \}$. Set $V':=V_{i_1,j_1}\cup V_{i_2,j_2}\cup \cdots\cup V_{i_r,j_r}$. Note that $0<{1}/{m'}\ll 2\eps  z'\ll {d}/{2}\ll \gamma \ll 1/r$. Applying Lemma \ref{reduce-original} yields that  $G'[V']$ contains a $K_r$-tiling covering all but at most $\sqrt{2\eps  z'}m'r$ vertices. By considering each copy of $K_r$ in $\mathcal{M}$ we conclude that $G'\subseteq G$ contains a $K_r$-tiling covering at least 
$$
(1-\sqrt{2\eps  z'})m'r\times (1-\mu/2)|R'|/r\geq (1-\mu)n
$$
vertices, as desired. 
\end{proof}

\subsection{Proof of Lemma \ref{absorbing}}\label{section4.3}

In this subsection, we will give the proof of Lemma \ref{absorbing}. 
\begin{proof}[\bf Proof of Lemma \ref{absorbing}]
    Given parameters satisfying 
   $$0<\frac{1}{n}\ll\eps\ll\alpha\ll\xi\ll \gamma\ll \frac{1}{r},
   $$
   assume that $G$ is an $n$-vertex graph with $\sigma(G)\geq 2(1-\frac{1}{r}-\alpha)n$ and $G$ contains no  $\gamma$-independent set of size $\frac{n}{r}$. 
   Denote $S:=\{v\in V(G):d(v)<(1-\frac{1}{r}-\alpha)n\}$. Clearly, $G[S]$ forms a clique. We aim to establish that almost all  $r$-sets in $V(G)$ have $\Omega(n^{r^2})$ $K_r$-absorbers. To this end, we proceed by considering the following two cases.

\medskip
    {\bf Case 1.} $|S|\leq \xi n$. 
\medskip

Let $\{v_1,\ldots,v_r\}$ be a set in $V(G)\setminus S$. For any set $W\subseteq V(G)\setminus S$ with size at most $r-1$, we have $|N(W)\setminus S|\geq \big(1-\frac{|W|}{r}-|W|\alpha-\xi\big)n$. 
Hence, there are at least 
$$
(n-{\xi}n-r)\Big(\big(1-\frac{1}{r}-\alpha-\xi\big)n-r-1\Big)\cdots\Big(\big(1-\frac{r-1}{r}-(r-1)\alpha-\xi\big)n-2r+1\Big)\geq \Big(\frac{n}{2r}\Big)^r
$$
$r$-sets, say $\{u_1,u_2,\ldots,u_r\}$,  in $V(G)\setminus (S\cup \{v_1,v_2,\ldots, v_r\})$ such that each spans a copy of $K_r$ in $G$. 
For every pair $(u_i,v_i)$ with $i\in [r]$, the number of $(r-1)$-sets $X\subseteq  V(G)\setminus S$ for which both $G[X\cup \{u_i\}]$ and $G[X\cup \{v_i\}]$ are copies of $K_r$ is at least 
$$
\Big(\big(1-\frac{2}{r}-2\alpha-\xi\big)n-2r\Big)\cdots\Big(\big(1-\frac{r-2}{r}-(r-2)\alpha-\xi\big)n-3r\Big)\cdot \frac{\gamma}{2}n^2\geq \Big(\frac{n}{2r}\Big)^{r-3}\frac{\gamma}{2}n^2.
$$ 
Here, the last two vertices have at least $\frac{\gamma}{2}n^2$ choices since $G$ contains no $\gamma$-independent set of size $\frac{n}{r}$. 
It follows from $|S|\leq \xi n$ that all but at most $\xi n^r$ $r$-sets 
have at least $\Omega(n^{r^2})$ $K_r$-absorbers.


\medskip
{\bf Case 2.} $|S|> \xi n$. 
\medskip

Let $\{v_1,\ldots,v_r\}$ be a set in $V(G)$. Clearly, there are at least $\binom{|S|-r}{r}$ $r$-sets, say $\{u_1,\ldots,u_r\}$,  in $S$ that span copies of $K_r$ in $G[S]-\{v_1,v_2,\ldots,v_r\}$. 
Fix an $i\in [r]$, we consider the pair $(u_i,v_i)$. 
If $|N(v_i)\cap S|\ge \frac{\xi}{4}n$, then the number of $(r-1)$-sets $X$ such that both $G[X\cup \{u_i\}]$ and $G[X\cup \{v_i\}]$ are copies of $K_r$ is at least $c\xi^{r-1}n^{r-1}$ for some constant $c$. 
If $|N(v_i)\cap S|\le \frac{\xi}{4}n$, then there are at least $\frac{3\xi}{4}n$ choices for $u_i\in S\setminus N(v_i)$. By Ore's condition, we have $d(v_i)+d(u_i)\ge 2(1-\frac{1}{r}-\alpha)n$. Hence 
$$
|(N(v_i)\cap N(u_i))\setminus S|\ge |N(v_i)\cap N(u_i)|-\frac{\xi}{4}n\geq \frac{r-2}{r}n-\frac{\xi}{3} n.
$$
Let   
\begin{align*}
&W^j:=\big((N(v_i)\cap N(u_i))\setminus S\big)\cap \big(\bigcap_{\ell\in [j-1]}N(w^{\ell})\big)\ \text{for each}\ j\in [r-2],\\
&\text{and}\ w^j\in W^j\ \text{for each}\ j\in [r-3].
\end{align*}
Similarly, for each $j\in [2,r-3]$, one has $|W^j|\geq \frac{r-2-j+1}{r}n-\frac{\xi}{3-j/r} n$. Thus, $w^j$ is well-defined for each $j\in [r-2]$. 
Together with $G$ contains no $\gamma$-independent set of size $\frac{n}{r}$, there are at least $\frac{\gamma}{2}n^2$ edges in $G[W^{r-2}]$. Thus, the number of $(r-1)$-sets $X\subseteq (N(v_i)\cap N(u_i))\setminus S$ that forms a clique  is at least $\frac{\gamma}{2\cdot r^{r-3}}n^{r-1}$. 
Therefore, for each $r$-set  in $V(G)$, there are at least $\Omega(n^{r^2})$ $K_r$-absorbers for it.

Let $\mathcal{Q}$ denote the set of $r$-set in $V(G)$ each of which has at least $\Omega(n^{r^2})$ $K_r$-absorbers. Then $\mathcal{Q}=\binom{V(G)}{r}$ if $|S|>\xi n$ and  $|\mathcal{Q}|\geq (1-o(1))n^r$ otherwise. Given an $r$-set $Q\subseteq \mathcal{Q}$, let $L_Q\subseteq \binom{V(G)}{r^2}$ denote the family of all $K_r$-absorbers for $Q$. Thus, $|L_Q|\geq \sqrt{\eps} n^{r^2}$ for each $Q\in \mathcal{Q}$. Let $F$ be the family obtained by selecting each of the $\binom{n}{r^2}$ elements of $\binom{V(G)}{r^2}$ independently with probability $p:=\frac{\sqrt{\eps}}{n^{r^2-1}}$. Then  
$$
\mathbb{E}(|F|)=p\binom{n}{r^2}<\frac{\sqrt{\eps}}{r^2!}n,\ \text{and}\ \mathbb{E}(|L_Q\cap F|)\geq p\sqrt{\eps} n^{r^2}=\eps n\ \text{for each}\ Q\in \mathcal{Q}.
$$

By Lemma \ref{chernoff}, with high probability we have
\begin{align}\label{eq:F}
    |F|\leq 2\mathbb{E}(|F|)<\frac{2\sqrt{\eps}}{r^2!}n,
\end{align}
and 
\begin{align}\label{eq:F2}
    |L_Q\cap F|\geq \frac{1}{2}\mathbb{E}(|L_Q\cap F|)\geq \frac{\eps}{2}n \ \text{for each}\ Q\in \mathcal{Q}.
\end{align}

Let $Y$ be the number of intersecting pairs of members in $F$. Then 
$$
\mathbb{E}(Y)\leq p^2\binom{n}{r^2}r^2\binom{n}{r^2-1}\leq \frac{\eps n}{(r^2-1)!(r^2-1)!}.
$$
By Markov's bound, the probability that $Y\leq \frac{\eps n}{(r^2-1)!(r^2-1)!}$ is at least $\frac{1}{2}$. Therefore, we can find a family $F$ of $r^2$-sets satisfying \eqref{eq:F} and $\eqref{eq:F2}$, and having at most $\frac{\eps n}{(r^2-1)!(r^2-1)!}$ intersecting pairs. Removing 
one set from each of the intersecting pairs in $F$, we get a family $F'$ 
such that $|F'|\leq |F|< \frac{2\sqrt{\eps}}{r^2!}n$ and for all $Q\in \mathcal{Q}$ we have
\begin{align}\label{eq:F3}
    |L_Q\cap F'|\geq \frac{\eps}{2}n-\frac{\eps n}{(r^2-1)!(r^2-1)!}>\frac{\eps}{r}n.
\end{align}

Assume that $|S|\leq \xi n$ and $X\subseteq V(G)\setminus S$ is a set with size at most $2\xi n$. By a similar discussion as Case 2, we can prove that for any vertex $w\in S$, there exist $\Omega(n^{r-1})$ copies of $K_r$ in $G-X$ with $V(K_r)\cap S=\{w\}$. Thus, for any set $T\subseteq S$ with size at most $r-1$, we can find a set $T'\subseteq V(G)\setminus (S\cup X)$ with size at most $(r-1)^2$ such that $G[T\cup T']$ contains a perfect $K_r$-tiling.

Let $M'$ denote the disjoint union of the sets in $F'$. Define $M:=M'$ if $|S|\geq \xi n$ and $M:=M'\cup S\cup S'$ otherwise, where $|S'|\leq (r-1)^2$ and $S'\subseteq V(G)\setminus (S\cup M')$ such that $G[(M'\setminus S)\cup S']$ contains a perfect $K_r$-tiling. 
Then $|M|=|F'|r^2+\xi n+(r-1)^2\leq 2\xi n.$ Since $F'$ consists of disjoint $K_r$-absorbers and each $K_r$-absorber is covered by a perfect $K_r$-tiling, $G[M]$ contains a perfect $K_r$-tiling. Now, let $W\subseteq V(G)\setminus M$ be a set of at most $\eps n$ vertices such that $|W|=r\ell$ for some $\ell\in \mathbb{N}$. We arbitrarily partition $W$ into $r$-sets $Q_1,\ldots,Q_{\ell}$. Notice that $Q_i\in \mathcal{Q}$ for each $i\in [\ell]$ as $S\subseteq M$ if $|S|\leq \xi n$. Together with \eqref{eq:F3}, we are able to find a different $K_r$-absorber from $L_{Q_i}\cap F'$ for each $Q_i$ with $i\in [\ell]$. 
Therefore, $G[M\cup W]$ contains a $K_r$-factor, as desired. 
\end{proof}

\section{Extremal case}\label{section5}
We begin by introducing some necessary parameters and definitions central to our proof. Let $r,s,n$ be three positive integers satisfying 
\begin{align}\label{eq:p1}
    r\ge 3,\ 1\le s\le r\ \text{and}\ r\mid n.
\end{align}
Define constants $\gamma, \gamma_1,\gamma_2,\dots,\gamma_r, \gamma_{r+1}$ satisfying 
\begin{align}\label{eq:p2}
    0<\frac{1}{n}\ll \gamma \ll \gamma_1\ll \dots \ll \gamma_r\ll\gamma_{r+1}=\frac{1}{r}.
\end{align}
When the constant $s$ is fixed, we define 
\begin{align}\label{eq:p3}
    \gamma_s\ll \alpha \ll\beta'\ll\beta\ll \gamma_{s+1}.
\end{align}



Let $G$ be an $n$-vertex graph with $r\mid n$. We say a vertex partition $\mathcal{P}:= \{A_1, \ldots, A_s, B\}$ is an \textit{$(r,s)$-partition} of $G$ if each $A_i$ with $i\in [s]$ has size exactly $\frac{n}{r}$. 
Given a constant $\delta>0$, we define the following three types of vertices with respect to an $(r,s)$-partition $\mathcal{P}$ of $G$. 
\begin{itemize}
    \item For a vertex $x\in A_i$ with $i\in [s]$, we say $x$ is \textit{$(\delta, A_i)$-bad} (or simply {$(\delta, i)$-bad}) if $d(x,A_i)\ge \delta n$. 
    Let $V_b(\mathcal{P},\delta,i)$ be the set of $(\delta,i)$-bad vertices.
    \item For a vertex $x\in V(G)\setminus A_i$ with $i\in [s]$, we say that  $x$ is \textit{$(\delta, A_i)$-exceptional} (or simply {$(\delta, i)$-exceptional}) if $d(x,A_i)\le \delta n$. 
    Let $V_{ex}(\mathcal{P},\delta,i)$ be the set of $(\delta,i)$-exceptional vertices. 
    \item For a vertex $x\in V(G)\setminus A_i$ with $i\in [s]$, we say that $x$ is \textit{$(\delta, A_i)$-excellent} (or simply {$(\delta, i)$-excellent}) if $d(x,A_i)\ge |A_i|-\delta n$. Let $V_e(\mathcal{P}, \delta,i)$ be the set of $(\delta ,i)$-excellent vertices, and let $V_{ne}(\mathcal{P}, \delta,i):=V(G)\setminus (A_i\cup V_e(\mathcal{P}, \delta,i))$. 
    
    \item A vertex $x\in V(G)\setminus B$ is called \textit{$(\delta,B)$-excellent} if $d(x,B)\ge |B|-\delta n$.  Denote by $V_e(\mathcal{P},\delta, B)$ the set of all such  $(\delta,B)$-excellent vertices, and let $V_{ne}(\mathcal{P},\delta, B):=V(G)\setminus (B\cup V_e(\mathcal{P}, \delta,B))$.
\end{itemize}

When the partition $\mathcal{P}$ is clear from context and  $\bullet\in \{b,ex,e,ne\}$,  we  abbreviate $V_{\bullet}(\mathcal{P},\delta,i)$ to $V_{\bullet}(\delta,i)$ when no ambiguity arises. Define  $S:=\{x\in V(G):d(x)<(1-\frac{1}{r})n-1\}$, and denote 
$$
V_{\bullet}^S(\beta,i):=V_{\bullet}(\beta,i)\cap S\ \text{and}\ V_{\bullet}^L(\beta,i):=V_{\bullet}(\beta,i)\setminus S.$$

By the Pigeonhole Principle, we obtain the following fact, which will be used frequently in the subsequent proof. 
\begin{fact}\label{ExFact}
   Let $G$ be an $n$-vertex graph with an $(r,s)$-partition $\mathcal{P}=\{A_1,\ldots,A_s,B\}$. If $d(v)> (1-\frac{2}{r}+2\delta)n$ for some vertex $v\in V(G)$, then there exists at most one $i\in[s]$ such that $d(v,A_i)\le \delta n$. 
\end{fact}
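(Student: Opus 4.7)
The plan is to prove this by a direct counting argument via contradiction, exploiting the fact that each $A_i$ in an $(r,s)$-partition has exactly $n/r$ vertices and the remainder $B$ has $(r-s)n/r$ vertices. The inequality is tight essentially by pigeonhole, so no structural argument is needed.

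I would assume toward a contradiction that there exist two distinct indices $i, j \in [s]$ with $d(v, A_i) \le \delta n$ and $d(v, A_j) \le \delta n$. Then I would partition the neighbors of $v$ according to the sets $A_1, \dots, A_s, B$ and bound each piece crudely: for the two `bad' parts $A_i$ and $A_j$ use the hypothesis $d(v, A_k) \le \delta n$; for the remaining $s-2$ parts $A_k$ with $k \in [s] \setminus \{i,j\}$ use the trivial bound $d(v, A_k) \le |A_k| = n/r$; and for $B$ use $d(v, B) \le |B| = (r-s)n/r$. Adding these gives
\[
d(v) \;\le\; (s-2)\cdot \frac{n}{r} + 2\delta n + \frac{(r-s)n}{r} \;=\; \Big(1-\frac{2}{r}\Big)n + 2\delta n,
\]
which contradicts the hypothesis $d(v) > (1 - \tfrac{2}{r} + 2\delta)n$. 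Hence at most one such index $i$ exists.

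There is essentially no obstacle; the only minor care is ensuring the crude bound is written so that the term $v$ itself (if $v$ lies in some $A_k$ or in $B$) does not cause any slack issue, but since our bound $\sum_k d(v, A_k) + d(v, B) \le \sum_k |A_k| + |B| = n$ is already an overcount (vertex $v$ is not its own neighbor), the inequality above still holds with room to spare. I would write the argument in two or three lines right after the fact statement.
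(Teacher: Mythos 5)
Your proposal is correct and is exactly the counting/pigeonhole argument the paper invokes (the paper states Fact~\ref{ExFact} with the remark ``By the Pigeonhole Principle'' and omits the proof). Your decomposition of $d(v)$ over the parts $A_1,\dots,A_s,B$, bounding the two allegedly exceptional parts by $\delta n$ and the rest by their sizes, is the intended argument, and your note about $v$ not counting itself is correctly handled.
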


We divide the proof of Theorem \ref{Extremal case} into two parts. In Section \ref{section5.1}, we show that for any $n$-vertex graph $G$ satisfying $\sigma(G) \geq 2\left(1-\frac{1}{r}\right)n - 2$, the vertex set of $G$ admits a ``good partition'' (see Lemma \ref{Lpartition}). Then, in Section \ref{section5.2}, under such a partition, we construct a $K_r$-tiling that covers all non-excellent vertices (see Lemma \ref{K_r-tiling}). Furthermore, the remaining graph still preserves an $(r,s)$-partition. 
By using those two lemmas, we provide a proof of Theorem \ref{Extremal case} in Section \ref{section5.3}. The proofs of Lemma \ref{Lpartition} and Lemma \ref{K_r-tiling} will be given in Section \ref{section6} and Section \ref{section7}, respectively. 



\subsection{Partition of graphs}\label{section5.1}

We aim to establish a partition of the graph $G$ that exhibits certain desirable properties. For this, we introduce the following definition.
\begin{definition}[Good partition]\label{Partition}
Let $G$ be an $n$-vertex graph. Given constants $0\le \delta\ll \zeta \ll 1/r$ and a vertex subset $S\subseteq V(G)$, an $(r,s)$-partition $\mathcal{Q}=\{A_1,\ldots,A_s,B\}$ with $S\subseteq B$ is called \textit{$(r,s;\delta, \zeta;S)$-good} if there exist constants $\alpha,
\beta',\beta$ with $\delta\ll \alpha \ll \beta' \ll \beta \ll \zeta$ such that the following conditions hold.
\begin{enumerate}
[label =\rm  (A\arabic{enumi})]
    \item\label{B1}  For each $i\in [s]$, $A_i$ is a $\sqrt{\alpha}$-independent set of size $\frac{n}{r}$ in $G$.
    \item\label{B2} If $|B| \ge \frac{2n}{r}$, then $G[B]$ does not contain any $\frac{\zeta^2}{4}$-independent set of size $\frac{n}{r}$. 
    \item\label{B3} For each $i\in [s]$, $|V_b(2\beta,i)|\le \sqrt{\alpha} n$ and $ |V_{ne}(2\beta',i)|\le \sqrt{\alpha} n$.
    \item\label{B4} For each $i\in [s]$, either $V_b(2\beta,i)=\emptyset$ or $V_{ex}^L(\beta/2,i)=\emptyset$. Moreover, $G[V_{ex}^L(\beta/2,i),A_i]$ contains a matching $M_i$ covering $V_{ex}^L(\beta/2,i)$ such that $V(M_i)\cap V(M_j)=\emptyset$ for $i\neq j$ 
    (here $M_i=\emptyset$ if $V_{ex}^L(\beta/2,i)=\emptyset$). 
\end{enumerate}
\end{definition}

\begin{lemma}\label{Lpartition}
     Given the parameters defined in \eqref{eq:p1}-\eqref{eq:p3}, let $G$ be an $n$-vertex graph with
$\sigma(G)\ge 2\big(1-\frac{1}{r}\big)n-2$ 
and suppose $G$ has a $\gamma$-independent set of size $\frac{n}{r}$. Then either $G$ is the extremal graph  \ref{EX1}, or there exists an $(r,s;\gamma_s,\gamma_{s+1};S)$-good partition for some $s\in[r]$, where $S:=\big\{x\in V(G):d(x)<(1-\frac{1}{r})n-1\big\}.$
\end{lemma}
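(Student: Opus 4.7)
Let $S := \{x \in V(G) : d(x) < (1-\tfrac{1}{r})n - 1\}$; by Fact~\ref{clique}, $G[S]$ is a clique. The plan is to build the good partition in two stages: a greedy extraction followed by an extremal-choice refinement.

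\textbf{Stage 1 (Extraction).} We iteratively extract almost-independent sets $A_1, A_2, \ldots$, each of size $\tfrac{n}{r}$, starting from the given $\gamma$-independent set. At step $i$, let $B_{i-1} := V(G) \setminus \bigcup_{j<i} A_j$; if $|B_{i-1}| \ge \tfrac{2n}{r}$ and $G[B_{i-1}]$ contains a $\gamma_i^2/4$-independent set of size $\tfrac{n}{r}$, extract one as $A_i$; otherwise stop, setting $s := i-1$ and $B := B_{i-1}$. Whenever $A_i \cap S$ is nonempty, those vertices are swapped out: since $G[A_i \cap S]$ is a clique sitting inside an almost-independent set, $|A_i \cap S| = O(\gamma_i n)$, and the hierarchy $\gamma_i \ll \gamma_{i+1}$ supplies enough slack to replace them with vertices from $B_{i-1} \setminus A_i \setminus S$ without destroying near-independence. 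Among all admissible extractions, we select one maximizing $s$. This directly gives \textrm{(A1)}, \textrm{(A2)}, and $S \subseteq B$.

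\textbf{Stage 2 (Refinement).} Among partitions satisfying the above with the same $s$, we select one minimizing $\Phi := \sum_{i=1}^s |E(G[A_i])|$. Condition \textrm{(A3)} follows by direct counting: $2\beta n \cdot |V_b(2\beta, i)| \le 2|E(G[A_i])| \le \gamma_s^2 n^2 / 2$ gives $|V_b(2\beta, i)| \le \sqrt{\alpha}\, n$, and averaging $e(A_i, V \setminus A_i)$---now lower-bounded using $A_i \cap S = \emptyset$, so $\sum_{v \in A_i} d(v) \ge \tfrac{n}{r}\bigl((1-\tfrac{1}{r})n - 1\bigr)$---forces $|V_{ne}(2\beta', i)| \le \sqrt{\alpha}\, n$. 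For the dichotomy in \textrm{(A4)}, if $v \in V_b(2\beta, i)$ and $u \in V_{ex}^L(\beta/2, i)$ coexisted, swapping $v$ and $u$ (with a short case analysis on $u \in B$ versus $u \in A_j$ for some $j \neq i$) would reduce $\Phi$ by at least $3\beta n / 2$ while keeping \textrm{(A1)} (slack $\sqrt\alpha \gg \gamma_s^2/4$), $S \subseteq B$ (since $u, v \notin S$), and \textrm{(A2)} (by maximality of $s$, as explained below) intact, contradicting minimality of $\Phi$. For the matchings $M_i$, Ore's condition forces $V_{ex}^L(\beta/2, i) \cap V_{ex}^L(\beta/2, j) = \emptyset$ for $i \neq j$ (a common vertex would have too many non-neighbors to cover), and $\Phi$-minimality forces $d(x, A_i) \ge 1$ for every $x \in V_{ex}^L(\beta/2, i)$; since $|V_{ex}^L(\beta/2, i)| \le \sqrt{\alpha}\, n \ll \tfrac{n}{r}$, Hall's theorem applied iteratively over $i \in [s]$ yields the required pairwise-disjoint $M_1, \ldots, M_s$.

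\textbf{Main obstacle.} The delicate point is coupling Stage-2 swaps with \textrm{(A2)}: moving $v \in A_i$ into $B$ could, a priori, create a new $\gamma_{s+1}^2/4$-independent set of size $\tfrac{n}{r}$ in the post-swap $B$. The resolution is the primary maximization of $s$ in Stage~1 (before minimizing $\Phi$ in Stage~2): any such new independent set would enable a further extraction, yielding a partition with $s+1$ almost-independent classes and contradicting the choice of $s$. A secondary wrinkle is Hall's condition for the matchings: a failure on some $T \subseteq V_{ex}^L(\beta/2, i)$ would produce the large near-independent set $(A_i \setminus N(T)) \cup T$ of size at least $\tfrac{n}{r} + 1$, which, combined with the Ore condition and the hypothesis $G \notin$~\ref{EX1}, yields a contradiction.
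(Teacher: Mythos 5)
Your two-stage extremal strategy (maximize $s$, then minimize $\Phi:=\sum_i|E(G[A_i])|$) is a genuinely different route from the paper's iterative swap process in Claim~\ref{A2}, and Stage~1 roughly parallels Claim~\ref{rs-partition}. However, the matching argument in Stage~2 has a gap that the paper's use of Lemma~\ref{MStab} is precisely designed to close. You apply Hall's theorem to the bipartite graph $G[V_{ex}^L(\beta/2,i),A_i]$ and assert that a Hall failure on $T$ produces the set $(A_i\setminus N(T))\cup T$ of size at least $n/r+1$, which ``combined with the Ore condition and $G\notin$~\ref{EX1} yields a contradiction.'' But this set is only \emph{near}-independent: once bad vertices are absent, $A_i\setminus N(T)$ can still carry up to $\sqrt{\alpha}\,n^2$ internal edges (that is all \ref{B1} guarantees), and $T\subseteq V_{ex}^L$ has no independence property at all. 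A near-independent set of size $>n/r$ does not give \ref{EX1}, which demands an \emph{exact} independent set of size $n/r+1$, and neither the Ore condition nor the hypothesis $G\notin$~\ref{EX1} gives you an upgrading mechanism. The paper avoids Hall entirely: it works with the auxiliary graph $H_k=G[(A_k^{k-1}\cup X_k)\setminus Y_k]$ whose order is \emph{exactly} $n/r+c_k$ with $c_k=|X_k|-|Y_k|$, whose minimum degree is $\ge c_k-1$ (because all its vertices avoid $S$, so they have at most $n/r$ non-neighbours), and whose maximum degree is $O(\beta n)$. Lemma~\ref{MStab} then produces a matching of size $c_k$ when $c_k\ge 2$, and when $c_k=1$ a matching failure literally means $H_k$ is edgeless, exhibiting $V(H_k)$ as a \emph{genuine} independent set of size $n/r+1$. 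That exact size bookkeeping is what makes matching failure collapse to \ref{EX1}; the bipartite-Hall framing does not supply it.

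A secondary problem: the claim that $\Phi$-minimality forces $d(x,A_i)\ge 1$ for every $x\in V_{ex}^L(\beta/2,i)$ does not hold when $x\in A_j$ with $j\ne i$. By Fact~\ref{ExFact} and $x\notin S$, one has $d(x,A_j)\ge n/r-\beta n/2-1$, so $x$ contributes heavily to $E(G[A_j])$. Swapping $x$ into $A_i$ and some $a\in A_i$ with $d(a,A_i)\ge 1$ out to $A_j$ lowers $E(G[A_i])$ by only $d(a,A_i)$, which may equal $1$, while $E(G[A_j])$ changes by $d(a,A_j)-d(x,A_j)$, and without a bound on $d(a,A_j)$ this can be as large as $+\beta n/2$; so $\Phi$ need not decrease. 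And even granting $d(x,A_i)\ge 1$, that is merely Hall's condition for singletons; together with $|V_{ex}^L(\beta/2,i)|\le\sqrt\alpha\,n$ it does not imply the full Hall condition (all exceptional vertices could share one common neighbour in $A_i$).
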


\subsection{Covering non-excellent vertices}\label{section5.2}

Note that Lemma \ref{Lpartition} yields an $(r,s;\gamma_s,\gamma_{s+1};S)$-good partition of $G$. Under such a partition, there will be some vertices that disrupt the uniformity--specifically, the non-excellent vertices. To address this issue, we introduce the following lemma to cover such vertices. 

\begin{lemma}\label{K_r-tiling}
      Given the parameters defined in \eqref{eq:p1}-\eqref{eq:p3}, let $G$ be an $n$-vertex graph with
$\sigma(G)\ge 2\left(1-\frac{1}{r}\right)n-2$ 
and suppose $G$ has a $\gamma$-independent set of size $\frac{n}{r}$. Let $\mathcal{Q}$ be an $(r,s;\gamma_s,\gamma_{s+1};S)$-good partition of $G$ for some $s\in[r]$, where $S:=\{x\in V(G):d(x)<(1-\frac{1}{r})n-1\}.$ 
Then one of the following holds. 
    \begin{enumerate}
    [label =\rm  (B\arabic{enumi})]
        \item\label{L1} $G$ is the extremal graph  \ref{EX2}.
        \item\label{L2} There exists a $K_r$-tiling $\mathcal{T}$ with at most $5r\alpha^{1/4}n$ vertices such that 
        \begin{itemize}
        \item $\bigcup_{i\in [s]}V_{ne}(2\beta',i)\cup V_{ne}(2\beta',B)\subseteq V(\mathcal{T})$;
        \item the partition $\mathcal{Q'}=        \{A_1',\ldots, A_s',B'\}$, which is obtained  from $\mathcal{Q}$ by deleting all vertices in $\mathcal{T}$, is an $(r,s)$-partition of $G-V(\mathcal{T})$;
        \item and there exists a $K_{r-s}$-factor in $G[B']$.
        \end{itemize} 
        \end{enumerate} 
\end{lemma}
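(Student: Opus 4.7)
The plan is to explicitly construct a small $K_r$-tiling $\mathcal{T}$ by building a ``base'' around each non-excellent vertex and greedily extending each base to a $K_r$ using excellent vertices, and then to secure a $K_{r-s}$-factor in the residual graph $G[B']$ by invoking Theorem~\ref{non-extremal} when $r-s \ge 3$ or Proposition~\ref{PMatching} when $r-s = 2$. I first let $X := \bigcup_{i \in [s]} V_{ne}(2\beta',i) \cup V_{ne}(2\beta',B)$ be the set of non-excellent vertices. Property~\ref{B3} gives $|X| \le (s+1)\sqrt{\alpha}n \le 2r\sqrt{\alpha}n$. A degree count using \ref{B1}, \ref{B3}, and Ore's condition shows that every $(2\beta',B)$-non-excellent vertex lies in $V_b(2\beta,j)$ for its native $A_j$, so I may restrict attention to bad vertices in the $A_j$'s and to exceptional or intermediate vertices outside their native parts; moreover, Fact~\ref{ExFact} ensures every vertex outside $S$ is $(\beta/2,i)$-exceptional for at most one $i$.

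For each $v \in X$ I designate a base clique $B_v$ together with a target slot assignment inside the eventual $K_r$. If $v \in A_i$ is $(2\beta,i)$-bad, put $B_v := \{v\}$ with $v$ in the $A_i$-slot. If $v \in V_{ex}^L(\beta/2,i)$ for (the unique) $i$, put $B_v := \{v, M_i(v)\}$, with $M_i(v) \in A_i \cap N(v)$ in the $A_i$-slot and $v$ in the slot of its native part. For an intermediate non-excellent $v$ (satisfying $\beta n/2 < d(v,A_i) < |A_i| - 2\beta' n$ for some $i$), the inequality $d(v,A_i) > \beta n / 2 \gg \sqrt{\alpha}n \ge |V_b(2\beta,i)|$ lets me pick a non-bad partner $w \in A_i \cap N(v)$ and set $B_v := \{v,w\}$. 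By \ref{B4} the matchings $M_i$ have pairwise disjoint vertex sets, and a greedy choice guarantees intermediate partners are all distinct, so the bases are pairwise disjoint. I then extend each base to a $K_r$ by iteratively filling every empty slot: for an $A_j$-slot I pick a non-bad $w \in A_j$ that is a common neighbor of the current partial clique and is not already used, and analogously for each $B$-slot. Since the non-bad vertices in any $A_j$ number at least $(1 - \sqrt{\alpha})n/r$, each is adjacent to all but $O(\beta' n)$ vertices in any other part (by Fact~\ref{ExFact} applied to its high degree), and only $O(\sqrt{\alpha}n)$ vertices are ever consumed, the eligible set has size $\Omega(n/r)$ at every step. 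The resulting tiling satisfies $|V(\mathcal{T})| \le r|X| \le 5r\alpha^{1/4}n$, and by design each $K_r \in \mathcal{T}$ uses exactly one vertex of each $A_i$ and $r-s$ of $B$, so $\mathcal{Q}' = \{A_1',\ldots,A_s',B'\}$ is an $(r,s)$-partition of $G - V(\mathcal{T})$.

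It remains to find a $K_{r-s}$-factor in $G[B']$. The cases $r-s \in \{0,1\}$ are immediate. For $r-s \ge 3$ I apply Theorem~\ref{non-extremal} to $G[B']$: every vertex of $B'$ is $(2\beta',j)$-excellent for all $j \in [s]$, which combined with Ore's condition on $G$ yields $\sigma(G[B']) \ge 2(1 - 1/(r-s) - \alpha')|B'|$ for a suitable small $\alpha'$, and any hypothetical small almost-independent set of size $|B'|/(r-s)$ in $G[B']$ would extend, by re-adding the $O(\sqrt{\alpha}n)$ removed $B$-vertices, to a $\gamma_{s+1}^2/4$-independent set of size $n/r$ in $G[B]$, contradicting~\ref{B2}. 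For $r-s = 2$ I apply Proposition~\ref{PMatching} to $G[B']$: the almost-independent alternative is excluded by the same argument, leaving either a perfect matching in $G[B']$ (completing~\ref{L2}) or the conclusion that $G[B']$ splits into two odd components $C_1, C_2$ with no edges between them and the smaller a clique.

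The main obstacle is this last subcase ($r-s=2$ with two odd components). Here I plan to show that either $G$ is globally the extremal graph~\ref{EX2} (yielding~\ref{L1}), or I can locally adjust $\mathcal{T}$ to a new tiling $\mathcal{T}'$ by rerouting the $B$-partners of one or two bases between $C_1$ and $C_2$, using the excellent vertices of $V(G)\setminus B$ and the flexibility afforded by the matchings $M_i$, thereby flipping the parity of both components so that $G[B \setminus V(\mathcal{T}')]$ admits a perfect matching via Proposition~\ref{PMatching}. The rigidity of ``no edges between $C_1$ and $C_2$'', once propagated through Ore's condition and the almost-completeness between $V(G)\setminus B$ and $B$, is strong enough to force the entire missing-edge pattern of~\ref{EX2} whenever no admissible swap is available. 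This delicate structural dichotomy is the heart of the argument and is carried out in Section~\ref{section7}.
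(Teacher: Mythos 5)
The broad strokes of your plan --- build a base around each non-excellent vertex, extend bases greedily to $K_r$'s using excellent vertices, then secure a $K_{r-s}$-factor in $G[B']$ via Theorem~\ref{non-extremal} (for $r-s\ge 3$) or Proposition~\ref{PMatching} (for $r-s=2$) --- match the paper's architecture, and your treatment of the $r-s=2$ subcase is at the right level of generality even if only sketched. However, there are three concrete gaps.

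First, the degree count you invoke at the outset is wrong. For $v\in A_j\setminus S$ with $d(v,B)<|B|-2\beta' n$, Ore's condition yields only $d(v,A_j)\ge 2\beta' n-1$, not $d(v,A_j)\ge 2\beta n$; since $\beta'\ll\beta$, such a vertex need not be $(2\beta,j)$-bad. The paper instead bounds $|V_{ne}(2\beta',B)|$ by a pigeonhole argument on edges inside some $A_i$ (giving $\le \alpha^{1/4}n$, not $\sqrt{\alpha}n$). Also, \ref{B3} does not bound $|V_{ne}(2\beta',B)|$ at all, so your $|X|\le (s+1)\sqrt{\alpha}n$ is not justified by the cited properties.

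Second, and more seriously, your base construction has no branch for vertices in $V_{ex}^S(\beta/2,i)$, the exceptional vertices lying in $S$. The matching $M_i$ of \ref{B4} only covers $V_{ex}^L(\beta/2,i)=V_{ex}(\beta/2,i)\setminus S$, so your recipe ``$B_v:=\{v,M_i(v)\}$'' is unavailable for them, and your ``intermediate'' branch requires $d(v,A_i)>\beta n/2$, which these vertices fail by definition. These are precisely the hardest vertices (they form a clique of possibly low-degree vertices), and the paper spends Claim~\ref{degree-V-S} to lower-bound their degree and Claims~\ref{r-scover} and~\ref{com-2} to build $(2,\beta/4)$-bases for them out of $K_{r-s+1}$-copies in $B$ together with auxiliary pairs in $A_i$. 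Your proposal has no mechanism to cover them.

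Third, your extension step assumes each base extends to a single $K_r$ with exactly one vertex in each $A_i$, but your intermediate base $\{v,w\}$ puts both $v$ and $w$ in the same $A_i$ when $v$'s native part is $A_i$ (which is exactly the $v\in A_i$, $v\notin B$ case of Lemma~\ref{Vne}). A single $K_r$ containing that base would delete two vertices from $A_i$ and one from each other part, breaking the $(r,s)$-partition of $G-V(\mathcal{T})$. The paper resolves this by defining $(2,\xi)$-bases, each consisting of two vertex-disjoint cliques that extend to a \emph{pair} of $K_r$-copies jointly taking two vertices from each $A_i$ and $2(r-s)$ from $B$ (Definition~\ref{Base}, \ref{C03}--\ref{C04}, and Lemma~\ref{extend}); your framework does not accommodate this pairing, and without it the second bullet of \ref{L2} fails.
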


\subsection{Proof of Theorem \ref{Extremal case}}\label{section5.3}

Based on Lemma \ref{Lpartition} and Lemma \ref{K_r-tiling}, we give the proof of Theorem \ref{Extremal case}. 


\begin{proof}[\bf Proof of Theorem \ref{Extremal case}]
Given the parameters defined in \eqref{eq:p1}-\eqref{eq:p3}, let $G$ be an $n$-vertex graph with $\sigma(G)\ge 2(1-\frac{1}{r})n-2$. Assume that $G$ has a $\gamma$-independent set of size $\frac{n}{r}$. By Lemma~\ref{Lpartition}, either $G$ has an independent set of size $\frac{n}{r}+1$ or $G$ has  an $(r,s;\gamma_s,\gamma_{s+1};S)$-good partition $\mathcal{Q}$ for some $s\in[r]$, where $S:=\{x\in V(G):d(x)<(1-\frac{1}{r})n-1\}$. For the former case, $G$ is the extremal graph $\ref{EX1}$. For the latter case, together with Lemma \ref{K_r-tiling}, we conclude that either $G$ is the extremal graph  \ref{EX2} or \ref{L2} holds.

Assume that there exists a $K_r$-tiling $\mathcal{T}$ satisfying \ref{L2}. Let $\mathcal{Q}'=        \{A_1',\ldots, A_s',B'\}$ be the $(r,s)$-partition obtained from $\mathcal{Q}$. Then for each $D'\in \mathcal{Q}'$ and each $v\notin D'$ we have 
\begin{align}\label{DofQ'}
d(v,D')\ge |D'|-2\beta'n -5r\alpha^{1/5}n\ge |D'|-\beta n.
\end{align}
Notice that there exist $t:=\frac{|B'|}{r-s}$ vertex-disjoint copies of $K_{r-s}$, say $K^1,\ldots,K^{t}$, in $G[B']$. Contracting each $K^i$ (where $i\in [t]$) into a vertex $w_i$ yields a new vertex set $B^*$ of size $t$. 
We are to construct an auxiliary graph $G^*$ with $V(G^*)=\big (\bigcup_{i\in[s]}A_i'\big)\cup B^*$, and $xy\in E(G^*)$ if and only if 
\begin{itemize}
    \item $x\in A_i'$, $y\in A_j'$, and $xy\in E(G)$ for $i,j\in[s]$ with $i\ne j$; or 
    \item $x=w_i\in B^*$, $y\in A_j'$, and $y\in N(V(K^i))$ for $j\in [s]$ and $i\in[t]$.
\end{itemize}
Thus, $\mathcal{Q}^*=\{A_1',\ldots,A_s',B^*\}$ is a $(s+1,s)$-partition of $G^*$. 
Then  \eqref{DofQ'} implies that  
$$
d_{G^*}(v,D^*)\geq |D^*|-(r-s)\beta n\geq \Big(1-\frac{1}{2(s+1)}\Big)|D^*| \ \text{for each}\ D^*\in \mathcal{Q}^* \ \text{and each}\ v\notin D^*.
$$

By Lemma~\ref{GHH2024}, we conclude that $G^*$ admits a  $K_{s+1}$-factor. Consequently,  $G-V(\mathcal{T})$ admits a $K_r$-factor. Combining this with $\mathcal{T}$ yields the desired $K_r$-factor in $G$. 
\end{proof}

\section{Proof of Lemma \ref{Lpartition}}\label{section6}
In this section, we prove Lemma \ref{Lpartition} by constructing the required good partition of $G$.

\begin{proof}[\bf Proof of Lemma \ref{Lpartition}]


Given the parameters defined in \eqref{eq:p1}-\eqref{eq:p3}, let $G$ be an $n$-vertex graph with
$\sigma(G)\ge 2\left(1-\frac{1}{r}\right)n-2$
and $G$ has a $\gamma$-independent set of size $\frac{n}{r}$. Denote $S:=\{x\in V(G):d(x)<(1-\frac{1}{r})n-1\}.$ 
We first show that $G$ admits an $(r,s)$-partition for some $s\in [r]$.  
\begin{claim}\label{rs-partition}
    There exists an integer $s\in [r]$ such that $G$ has an $(r,s)$-partition $\mathcal{P}:=\{A_1,\dots,A_s,B\}$ with $S\subseteq B$. Moreover, $A_i$ is a $\gamma_i$-independent set in $G$ for each  $i\in [s]$ and $G[B\setminus S]$ contains no $\gamma_{s+1}$-independent set of size $\frac{n}{r}$.
\end{claim}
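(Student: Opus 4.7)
The plan is to build the partition iteratively by peeling off almost-independent sets of size $n/r$, one at a time, with a progressively weakened independence threshold. First I will apply Fact~\ref{clique} to conclude that $G[S]$ is a clique; since every $v \in S$ then satisfies $d(v) \ge |S|-1$ while the definition of $S$ forces $d(v) \le (1-1/r)n - 2$, I obtain $|S| \le (1-1/r)n - 1$, and hence $|V(G) \setminus S| \ge n/r + 1$.

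To construct $A_1$, I start from the hypothesized $\gamma$-independent set $X_0$ of size $n/r$. Because $G[S]$ is a clique, $\binom{|X_0 \cap S|}{2} \le e(G[X_0]) \le \gamma n^2$, and hence $|X_0 \cap S| \le \sqrt{2\gamma}\,n + 1$. The bound on $|S|$ gives $|V(G) \setminus (X_0 \cup S)| = n - n/r - |S| + |X_0 \cap S| \ge |X_0 \cap S|$, so I can replace each vertex of $X_0 \cap S$ by a fresh vertex from $V(G) \setminus (X_0 \cup S)$ to obtain $A_1 \subseteq V(G) \setminus S$ with $|A_1| = n/r$. The $|X_0 \cap S|$ replacements contribute at most $|X_0 \cap S| \cdot (n/r)$ additional edges inside $A_1$, so $e(G[A_1]) \le \gamma n^2 + \sqrt{2\gamma}\,n \cdot (n/r) \le \gamma_1 n^2$ by the hierarchy $\gamma \ll \gamma_1$, giving that $A_1$ is $\gamma_1$-independent.

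With $A_1$ in hand I iterate: having chosen $A_1, \ldots, A_{i-1}$ inside $V(G) \setminus S$, if the induced subgraph $G[V(G) \setminus (A_1 \cup \cdots \cup A_{i-1} \cup S)]$ contains a $\gamma_i$-independent set of size $n/r$, I take such a set as $A_i$; otherwise I stop, letting $s := i - 1$ and $B := V(G) \setminus (A_1 \cup \cdots \cup A_s)$. Since each $A_i$ has size $n/r$ and is disjoint from $S$, the process terminates with some $s \le r$; the collection $\{A_1, \ldots, A_s, B\}$ is an $(r,s)$-partition with $S \subseteq B$, every $A_i$ is $\gamma_i$-independent by construction (for $i \ge 2$ no swap is needed since $A_i$ is chosen directly inside the remainder), and the stopping rule ensures that $G[B \setminus S]$ contains no $\gamma_{s+1}$-independent set of size $n/r$.

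The only delicate step is the production of $A_1$: it requires both the bound $|S| \le (1-1/r)n - 1$ (to guarantee enough vertices outside $X_0 \cup S$ for the swap) and the hierarchy $\gamma \ll \gamma_1$ (to absorb the $O(\sqrt{\gamma}\,n^2)$ edges introduced by the new vertices). After that, the iterative extraction at steps $i \ge 2$ is automatic, and the progressively weakened thresholds $\gamma_1 \ll \cdots \ll \gamma_{s+1}$ make the stopping rule yield exactly the statement of the claim.
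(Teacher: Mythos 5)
Your proposal is correct and follows essentially the same route as the paper's own proof: use Fact~\ref{clique} to see that $G[S]$ is a clique, bound $|X_0\cap S|$ via $\binom{|X_0\cap S|}{2}\le e(G[X_0])\le\gamma n^2$, swap the vertices of $X_0\cap S$ for fresh ones to obtain $A_1$ with $e(G[A_1])\le\gamma_1 n^2$ by the hierarchy $\gamma\ll\gamma_1$, then iteratively extract $\gamma_i$-independent sets of size $n/r$ disjoint from $S$ until none remain. The one thing you do that the paper leaves implicit is the verification that $|V(G)\setminus(X_0\cup S)|\ge|X_0\cap S|$ so that the swap is actually available, which you derive cleanly from $|S|\le(1-1/r)n-1$; this is a small but genuinely useful added check.
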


\begin{proof}[Proof of Claim \ref{rs-partition}]


It follows from Fact \ref{clique} that $G[S]$ is a clique. Let $A$ be a $\gamma$-independent set of size $\frac{n}{r}$ in $G$. Then, 
$$\binom{|A\cap S|}{2}=e(G[A\cap S])\le e(G[A])\le \gamma n^2.$$
Thus, $|A\cap S|\le 2\sqrt{\gamma}n$. Let $A'$ be a subset of $V(G)\setminus (A\cup S)$ with size exactly $|A\cap S|$, and let $A_1=A'\cup (A\setminus S)$.  
Since $\gamma \ll \gamma_1$, we have 
$$e(G[A_1])\le e(G[A])+\frac{n}{r}|A'|\le (\gamma+2\sqrt{\gamma})n^2<\gamma_1 n^2,$$
that is, $A_1$ is a $\gamma_1$-independent set of size $\frac{n}{r}$ in $G-S$. 

Next, we iteratively select all possible sets $A_i$ satisfying $A_i\cap S=\emptyset$, $|A_i|=\frac{n}{r}$ and $A_i$ is a $\gamma_i$-independent set in $G-\bigcup_{j\in [i-1]}A_j$, until no such sets remain. Suppose the process terminates after $s$ steps, resulting in sets $A_1,A_2,\ldots,A_s$. We denote the remaining vertices by $B$. Clearly, $G[B\setminus S]$ contains no $\gamma_{s+1}$-independent set of size $\frac{n}{r}$, as desired. 
\end{proof}







Let $\mathcal{P}_0:=\{A_1^0,\dots,A_s^0,B^0\}$ be an $(r,s)$-partition of $G$ obtained in Claim \ref{rs-partition}. Next, we estimate the number of bad vertices and non-excellent vertices under the partition $\mathcal{P}_0$. For each $i\in [s]$, since $A_i^0$ is a $\gamma_i$-independent set in $G$ and $\gamma_i\ll \alpha \ll \beta$, we have 
$$|V_b(\mathcal{P}_0,\beta,i)|\le\frac{2e(G[A_i^0])}{\beta n} \le  \frac{2\gamma_i n^2}{\beta n}\le \alpha n.$$ Observe that at least $\binom{|A_i^0|}{2}-\gamma_i n^2$ edges are missing in  $G[A_i^0]$. By Ore's condition, the number of edges in $G[A_i^0,V(G)\setminus A_i^0]$ is at least
$$\underset{x\in A_i^0}{\sum}d(x)-2e(G[A_i^0])\ge 
\frac{1}{|A_i^0|} \cdot
e\left (\overline{G[A_i^0]}\right )\cdot\sigma (G)-2 e\left(G[A_i^0]\right) \ge |A_i^0||V(G)\setminus A_i^0|-4\gamma_i n^2.
$$
Since $\gamma_i \ll \alpha \ll \beta' \ll \beta$, for each $i\in [s]$, we have that 
$$|V_{ex}(\mathcal{P}_0,\beta,i)|\leq |V_{ne}(\mathcal{P}_0,\beta',i)|\le \frac{4\gamma_i n^2}{\beta'n}\leq  \alpha n.$$ 


In the following part, we are to construct an $(r,s;\gamma_s,\gamma_{s+1}; S)$-good partition by utilizing the following operations. Recall that   $\mathcal{P}_0=\{A_1^0,\ldots,A_s^0,B^0\}$. For each $k\in [s]$, we recursively construct an $(r,s)$-partition $\mathcal{P}_k=\{A_1^k,\ldots,A_s^k,B^k\}$ of $G$ from the previous partition $\mathcal{P}_{k-1}=\{A_1^{k-1},\ldots,A_s^{k-1},B^{k-1}\}$. 
Let 
$$
X_k:=V_{ex}^L(\mathcal{P}_{k-1},\beta+(k-1)\alpha,k),\ Y_k:=V_b(\mathcal{P}_{k-1},\beta+(k-1)\alpha,k),\ \text{and}\ H_k:=G[(A_k^{k-1}\cup X_k) \setminus Y_k].$$ Denote $t_k:=\min \{|X_k|,|Y_k|\}$. Assume  $\{x_1^k,\dots , x_{t_k}^k\}\subseteq X_k$ and $\{y_1^k,\dots, y_{t_k}^k\}\subseteq Y_k$. We now perform the following steps  on $\mathcal{P}_{k-1}$ to construct the partition  $\mathcal{P}_{k}$.
\begin{itemize}
    \item For each pair of vertices $x_j^k,\,y_j^k$ with $j\in [t_k]$, we move $x_j^k$ to $A_k^{k-1}$ and reassign $y_j^k$ to the original part that $x_j^k$ belongs to (see Figure \ref{fig:exchang}).
    \item If $|X_k|>|Y_k|$,  then we will find a matching $M_k$ of size $c_k:=|X_k|-|Y_k|$ in $H_k$. Otherwise, there exists an independent set of size $\frac{n}{r}+1$.
    \item A suitable exchange of at most $c_k$ vertices between $X_k$ and  $A_k^{k-1}$ (denote the resulting set by $A_k^k$) ensures that each edge in $M_k$ has exactly one endpoint in $A_k^k$, which yields the partition $\mathcal{P}_{k}=\{A_1^k,\ldots,A_s^k,B^k\}$. 
\end{itemize} 
The details of the last two steps and the verification that  $\mathcal{P}_s$ satisfies Definition~\ref{Partition} will be provided in the subsequent claim. 
\begin{figure}[ht!]
    \centering
    \begin{tikzpicture}[
    node distance=1.5cm,
    set/.style={ellipse, draw, minimum width=1cm, minimum height=2cm},
    smallset/.style={ellipse, draw, minimum width=0.23cm, minimum height=0.9cm},
    label/.style={above, font=\footnotesize},
    point/.style={circle, fill=black, inner sep=1pt},
    box/.style={draw, rectangle, minimum width=5.2cm, minimum height=3cm},
    verysmallset/.style={ellipse, draw, minimum size=0pt,  
    inner sep=0pt,     
    outer sep=0pt, minimum width=0.2cm, minimum height=0.5cm},
]

\node[set, fill=blue!0] (A1) at (2,0) {};
\node[label] at (2,-0.2) {};
\node[label] at (2,-1.5) {$D$};
\node[set, fill=blue!0] (A2) at (3.5,0) {};
\node[label] at (3.5,-0.2) {};
\node[label] at (3.5,-1.6) {$A_k^{k-1}$};
\node[point] (b1) at (2.3,0.5) {};
\node[label] at (2.3,0.1) {$x_j$};
\node[label] at (2.7,0.6) {$<\delta n$};
\node[point] (b2) at (3.3,0.7) {};
\node[point]  (b3) at (3.3,0.4) {};
\node[verysmallset]  (b4) at (3.3,0.55) {};
\draw (b2)--(b1)--(b3);

\node[point] (c1) at (3.3,-0.25) {};
\node[label] at (3.3,-0.67) {$y_j$};
\node[point] (c2) at (3.7,0.15) {};
\node[point] (c3) at (3.7,0) {};
\node[label] at (3.7,-0.57) {$\vdots$};
\node[point] (c4) at (3.7,-0.55) {};
\node[smallset]  (c5) at (3.7,-0.2) {};
\draw (c2)--(c1)--(c3) (c1)--(c4);
\node[label] at (4.23,-0.5) {$>\delta n$};

\node[label] at (4.9,-0.1) {$\Longrightarrow$};

\node[set, fill=blue!0] (A1) at (6,0) {};
\node[label] at (6,-0.2) {};
\node[label] at (6,-1.5) {$D$};
\node[set, fill=blue!0] (A2) at (7.5,0) {};
\node[label] at (7.5,-0.2) {};
\node[label] at (7.5,-1.6) {$A_k^{k-1}$};
\node[point] (b1) at (7.7,0.6) {};
\node[label] at (7.9,0.4) {$x_j$};
\node[point] (b2) at (7.3,0.7) {};
\node[point]  (b3) at (7.3,0.4) {};
\node[verysmallset]  (b4) at (7.3,0.55) {};
\draw (b2)--(b1)--(b3);

\node[point] (c1) at (6.3,-0.25) {};
\node[label] at (6.3,-0.67) {$y_j$};
\node[point] (c2) at (7.7,0.15) {};
\node[point] (c3) at (7.7,0) {};
\node[label] at (7.7,-0.57) {$\vdots$};
\node[point] (c4) at (7.7,-0.55) {};
\node[smallset]  (c5) at (7.7,-0.2) {};
\draw (c2)--(c1)--(c3) (c1)--(c4); 
\end{tikzpicture}

    \caption{Vertices $x_j,y_j$ exchanged in the $k$-th step, where 
$\delta=\beta+(k-1)\alpha$ and $D \in \mathcal{P}_{k-1}\setminus{A_k^{k-1}}$.}
    \label{fig:exchang}
\end{figure}
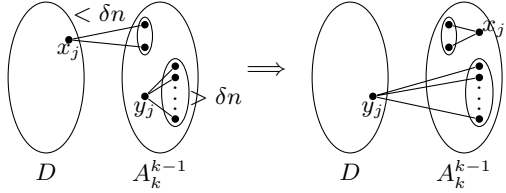



\begin{claim}\label{A2}
   Either $G$ contains an independent set of size $\frac{n}{r}+1$, or for each $k\in [s]$, there exists an $(r,s)$-partition $\mathcal{P}_k:=\{A_1^k,\ldots,A_s^k,B^k\}$ satisfying  the following.
\begin{enumerate}
[label =\rm  (C\arabic{enumi})]
    \item\label{B01} For each $i\in [s]$, $A_i^k$ is a $(2^k-1)\alpha$-independent set of size $\frac{n}{r}$ in $G$.
    \item\label{B02} If $|B^k| \ge \frac{2n}{r}$, then $G[B^k\setminus S]$ contains no $(\gamma_{s+1}-(2^k-1)\alpha)$-independent set of size $\frac{n}{r}$.
    \item\label{B03} For each $i\in [s]$, we have $$
    |V_b(\mathcal{P}_k,\beta+(2^{k}-1)\alpha, i)|\le 2^k\alpha n\ \text{and}\ |V_{ne}(\mathcal{P}_k,\beta'+(2^k-1)\alpha,i)|\le 2^k\alpha n.
    $$
    \item\label{B05} For each $i\in [k]$, there exists a matching $M_i$ of size $c_i$ in $H_i$ such that  each edge in $M_i$ has exactly one endpoint in $A_i^k$, and $V(M_i)\cap V(M_j)=\emptyset$ for all $i\neq j$. If $M_i\neq \emptyset$, then there are no $(\beta+(2^k-1)\alpha, i)$-bad vertices in $A_i^k$. 
    \item\label{B04}For each $i\in[k]$, there are no $(\beta-(2^k-1)\alpha,i)$-exceptional vertices in $V(G)\setminus (V(H_i)\cup S)$.
\end{enumerate}
\end{claim}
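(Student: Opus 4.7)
The plan is to prove Claim~\ref{A2} by induction on $k$, with $\mathcal{P}_0$ from the paragraph preceding the claim playing the role of the base case. The bounds $|V_b(\mathcal{P}_0,\beta,i)|\le \alpha n$, $|V_{ne}(\mathcal{P}_0,\beta',i)|\le \alpha n$, together with the $\gamma_i$-independence of each $A_i^0$ (using $\gamma_i\ll\alpha$), give (C1)--(C3) at level $0$, while (C4)--(C5) are vacuous.

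For the inductive step from $\mathcal{P}_{k-1}$ to $\mathcal{P}_k$, the induction hypothesis (C3) gives $|X_k|,|Y_k|\le 2^{k-1}\alpha n$. I carry out the $t_k=\min\{|X_k|,|Y_k|\}$ pairwise swaps $(x_j^k,y_j^k)$ exactly as described in the text preceding the claim, obtaining an intermediate partition $\mathcal{P}_k^{\ast}$ whose $A_k$-coordinate differs from $A_k^{k-1}$ in at most $2\cdot 2^{k-1}\alpha n$ vertices. If $|X_k|\le|Y_k|$, I set $M_k=\emptyset$, $c_k=0$, and $\mathcal{P}_k:=\mathcal{P}_k^{\ast}$. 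Otherwise, I look for a matching $M_k$ of size $c_k:=|X_k|-|Y_k|$ in $H_k$ that is vertex-disjoint from $\bigcup_{i<k}V(M_i)$; assuming such a matching exists, I exchange at most $c_k$ further vertices between $X_k$ and the current $A_k$-part so that each edge of $M_k$ has exactly one endpoint in $A_k^k$.

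The key structural step is to show that either $M_k$ can be found, or $G$ admits an independent set of size $\tfrac{n}{r}+1$. The subgraph $H_k$ has $\tfrac{n}{r}+c_k$ vertices and is very sparse: by (C1), at most $(2^{k-1}-1)\alpha n^2$ edges lie inside $A_k^{k-1}\setminus Y_k$, and each $x\in X_k$ contributes at most $(\beta+(k-1)\alpha)n$ edges into $A_k^{k-1}\setminus Y_k$ by the definition of $L$-exceptional. If no matching $M_k$ avoiding $\bigcup_{i<k}V(M_i)$ exists, König's theorem yields a vertex cover of $H_k$ of size strictly smaller than $c_k$ outside the $O(k\alpha n)$ vertices locked by previous matchings, whose complement is an independent set $I\subseteq V(H_k)$ of size at least $\tfrac{n}{r}-O(k\alpha n)$. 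Using the Ore-type bound $\sigma(G)\ge 2(1-\tfrac{1}{r})n-2$ together with the sparsity of $H_k$, I augment $I$ by sufficiently many vertices outside $V(H_k)$ whose $A_k^{k-1}$-neighbourhoods barely meet $I$, producing an independent set of size at least $\tfrac{n}{r}+1$ in $G$.

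Verifying (C1)--(C5) for $\mathcal{P}_k$ is a bookkeeping exercise. At most $2^k\alpha n$ vertices move overall in the two phases, so $|d(v,A_j^k)-d(v,A_j^{k-1})|\le 2^k\alpha n$ for every $v$, preserving (C3) with the threshold shifted by $\alpha\cdot 2^{k-1}$. Property (C1) follows because each swap replaces a bad vertex (destroying at least $(\beta+(k-1)\alpha)n$ internal edges) with an exceptional vertex (creating at most the same), so the edge count inside $A_k^k$ grows by a negligible amount. (C4) holds by construction of $M_k$, its vertex-disjointness from previous matchings, and the fact that Phase~1 emptied $A_k$ of $Y_k$. (C5) holds because any $v\notin V(H_k)\cup S$ originally satisfied $d(v,A_k^{k-1})>(\beta+(k-1)\alpha)n$, and loses at most $2^k\alpha n$ of its $A_k$-degree in the perturbation. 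The principal obstacle is to simultaneously control all five conditions under the geometric blow-up of the error parameter from $(2^{k-1}-1)\alpha$ to $(2^k-1)\alpha$: one must either realize $M_k$ disjointly from the $O(k\alpha n)$ already-locked vertices, or else exploit its nonexistence to upgrade a large almost-independent set to a genuine independent set exceeding $\tfrac{n}{r}$. Both routes hinge on the hierarchy $\alpha\ll\beta'\ll\beta\ll\gamma_{s+1}$ chosen in~\eqref{eq:p3}.
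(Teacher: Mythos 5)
The overall inductive framework and the swap mechanism are the same as the paper's, but your argument for the existence of the matching $M_k$ contains a genuine gap that the paper resolves with a different tool, namely Lemma~\ref{MStab}.

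You propose to argue via König's theorem that if no matching of size $c_k$ exists in $H_k$, then a small vertex cover exists and its complement is a large independent set. There are two problems. First, König's theorem requires $H_k$ to be bipartite, but $H_k = G[(A_k^{k-1}\cup X_k)\setminus Y_k]$ need not be bipartite. What does hold for a general graph is the weaker bound $\alpha(H_k)\ge |V(H_k)| - 2\nu(H_k)$, so if $\nu(H_k) < c_k$ you get an independent set of size at least $\tfrac{n}{r} - c_k + 2$. Since $c_k$ can be as large as $\Theta(\alpha n)$, this falls well short of $\tfrac{n}{r}+1$ whenever $c_k\ge 2$, and your proposed ``augmentation'' by vertices outside $V(H_k)$ is not carried out and is very unlikely to work: the Ore condition forces non-adjacent pairs to have high total degree, making it hard to add $\Omega(\alpha n)$ pairwise non-adjacent vertices to an already near-maximum independent set. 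The paper sidesteps this entirely: for $c_k\ge 2$, the bounds $\delta(G[H_k])\ge c_k-1$ (from $d_G(v)\ge(1-\tfrac{1}{r})n-1$ on $V(H_k)$) and $\Delta(H_k)\le(\beta+(2^{k-1})\alpha)n$ let it invoke Lemma~\ref{MStab} to produce a matching of size $c_k$ outright; the independent-set alternative only arises in the $c_k=1$ case, where $V(H_k)$ itself is the required set of size $\tfrac{n}{r}+1$.

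Two further omissions: you do not verify (C4) and (C5) for the indices $i<k$ that were already processed. The paper handles these by combining the induction hypothesis with Fact~\ref{ExFact} (that vertices of degree at least $(1-\tfrac{1}{r})n-1$ are exceptional for at most one part), which shows $V(H_i)\cap V(H_j)=\emptyset$ for $i\ne j$ so that the previous matchings and partition structure are untouched in step $k$; without this argument, the claim that ``Phase~1 emptied $A_k$ of $Y_k$'' is not enough to preserve (C4)/(C5) for $i<k$. You also do not address (C2), though that part is routine.
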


\begin{proof}[Proof of Claim \ref{A2}]
We prove this claim by induction on $k$. We first consider the case that $k=1$. Observe that for each $i\in [s]$, the number of vertices exchanged from $\mathcal{P}_0$ to $\mathcal{P}_1$ within $A_i^1$ (resp. $B^1$) is bounded by $|V_{ex}^L(\mathcal{P}_0,\beta,1)|\le  \alpha n$. Thus, under the partition $\mathcal{P}_1$, for each $i\in[s]$, we have 
$$
e(G[A_i^1])\leq \gamma_in^2+\alpha n \cdot \frac{n}{r}\le \alpha n^2,
$$
and for any set $B'\subseteq B^1\setminus S$ of  size $\frac{n}{r}$, we have 
$$
e(G[B'])\ge \gamma_{s+1} n^2-\alpha n\cdot \frac{n}{r}\geq (\gamma_{s+1}-\alpha)n^2.
$$
Therefore, $A_i^1$ is an $\alpha$-independent set of size $\frac{n}{r}$ for each $i\in [s]$,  and $G[B^1\setminus S]$  contains no  $(\gamma_{s+1}-\alpha)$-independent set of size $\frac{n}{r}$. Hence, \ref{B01} and \ref{B02} hold. 

For each vertex $v \in V(G)$, it is straightforward to check that for each $i\in [s]$, 
\begin{align}\label{DV}
    d(v,A_i^0)-\alpha n \le d(v,A_i^1)\le  d(v,A_i^0)+\alpha n.
\end{align}
Therefore, if $v$ is not moved in the first step and $v\notin V_{ne}(\mathcal{P}_0,\beta',i)\cup V_{b}(\mathcal{P}_0,\beta,i)$, then 
$v\notin V_{ne}(\mathcal{P}_1,\beta'+\alpha,i)\cup V_{b}(\mathcal{P}_1,\beta+\alpha,i)$. 
Thus, for each $i\in [s]$ we have 
$$|V_{ne}(\mathcal{P}_1,\beta'+\alpha, i)|\le |V_{ne}(\mathcal{P}_0,\beta',i)|+\alpha n \le 2\alpha n
\text{~and~} |V_{b}(\mathcal{P}_1,\beta+\alpha,i)|\le |V_{b}(\mathcal{P}_0,\beta,i)|+\alpha n \le 2\alpha n.$$
Hence \ref{B03} holds. 

Note that $|V(H_1)|=\frac{n}{r}+c_1$. If $c_1= 1$, then either $H_1$ has a matching $M_1$ of size $c_1$ or $V(H_1)$ is an independent set of size $\frac{n}{r}+1$. 
If $c_1\ge 2$, then since  $d_G(v)\ge (1-\frac{1}{r})n-1$ for each $v\in V(H_1)$, we have $\delta(G[H_1])\ge c_1-1$.  
Recall that $H_1=G[(A^0_1\cup X_1)\setminus Y_1]$, \eqref{DV} implies that for each $v\in V(H_1)$ we have 
$$d_{H_1}(v)\le \beta n+|V_{ex}^L(\mathcal{P}_0, \beta,1)|\le (\beta+\alpha)n,$$
 that is, $\Delta(H_1)\le(\beta+\alpha)n$. 
 Combining with Lemma~\ref{MStab}, one has that $H_1$ contains a matching $M_1$ of size $c_1$. Notice that 
the number of matching edges of $M_1$ within $A_1^0\cup \{x_1^1,\ldots,x_{t_1}^1\}$ is the same as the number of matching edges of $M_1$ within $X_1\setminus \{x_1^1,\ldots,x_{t_1}^1\}$. 
Consequently, we can perform vertex exchanges in $V(H_1)$ to ensure that under the partition $\mathcal{P}_1$, every $e \in E(M_1)$ has exactly one endpoint in $A_1^1$ (see Figure~\ref{fig:exchange-matching}). This can be achieved by exchanging at most $c_1$ vertices. 
Furthermore, if $M\ne \emptyset$, then we have $A_1^1\subseteq V(H_1)$, that is, $V_b(\mathcal{P}_1, \beta+\alpha,1)=\emptyset$.
Thus, \ref{B05} holds. 
\begin{figure}[ht]
\centering
\begin{tikzpicture}[
    node distance=1.5cm,
    set/.style={shape=ellipse, draw, minimum width=0.7cm, minimum height=1.8cm},
    smallset/.style={ellipse, draw, minimum width=0.23cm, minimum height=0.9cm},
    label/.style={above, font=\footnotesize},
    point/.style={circle, fill=black, inner sep=1pt},
    box/.style={draw, rectangle, minimum width=5.2cm, minimum height=3cm},
    verysmallset/.style={ellipse, draw, minimum size=0pt,  
    inner sep=0pt,     
    outer sep=0pt, minimum width=0.2cm, minimum height=0.5cm},
]

\node[set, fill=blue!1] (A1) at (2,0) {};
\node[label] at (2,-1.5) {$A_1^0$};
\node[point] (b1) at (2,0.3) {};
\node[label] at (2,0.28) {$u_1$};
\node[point] (b2) at (2,-0.3) {};
\node[label] at (2,-0.7) {$v_1$};
\node[set, fill=blue!1] (A2) at (3,0) {};
\node[point] (b3) at (3,0) {};
\node[label] at (3,-0.02) {$u_2$};
\node[label] at (3,-1.5) {$A_i^0$};
\node[set, fill=blue!1] (A3) at (4,0) {};
\node[label] at (4,-1.5) {$A_j^0$};
\node[point] (b4) at (4,0) {};
\node[label] at (4,-0.02) {$v_2$};
\draw (b1)--(b2) (b3)--(b4);

\node[label] at (4.8,-0.2) {$\Longrightarrow$};

\node[set, fill=blue!1] (A1) at (5.6,0) {};

\node[label] at (5.6,-1.5) {$A_1^1$};
\node[point] (b1) at (5.6,0.3) {};
\node[label] at (5.6,0.28) {$u_1$};
\node[point] (b2) at (5.6,-0.3) {};
\node[label] at (5.6,-0.7) {$v_2$};
\node[set, fill=blue!1] (A2) at (6.6,0) {};
\node[point] (b3) at (6.6,0) {};
\node[label] at (6.6,-0.4) {$u_2$};
\node[label] at (6.6,-1.5) {$A_i^1$};
\node[set, fill=blue!1] (A3) at (7.6,0) {};
\node[label] at (7.6,-1.5) {$A_j^1$};
\node[point] (b4) at (7.6,0) {};
\node[label] at (7.6,-0.02) {$v_1$};
\draw (b1)--(b4) (b3)--(b2);
\end{tikzpicture}
\caption{Here, $u_1v_1,u_2v_2\in E(M_1)$ with  $u_1,v_1\in A_1^0\cup \{x_1^1,\ldots,x_{t_1}^1\}$, $u_2,v_2\in X_1\setminus \{x_1^1,\ldots,x_{t_1}^1\}$.    
    Exchange $v_1$ and $v_2$ to guarantee that every edge in $M_1$ has exactly one endpoint in $A_1^1$.}
    \label{fig:exchange-matching}
    \end{figure}
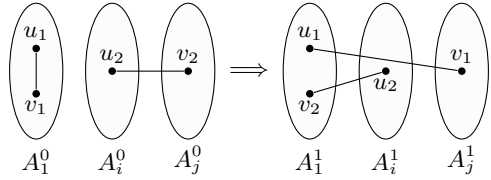

Let $v$ be a vertex in $V(G)\setminus (V(H_1)\cup S)$. If $d(v,A_1^0)\ge \beta n$, then $d(v,A_1^1)\ge (\beta-\alpha)n$, that is, there are no $(\beta-\alpha, i)$-exceptional vertices in $V(G)\setminus V(G)\setminus (V(H_i)\cup S)$ for partition $\mathcal{P}_1$. Thus, \ref{B04} holds.
Hence, \ref{B01}-\ref{B04} hold when $k=1$. Next, we assume that \ref{B01}-\ref{B04} hold for $ k-1$, and we will prove that $\mathcal{P}_k$ satisfies all of them.

By induction, for each $i\in [s]$, we know that $A_i^{k-1}$ is a $(2^{k-1}-1) \alpha$-independent set and $$\big|V_{ex}(\mathcal{P}_{k-1},\beta,i)\big|\le \big|V_{ne}^L(\mathcal{P}_{k-1},\beta+(2^{k-1}-1)\alpha, i)\big|\le 2^{k-1}\alpha n.
$$
Hence, at most $2^{k-1}\alpha n$ vertices are exchanged in the $k$-th step. Thus, we have 
\begin{align}\label{DegreeC}
    d(x,A_i^{k-1})-2^{k-1} \alpha n \le d(x,A_i^k)\le d(x,A_i^{k-1})+2^{k-1} \alpha n.
\end{align}
By an argument analogous to that for $\mathcal{P}_1$, \ref{B01}-\ref{B03} also hold for $\mathcal{P}_k$. Hence, it remains to prove \ref{B05} and \ref{B04}.

For \ref{B05}, the same method as for $k=1$ yields an independent set of size $\frac{n}{r}+1$ or a 
matching $M_k$ of size $c_k$ in $H_k$ such that $V_b(\mathcal{P}_k, \beta+(2^k-1)\alpha, k) = \emptyset$ whenever $M_k\neq \emptyset$. By induction, for each $i \in [k-1]$ with $M_i \ne \emptyset$, there are no $(\beta +(2^{k-1}-1)\alpha,i)$-bad vertices in $A_{i}^{k-1}$. Given that \( d(v) \ge \left(1 - \frac{1}{r}\right)n - 1 \) for every \( v \in V(G) \setminus S \), and by Fact~\ref{ExFact}, it follows that no vertex  in \(A_i \) is exchanged in the \( k \)-th step. Therefore, \( V_b\bigl(\mathcal{P}_k, \beta + (2^k - 1)\alpha, i\bigr) = \emptyset \), and every edge in \( M_i \) has exactly one endpoint in \( A_i^k \). 
Moreover, by induction, the inequality \eqref{DegreeC} implies that 
$$d(x_i,A^k_i)\le (\beta+(2^{k-1}-1)\alpha)n+2^{k-1}\alpha n\le (\beta+(2^k-1)\alpha) n$$ 
for each $i \in [k-1]$ and each $x_i\in V(H_i)$. For $H_k$ with any $x_k\in V(H_k)$, we have 
$$d(x_k,A^k_k)\le (\beta+(k-1)\alpha)n+2^{k-1}\alpha n\le (\beta+(2^k-1)\alpha )n.$$
Combining with Fact \ref{ExFact}, since $d(v)\ge (1-\frac{1}{r})n-1$ for any $v\in V(G)\setminus S$, we have that $V(H_i)\cap V(H_j)=\emptyset$ for all $i\ne j\in [k]$, which implies  $V(M_i)\cap V(M_j)=\emptyset$ for all $i\ne j\in [k]$. Thus, \ref{B05} holds.

By a similar argument, we have that there are no $(\beta-(2^{k}-1)\alpha,k)$-exceptional vertices in $V(G)\setminus(V(H_k)\cup S)$. For each $i\in [k-1]$, let $v\in V(G)\setminus (V(H_i)\cup S)$. By induction, we have $d(v,A_i^{k-1})\ge (\beta -(2^{k-1}-1)\alpha)n$. Thus, \eqref{DegreeC} implies that 
$$d(v,A_i^{k})\ge (\beta -(2^{k-1}-1)\alpha)n-2^{k-1}\alpha n\ge (\beta-(2^{k}-1)\alpha)n.$$ Hence, \ref{B04} holds. 
\end{proof}

By Claim \ref{A2}, either $G$ has an  independent set of size $\frac{n}{r}+1$, or it admits an $(r,s)$-good partition. In the former case, the proof is complete. We may thus assume the latter, and let $\mathcal{Q}:=\{A_1,\ldots,A_s,B\}$ be the partition $\mathcal{P}_s$ obtained by Claim \ref{A2}. We will show that $\mathcal{Q}$ is the desired good partition. Clearly, \ref{B1} follows by \ref{B01} and the fact that $(2^s-1)\alpha \le \sqrt{\alpha}$. By \ref{B05}, there exist $s$ vertex-disjoint matchings $M_1,\dots, M_s$. 
 Thus \ref{B05} and \ref{B04} implies \ref{B4}.

It is routine to check that for two constants  $\mu_1,\mu_2$ with  $\mu_1\le \mu_2$, for each $i\in [s]$ we have  
$$
|V_{ne}(\mu_1,i)|\ge |V_{ne}(\mu_2,i)|,\,|V_{b}(\mu_1,i)|\ge |V_{b}(\mu_2,i)|\ \text{and}\ |V_{ex}(\mu_2,i)|\ge |V_{ex}(\mu_1,i)|.
$$ 
Hence, \ref{B3} holds  by \ref{B03}.

Finally, we prove \ref{B2} by considering the following two cases: $|B\setminus S|\ge \frac{n}{r}$ and $|B\setminus S| < \frac{n}{r}$. We start with $|B\setminus S| \ge \frac{n}{r}$. Let $B' \subseteq B$ be an arbitrary  subset of size $\frac{n}{r}$,  
and let $C\subseteq B\setminus (S\cup B')$ be a set with size $|B'\cap S|$. If $|B'\cap S|\geq \gamma_{s+1}n$, then 
$$
e(G[B'])\geq e(G[B' \cap S])\geq \frac{\gamma_{s+1}^2}{4}n^2.
$$
If $|B'\cap S|<\gamma_{s+1}n$, then 
\begin{align*}
    e(G[B'])-e(G[(B'\setminus S)\cup C])&\geq e(G[B'\setminus S,B'\cap S])-e(G[B'\setminus S,C])\geq -|B'\setminus S||C|\geq -\frac{n}{r}\cdot \gamma_{s+1}n.
\end{align*}
Therefore, by \ref{B02} we have 
$$
e(G[B'])\geq e(G[(B'\setminus S)\cup C])-\frac{n}{r}\cdot \gamma_{s+1}n\geq (\gamma_{s+1}-(2^s-1)\alpha)n^2-\frac{n}{r}\cdot \gamma_{s+1}n 
    \geq \frac{\gamma_{s+1}^2}{4}n^2.
$$
That is, $B'$ is not a $\frac{\gamma_{s+1}^2}{4}$-independent set in $G$.



Now, we consider that $|B\setminus S|<\frac{n}{r}$. It is easy to see that if $|B\setminus S|\leq \frac{n}{2r}$, then \ref{B2} holds immediately. Hence we may assume that $|B\setminus S|> \frac{n}{2r}$ in the following. Since $|B|\ge \frac{2n}{r}$, we obtain $|S|> \frac{n}{r}$. Let $v$ be a vertex in $S$. If  $v$ is $(2\beta',i)$-excellent for all $i\in [s]$, then $d(v,B\setminus S)<2r \beta' n$, as $d(v)< (1-\frac{1}{r})n -2$. It follows that 
$$e(G[B\setminus S,S])\le d(v,B\setminus S)\cdot |S|+|B\setminus S|\cdot \Big|\bigcup_{i\in [s]}V_{ne}(2\beta',i)\Big|\leq 2r\beta' n\cdot |S|+\frac{n}{r}\cdot r  \sqrt{{\alpha}}n=2r\beta' n\cdot |S|+\sqrt{{\alpha}}n^2.$$
Thus, we obtain that
\begin{align*}
    2e(G[B\setminus S])&\geq \Big(\big(1-\frac{1}{r}\big)n-1-\frac{r-2}{r}n\Big)|B\setminus S|-e(G[S,B\setminus S])\\
    &\geq \Big(\frac{n}{r}-1\Big)\frac{n}{2r}-2r\beta'n \Big(\frac{n}{r}+\beta n+1\Big)-\sqrt{\alpha}n^2\\
    &\geq \frac{\gamma_{s+1}^2}{2}n^2.
\end{align*}
Therefore, any subset of $B$ with size $\frac{n}{r}$ is not a $\frac{\gamma_{s+1}^2}{4}$-independent set in $G$. Hence \ref{B2} holds. 
\end{proof}

\section{Proof of Lemma \ref{K_r-tiling}}\label{section7}


With a good partition of $G$ provided by Lemma~\ref{Lpartition}, our task in this section is to
construct a $K_r$-tiling that covers all non-excellent vertices under the partition. Throughout, we assume the following hypothesis:

\begin{enumerate}
[label =$(\dag)$]
    \item\label{assumption} Given the parameters defined in \eqref{eq:p1}-\eqref{eq:p3}, let $G$ be an $n$-vertex graph with
$\sigma(G)\ge 2\left(1-\frac{1}{r}\right)n-2$ 
and suppose $G$ has a $\gamma$-independent set of size $\frac{n}{r}$. Let $\mathcal{Q}:=\{A_1,\ldots,A_s,B\}$ be an $(r,s;\gamma_s,\gamma_{s+1};S)$-good partition of $G$ for some $s\in[r]$, where $S:=\{x\in V(G):d(x)<(1-\frac{1}{r})n-1\}.$
\end{enumerate}

We first define some necessary notation and provide several properties of the $(r,s;\gamma_s,\gamma_{s+1}; S)$-good partition $\mathcal{Q}$. Define 
\begin{align*}
    &V_{ex}(\beta):=\bigcup_{i=1}^{s}V_{ex}(\beta,i),\ V_{ex}^S(\beta):=V_{ex}(\beta)\cap S\ \text{and}\ V_{ex}^L(\beta):=V_{ex}(\beta)\setminus V_{ex}^S(\beta);\\
    &V_e{(2\beta')}:=\bigcap_{D\in \mathcal{Q}} (V_e(2\beta',D)\cup D),\ V_e^S{(2\beta')}:=V_e{(2\beta')}\cap S\ \text{and}\ V_e^L{(2\beta')}:=V_e{(2\beta')}\setminus V_e^S{(2\beta')}.
\end{align*}
Let $V_{ne}(2\beta'):=V(G)\setminus V_e(2\beta')$.  
We claim that 
$$
|V_{ne}(2\beta',B)|\le \alpha^{1/4} n.
$$
Otherwise, suppose that $|V_{ne}(2\beta',B)|> \alpha^{1/4} n$. By the Pigeonhole Principle, there exists some $i\in [s]$ such that there are at least $\frac{\alpha^{1/4} n}{s}$ vertices $v\in A_i$ satisfying $d(v,B)<|B|-2\beta' n$. Since $d(v)\ge (1-\frac{1}{r})n-1$ for each $v\in A_i$, we obtain that 
$$|E(G[A_i])|\geq \frac{1}{2}\cdot\frac{\alpha^{1/4} n}{s} \cdot (2\beta'n-1)\ge \sqrt{\alpha}n^2,$$ 
which contradicts \ref{B1} of Definition \ref{Partition}. Combining with \ref{B3}, we get
\begin{align}\label{size-of-ne}
    |V_{ne}(2\beta')|\leq 2\alpha^{1/4}n.
\end{align}

To prove Lemma \ref{K_r-tiling}, we employ a critical structure, called a \textit{base}, that covers all non-excellent vertices and whose extension property ensures the existence of a vertex-disjoint $K_r$-tiling.


\begin{definition}[Base]\label{Base}
Given integers $n,r,s$  with $n\gg r\ge 3$ and a positive constant $\xi$, let $G$ be an $n$-vertex graph and $\mathcal{P}$ be an $(r,s)$-partition of $G$. 
\begin{itemize}
    \item We say a clique $C$ is a \textit{$\xi$-base} in $G$ if for each  $D\in\mathcal{P}$ we have 
\begin{enumerate}
[label =\rm  (D\arabic{enumi})]
    \item\label{C01} $|V(C)\cap D|\le \frac{r|D|}{n}$,
    \item\label{C02}  $|N(V(C))\cap D|\ge |D|-(|V(C)\cap D|+1)\cdot \frac{n}{r} +\xi n$.
\end{enumerate}

\item We say a pair of vertex-disjoint cliques $C_0$ and $C_1$ is a \textit{$(2,\xi)$-base} in $G$ if there exist two  different parts $D_0, D_1\in \mathcal{P}$ satisfying the following conditions.
\begin{enumerate}
[label =\rm  (D\arabic{enumi})]
\addtocounter{enumi}{2}
    \item\label{C03} For each $i\in\mathbb{Z}_2$, we have $|V(C_i)\cap D_i|=\frac{r|D_i|}{n}+1$ and  $|V(C_i)\cap D|\le \frac{r|D|}{n}$ for each $D\in \mathcal{P}\setminus\{D_{i}\}$.
    \item\label{C04} For each $i\in \mathbb{Z}_2$, \ref{C02} holds for all $D\in \mathcal{P}\setminus \{D_{i+1}\}$, and 
    $$|N(V(C_i))\cap D_{i+1}|\ge |D_{i+1}|-(|V(C)\cap D_{i+1}|+2)\cdot \frac{n}{r}+\xi n.$$
\end{enumerate}
\end{itemize}
\end{definition}

A collection $\mathcal{H}$ of vertex-disjoint subgraphs of $G$ is called a \textit{$\xi$-base set} if for each $H \in \mathcal{H}$, there exists a parameter $\tau \geq \xi$ for which $H$ is either a $\tau$-base or a $(2, \tau)$-base. 
Here, \ref{C02} and \ref{C04} provide the fundamental criterion for extending a base structure to a copy of $K_r$, while \ref{C01} and \ref{C03} ensure that after extending the base set to a $K_r$-tiling, the partition $\mathcal{P}$ retains its $(r,s)$-partition property in the remaining graph.

In the following, we split the proof of Lemma \ref{K_r-tiling} into two parts. In Section \ref{section7.1}, we construct a $\beta/4$-base set that covers all vertices in $V_{ne}(2\beta')$. In Section \ref{section7.2}, we extend the base set to a $K_r$-tiling satisfying \ref{L2}. 

\subsection{Covering vertices in  $V_{ne}(2\beta')$}\label{section7.1}

Denote  $|V_{ex}^S(\beta/2,i)|=s_i$ for each $i\in [s]$. Let $f(x,y)$ be a function of $x$ and $y$ such that 
\begin{equation*}
f(x,y)= \left\{
\begin{array}{ll}
	1 & x=1, \\
     \left \lceil x/(y+1)\right \rceil +1 & x\ge 2 .
\end{array}\right.
\end{equation*}
Notice that $V_{ne}(2\beta')$ can be partitioned as follows:
$$
V_{ne}(2\beta')=V_{ex}(\beta/2)\cup (V_{ne}(2\beta')\setminus V_{ex}(\beta/2)). 
$$
The subsequent lemma constructs a base set that covers all vertices in $V_{ex}(\beta/2)$.


\begin{lemma}\label{Exceptional}
     Suppose that \ref{assumption} holds. 
     Then there exists a ${\beta}/{4}$-base set $\mathcal{F}$ in $G$ that covers all vertices in $V_{ex}(\beta/2)$.

{Moreover, for any $i \in [s]$ with $s_i > 0$, there exist families $\mathcal{F}_i$ and $\mathcal{F}'_i$ of vertex-disjoint subgraphs, with $V_{ex}^S(\beta/2,i)\subseteq V(\mathcal{F}_i)$, $|\mathcal{F}_i| = \lceil s_i/(r-s+1) \rceil$  and $|\mathcal{F}'_i| = f(s_i, r-s)$, such that each $F \in \mathcal{F}_i$ and $F' \in \mathcal{F}'_i$ satisfy that $F \cup F'$ is a $(2, \beta/4)$-base in $G$ with $D_0 = B$ and $D_1 = A_i$.}

\end{lemma}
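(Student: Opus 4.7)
The plan is to split $V_{ex}(\beta/2) = V_{ex}^L(\beta/2) \sqcup V_{ex}^S(\beta/2)$ and cover each piece separately by $\beta/4$-bases, exploiting the abundance of excellent vertices: $|V_e(2\beta')| \geq n - 2\alpha^{1/4}n$ by \eqref{size-of-ne}.

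For each $v \in V_{ex}^L(\beta/2, i)$ I would first refine the matching $M_i$ from \ref{B4} so that $v$'s partner $u \in A_i$ lies in $V_e(2\beta')$; this is possible by an augmenting-path argument since $|V_{ex}^L(\beta/2, i)| \leq \sqrt{\alpha}\,n$ while $|A_i \setminus V_e(2\beta')| \leq 2\alpha^{1/4}n$. Then $\{u, v\}$ is a $\beta/4$-base: condition \ref{C01} is immediate, and the non-trivial part of \ref{C02}, namely $|N(\{u,v\}) \cap A_j| \geq \beta n/4$ for $j \neq i$ and $|N(\{u,v\}) \cap B| \geq |B| - n/r + \beta n/4$, follows by inclusion-exclusion, combining the excellence of $u$ (so $d(u,D) \geq |D| - 2\beta' n$ for $D \neq A_i$), Fact~\ref{ExFact} applied to $v$ (giving $d(v, A_j) > (\beta/2)n$ for $j \neq i$), and the elementary bound $d(v, B) \geq |B| - (\beta/2)n - 1$ derived from $d(v) \geq (1-1/r)n - 1$ together with $d(v, A_i) \leq (\beta/2)n$.

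For $i$ with $s_i > 0$, set $T_i := V_{ex}^S(\beta/2, i) \subseteq B \cap S$; by Fact~\ref{clique}, $T_i$ is a clique. I partition $T_i$ into $\lceil s_i/(r-s+1)\rceil$ groups of size $\leq r-s+1$, pad each group to exactly $r-s+1$ vertices in $B$ (using further vertices of $S \cap B$ or excellent $B$-vertices adjacent to all current $S$-members), and adjoin one vertex from each $A_j$ with $j \neq i$ in the common neighborhood of the current $B$-part, producing the clique $F \in \mathcal{F}_i$. With $|V(F) \cap B| = r-s+1$ and $|V(F) \cap A_j| = 1$ for $j \neq i$, every right-hand side in \ref{C03}/\ref{C04} for $C_0 = F$ becomes non-positive and hence vacuous, so the only nontrivial point is the existence of such adjoined $A_j$-vertices; this follows from Ore, which forces any non-neighbor $w \in A_j$ of some $u \in V(F) \cap S$ to satisfy $d(w) \geq (1-1/r)n$, and a count against $|V_{ne}(2\beta', j)| \leq \sqrt{\alpha}\,n$ bounds the total number of such $w$'s strictly below $|A_j|$.

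For $\mathcal{F}'_i$ I need $f(s_i, r-s)$ vertex-disjoint $K_2$-edges in $G[A_i \cap V_e(2\beta')]$, each extended by one vertex from each other part to a clique $F'$ with $|V(F') \cap A_i| = 2$, $|V(F') \cap B| = r-s-1$, and $|V(F') \cap A_j| = 1$ for $j \neq i$; these intersection sizes again make every inequality in \ref{C03}/\ref{C04} for $C_1 = F'$ vacuous. If $V_b(2\beta, i) \neq \emptyset$, some $A_i$-vertex has $\geq 2\beta n$ internal neighbors and excellent edges abound after excising the $O(\alpha^{1/4}n)$ non-excellent vertices; otherwise $V_{ex}^L(\beta/2, i) = \emptyset$ by \ref{B4} and an Ore-based analysis between $T_i$ and $A_i$ supplies the required edges. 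The main obstacle I foresee is the common-neighborhood argument for $F$: since $V(F) \cap S$ consists of low-degree vertices the naive inclusion-exclusion is too weak and the Ore-plus-$|V_{ne}|$ count above is essential. Finally, vertex-disjointness of the whole family $\mathcal{F}$ (the union of all $\mathcal{F}_i$, $\mathcal{F}'_i$, and the Part-1 bases) and the size bound $|V(\mathcal{F})| \leq 5r\alpha^{1/4}n$ follow by greedy selection inside the $n - O(\alpha^{1/4}n)$-sized slack of $V_e(2\beta')$.
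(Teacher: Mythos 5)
Your overall decomposition of $V_{ex}(\beta/2)$ into $V_{ex}^L$ and $V_{ex}^S$, and the idea of covering the former with matching edges and the latter with paired families $\mathcal{F}_i$, $\mathcal{F}'_i$, matches the paper's plan. But two parts of the $V_{ex}^S$ construction contain real gaps.

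First, your case split for $\mathcal{F}'_i$ on $V_b(2\beta,i)\neq\emptyset$ versus $V_b(2\beta,i)=\emptyset$ is too coarse. Each base in $\mathcal{F}'_i$ that uses a bad vertex consumes one vertex from $V_b(2\beta,i)$, and you need $f(s_i,r-s)$ vertex-disjoint bases. If $0<|V_b(2\beta,i)|<f(s_i,r-s)$, your "bad vertex has $\ge 2\beta n$ internal neighbors, so excellent edges abound" step produces at most $|V_b(2\beta,i)|$ disjoint bases, since every such edge reuses a bad vertex as endpoint; and $A_i$ being $\sqrt\alpha$-independent means non-bad vertices have fewer than $2\beta n$ internal neighbors, so you cannot assume a large independent matching among them. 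The paper's Claim~\ref{com-2} therefore splits on $|V_b(2\beta,i)|\ge f(s_i,r-s)$ versus $<f(s_i,r-s)$, and in the deficient regime supplements the bad-vertex bases with a matching $M_i^*$ in $G[A_i\setminus(V_b(2\beta,i)\cup V(M_i))]$, produced by a nontrivial Ore-plus-$V_{ne}$ counting argument showing that vertices $v\in A_i$ with a non-neighbor in $V_{ex}^S(\beta/2,i)$ satisfy $d(v)\ge(1-1/r)n$ and hence $d(v,A_i)\ge\lceil 8s_i/9\rceil$. Your phrase "an Ore-based analysis between $T_i$ and $A_i$ supplies the required edges" gestures at this but only when $V_b=\emptyset$; the mixed regime is not addressed.

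Second, for the construction of $\mathcal{F}_i$ your padding step — "pad each group to exactly $r-s+1$ vertices in $B$ using further vertices of $S\cap B$ or excellent $B$-vertices adjacent to all current $S$-members" — does not establish that the padded set spans a clique. Adjacency of padding vertices to the $T_i$-part does not give adjacency among the padding vertices themselves. The paper resolves this by invoking Proposition~\ref{SuperT} inside $B_{T_i}:=\big(N(T_i,B)\cap V_e(2\beta')\big)\setminus V(\bigcup_{k<i}\mathcal{F}_k)$, using the Ore condition inherited by $G[B_{T_i}]$ and the absence of a $\tfrac{\gamma_{s+1}^2}{10}$-independent set of size $|B_{T_i}|/(r-s)$ to extract a $K_{r-s}$ that completes the $K_{r-s+1}$; this is the ingredient your sketch omits. (A secondary remark: you extend $F$ and $F'$ to full $K_r$'s, which makes all \ref{C03}/\ref{C04} inequalities vacuous. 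This formally satisfies the $(2,\beta/4)$-base definition, but the paper deliberately keeps $\mathcal{F}_i\subseteq(B\cap V_e(2\beta'))\cup V_{ex}^S(\beta/2,i)$ and $\mathcal{F}'_i$ as edges or triangles, and this slack is exploited in the later exchange arguments of Lemma~\ref{K_r-tiling}, Case~3.)
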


\begin{proof}
Since $\mathcal{Q}:=\{A_1,\ldots,A_s,B\}$ is an $(r,s;\gamma_s,\gamma_{s+1};S)$-good partition of $G$, \ref{B1}-\ref{B4} in Definition \ref{Partition}  hold.  Let $M_i$ be the matching given in Definition~\ref{Partition}~\ref{B4}. Recall that $V_{ex}(\beta/2)=V_{ex}^L(\beta/2) \cup V_{ex}^S(\beta/2)$. We will construct two  vertex-disjoint $\beta/4$-base sets $\mathcal{F}^L$ and $\mathcal{F}^S$ covering $V_{ex}^L(\beta/2)$ and $V_{ex}^S(\beta/2)$, respectively. The subsequent  claim considers $\mathcal{F}^L$. 

\begin{claim}\label{claim-matching}
Let $M_i$ with $i\in [s]$ be the matching given in Definition \ref{Partition} \ref{B4}. Define $
\mathcal{F}^L:=\bigcup_{i\in [s]}E(M_i).
$ 
Then $\mathcal{F}^L$ is a $\beta/4$-base set that covers $V_{ex}^L(\beta/2)$. 
\end{claim}
\begin{proof}[Proof of Claim \ref{claim-matching}]
It is clear that $\mathcal{F}^L$ covers all vertices in $V_{ex}^L(\beta/2)$ and any two elements of $\mathcal{F}^L$ are vertex-disjoint. Let $uv$ be an arbitrary edge in $\mathcal{F}^L$. 
It suffices to show that $uv$ is a ${\beta}/{4}$-base in $G$. Without loss of generality, assume that $u\in V_{ex}^L(\beta/2,i)$ for some $i\in [s]$. Then, $d(u,A_i)\le \beta n/2$ and $d(v,A_i)\le 2\beta n.$

By Definition \ref{Partition} \ref{B4}, \ref{C01} holds obviously. As $u,v\notin S$, one has  
$$d(u, V(G)\setminus A_i)+d(v,V(G)\setminus A_i)\ge2\Big(1-\frac{1}{r}\Big)n-3\beta n.$$
Hence $|N(v,D)\cap N(u,D)|\ge |D|-3\beta n$ for any $D\in \mathcal{Q}\setminus\{A_i\}.$ Thus, \ref{C02} holds when  $\xi=\beta /4$.
\end{proof}




Next, we consider vertices in $V_{ex}^S(\beta/2)$, and begin by estimating the degrees of those in $V_{ne}(2\beta', i)$.
\begin{claim}\label{degree-V-S}
    If $u\in V_{ne}(2\beta',i)$ for some $i\in [s]$, then $d(u)\geq \left(1-\frac{1}{r}\right)n-2\alpha^{1/4}n.$
\end{claim}
\begin{proof}[Proof of Claim \ref{degree-V-S}]
    Since $A_i$ is a $\sqrt{\alpha}$-independent set, one has 
$$
\Big|\Big\{v\in A_i:d(v)\le \Big(1-\frac{1}{r}\Big)n+\alpha^{1/4} n\Big\}\Big|\geq \left|\left\{v\in A_i:d(v,A_i)\le \alpha^{1/4} n\right\}\right|\geq \frac{n}{r}-2\alpha^{1/4} n. 
$$
Therefore, for any $u\in V_{ne}(2\beta',i)$, there exists a vertex $v\in A_i\setminus N(u)$ such that $d(v)\le (1-\frac{1}{r})n+\alpha^{1/4} n.$ By Ore's condition, for any $u\in V_{ex}(\beta/2,i)$, one has  
\begin{align*}
    d(u)\ge 2\Big(1-\frac{1}{r}\Big)n-2-\Big(1-\frac{1}{r}\Big)n-\alpha^{1/4} n\ge \Big(1-\frac{1}{r}\Big)n-2\alpha^{1/4}n,
\end{align*} 
as desired.
\end{proof}
By Fact \ref{ExFact} and  Claim \ref{degree-V-S}, one has  $$V_{ex}^S(\beta/2,i)\cap V_{ex}^S(\beta/2,j)=\emptyset\ \text{for any distinct}\ i, j \in [s].$$
We now construct the $\beta/4$-base set $\mathcal{F}^S$, in which each base consists of two components: one for covering the vertices in $V_{ex}^S(\beta/2)$, while the other ensures that \ref{C04} is satisfied. 
The following claim describes the construction of the former one. 


\begin{claim}\label{r-scover}
 For each $i\in [s]$, there exists a family $\mathcal{F}_i$  of $\left\lceil s_i/(r-s+1)\right\rceil$ vertex-disjoint $K_{r-s+1}$ copies such that $V_{ex}^S(\beta/2,i) \subseteq V(\mathcal{F}_i)\subseteq \left(B\cap V_{e}(2\beta')\right)\cup V_{ex}^S(\beta/2,i)$. Furthermore, the families $\{\mathcal{F}_i:i\in [s]\}$ are pairwise vertex-disjoint.


    
\end{claim}
\begin{proof}[Proof of Claim \ref{r-scover}]
Let $\mathcal{F}_i=\emptyset$ if $s_i=0$ and $i\in [s]$. Next, fix an $i\in [s]$ with $s_i>0$, suppose the desired $\mathcal{F}_k$ have been constructed for all $k < i$. Note that 
$G[V_{ex}^S(\beta/2,i)]$ is a clique. Consequently, there are $\left\lfloor s_i/(r-s+1)\right\rfloor$ vertex-disjoint $K_{r-s+1}$ copies in $G[V_{ex}^S(\beta/2,i)]$. Let $T_i$ denote the set of remaining vertices in $V_{ex}^S(\beta/2,i)$ and denote $t_i:=|T_i|$, where $0\leq t_i \leq r-s$. 
If $t_i=0$, then we get the desired family  $\mathcal{F}_i$.  Next, we consider $t_i>0$. Define $B_{T_i}:=\big (N(T_i,B)\cap V_e(2\beta')\big)\setminus V(\bigcup_{k<i}\mathcal{F}_k)$.   We will find a $K_{r-s+1}\subseteq G[B_{T_i}]$ that covers $T_i$ (see Figure \ref{fig:cover-S}). 


\begin{figure}[ht!]
    \centering
    \begin{tikzpicture}[
    node distance=1.5cm,
    set/.style={ellipse, draw, minimum width=1.3cm, minimum height=2.3cm},
    smallset/.style={ellipse, draw, minimum width=1.1cm, minimum height=1.7cm},
    smallset0/.style={ellipse, draw, minimum width=1.6cm, minimum height=2.1cm},
    ssmallset/.style={ellipse, draw, minimum width=2.7cm, minimum height=1.7cm},
    sssmallset/.style={ellipse, draw, minimum width=1.1cm, minimum height=0.56cm},
    ssssmallset/.style={ellipse, draw, minimum width=0.6cm, minimum height=0.5cm},
    sssssmallset/.style={ellipse, draw, minimum width=0.8cm, minimum height=0.7cm},
    label/.style={above, font=\footnotesize},
    point/.style={circle, fill=black, inner sep=1pt},
    box/.style={draw, rectangle, minimum width=4.8cm, minimum height=3cm},
]


\node[set, fill=blue!0] (A3) at (5.2,0) {};
\node[smallset, fill=blue!5] (A4) at (5.2,-0.2) {};
\node[label] at (5.2,-0.4) {$d(v,A_i)$};
\node[label] at (5.2,-0.7) {small};
\node[label] at (5.2,-1.7) {$A_i$};
\node[point] (v) at (5.2,-0.7) {};

\node[smallset0, fill=red!20] at (7.5,0) (B) {};
\node[label] at (7.5,-1.6) {$V_{ex}^S(\beta/2,i)$};

\node[sssmallset, fill=orange!20] at (7.5,0.7) (C1) {};
\node[label] at (7.5,0.43) {$K_{r-s+1}$};
\node[sssmallset, fill=orange!20] at (7.5,0) (C2) {};
\node[label] at (7.5,0.15) {$\cdots$};
\node[label] at (7.5,-0.28) {$K_{r-s+1}$};

\node[ssssmallset, fill=orange!20] at (7.5,-0.7) (C3) {};
\node[label] at (7.5,-0.98) {$T_i$};
\draw[thick,dashed,red] (v)--(C3);
\node[label] at (6.15,-1.2) {missing};

\node[ssmallset, fill=blue!10] (A4) at (9.7,-0.5) {};
\node[box] at (8.9,-0.1) {};
\node[sssssmallset, fill=orange!30] at (9.5,-0.95) (C4) {};
\node[label] at (9.5,-1.25) {$K_{r-s}$};
\node[label] at (10.7,0.9) {$B$};
\draw[thick] (7.5,-0.45)--(9.1,0.27);
\draw[thick] (7.5,-0.95)--(9.45,-1.3);
\draw[thick] (7.5,-0.45)--(9.5,-0.60);
\node[label] at (9.7,-0.5) {$N(T_i)\cap V_{e}(2\beta')$};
\end{tikzpicture}
    \caption{Process of covering  $V_{ex}^S(\beta/2,i)$.}
    \label{fig:cover-S}
\end{figure}
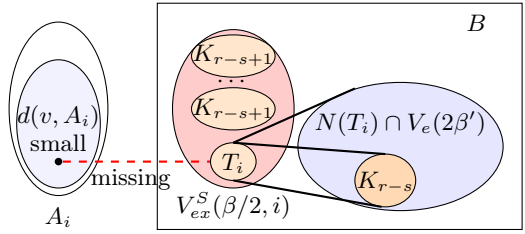

By Claim \ref{degree-V-S}, one has 
$d(u,B)\ge |B|- \beta n/2- 2\alpha^{1/4}n$ for each $u\in T_i$. 
Thus, 
\begin{align}\label{common-T}
|N(T_i,B)|\ge|B|-t_i \cdot \frac{\beta n}{2}-t_i\cdot 2\alpha^{1/4}n\ge |B|-r\cdot \beta n.
\end{align}
By Definition \ref{Partition} \ref{B3}, we get that $|B\cap V_e(2\beta')|\ge |B|-r\sqrt{\alpha} n.$ 
Then 
$$
|B_{T_i}|\ge |B|-r\beta n-r\sqrt{\alpha}n-\sum_{k<i}\Big\lceil \frac{s_k}{r-s+1}\Big\rceil\cdot (r-s+1) \ge |B|-(r+1)\beta n.
$$ 
It follows from Ore's condition that  $\sigma(G[B])\geq 2\big(1-\frac{1}{r-s}\big)|B|-2.$
Thus,
\begin{align*}
    \sigma(G[B_{T_i}])&\geq \sigma(G[B])-2(r+1)\beta n\geq 2\Big(1-\frac{1}{r-s}\Big)|B|-(2r+3)\beta n\\
    &\geq 2\Big(1-\frac{1}{r-s-1}\Big)|B_{T_i}|.
\end{align*}
Note that $G[B_{T_i}]$ contains no $\frac{\gamma_{s+1}^2}{4}$-independent set of size $\frac{n}{r}$, and hence no $\frac{\gamma_{s+1}^2}{10}$-independent set of size $\frac{|B_{T_i}|}{r-s}$. By Proposition \ref{SuperT}, we conclude that $G[B_{T_i}]$ contains $K_{r-s}$ as a subgraph. By selecting $r-s+1-t_i$ vertices from this $K_{r-s}$ and combining them with $T_i$, we obtain a $K_{r-s+1}$ that covers $T_i$.  Together with the existing $\lfloor s_i/(r-s+1)\rfloor$ copies, it  forms the desired family $\mathcal{F}_i$.
\end{proof}

Let $\mathcal{F}_i$ be the family obtained in Claim \ref{r-scover} for each $i\in [s]$. 
For all distinct $i,j\in [s]$ and each $K_{r-s+1}\in \mathcal{F}_i$, since  $V(\mathcal{F}_i)\subseteq \left(B\cap V_{e}(2\beta')\right)\cup V_{ex}^S(\beta/2,i)$,  together with Claim \ref{degree-V-S} we have 
\begin{align}\label{base-F}
    |N(V(K_{r-s+1}),A_j)|\ge |A_j|-(r-s+1)\cdot \max \bigg\{\frac{\beta}{2}n+2\alpha^{1/4}n, 2\beta' n\bigg \} \ge (1-r^2\beta)\frac{n}{r}.
\end{align}

Next, we are to construct the second component of each base in $\mathcal{F}^S$. 
\begin{claim}\label{com-2}
    For each $i\in [s]$ with $s_i>0$, there exists a family $\mathcal{F}_i'$ of vertex-disjoint subgraphs satisfying
    \begin{itemize}
        \item $|\mathcal{F}_i'|=f(s_i,r-s)$;
        \item $|V(F') \cap A_i|=2$ for each $F'\in \mathcal{F}_i'$;
        \item for any pair  $F\in \mathcal{F}_i$ and 
        $F'\in \mathcal{F}_i'$, their union $F\cup F'$ forms a $(2,\beta/4)$-base in $G$. 
    \end{itemize}
\end{claim}

\begin{proof}[Proof of Claim \ref{com-2}]
Fix an $i\in [s]$ with $s_i>0$,  suppose the desired $\mathcal{F}_k'$ have been constructed for all $k < i$. We split our proof into the following two cases. 



\medskip
\textbf{Case 1.} $|V_b(2\beta,i)|\ge f(s_i,r-s)$.
\medskip

In this case, we construct $\mathcal{F}_i'$ by utilizing vertices in  $V_{b}(2\beta,i)$. Observe that $V_{b}(2\beta, i)$ contains the following two types of vertices:
\begin{enumerate}[label =\rm  {\bf Type \arabic{enumi}}, leftmargin=6em]
    \item\label{v1} Vertices that are $(\beta/2,j)$-exceptional for some $j\in [s]\setminus \{i\}$;
    \item\label{v2} Vertices that are not $(\beta/2,j)$-exceptional for any $j\in[s].$ 
\end{enumerate}


Let $x$ be a vertex in $V_{b}(2\beta,i)$ that is of \ref{v1}, that is,  $x\in V_{ex}(\beta/2,j)$ for some $j\in [s]\setminus \{i\}$. By Definition \ref{Partition} \ref{B4}, there exists an edge $xy\in E(M_j)$ with $y\in A_j$ and  $d(x,A_j),d(y,A_j)\le 2\beta n$. Together with $x,y\notin S$, we obtain $d(\{x,y\},D)\ge |D|-4\beta n-2$ for each $D\in \mathcal{Q}\setminus \{A_j\}$. Recall that \eqref{size-of-ne} implies that at most  $4r\alpha^{1/4}n$ vertices have been used in all previously constructed bases, including those in $\mathcal{F}_k'$ with $k<s$. 
Together with $|V_b(2\beta, i)|\le \sqrt{\alpha} n$, there are at least
$$d(\{x,y\},A_i)-\big|V_{ne}(2\beta')\cup 
V_b(2\beta,i)\big|-4r\alpha^{1/4}n\ge \frac{n}{r}-5\beta n$$
vertices in $\big(V_{e}(2\beta')\setminus V_b(2\beta,i)\big)\cap N(\{x,y\},  A_i)$ that remain unused by any previous base. 
Choose $z$ to be one such vertex. 
Then for any $D\in \mathcal{Q}\setminus \{A_i, A_j,B\}$ we have  
$$d(\{x,y,z\},D) \ge |D|-4\beta n-2-2\beta'n\ge |D|-5\beta n,\ \text{and}\ d(\{x,y,z\},B) \ge |B|-4\beta n-2-\big(\frac{n}{r}+1\big).$$
Together with \eqref{base-F}, the union of $G[\{x,y,z\}]$ and any $F\in \mathcal{F}_i$ satisfy \ref{C03}-\ref{C04}, and thus forms a $(2,\beta/4)$-base in $G$.

Next, suppose $x\in V_{b}(2\beta,i)$ is of \ref{v2}. Then, $d(x,A_j)\ge \beta n/2$ for all $j\in [s]$ and $d(x,B)\ge |B|-n/r-1$. Since $|V_{ne}(2\beta')|\le 2\alpha^{1/4} n$ by \eqref{size-of-ne}, there are at least
$$d(x,A_i)-\big|V_{ne}(2\beta')\cup 
V_b(2\beta,i)\big|-4\alpha^{1/4} n\ge \frac{\beta n}{3}$$
vertices in $\big(V_{e}(2\beta')\setminus V_b(2\beta,i)\big)\cap N(x,  A_i)$ that are not used in any previous bases. 
Choose $z$ to be one such vertex. 
Then for any $D\in \mathcal{Q}\setminus \{A_i,B\}$ we have 
$$d(\{x,z\},D)\ge d(x,D)-\big(|D|-d(z,D)\big)\ge \frac{\beta n}{2}-2\beta'n\ge\frac{\beta n}{3},\ \text{and}\ d(\{x,z\},B)\geq |B|-\frac{n}{r}-3\beta'n.$$
Combining with \eqref{base-F}, the union of $G[\{x,z\}]$ and any $F\in \mathcal{F}_i$ satisfy \ref{C03}-\ref{C04}, and thus forms a $(2,\beta/4)$-base in $G$. 

As $\alpha\ll \beta$, we can construct a family  $\mathcal{F}_i'$ of $f(s_i,r-s)$ vertex-disjoint subgraphs such that for every $F'\in \mathcal{F}_i'$ and every  $F\in\mathcal{F}_i$, the  union $F\cup F'$ forms a $(2,\beta/4)$-base in $G$.  

\medskip
\textbf{Case 2.} $|V_b(2\beta,i)|< f(s_i,r-s)$.
\medskip

We claim that $G[A_i\setminus (V_b(2\beta,i)\cup V(M_i))]$ contains a matching $M^*_i$ of size $f(s_i,r-s)-|V_{b}(2\beta,i)|$. 
If so,  then for any $uv\in E(M_i^*)$ and $D\in \mathcal{Q}\setminus\{A_i\}$ we have $d(\{u,v\}, D)\ge |D|-4\beta n-2$. Therefore, \eqref{base-F} implies that the union of any edge in $M_i^*$ and any $F\in \mathcal{F}_i$ satisfy \ref{C03}-\ref{C04}, and thus forms a $(2,\beta/4)$-base in $G$. In the following, we will prove the existence of  $M_i^*$.



Notice that $e(G[V_{ex}(\beta/2,i), A_i])\le \frac{\beta}{2}n \cdot |V_{ex}(\beta/2,i)|.$ 
Therefore, 
$$\Big|\big\{v\in A_i:d(v, V_{ex}(\beta/2,i))\ge \frac{1}{9}|V_{ex}(\beta/2,i)|\big\}\Big|\leq \frac{|V_{ex}(\beta/2,i)|\beta n}{2}\cdot{\frac{1}{\frac{1}{9}|V_{ex}(\beta/2,i)|}}\le 5 \beta n.$$ 
Furthermore, by \ref{B4} one has 
$$
\Big|\{v\in A_i:d(v, V(M_i)\cap A_i) \ge \frac{8}{9}|E(M_i)|\}\Big|\leq \frac{e(G[V(M_i)\cap A_i, A_i])}{\frac{8}{9}|E(M_i)|}\leq \frac{2\beta n|E(M_i)|} {\frac{8}{9}|E(M_i)|}\le 4\beta n.
$$
Hence, there exists a set $A_i'\subseteq A_i\setminus (V_b(2\beta, i)\cup V(M_i)\cup N(V_{ex}^S(\beta/2,i)))$ of size at least $\frac{n}{2r}$ such that each vertex $v\in A_i'$ satisfying  
$$
d(v, V_{ex}(\beta/2,i))<\frac{|V_{ex}(\beta/2,i)|}{9},\ \text{and}\ 
d(v, V(M_i)\cap A_i)<\frac{8}{9}|E(M_i)|.
$$ 
Choose $v\in A_i'$. Since $v\notin N(V_{ex}^S(\beta/2,i))$, i.e., there exists a vertex $u\in S$ such that $uv\notin E(G)$, Ore's condition implies that 
$d(v)\ge (1-\frac{1}{r})n$. 
Thus, 
 \begin{align*}
     d(v, A_i)\geq \Big(1-\frac{1}{r}\Big)n-\big|V(G)\setminus (A_i\cup V_{ex}({\beta}/2,i))\big|-d(v,V_{ex}({\beta}/2,i))\ge \Big \lceil \frac{8 |V_{ex}(\beta/2,i)|}{9}\Big \rceil.
 \end{align*}
Therefore, 
\begin{align*}
    d\big(v, A_i \setminus (V_b(2\beta, i)\cup V(M_i))\big)&\ge \big(d(v, A_i)-d(v,V(M_i))\big)-|V_b(2\beta,i)|\geq \Big \lceil \frac{8 s_i}{9}\Big \rceil-|V_b(2\beta,i)|\\
    &\ge f(s_i,r-s)-|V_{b}(2\beta,i)|.
\end{align*}
Together with $|A_i'|\gg 2s_i$, there exists a matching $M_i^*$ of size $f(s_i,r-s)-|V_{b}(2\beta, i)|$ inside $G[A_i',A_i \setminus (V_b(2\beta, i)\cup V(M_i))]$. 

Note that all other existing bases can be chosen to be vertex-disjoint from all $M_i^*$.  Together with the argument in Case~1, there exists a family of $|V_b(2\beta, i)|$ vertex-disjoint subgraphs that covers $V_b(2\beta, i)$. Combining those with $M^*$ gives a family $\mathcal{F}_i'$ of size $f(s_i,r-s)$ such that for any $F'\in \mathcal{F}_i'$ and   $F\in\mathcal{F}_i$, the union $F\cup F'$ forms a $(2,\beta/4)$-base in $G$.
\end{proof}

By the above argument, we have 
$\left \lceil s_i/(r-s+1)\right\rceil =|\mathcal{F}_i|\le |\mathcal{F}_i'|=f(s_i,r-s).$ Notice that when $s_i\ge 2$ and $r-s\ge 2$, it holds that $|\mathcal{F}_i'|=|\mathcal{F}_i|+1$. Let $\mathcal{F}_i''$ be a subfamily of $\mathcal{F}_i'$ with size exactly $|\mathcal{F}_i|$. 
Thus, by combining  $\mathcal{F}_i$ and $\mathcal{F}_i''$, we obtain a  $\beta/4$-base set $\mathcal{F}_i^S$ of size $\left \lceil s_i/(r-s+1)\right\rceil$. 
Define $\mathcal{F}^S:=\bigcup_{i\in [s]}\mathcal{F}^S_i$. Clearly,  $\mathcal{F}^S$ is a $(2, \beta/4)$-base set that covers  $V_{ex}^S(\beta/2)$. 

Let $\mathcal{F}^L_*$ be a maximal subset of $\mathcal{F}^L$ where every graph is vertex-disjoint from all graphs in $\mathcal{F}^S$.  
Hence, $\mathcal{F}:=\mathcal{F}^L_*\cup \mathcal{F}^S$ is the desired $\beta/4$-base set. 
\end{proof}

Lemma \ref{Exceptional} guarantees the existence of a $\beta/4$-base set $\mathcal{F}$ covering $V_{ex}(\beta/2)$. However, vertices in $V_{ne}(2\beta')\setminus V_{ex}(\beta/2)$ may still fail to satisfy the conditions of Lemma~\ref{GHH2024}. We  
next construct a base set covering $V_{ne}(2\beta')\setminus V_{ex}(\beta/2)$. 


\begin{lemma}\label{Vne}
Suppose that \ref{assumption} holds, and let  $U\subseteq V(G)$ be a vertex subset with $|U|\le 4r\alpha^{1/4} n$. 
Then there exists a $\beta/4$-base set that covers $V_{ne}(2\beta')\setminus V_{ex}(\beta/2)$ and avoids $U$. Moreover, every vertex in this base set that does not belong to $B$ has degree at most $ (1-2\beta')n$. 


\end{lemma}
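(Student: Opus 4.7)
Let $W := V_{ne}(2\beta') \setminus V_{ex}(\beta/2)$; by~\eqref{size-of-ne}, $|W| \le 2\alpha^{1/4}n$. The initial observation is that every $v\in W$ automatically satisfies $d(v) < (1-2\beta')n$: if $D\in\mathcal{Q}$ is a part with $v\notin D$ and $d(v,D)<|D|-2\beta' n$, then $d(v)\le (n-|D|)+d(v,D)<(1-2\beta')n$. This takes care of the ``moreover'' degree condition for every non-$B$ base vertex that is forced to lie in $W$. The plan is to construct, for each $v\in W$, a small $\beta/4$-base $C_v$ (a singleton in the generic case, a $(2,\beta/4)$-base in the residual hard cases), and to assemble them greedily; pairwise vertex-disjointness and disjointness from $U$ are easy because $|W|+|U|=O(\alpha^{1/4}n)\ll \beta n$ while each base uses at most $O(r)$ vertices.

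\textbf{Generic vertices.} For $v\in (A_i\setminus V_b(2\beta,i))\cap W$ or $v\in (B\setminus S)\cap W$, the singleton $\{v\}$ is a $\beta/4$-base. Condition~\ref{C01} is immediate. For~\ref{C02}, the requirement on $D=A_j$ with $v\notin A_j$ reduces to $d(v,A_j)\ge (\beta/4)n$, which follows from $v\notin V_{ex}(\beta/2)$ (giving $d(v,A_j)>\beta n/2$). The delicate case is~\ref{C02} for $D=B$ with $v\in A_i$, asking $d(v,B)\ge |B|-n/r+(\beta/4)n$: since $v\notin S$, Ore's condition gives $d(v)\ge (1-\tfrac{1}{r})n-1$, and combined with $d(v,A_i)<2\beta n$ and $\sum_{j\neq i}d(v,A_j)\le (s-1)n/r$ we obtain $d(v,B)\ge |B|-2\beta n-1$, safely above the threshold. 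The case $v\in B\setminus S$ is analogous using $\sum_j d(v,A_j)\le sn/r$, and the condition for the part containing $v$ is trivial.

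\textbf{Hard vertices.} The remaining vertices lie in $V_b(2\beta,i)\cap W$ for some $i$, or in $S\cap W$; for these the singleton may fail~\ref{C02} for $D=B$ because $d(v,B)$ can drop to roughly $|B|-n/r$. For $v\in V_b(2\beta,i)\cap W$, I produce a $(2,\beta/4)$-base with $D_0=A_i$ and $D_1=B$ by taking $C_0=\{v,u\}\subset A_i$ (with $u\in N(v)\cap A_i$ chosen so $d(u)\le (1-2\beta')n$) and $C_1$ a copy of $K_{r-s+1}$ inside $B\cap V_e(2\beta')$. Such $u$ exists because $|N(v)\cap A_i|\ge 2\beta n$ while the ``almost-full'' set $H_i:=\{x\in A_i : d(x)>(1-2\beta')n\}$ has size $O(r\alpha n)$: any $x\in H_i$ is forced to satisfy $d(x,A_i)\ge n/r-1-2\beta'n$, and the $\sqrt{\alpha}$-independence of $A_i$ caps $|H_i|$. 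The existence of the required $K_{r-s+1}\subset B\cap V_e(2\beta')$ follows from~\ref{B2} combined with Proposition~\ref{SuperT} applied to $G[B]$. Conditions~\ref{C03}--\ref{C04} are then verified via the excellence of $C_1$'s vertices and the loosened ``$+2$'' bound in~\ref{C04} for $C_0$ at $D=B$. For $v\in S\cap W$, I exploit that $G[S]$ is a clique contained in $B$: when $|B|=n/r$ the singleton $\{v\}$ suffices trivially, and otherwise I augment $v$ with further vertices of $S\setminus U$ to build a clique whose common neighborhoods in each $A_j$ and in $B$ satisfy the $\beta/4$-base conditions.

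\textbf{Main obstacle.} The crux is the hard case: simultaneously securing, for each $v\in V_b(2\beta,i)\cap W$, a moderate-degree partner $u\in N(v)\cap A_i$ \emph{and} a compensating $K_{r-s+1}\subseteq B\cap V_e(2\beta')$, all while maintaining pairwise disjointness and $U$-avoidance across the entire base set. This rests on the structural inputs~\ref{B2} (absence of a large almost-independent set in $G[B]$), \ref{B3} (smallness of $V_b(2\beta,i)$), and the definitional lower bound $|N(v)\cap A_i|\ge 2\beta n$.
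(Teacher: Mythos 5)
Your route (certifying singletons as $\beta/4$-bases wherever the degree arithmetic permits, and upgrading only for the hard cases) differs from the paper's, which constructs a $(2,\beta/4)$-base for every $v\in A_i$ and a $K_{r-s}$-clique base inside $B\cap V_e(2\beta')$ for every $v\in B$. The singleton calculations for the generic cases are sound, and the observation that $v\in W$ automatically yields $d(v)<(1-2\beta')n$ cleanly handles the ``moreover'' clause. However, there are two genuine gaps in the hard cases. For $v\in V_b(2\beta,i)\cap W$, choosing $u\in N(v)\cap A_i$ with only $d(u)\le(1-2\beta')n$ does not suffice: this constraint says nothing about $d(u,A_j)$ for $j\ne i$, and a low-degree $u$ can still be $(\beta/2,j)$-exceptional, in which case $|N(\{v,u\})\cap A_j|$ fails \ref{C02}. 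You must additionally demand $u\in V_e(2\beta')$ and $u\notin U$, as the paper does when it selects $w\in(N(v,A_i)\cap V_e(2\beta'))\setminus U$; the bound $|N(v)\cap A_i|\ge 2\beta n$ together with $|V_{ne}(2\beta')|\le 2\alpha^{1/4}n$ and $|U|\le 4r\alpha^{1/4}n$ makes this patch routine, but as written it is missing.

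The treatment of $v\in S\cap W$ with $|B|\ge 2n/r$ is wrong, not merely vague. Augmenting $v$ with further vertices of $S$ gives no control over the common neighborhood in any $A_j$: vertices of $S$ may be poorly connected to a fixed $A_j$, so $N(V(C))\cap A_j$ can drop below the $\beta/4$ threshold required by \ref{C02}. In fact this case needs no bigger base: by Claim~\ref{degree-V-S}, any $v\in B\cap V_{ne}(2\beta')$ lies in $V_{ne}(2\beta',i)$ for some $i$ and so satisfies $d(v)\ge(1-\tfrac1r)n-2\alpha^{1/4}n$, which gives $d(v,B)\ge\tfrac{r-s-1}{r}n+2\beta'n-2\alpha^{1/4}n$, comfortably above $|B|-2n/r+(\beta/4)n$. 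So the singleton works here too, but you do not invoke Claim~\ref{degree-V-S}, and your assertion that ``the condition for the part containing $v$ is trivial'' is false whenever $r-s\ge 2$ — it is satisfied only via this degree estimate. (The paper sidesteps the issue entirely by growing a $K_{r-s}$ containing $v$ inside $N(v,B)\cap V_e(2\beta')$ using Proposition~\ref{SuperT}, which makes \ref{C02} at $D=B$ trivial by design and keeps all base vertices excellent.)
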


\begin{proof}
Choose $v\in V_{ne}(2\beta')\setminus V_{ex}(\beta/2)$. If $s=r$, then $\delta(G)\geq (1-\frac{1}{r})n-1$ and one may assume $v\in A_i$ for some $i\in [r]$. Thus, $d(v,
A_j)\ge \beta n/2$ for every $j\in [n]\setminus \{i\}$. Therefore, the vertex $v$ itself is a $\beta/4$-base under the partition $\mathcal{Q}$. In what follows, we only consider the case that $s<r$. 

Recall that 
$$
v\in V_{ne}(2\beta')=V(G)\setminus V_{e}(2\beta')\ \text{and}\ V_{e}(2\beta')=\bigcap_{D\in \mathcal{Q}}(V_e(2\beta',D)\cup D).
$$
Hence there exists a set $D\in \mathcal{Q}$ such that $v\in V(G)\setminus (V_{e}(2\beta',D)\cup D)$. We split the proof into the following two cases. 

\medskip
\textbf{Case 1.} $v\notin B$. 
\medskip

Since $v\notin B$, without loss of generality, assume that $v\in A_i$ for some $i\in [s]$. Let $D$ be a set in $\mathcal{Q}$ such that $v\in V(G)\setminus (V_{e}(2\beta',D)\cup D)$. 
Since $v\notin V_e(2\beta',D)$, we obtain 
$$
d(v,A_i)\geq \Big(1-\frac{1}{r}\Big)n-1-|V(G)\setminus (A_i\cup D)|-d(v,D)\geq |D|-1-(|D|-2\beta'n)=
2\beta' n-1.
$$
As $|V_{ne}(2\beta')| \le  2\alpha^{1/4}n$ and $|U|\le 4r\alpha^{1/4} n,$ we obtain 
$$\big|(N(v, A_i)\cap V_e(2\beta' ))\setminus U\big|=d(v,A_i)-|V_{ne}(2\beta')|-|U|\ge \beta' n.$$
Since $G[A_i]$ is $\sqrt
{\alpha}$-independent, there are at most $2r\sqrt{\alpha} n (\ll \beta'n)$ vertices $u\in A_i$ such that $d(u)\ge n-2\beta' n$.
Choose $w\in (N(v, A_i)\cap V_e(2\beta' ))\setminus U$ such that $d(w)\le n- 2\beta' n$. 
Since $v\notin V_{ex}(\beta/2)$ and $w\in V_e(2\beta')$, for each $D'\in \mathcal{Q}\setminus \{A_i,B\}$, we deduce  
\begin{align}\label{eq:uv-co}
    |N(\{v,w\},D')|\ge \frac{\beta n}{2}-2\beta'n\geq\frac{\beta n}{4}\ \text{and}\ |N(\{v,w\},B)|\ge |B|-\frac{n}{r}-3\beta'n.
\end{align} 

Let $B':=B\setminus (V_{ne}(2\beta')\cup U)$. 
Then $$|B'|\ge \frac{r-s}{r}n-|V_{ne}(2\beta')|-|U|\ge \frac{r-s}{r}n-5r\alpha^{1/4} n.$$
Based on $\sigma(G)\ge2(1- \frac{1}{r})n-2$, we get 
$$\sigma(G[B'])\ge \Big(1-\frac{1}{r-s}-\alpha^{1/5}\Big)|B'|.$$
Since $G[B]$ contains  no $\gamma_{s+1}^2/4$-independent set of size $\frac{n}{r}$, Proposition \ref{SuperT} implies that there exists a copy of $K_{r-s+1}$ in $G[B']$. 
It is straightforward to check that such a $K_{r-s+1}$ together with the edge $vw$ satisfy \ref{C03} and \ref{C04} with $\xi =\beta /4$, and thus forms a $(2,\beta/4)$-base in $G$, as desired. 

\medskip
\textbf{ Case 2.} $v\in B$. 
\medskip 


In this case, there exists an $i\in [s]$ such that  $v\in V(G)\setminus (V_{e}(2\beta',A_i)\cup A_i)$. It follows from Claim \ref{degree-V-S} that 
$$d(v,B)\ge \frac{r-s-1}{r}n+2(\beta'-\alpha^{1/4})n.$$ 
Let $B':= N(v, B)\setminus (V_{ne}(2\beta')\cup U)$. It is straightforward to check that $|B'|>\frac{r-s-1}{r}n$ and $\sigma(G[B'])\ge 2(1-\frac{1}{r-s-1})|B'|-2$. Recall that $G[B]$ contains  no $\gamma_{s+1}^2/4$-independent set of size $\frac{n}{r}$. 
Thus, Proposition \ref{SuperT} implies that $G[B']$ contains a copy of $K_{r-s-1}$. Therefore, there exists a copy of $K_{r-s}$ containing $v$, which forms a $\beta/4$-base under  the partition $\mathcal{Q}$, as desired. 

\medskip
As $\alpha\ll \beta'$ and $|V_{ne}(2\beta')|\leq 2\alpha^{1/4}n$, by applying the above argument iteratively, we deduce that for each vertex $v\in V_{ne}(2\beta')$, there is a base covering $v$ that avoids all vertices used in any previously constructed bases.
\end{proof}

\subsection{Proof of Lemma \ref{K_r-tiling}}\label{section7.2}

In this subsection, we will give the proof of Lemma \ref{K_r-tiling}. First, we show that the bases constructed above can be extended to some vertex-disjoint $K_r$ and the rest partition still has some good properties. 

\begin{lemma}\label{extend}
Suppose that \ref{assumption} holds. Let $H$ be a $\beta/5$-base or $(2,\beta/5)$-base in $G$, and $W$ be a vertex subset satisfying $|W| \leq 5r\alpha^{1/4}n$. Then, there exists a $K_r$-tiling ${T}_H$ in $G-W$ that covers $V(H)$, consisting of at most two vertex-disjoint cliques and  satisfying 
\[
|T_H \cap V(D)| = \frac{kr|D|}{n}\ \text{for every}\ D\in \mathcal{Q}.
\]
\end{lemma}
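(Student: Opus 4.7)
The plan is to extend each base (or pair of bases) by greedy selection of $(2\beta')$-excellent vertices lying outside $W$, leveraging the density guarantees \ref{C02} and \ref{C04} that are built into the definition. First I set $W^* := W \cup V_{ne}(2\beta')$, so by \eqref{size-of-ne} we have $|W^*| \le 6r\alpha^{1/4}n$. Every vertex $v \in V(G) \setminus W^*$ with $v \notin D$ for some $D \in \mathcal{Q}$ satisfies $d(v,D) \ge |D| - 2\beta' n$; this excellence is the key property that allows greedy clique extension, since each new pick removes at most $2\beta' n$ candidates from any subsequent pool.

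In the single-base case $H = C$, for each $D \in \mathcal{Q}$ I need to add exactly $k_D := r|D|/n - |V(C) \cap D|$ new vertices from $D$ so that together with $V(C)$ they form a $K_r$. The candidate pool $N_D := (N(V(C)) \cap D) \setminus W^*$ has size at least $|D| - (|V(C) \cap D|+1)\, n/r + (\beta/6)n$ by \ref{C02}, after subtracting $|W^*| \ll \beta n$. In particular $|N_{A_i}| \ge \beta n/6$ whenever $k_{A_i} = 1$, and $|N_B| \ge (k_B - 1)n/r + \beta n/6$. I then pick the required vertices one by one, first from each $N_{A_i}$ and then from $N_B$; each excellent pick reduces the relevant subsequent pool by at most $2\beta' n$, and since the total number of picks is at most $r$ with $\beta' \ll \beta$, every pool remains non-empty. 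The resulting vertices together with $V(C)$ form a $K_r$ whose intersection with each $D$ is exactly $r|D|/n$, giving $k=1$.

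For the $(2,\beta/5)$-base case, write $H = C_0 \cup C_1$ with special parts $D_0, D_1 \in \mathcal{Q}$. I extend $C_0$ to a $K_r$, call it $K^{(0)}$, with target distribution $|K^{(0)} \cap D_0| = r|D_0|/n + 1$, $|K^{(0)} \cap D_1| = r|D_1|/n - 1$, and $|K^{(0)} \cap D| = r|D|/n$ for all remaining $D$; these counts sum to $r$ as required. The weaker bound \ref{C04} provides a pool in $D_1$ of size at least $|D_1| - (|V(C_0) \cap D_1| + 2)n/r + \beta n/6$, which accommodates the reduced target of $r|D_1|/n - 1 - |V(C_0) \cap D_1|$ new vertices; \ref{C02} handles the other parts exactly as in Case~1. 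During this extension I treat $V(C_1) \cup W^*$ as forbidden, costing at most $r+1$ extra exclusions, all absorbed by the $\beta n/6$ slack. I then extend $C_1$ to a vertex-disjoint $K_r$ $K^{(1)}$ with the symmetric target, now forbidding $V(K^{(0)}) \cup W^*$. The union $T_H = K^{(0)} \cup K^{(1)}$ then satisfies $|T_H \cap D| = 2r|D|/n$ for every $D \in \mathcal{Q}$, giving $k=2$.

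The main obstacle is the bookkeeping in the two-base case: one must verify that the two extensions can be carried out sequentially while satisfying the exact proportionality and disjointness constraints. The slack of order $\beta n$ in each pool, contrasted with the cumulative losses of order $O(r + r\beta'n)$ from $W^*$, from $V(C_{1-i})$, and from prior greedy picks, makes this work precisely because the hierarchy $\alpha \ll \beta' \ll \beta$ is enforced. A secondary technical point is building a clique of size up to $r - s$ inside $N(V(C)) \cap B$: restricted to $(2\beta')$-excellent vertices each of which has at most $2\beta' n$ non-neighbors in $B$, a direct greedy construction suffices, bypassing any need to invoke Proposition \ref{SuperT} or Lemma \ref{GHH2024}.
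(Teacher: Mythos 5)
The greedy extension over the parts $A_1,\dots,A_s$ is correct, and the parameter bookkeeping (using $\beta' \ll \beta$, $\alpha^{1/4}n \ll \beta n$) is fine. The gap is in your claimed ``direct greedy construction'' of a clique of size up to $r-s$ inside $N(V(C))\cap B$.

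You assert that the $(2\beta')$-excellent vertices in $B$ each have at most $2\beta' n$ non-neighbors in $B$, and this is false. Unwinding the definition, a vertex $v\in B$ lies in $V_e(2\beta')=\bigcap_{D\in\mathcal{Q}}(V_e(2\beta',D)\cup D)$ precisely when $d(v,A_i)\ge |A_i|-2\beta' n$ for every $i\in[s]$; the factor $D=B$ contributes nothing for $v\in B$ since $v\in D$ satisfies that clause trivially. So excellence controls $d(v,A_i)$ for each $i$ but gives no upper bound on the number of non-neighbors of $v$ inside $B$. Indeed, $G[B]$ only satisfies the Ore-type condition $\sigma(G[B])\ge 2(1-\tfrac{1}{r-s})|B|-2$; a vertex of $B\cap V_e(2\beta')$ can have on the order of $\tfrac{1}{r-s}|B|=\tfrac{n}{r}$ non-neighbors in $B$ (think of $G[B]$ close to balanced complete $(r-s)$-partite). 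Under your greedy scheme the pool $N_B$ of size $(t_H-1)\tfrac{n}{r}+\tfrac{\beta n}{8}$ could collapse to nothing after a single pick, so the argument does not close.

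The paper instead restricts to $G':=G[(N(V(H),B)\setminus W)\cap V_e(2\beta')]$, verifies that $\sigma(G')\ge 2\bigl(1-\tfrac{1}{t_H-1}\bigr)|V(G')|$, and then invokes Proposition~\ref{SuperT} together with property~\ref{B2} (that $G[B]$ has no $\tfrac{\gamma_{s+1}^2}{4}$-independent set of size $\tfrac{n}{r}$) to produce a $K_{t_H}$ inside $G'$. That step relies on the Graph Removal Lemma via Proposition~\ref{SuperT} and is genuinely necessary here; it cannot be bypassed by a pick-one-at-a-time argument. The same issue recurs in your treatment of the $(2,\beta/5)$-base case whenever the remaining vertices of $K^{(0)}$ or $K^{(1)}$ must be drawn from $B$.
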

\begin{proof}

We first consider a $\beta/5$-base $H\in \mathcal{H}$. Let $t_H:=r-s-|V(H)\cap B|$. By \ref{C02} one has $|N(V(H),B)|\ge (t_H-1)\frac{n}{r}+\frac{\beta n}{5}$. Let $G':=G[(N(V(H), B)\setminus W)\cap V_{e}(2\beta')]$. Clearly, 
$$|V(G')|\ge (t_H-1)\frac{n}{r}+\frac{\beta n}{5}-5r\alpha^{1/4} n-2\alpha^{1/4} n\ge (t_h-1)\frac{n}{r}+\frac{\beta n}{8}.$$ 
Since $\sigma(G)\ge 2(1-\frac{1}{r})n-2$, one has 
$$\sigma(G')\ge 2|V(G')|-\frac{2n}{r}-2\ge 2\Big(1-\frac{1}{t_H-1}\Big)|V(G')|.$$
By \ref{B2}, $G'$ contains no $\frac{\gamma_{s+1}^2}{4}$-independent set of size $\frac{n}{r}$. 
Proposition \ref{SuperT} implies that there exists a $K_{t_H}$ in $G'$. Define  $H'$ as the join of this $K_{t_H}$ and $H$. Since $H$ is a $\beta/4$-base, for every $A_i\in\mathcal{Q}$ with $|A_i\cap V(H)|=0$ we have
$$|(N(H',A_i)\setminus W)\cap V_e(2\beta')|\ge \frac{\beta n}{5}-r \cdot 2\beta' n-5r\alpha^{1/4} n-2\alpha^{1/4} n\ge \frac{\beta n}{8}.$$
This enables us to iteratively select vertices $x_i\in \big(N(H',A_i)\cap V_e(2\beta')\big)\setminus W$ for each $i\in [s]$ with $|A_i\cap V(H)|=0$. Therefore, $G[V(H')\cup \{x_i:|A_i\cap V(H)|=0\ \text{and}\ i\in [s]\}]$ is a clique $K_r$, as desired. 

Next, we assume that $H$ is a $(2,\beta/5)$-base in $\mathcal{H}$. 
Without loss of generality, assume that $C_0$ and $C_1$ are two vertex-disjoint cliques in $H$ and $D_0, D_1 \in \mathcal{Q}$ are two parts satisfying $|N(C_i)\cap D_i|=\frac{r|D_i|}{n}+1$. 
For $C_0$, we will find a vertex set $T_1\subseteq D_1\setminus W$ of size $t_1:=\frac{r|D_1|}{n}-|V(C_0)\cap D_1|-1$ such that $G[V(C_0)\cup T_1]$ is a clique. 
If $t_1\le0$, then it is trivial. Thus, we only need to consider $D_1=B$ and $r-s\ge 2$. Let $G':=G[(N(C_0,B)\cap V_{e}^S(2\beta'))\setminus W]$. 
By applying a similar argument as the case that $H$ is a $\beta/5$-base, we conclude that 
$G'$ contains $K_{t_1}$ as a subgraph and there exists a copy of $K_r$ covering $K_{t_1}\cup C_0$ that avoids $W$ and all used vertices. 

By the same argument, there exists a copy of $K_r$ covering $C_1$ that avoids both $W$ and all previously used vertices. Moreover, the union of these two $K_r$ copies uses two distinct vertices from each $A_i\setminus W$ for each $i\in [s]$ and $2(r-s)$ vertices from $B\setminus W$, as required. 
\end{proof}


Now, we are ready to prove Lemma \ref{K_r-tiling}. 
\begin{proof}[\bf Proof of Lemma \ref{K_r-tiling}] 
Recall that $|V_{ne}(2\beta')|\le 2\alpha^{1/4} n$. By Lemma \ref{Exceptional} and Lemma~\ref{Vne}, we obtain a $\beta/4$-base set $\mathcal{H}$ covering $V_{ne}(2\beta')$ with at most $12\alpha^{1/4}n$ vertices. Let $W:=V(\mathcal{H})$. We then proceed iteratively: at each step, we apply Lemma~\ref{extend} to extend a base to a \( K_r \)-tiling, and then update \( W \) by adding the vertices of this new \( K_r \)-tiling. Therefore, we obtain a \( K_r \)-tiling \( \mathcal{T} \) of order at most \( 4r \alpha^{1/4} n \) that covers \( V_{\mathrm{ne}}(2\beta') \).  
Let $G':=G-V(\mathcal{T})$, $A_i':=A_i\setminus V(\mathcal{T})$ for each $i\in [s]$,  and  $B':=B\setminus V(\mathcal{T})$. 
By Lemma~\ref{extend}, one has that  $\{A_1',\ldots,A_s',B'\}$ is an $(r,s)$-partition of $G'$ and $(r-s)\mid |B'|$. 
Furthermore,  
\begin{align}\label{Ore:G'}
\sigma(G')\ge 2\Big(1- \frac{1}{r}\Big)n-8r\alpha^{1/4}n-2 \ \text{and} \ \sigma(G[B'])\ge 2\Big(1-\frac{1}{r-s}-\alpha^{1/5}\Big)|B'|.
\end{align} 

Observe that $G[B']$ contains no $\frac{\gamma_{s+1}^2}{8}$-independent set of size $\frac{n}{r}$, and hence no $\frac{\gamma_{s+1}^2}{10}$-independent set of size $\frac{|B'|}{r-s}$. Clearly, \ref{L2} is trivial for $r-s=1$. If $r-s\geq 3$, then by Theorem \ref{non-extremal} we obtain that there exists a $K_{r-s}$-tiling in $G[B']$, i.e., \ref{L2} holds. 
In what follows, it suffices to consider $r-s=2$. We will show that either $G$ is the extremal graph \ref{EX2}, or $G[B']$ contains a perfect matching, or by modifying some bases in $\mathcal{H}$ we can get a new $K_r$-tiling $\mathcal{T}'$ such that $G[B\setminus V(\mathcal{T}')]$ admits a perfect matching. 

If $G[B']$ contains a perfect matching, then we are done. Suppose that $G[B']$ contains no perfect matching. By Proposition \ref{PMatching}, we know that $G[B']$ consists of two odd components, say $F_0$ and $F_1$.

Suppose that $V_{ne}(2\beta')= \emptyset$. Then   $\mathcal{H}=\emptyset$ and $B'=B$. Without loss of generality, assume that  $|F_0|\le |F_1|$. 
By Ore's condition, we conclude that 
$N(z)=V(G)\setminus (V(F_{i+1})\cup \{z\})\ \text{for each}\ z\in V(F_i)\ \text{with}\ i\in \mathbb{Z}_2.$ Hence 
$F_0, F_1$ are two cliques with odd orders. 
If $G[A_i]=\emptyset$ for all $i\in [r-2]$, then $G$ is the extremal graph described in \ref{EX2}. Suppose that there exists an edge $uv\in E(G[A_i])$ for some $i\in [r-2]$. Then there exists a vertex  $w\in V(F_0)$ such that $G[\{u,v,w\}]$ forms a $K_3$, say $K^1$. Since $F_1$ is a clique and $|F_1|\ge |B'|/2$, there exists a $K_3$, say $K^2$, in $F_1$. Since $V_{ne}(2\beta')=\emptyset$, it is easy to check that $K^1\cup K^2$ is a $(2,\beta/4)$-base in $G$. By applying Lemma~\ref{extend} with $W=\emptyset$, we obtain a $K_r$-tiling $\mathcal{T}$ of $G$ such that  $|V(F_i)\setminus V(\mathcal{T})|$ is even for every $i\in \mathbb{Z}_2$. Applying Proposition~\ref{PMatching} again yields that $G[B\setminus V(\mathcal{T})]$ contains a perfect matching, as desired. Thus, overall, it suffices to consider the case where 
$$
r-s=2,\ G[B']=F_0\cup F_1\ \text{with $|F_0|$ and $|F_1|$ both  odd} ,\ \text{and}\ V_{ne}(2\beta')\neq \emptyset.
$$

Our goal is to  modify the $K_r$-tiling $\mathcal{T}$ into a new $K_{r}$-tiling $\mathcal{T}'$ that satisfies \ref{L2}. We begin with an analysis of the properties of $F_0$ and $F_1$.  
Since $S$ is a clique in $G$, at most one of the components $F_i$ with $i\in\mathbb{Z}_2$ satisfies $V(F_i)\cap S\neq \emptyset.$  Without loss of generality, assume that $V(F_1)\cap S=\emptyset$. Therefore, for a vertex $v\in V(F_1)$, one has 
$$
\Big(1-\frac{1}{r}\Big)n-1\leq d(v)\leq n-|B'|+d(v,F_1),
$$
which implies 
$$
|F_1|\geq d(v,F_1)\geq \frac{n}{r}-1-4r\alpha^{1/4}n\geq \frac{n}{r}-\alpha^{1/5}n. 
$$
Furthermore, if $|F_1|\ge \frac{n}{r}+1$, then $d(u)\leq (1-\frac{1}{r})n-2$ for every $u\in V(F_0)$. That is,  $V(F_0)\subseteq S$.  Similarly, if $V(F_0)\setminus S\neq \emptyset$, then $|F_0|\geq \frac{n}{r}-\alpha^{1/5}n.$ In particular, if $|F_0|\leq \frac{n}{r}-3\beta n$, then by Ore's condition one has 
\begin{align}\label{complete}
G[V_{ex}^L(\beta/2)\cup (V(G)\setminus (V_b(2\beta)\cup B)),V(F_0)]\ \text{is a complete bipartite graph}.
\end{align}




For any $\Tilde{\mathcal{H}}\subseteq \mathcal{H}$, let $\Tilde{\mathcal{T}}\subseteq \mathcal{T}$ be the set of $K_r$ copies obtained by extending the bases in $\widetilde{\mathcal{H}}$. Let $X_{\Tilde{\mathcal{H}}} :=\big(B\cap V(\Tilde{\mathcal{T}})\big)\setminus V_{ex}^S(\beta/2)$. It follows from \eqref{size-of-ne} that $|X_{\Tilde{\mathcal{H}}}|\le 8\alpha^{1/4}n$. 
If $|F_1|\ge \frac{3n}{2r}$, 
then define 
\begin{align}\label{eq:F_0}
F_0'(\Tilde{H}):=G\big[V(F_0)\cup (X_{\Tilde{\mathcal{H}}}\cap S)\big] \ \text{and} \ F_1'(\Tilde{H}):=G\big [(V(F_1)\cup X_{\Tilde{\mathcal{H}}})\setminus S\big];
\end{align}
if $|F_1|< \frac{3n}{2r}$, then define  
\begin{align}\label{eq:F_1}
F_1'(\Tilde{H}):=G\big[V(F_1)\cup \{x\in X_{\Tilde{\mathcal{H}}} : d(x,F_1)\ge \frac{2}{3}|F_1|\}\big] \ \text{and} \ F_0'(\Tilde{H}):= G\big[(V(F_0)\cup X_{\Tilde{\mathcal{H}}})\setminus V(F_1')\big].
\end{align}
For brevity, when $\widetilde{H}$ is clear from context, we write $F_0'$ and $F_1'$ for $F_0'(\widetilde{H})$ and $F_1'(\widetilde{H})$, respectively.
Note that if $V_{ex}^S(\beta/2) \cap V(\Tilde{\mathcal{H}})=\emptyset$, then 
\begin{align}\label{even-order}
|V(F_0\cup F_1)|\ \text{and}\ 
|V(F_0'\cup F_1')|\ \text{are even}.
\end{align}
The following claim demonstrates that both $F_0'$ and $F_1'$ remain connected even after the deletion of any small subset of vertices.

\begin{claim}\label{Connected}
For any $\Tilde{\mathcal{H}}\subseteq \mathcal{H}$ and any $X'\subseteq V(F_0'\cup F_1')$ of size at most $10\alpha^{1/4}n$, we have that $F_0'':=G[V(F_0')\setminus X']$ and $F_1'':=G[V(F_1')\setminus X']$ are connected.
\end{claim}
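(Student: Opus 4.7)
The plan is to prove $F_0''$ and $F_1''$ are connected by separately analyzing the two cases used to define $F_0'$ and $F_1'$.

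\textbf{Case A: $|F_1|\ge \tfrac{3n}{2r}$.} I will first show $V(F_0)\subseteq S$. Since $F_0$ and $F_1$ are distinct components of $G[B']$, every $v\in V(F_0)$ satisfies $d_G(v,V(F_1))=0$, hence $d_G(v)\le n-1-|F_1|\le (1-\tfrac{3}{2r})n-1 < (1-\tfrac{1}{r})n-1$, placing $v$ in $S$. Combined with $V(F_0')=V(F_0)\cup(X_{\Tilde{\mathcal{H}}}\cap S)\subseteq S$ and Fact~\ref{clique}, the subgraph $F_0'$ is a clique in $G[S]$, so $F_0''$ remains a clique and is trivially connected. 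For $F_1''$, each $v\in V(F_1)\subseteq V(G)\setminus S$ satisfies $d_G(v,V(F_1))\ge\tfrac{n}{r}-1-4r\alpha^{1/4}n$, so any two non-adjacent $u,v\in V(F_1)$ share at least $|F_0|-O(\alpha^{1/4}n)$ common neighbors in $V(F_1)$, exceeding $|X'|$ provided $|F_0|\ge C\alpha^{1/4}n$ for a suitable constant $C$. Vertices added to $V(F_1')$ come from $X_{\Tilde{\mathcal{H}}}\setminus S$ and are $(2\beta',A_i)$-excellent in every part, hence have $\ge \tfrac{n}{2r}-O(\alpha^{1/4}n)$ neighbors in $V(F_1)$, attaching them to the core $F_1\setminus X'$.

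\textbf{Case B: $|F_1|<\tfrac{3n}{2r}$.} Since $V(F_1)\cap S=\emptyset$ forces $|F_0|\le\tfrac{n}{r}$, both $|F_0|$ and $|F_1|$ lie in $[\tfrac{n}{2r}-O(\alpha^{1/4}n),\tfrac{3n}{2r}]$. Each $v\in V(F_i)\setminus S$ satisfies $d_G(v,V(F_i))\ge \tfrac{n}{r}-O(\alpha^{1/4}n)\ge \tfrac{2}{3}|F_i|-O(\alpha^{1/4}n)$, so any two such vertices share at least $\tfrac{|F_i|}{3}-O(\alpha^{1/4}n)\gg |X'|$ common neighbors in $V(F_i)\setminus X'$. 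Vertices in $V(F_0)\cap S$ lie in the $S$-clique, and extra vertices added to $V(F_1')$ satisfy $d(\cdot,V(F_1))\ge\tfrac{2}{3}|F_1|$ by construction; analogous bounds hold for extra vertices in $V(F_0')$. Hence both $F_0''$ and $F_1''$ are connected via length-$2$ paths.

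\textbf{Main obstacle.} The principal difficulty is the sub-case of Case~A with $|F_0|\le C\alpha^{1/4}n$: here $|F_1|$ is near its maximum $\tfrac{2n}{r}$ and the direct common-neighbor bound degenerates. I plan to resolve this by applying Proposition~\ref{PMatching} to $F_1-v$ for an arbitrary $v\in V(F_1)$, which has even order and satisfies $\sigma(F_1-v)\ge |F_1-v|-O(1)$ by Ore's condition on $G$. The almost-independent-set alternative of Proposition~\ref{PMatching} is excluded by \ref{B2} together with $\alpha\ll\gamma_{s+1}$; the two-odd-component alternative combined with the connectivity of $F_1$ forces a single cut-vertex structure whose smaller side is a clique (by the final clause of Proposition~\ref{PMatching}), and a short case analysis using the large degree of the $X_{\Tilde{\mathcal{H}}}\setminus S$ vertices into $V(F_1)$ then shows that $F_1''$ still stays connected after removing $X'$.
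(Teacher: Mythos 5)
Your observation that $V(F_0)\subseteq S$ when $|F_1|\ge\tfrac{3n}{2r}$ is correct and gives a cleaner justification than the paper for why $F_0'$ is a clique (the paper states this without comment). Your Case B is also sound in outline. However, your Case A has a genuine gap, and you have correctly identified where: when $|F_0|$ is small. Your degree bound $d_G(v,V(F_1))\ge\tfrac{n}{r}-O(\alpha^{1/4}n)$ for $v\in V(F_1)$, obtained by $d_G(v)\ge(1-\tfrac1r)n-1$ minus the complement of $V(F_1)$, is real but too weak: two such vertices then only share $\ge 2(\tfrac{n}{r}-O(\alpha^{1/4}n))-|F_1|$ common neighbours, which is negative once $|F_1|$ approaches $\tfrac{2n}{r}$. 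The workaround you propose (applying Proposition~\ref{PMatching} to $F_1-v$ and then arguing about cut-vertex structure) is not a valid substitute: Proposition~\ref{PMatching} gives information about perfect matchings in $F_1-v$, but it says nothing about the connectivity of $F_1''=G[V(F_1')\setminus X']$ after removing an \emph{arbitrary} set $X'$ of up to $10\alpha^{1/4}n$ vertices. There is no clean passage from ``the two-odd-component alternative fails for $F_1-v$'' to ``$F_1''$ is connected.''

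The paper avoids this obstacle entirely by applying Ore's condition to a \emph{cross} pair $x\in V(F_0)$, $y\in V(F_1)$ rather than to a pair inside $F_1$. Since $x$ and $y$ lie in different components of $G[B']$, they are non-adjacent, so $d(x)+d(y)\ge 2(1-\tfrac1r)n-2$; on the other hand $d(x)\le (n-|B'|)+|F_0|-1$ (as $x$ has no neighbours in $F_1$), and $d(y)\le (n-|B'|)+d(y,F_1)$. Combining gives $d(y,F_1)\ge |F_1|-O(\alpha^{1/4}n)$ uniformly, i.e.\ with no dependence on the size of $F_0$. This is strictly stronger than the bound you derive, makes the common-neighbour argument trivial, and collapses your two sub-cases of Case~A into one. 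Replacing your single-vertex degree bound with this cross-pair Ore bound is the missing idea.
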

\begin{proof}[Proof of Claim \ref{Connected}]
   We first consider that $|F_1|\ge \frac{3n}{2r}$. 
   Clearly, $F_0'$ is a clique, and thus $F_0''$ is connected. 
For two vertices $x\in V(F_0)$ and $y\in V(F_1)$, we have
$$
2\Big(1-\frac{1}{r}\Big)n-2\leq d(x)+d(y)\leq 2(n-|B'|)+|F_0|-1+d(y,F_1).
$$
Thus, 
\begin{align}\label{eq:degree-y}    
d(y,F_1'')\geq d(y,F_1')-|X'|\geq d(y,F_1)-|X'|\geq |F_1|-1-|B\setminus B'|-|X'|\geq |F_1|-20\alpha^{1/4}n. 
\end{align}
    For any $y'\in V(F_1')\setminus V(F_1)$, we have
    $$ 
    d(y',F_1'')\ge d(y',F_1')-|X'|\geq d(y',B)-(|F_0|+|B\setminus B'|+|X'|)\ge \frac{n}{r}-1-\big(\frac{n}{2r}+20\alpha^{1/4}\big)\geq \frac{n}{3r}.
    $$
    Therefore, for any two distinct vertices $u,v\in V(F_1'')$, we obtain $N_{F_1''}(u)\cap N_{F_1''}(v)\neq \emptyset$ unless $u,v\in V(F_1'')\setminus V(F_1)$. Consequently, $F_1''$ is connected. 

    Now, we assume  $|F_1|<\frac{3n}{2r}$. For any $y'\in V(F_1'')\setminus V(F_1)$, by the construction of $F_1'$ we have $d(y',F_1'')\ge \frac{2}{3}|F_1|-|X'|\geq \frac{1}{2}|F_1''|+1$. Together with \eqref{eq:degree-y} and the classical Dirac's Theorem, we conclude that 
    $F_1''$ is connected.
    
   We now show that  $F_0''$ is connected. 
   Similar to \eqref{eq:degree-y} we obtain   $d(y,F_0'')\ge |F_0|-20\alpha^{1/4}\geq \frac{4}{5}|F_0''|$ for any $y \in V(F_0\cap F_0'')$. Let $y'\in V(F_0'')\setminus V(F_0)$. If there is a vertex $x\in V(F_0)\setminus N_G(y')$, then by Ore's condition we have
   $$
   2\Big(1-\frac{1}{r}\Big)n-2\leq d(x)+d(y')\leq 2(n-|B'|)+|F_0|-1+\frac{2}{3}|F_1|+d(y',F_0).
   $$
   Thus,
   $$
   d(y',F_0'')\geq d(y',F_0)-|X'|\geq \frac{1}{3}|F_1|-20\alpha^{1/4}n\geq \frac{n}{4r}>\frac{1}{5}|F_0''|,
$$
the last two  inequalities hold by $|F_1|\ge \frac{n}{r}-\alpha^{1/5} n$. Therefore, for any two distinct vertices $u,v\in V(F_0'')$, we obtain $N_{F_0''}(u)\cap N_{F_0''}(v)\neq \emptyset$ unless $u,v\in V(F_0'')\setminus V(F_0)$. It follows that $F_0''$ is connected. 
\end{proof}
To allow for minor modifications, we introduce a claim that modifies a given base into a desired form. 
\begin{claim}\label{Adjust}
{For any subset $\Tilde{\mathcal{H}}\subseteq \mathcal{H}$, let  $F_0'$ and $F_1'$ be given by \eqref{eq:F_0}-\eqref{eq:F_1}. Let $K$ be a clique satisfying \ref{C03} and \ref{C04} with $\xi=\beta/4$, $C_0=K$, $D_0=A_i$ for some $i\in [r-2]$,
and $D_1=B$. 
Suppose that $V_{ex}^S(\beta/2) \cap V(K)=\emptyset$ and $d(K,F_j')\ge \frac{n}{3r}$ for some $j\in \mathbb{Z}_2$. If $|F_0'|\ge 2$ or $d(x)\le n-2\beta'n$ for every $x\in V(K)$, then there exists a $(2,\beta/5)$-base $H'$ such that $V(K)\subseteq V(H')$, $|B\cap V(H')|=4$ and $|V(F_\ell')\setminus V(H')|$ is even for every $\ell\in \mathbb{Z}_2$. }
\end{claim}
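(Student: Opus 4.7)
The plan is to construct a clique $C_1$, vertex-disjoint from $K$, so that $H':=K\cup C_1$ becomes the desired $(2,\beta/5)$-base. Since the target $|B\cap V(H')|=4$ together with \ref{C03} (for a $(2,\xi)$-base with $D_1=B$ and $|B|=2n/r$, which forces $|V(C_1)\cap B|=3$) pins down $|V(K)\cap B|=1$, and since $|V(C_1)\cap A_j|\le 1$ for every $j\in[r-2]$, I will take $C_1$ to be a triangle sitting inside $B$, chosen so that its vertices lie mostly in $N(K)\cap V(F_j')$ and so that $|V(F_\ell')\setminus V(H')|$ has the prescribed parity for both $\ell\in\mathbb{Z}_2$.

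First, using $d(K,F_j')\ge n/(3r)$ and the structural information already established for $F_0',F_1'$ (connectivity from Claim~\ref{Connected}, high internal degrees from the Ore-type inequality \eqref{Ore:G'}, and the fact that every vertex of $B\cap V_e(2\beta')$ has co-degree at most $2\beta' n$ into each $A_k$), I will argue that the set $\mathcal{N}_j:=(N(K)\cap V(F_j'))\setminus V(\mathcal{T})$ has size at least $n/(3r)-O(\alpha^{1/4}n)$ and that within $\mathcal{N}_j$ there are many triangles. Picking any triangle in $\mathcal{N}_j$ yields the ``baseline'' choice $(c_0,c_1)=(0,0)$ or $(3,0)$ depending on $j$, where $c_\ell:=|V(C_1)\cap V(F_\ell')|$.

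Second, I will translate the parity condition into the requirement $c_\ell\equiv |V(F_\ell')|-|V(K)\cap V(F_\ell')|\pmod 2$ for both $\ell$, and adjust the triangle accordingly. The possible targets $(c_0,c_1)$ satisfy $c_0+c_1\le 3$; the remaining $3-c_0-c_1$ vertices of $V(C_1)\cap B$ must be drawn from $B\setminus V(F_0'\cup F_1')$, which consists of vertices used by other bases in $V(\mathcal{T}\setminus\widetilde{\mathcal{T}})$ together with $V_{ex}^S(\beta/2)$. The two alternative hypotheses are used precisely here: if $|F_0'|\ge 2$, then $F_0'$ has enough vertices with large common neighborhood so that we can swap one or two vertices of the baseline triangle for vertices in $V(F_{j+1}')$ while preserving adjacency to $K$; if instead every $x\in V(K)$ satisfies $d(x)\le n-2\beta' n$, then $|B\setminus N(V(K))|$ is large and in particular intersects $V_{ex}^S(\beta/2)\subseteq B\setminus V(F_0'\cup F_1')$ in sufficiently many vertices, from which a swap vertex maintaining the clique structure on $C_1$ (via the clique $G[S]$) can be chosen. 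In either case, every parity target $(\pi_0,\pi_1)\in\mathbb{Z}_2^2$ is attainable.

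Finally, I will verify \ref{C03} and \ref{C04} for $H'=K\cup C_1$ with $\xi=\beta/5$. Condition \ref{C03} is immediate by construction. For \ref{C04}, the bounds for $C_0=K$ are inherited from the hypothesis (with $\xi=\beta/4$), the loss $\beta/4-\beta/5=\beta/20$ absorbing all $O(\alpha^{1/4}n)$ corrections from previously used vertices. For $C_1$, since every vertex of $C_1$ lies either in $B\cap V_e(2\beta')$ or in $V_{ex}^S(\beta/2)\subseteq S$, each such vertex has $d(v,A_k)\ge |A_k|-2\beta' n$ for every $k\in[r-2]$ (using Definition~\ref{Partition}\ref{B3} and that $S$-vertices are adjacent to almost all of each $A_k$ via Ore's condition), and hence $|N(C_1)\cap A_k|\ge |A_k|-6\beta' n$, which comfortably satisfies the required \ref{C02} bound with $\xi=\beta/5$. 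The main obstacle is the parity-adjustment step when $|F_0'|$ is very small, since then $F_{j+1}'$ may be poorly connected through $B$ to the common neighborhood of $K$; this is exactly the scenario the second hypothesis $d(x)\le n-2\beta' n$ was designed to cover, by providing swap vertices in the clique $S\cap B\setminus V(F_0'\cup F_1')$.
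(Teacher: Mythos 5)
Your proposal misreads the claim in a way that creates a genuine gap. You assume that $K$ itself serves as the component $C_0$ of the new base $H'$, and from $|B\cap V(H')|=4$ together with $|V(C_1)\cap B|=3$ you "pin down" $|V(K)\cap B|=1$. But the claim only asserts $V(K)\subseteq V(H')$, not $K=C_0$; the hypothesis "$C_0=K$" is about which slot of \ref{C03}/\ref{C04} the hypotheses refer to, not a constraint on $H'$. Worse, in every application of the claim inside the proof of Lemma~\ref{K_r-tiling} (Cases 1, 2, 3) the clique $K$ lives entirely in $\bigcup_i A_i$, so $|V(K)\cap B|=0$. With $C_0:=K$ and $C_1$ a triangle in $B$, your $H'$ would have $|B\cap V(H')|=3$, not $4$, so the conclusion is not met. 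The paper resolves this by enlarging $K$ to $C_0:=G[V(K)\cup\{w\}]$ for a new vertex $w\in V(F_j')\subseteq B$, which supplies the missing $B$-vertex; the choice of $w$ also elegantly controls parity, since removing $w$ from $F_j'$ flips one parity and the triangle $C_1$ is then placed in $F_{j+\ell}'$ for the appropriate $\ell\equiv|F_j'|\pmod 2$.

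Your use of the two alternative hypotheses is also off track. In the paper, the case $|F_0'|\ge 2$ handles the situation where both $F_0'$ and $F_1'$ contain enough structure (a clique or a triangle) so a $K_3$ can be extracted from whichever $F_\ell'$ is needed; the case $|F_0'|=1$ with the degree bound $d(x)\le n-2\beta'n$ for $x\in V(K)$ instead uses Ore's condition on $(x,w)$ to force $wx\in E(G)$, so that the unique vertex $w$ of $F_0'$ can be absorbed into $C_0$ (giving $|V(F_0')\setminus V(H')|=0$, trivially even). Your proposed mechanism — finding swap vertices in $V_{ex}^S(\beta/2)$ for the second hypothesis — is not what that hypothesis is for, and it is unclear how it would preserve both the clique structure of $C_1$ and the required vertex count $|B\cap V(H')|=4$.
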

\begin{proof}[Proof of Claim \ref{Adjust}]
We first consider $|F_0'|\geq 2$. Since $d(K,F_j')\ge \frac{n}{3r}$ for some $j\in \mathbb{Z}_2$, there exists a vertex $w\in V(F_j')\subseteq V_{e}(2\beta')$ such that $G[V(K)\cup \{w\}]$ forms a clique $K_{k+1}$. Let such a $K_{k+1}$ be $C_0$. 
By Ore's condition, for each $x\in V(F_1')$ one has 
$$d(x,F_1'-w)\ge |F_1'|-|B\setminus B'|-2\ge \frac{1}{2}|F_1'|+1.$$
Thus, Mantel's Theorem implies that $F_1'-w$ contains a $K_3$. Similarly, one may  obtain that $F_0$ contains a $K_3$ if $|F_0'|\geq \frac{n}{7r}$, and $F_0'$ is a clique if $2\le |F_0'|< \frac{n}{7r}$. 
Consequently, there exists a  $K_3$ in $F_{j+\ell}'-V(K)\cup \{w\}$, where $\ell\equiv |F_j'|\pmod{2}$. Define $C_1$ to be such a $K_3$. It is easy to check that $H':=C_0\cup C_1$ satisfy \ref{C03}-\ref{C04} with $D_0:=A_i$, $D_1:=B$, and $\xi:=\beta/5$. Together with \eqref{even-order}, $H'$ is a $(2,\beta/5)$-base such that $|B\cap V(H')|=4$ and $|V(F_i')\setminus V(H')|$ is even for every $i\in \mathbb{Z}_2$.   

Now, we consider  $d(x)\le n-2\beta'n$ for every $x\in V(K)$ and $|F_0'|=1$. Let $w$ be the unique vertex in $F_0'$. Then $d(w)\le (1-\frac{2}{r})n+8\alpha^{1/4}n$. Since $\beta' \gg \alpha$, Ore's condition implies that $G[V(K)\cup \{w\}]$ is a clique, say $C_0$.  Recall that $F_1'$ contains a $K_3$, say $C_1$. 
Thus, $H':=C_0\cup C_1$ is the desired $(2,\beta/5)$-base $H'$.
\end{proof}

Based on the construction of bases in Lemma \ref{Exceptional} and Lemma~\ref{Vne}, there are three possible  types for each base  $H\in \mathcal{H}$: 
${\rm (1)}\ V(H)\cap V_{ne}(2\beta')\subseteq V_{ne}(2\beta')\setminus V_{ex}(\beta/2)$ (in Lemma~\ref{Vne}); ${\rm (2)}\ V(H)\cap V_{ne}(2\beta')\subseteq V_{ex}^L(\beta/2)$ (in Claim \ref{claim-matching}); ${\rm (3)}\ V(H)\cap V_{ex}^S(\beta/2)\ne \emptyset$ (in Claims \ref{r-scover} and \ref{com-2}). We then modify $\mathcal{T}$ into a $K_r$-tiling $\mathcal{T}'$ based on the types of bases in $\mathcal{H}$. 

\medskip
\textbf{Case 1.} There exists a base $H\in \mathcal{H}$ satisfying $V(H)\cap V_{ne}(2\beta')\subseteq V_{ne}(2\beta')\setminus V_{ex}(\beta/2).$
\medskip

By the proof of Lemma \ref{Vne}, one has 
\begin{itemize}
    \item $V(H)\cap V_{ne}(2\beta')$ contains exactly one vertex, say $v$;
    \item if $v\notin B$, then $v\in A_i$ for some $i\in [r-2]$ and $H$ is a $(2,\beta/4)$-base. More precisely, $H$ has two components $C_0$ and $C_1$ satisfying \ref{C03}-\ref{C04}, where $C_0$ is an edge $vw$ in $G[A_i]$ with $d(w)\leq n-2\beta'n$ and $C_1$ is a $K_3$ copy in $G[B\cap V_e(2\beta')]$;
    \item if $v\in B$, then $H$ is an edge  $vw$ in $G[B]$, which forms a $\beta/4$-base.
\end{itemize} 
Suppose that $F_0'$ and $F_1'$ are defined as in \eqref{eq:F_0} and \eqref{eq:F_1} by letting $\Tilde{\mathcal{H}}:=\{H\}$. We proceed by considering whether $v$ belongs to $B$ or not. 

We first assume $v\notin B$. 
Recall that $H$ is extended to two vertex-disjoint copies of $K_r$, denoted $K^1$ and $K^2$, in $\mathcal{T}$ by Lemma~\ref{extend}.  
By \eqref{eq:uv-co} one has $d(\{v,w\},F_j')\ge \frac{n}{3r}$ for some $j\in \mathbb{Z}_2$. Claim \ref{Adjust} implies that there exists a $(2,\beta/4)$-base $H'$ such that $\{v,w\}\subseteq V(H')$, $|V(H')\cap B|=4$ and $|V(F_\ell')\setminus V(H')|$ is even for every $\ell\in \mathbb{Z}_2.$ Applying Lemma~\ref{extend} to $H'$ with $W:=V(\mathcal{T})\setminus V(K^1\cup K^2)$, one may  extend it to two vertex-disjoint copies of $K_r$ that avoids $W$, denoted  $\Tilde{K}^1$ and $\Tilde{K}^2$. Let $\mathcal{T}':=\mathcal{T}\setminus \{K^1,K^2\}\cup \{\Tilde{K}^1,\Tilde{K}^2\}$. Clearly,  $\mathcal{T}'$ is a $K_r$-tiling covering $V_{ne}(2\beta')$.  
For each $\ell\in \mathbb{Z}_2$, Claim~\ref{Connected} ensures that $G[V(F_\ell')\setminus V(\Tilde{K}^1\cup \Tilde{K}^2)]$ is a connected graph with even order. 
Combined with Proposition~\ref{PMatching}, we know  that  $G[B\setminus V(\mathcal{T}')]$ contains a perfect matching. 

Next, assume that $v\in B$. 
Then, $d(v,A_i)\le |A_i|-2\beta'n$ for some $i\in [r-s]$.  
Recall that $H$ is extended to a copy of $K_r$, say $K$, in $\mathcal{T}$. 
Suppose that $d(v,F_{j})=0$ for some $j\in \mathbb{Z}_2$ and let $x$ be a vertex in $F_{j}$. Then\allowdisplaybreaks
\begin{align*}
2\Big(1-\frac{1}{r}\Big)n-2&\leq d(v)+d(x)\leq 2(n-|B'|)-|A_i|+d(v,A_i)+d(v,F_{j+1})+d(x,F_{j})\\
&\leq 2n-|B'|-2-2\beta'n< 2n-|B|-2=2\Big(1-\frac{1}{r}\Big)n-2,
\end{align*}
a contradiction. 
Hence, $d(v,F_j)>0$ for each $j\in \mathbb{Z}_2$.  
Thus, there exists a $\beta/4$-base $H'$ such that $|V(H')\cap B|=2$ and $|V(F_i')\setminus V(H')|$ is even for every $i\in \mathbb{Z}_2$. By Lemma~\ref{extend}, Claim \ref{Connected} and Proposition~\ref{PMatching}, there exists a $K_r$-tiling $\mathcal{T}'$ such that $G[B\setminus V(\mathcal{T}')]$ admits a perfect matching.


\medskip
\textbf{Case 2.} There exists a base $H\in \mathcal{H}$ satisfying  $V(H)\cap V_{ne}(2\beta')\subseteq V_{ex}^L(\beta/2).$
\medskip

By the construction of Claim \ref{claim-matching} in Lemma \ref{Exceptional}, we know that $H$ is an edge $uv$ with $u\in A_i\cap V_{ex}^L(\beta/2,j)$ and $v\in A_j\cap V_e(2\beta')$ for some distinct $i,j\in [s]$, which forms a $\beta/4$-base. 
Suppose that $F_0'$ and $F_1'$ are defined as in \eqref{eq:F_0}-\eqref{eq:F_1} by letting $\Tilde{\mathcal{H}}:=\{H\}$. 
It is routine to check that there exists a vertex $v'\in (A_i\cap V_e(2\beta'))\setminus (V(\mathcal{T})\cup V_b(2\beta'))$ such that $G[\{u,v,v'\}]$ forms a $K_3$. Note that $d(u),d(v),d(v')\le n-2\beta'n$ and $N(\{u,v,v'\},F_j)\ge \frac{n}{3r}$ for some $j\in \mathbb{Z}_2$. 
Applying Claim~\ref{Adjust}, Lemma~\ref{extend}, Claim \ref{Connected} and Proposition~\ref{PMatching} again yield a $K_r$-tiling $\mathcal{T}'$ such that $G[B\setminus V(\mathcal{T}')]$ admits a perfect matching.

\medskip
\textbf{Case 3.} There exists a base $H\in \mathcal{H}$ satisfying   $V(H)\cap V_{ex}^S(\beta/2)\ne \emptyset.$
\medskip

In this case, one may assume that no base in  $\mathcal{H}$ satisfies Case 1 or Case 2. Thus, $V_{ne}(2\beta')=V_{ex}(\beta/2)$ and $|V_{ex}^S(\beta/2)|\geq 1$. Denote $s_i:=|V_{ex}^S(\beta/2,i)|$ for each $i\in [r-2]$. By Claims \ref{r-scover} and \ref{com-2} in  Lemma \ref{Exceptional}, there exist two families $\mathcal{F}_i$ and $\mathcal{F}_i'$ of subgraphs of $G$ for each $i\in [r-2]$ with $s_i>0$, where 
\begin{itemize}
    \item $\mathcal{F}_i$ consists of $\left\lceil s_i/3\right\rceil$ vertex-disjoint $K_3$ copies such that $V_{ex}^S(\beta/2,i) \subseteq V(\mathcal{F}_i)\subseteq \left(B\cap V_{e}(2\beta')\right)\cup V_{ex}^S(\beta/2,i)$,
    \item $\mathcal{F}_i'$ consists of $f(s_i,2)$ vertex-disjoint subgraphs each of which has exactly two vertices in $A_i$, and for any pair  $F\in \mathcal{F}_i$ and 
        $F'\in \mathcal{F}_i'$, their union $F\cup F'$ forms a $(2,\beta/4)$-base in $G$. 
\end{itemize}
Moreover, each graph $F'\in \mathcal{F}_i'$ is one of the following three types:
\begin{itemize}
    \item $F'$ is a triangle $xyz$, where $x\in V_b(2\beta,i)\cap V_{ex}^L(\beta/2,j)$, $y\in V_{e}(2\beta')\cap A_j$, $z\in \big(V_{e}(2\beta')\setminus V_b(2\beta,i)\big)\cap N(\{x,y\},  A_i)$;
    \item $F'$ is an edge $xz$, where $x\in V_b(2\beta,i)\setminus V_{ex}^L(\beta/2)$ and $z\in \big(V_{e}(2\beta')\setminus V_b(2\beta,i)\big)\cap N(x,  A_i)$;
    \item $F'$ is an edge in a matching of $G[A_i\setminus (V_b(2\beta,i)\cup V(M_i))]$. 
\end{itemize}
Let $\mathcal{F}_i=\mathcal{F}_i'=\emptyset$ for each  $i\in [r-2]$ with $s_i=0$. 
Define $F_0'$ and $F_1'$ as in \eqref{eq:F_0} and \eqref{eq:F_1} with $\Tilde{\mathcal{H}} := \mathcal{H}$. 
Clearly, $V(F_0'\cup F_1')\subseteq V_e(2\beta')$. 

Assume that there is an edge $uv\in E(G[V(F_0'),V(F_1')])$. By the construction of $\mathcal{F}_i$ in Claim~\ref{r-scover}, one may avoid $u,v$ in $\bigcup_{i\in [r-2]}\mathcal{F}_i$. Based on Lemma~\ref{extend} with $\{u,v\}\subseteq W$, there exists a $K_r$-tiling $\mathcal{T}'$ that covers all bases in $\mathcal{H}$ and  avoids $\{u,v\}$. By Claim \ref{Connected}, the graph  $F_j'':=G[V(F_j')\setminus V(\mathcal{T}')]$ is connected for each $j\in \mathbb{Z}_2$. As $uv\in E(G[V(F_0''),V(F_1'')])$, the graph  $G[V(F_0''\cup F_1'')]$ is connected. Hence, Proposition~\ref{PMatching} guarantees the existence of a perfect matching in $G[V(F_0''\cup F_1'')]$. Thus, in what follows, it suffices to consider 
\begin{align*}
E(G[V(F_0'),V(F_1')])=\emptyset.  
\end{align*}

 We first assume that there exists an integer $k\in[r-2]$ such that $s_k\ge 3$. Thus, $|\mathcal{F}_k'|\ge |\mathcal{F}_k|+1$. Choose $Q\in \mathcal{F}_k'$. Then $|V(Q)\cap A_k|=2$. Observe that $\mathcal{T}$ can be constructed with  $V(Q)\cap V(\mathcal{T})=\emptyset$. 
Assume that $|F_0|=1$, and let $w$ be the unique vertex in $F_0$. Clearly, $w\in S$.  
 Note that there exists a $(2, \beta/4)$-base $H:=H_1\cup H_2\in \mathcal{H}$ with $|V(H_1)\cap V_{ex}^S(\beta/2,k)|=3$ and $|V(H_2)\cap A_i|=2$. Assume that $H$ is extended to two copies of $K_r$ by Lemma \ref{extend}, denoted $K^1$ and $K^2$, in $\mathcal{T}$ with $H_2\subseteq K^2$. Let $\{x,y,z\}\subseteq V(H)$ be the three vertices in $V(H_1)\cap V_{ex}^S(\beta/2,k)$. In fact, one may assume that $|V(K^2)\cap V(F_1')|=1$. Otherwise, by replacing the vertex in $V(K^2)\cap V(F_0')$ with a vertex in $N(H_2,F_1)\cap V_e(2\beta')$, we get a new tiling $\mathcal{T}'$ such that $G[B\setminus V(\mathcal{T}')]$ contains a perfect matching. 
 Since $z\in V_{ex}^S(\beta/2,i)$, Claim \ref{degree-V-S} implies that there exist $z_1,z_2\in V(F_1')$ such that $G[\{z,z_1,z_2\}]$ is a $K_3$. 
 Thus, there exist two vertex-disjoint $(2,\beta/4)$-bases $H':=G[\{w,x,y\}]\cup H_2$ and $H'':=G[\{z,z_1,z_2\}]\cup Q$ in $G$. By applying Lemma~\ref{extend}, Claim~\ref{Connected} and Proposition~\ref{PMatching}, we obtain a $K_r$-tiling $\mathcal{T}'$ covering $V_{ex}(\beta/2)$ such that $G[B\setminus V(\mathcal{T}')]$ admits a perfect matching. For the case $|F_0|\geq 2$, reapplying these results together with Claim~\ref{Adjust} yields a $K_r$-tiling $\mathcal{T}'$ with the same property.

 Next, we assume that there exists an integer $k\in[r-2]$ such that $|V_{ex}^S(\beta/2,k)|=2$. Let $\{u,v\}=V_{ex}^S(\beta/2,k)$. It follows from \eqref{common-T} that $N(\{u,v\},F_i')\ne \emptyset$ for each $i\in \mathbb{Z}_2$. Hence, there exists a vertex $w\in V(F_j')$ for some $j\in \mathbb{Z}_2$ satisfying the following:
 \begin{itemize}
     \item $H:=G[\{u,v,w\}]\cup F'$ forms a $(2,\beta/4)$-base, where $F'\in \mathcal{F}_k'$.
     \item Assume $K^1\cup K^2$ are two vertex-disjoint $K_r$ copies covering $H$ and avoiding $V(\mathcal{T})$ (via Lemma~\ref{extend}). Define $\mathcal{T}'$ as the $K_r$-tiling obtained from $\mathcal{T}\cup \{K^1, K^2\}$ by deleting the $K_r$ copies that covered the base containing $\{u,v\}$ in $\mathcal{T}$. Then $|V(F_\ell')\setminus V(\mathcal{T})|$ is even for each $\ell\in \mathbb{Z}_2$. 
 \end{itemize}  
 Together with  Claim \ref{Connected}, and Proposition \ref{PMatching}, there exists a perfect matching in $G[B\setminus V(\mathcal{T}')]$. 
Thus, in what follows, we consider that  
\begin{align}\label{VexS}
    |V_{ex}^S(\beta/2,k)|\le 1 \ \text{for each}\ k\in[r-2].
\end{align}

Suppose that there exists a subgraph $L\in \bigcup_{k\in [r-2]}\mathcal{F}_k'$ such that $N(L,F_i')\neq \emptyset$ for all $i\in \mathbb{Z}_2$. 
{By an argument similar to the case that $|V_{ex}^S(\beta/2,k)|=2$ for some $k\in [s]$}, we conclude that there exists a $K_r$-tiling $\mathcal{T}'$ covering $V_{ex}(\beta/2)$ for which $G[B\setminus V(\mathcal{T}')]$ contains a perfect matching. 
In the following,  assume that 
\begin{align}\label{LF_k}
\text{for any}\ L\in \bigcup_{k\in[r-2]}\mathcal{F}_k', \ \text{we have}\ 
    N(L,F_i')=\emptyset \ \text{for some} \ i\in \mathbb{Z}_2.
\end{align}

Fix a $k\in [s]$ with $s_k=1$,  let $u_kv_k\in E(G[A_k])\setminus E(G[A_k\cap V_b(2\beta,k)])$. 
Since $V_{ne}(2\beta')=V_{ex}(\beta/2)$, one has $A_k\setminus V_b(2\beta,k)\subseteq V_e(2\beta')$. Without loss of generality, assume that $u_k\in V_e(2\beta')\setminus V_b(2\beta)$. Suppose that $v_k\notin V_b(2\beta,k)$. Then  $N(\{u_k,v_k\},B)\geq \frac{2n}{r}-4\beta n$ and $u_kv_k$ forms a $\beta/4$-base in $G$. It follows from \eqref{LF_k} that   $|F_0|\leq 5\beta n$. However, \eqref{complete} implies that $G[\{u_k,v_k\},V(F_0)]$ is a complete bipartite  graph, which also contradicts \eqref{LF_k}. Thus, $v_k\in V_b(2\beta,k)$. Similarly, we can show that $v_k\notin V_{ex}^L(\beta/2)$, that is,  $v_k\in V_b(2\beta,k)\setminus V_{ex}^L(\beta/2)$. Therefore, each graph in $\mathcal{F}_k'$ is exactly an edge. Moreover, $$
E(G[A_k])\setminus E(G[A_k\cap V_b(2\beta,k)])=E(G[V_e(2\beta')\setminus V_b(2\beta,k),V_b(2\beta,k)\setminus V_{ex}^L(\beta/2)]).
$$

Next, we show that $E(G[A_k])\setminus E(G[A_k\cap V_b(2\beta)])$ contains no edge that is vertex-disjoint from $u_kv_k$. Assume for contradiction that it contains two independent edges, say $u_kv_k$ and $u'v'$. 
Assume that $u'\in V_e(2\beta')\setminus V_b(2\beta,k)$ and  $v'\in V_b(2\beta,k)\setminus V_{ex}^L(\beta/2)$.  
If $|F_0|\geq 2$, then applying Claim~\ref{Adjust} with $K=u'v'$, one may obtain a $(2,\beta/5)$-base $H'$ such that $\{u,v\}\subseteq V(H')$, $|B\cap V(H')|=4$ and $|V(F_i)\setminus V(H')|$ is even for every $i\in \mathbb{Z}_2$. 
Thus,  Lemma \ref{extend} and Proposition~\ref{PMatching} guarantee the existence of a new $K_r$-tiling $\mathcal{T}'$ such that $G[B\setminus V(\mathcal{T}')]$ has a  perfect matching. 
{Now, we assume $|F_0|=1$ and let $w$ be the unique vertex in $F_0$. Let $M^*:=\big(\bigcup_{j\in[r-2]}\mathcal{F}_j'\big)\cup \{u'v'\}$. Clearly,  $M^*$ is a matching of size $|V_{ex}^S(\beta/2)|+1$. 
Since $E(G[V(F_0'),V(F_1')])=\emptyset$, every vertex $u\in V(F_1')$ satisfies $d(u)\le n-|F_0'|-1$. 
By Ore's condition, one has 
$$d(w)\ge n-|F_1'|-|V_{ex}^S(\beta/2)|-1.$$
Therefore, there exists  an edge $xy\in E(M^*)$ such that $\{x,y\}\subseteq N(w)$.  
Notice that $d(\{x,y\},B)\ge \frac{n}{r}-2\beta n$ and $|V(F_0')\cup V_{ex}^S(\beta/2)|<2\beta n$. Hence, $N(\{x,y\},F_1')>0$, which contradicts  \eqref{LF_k}.} Thus, we have that $G[A_k]$ is a star with center $v_k$ together with  some isolated vertices. Let $K^k$ be the $K_r$ copy in $\mathcal{T}$ that covers $v_k$.


Let $w_k$ denote the unique vertex in $V_{ex}^S(\beta/2,k)$. 
Suppose that there exists a vertex $x_k\in A_k\setminus V_b(2\beta,k)$ such that $w_kx_k\in E(G)$. Since $d(x_k)\geq (1-1/r)n-1$, all but at most two vertices in $B$ are adjacent to $x_k$. By Claim \ref{degree-V-S}, \eqref{complete}   and the fact that $w_k\in V_{ex}^S(\beta/2,k)$, it is easy to check that $N(\{w_k,x_k\},F_i')\neq \emptyset$ for each $i\in \mathbb{Z}_2$.  Assume that $V(K^k)\cap V(F_j')\neq \emptyset$ for some $j\in \mathbb{Z}_2$. Choose $r_k\in N(\{w_k,x_k\},F_{j+1}')$ such that $G[\{w_k,x_k,r_k\}]$ is a $\beta/4$-base. By replacing the $(2,\beta/4)$-base covering $w_k$ with $G[\{w_k,x_k,r_k\}]$ in $\mathcal{H}$, together with Lemma~\ref{extend}, Claim \ref{Connected}, and Proposition \ref{PMatching}, we obtain a $K_r$-tiling $\mathcal{T}'$ covering $V_{ex}(\beta/2)$ such that $G[B\setminus V(\mathcal{T}')]$ contains a perfect matching. In what follows, we assume that 
\begin{align*}
w_kx\notin E(G) \ \text{for any} \ x\in A_k\setminus V_b(2\beta,k).
\end{align*}
Thus, $d(x)\leq (1- \frac{1}{r})n$ for every $x\in A_k\setminus V_{b}(2\beta)$. Together with $w_k\in S$ and 
Ore's condition, one has 
$d(x)=(1- \frac{1}{r})n\ \text{for every}\ x\in A_k\setminus V_{b}(2\beta),\ \text{and}\ d(w_k)=(1-\frac{1}{r})n-2.$ Thus, 
$$G[A_k] \ \text{is a star with center $v_k$ for all $k\in [r-2]$ with $s_k=1$}.$$ 
A similar argument shows that for all $k \in [r-2]$ with $s_k = 0$, we have $G[A_k] = \emptyset$, whence Ore's condition implies that $G[A_k, V_{ex}^S(\beta/2)]$ is complete.

The above argument implies that for every $x\in A_i\setminus \{v_i\}$ and each $i\in [r-2]$, one has
\begin{align}\label{eq:neighbor}
    N(x) = \left\{
\begin{array}{ll}
\big( V(G) \setminus (A_i \cup \{w_i\}) \big) \cup \{v_i\} & \text{if } s_i=1, \\
V(G) \setminus A_i & \text{if } s_i=0.
\end{array}
\right.
\end{align}
Moreover, $|V_b(2\beta)|=|V_{ex}^S(\beta/2)|=:q$, and   by  \eqref{LF_k} one has $N(v_i,F_j')=\emptyset$ for some $j\in \mathbb{Z}_2$ and all $v_i\in V_b(2\beta)$. 
Let $$ Y_0:=\{y\in V_b(2\beta):N(y,F_0')=\emptyset\}\ \text{and} \ Y_1:=\{y\in V_b(2\beta):N(y,F_1')=\emptyset\}.$$
Choose $z_0\in V(F_0')$ and $z_1\in V(F_1')$. By Ore's condition, one has
$$
2\Big(1-\frac{1}{r}\Big)n-2\leq d(z_0)+d(z_1)\leq 2\Big(1-\frac{2}{r}\Big)n-(|Y_0|+|Y_1|)+2q+|F_0'|+|F_1'|-2=2\Big(1-\frac{1}{r}\Big)n-2. 
$$
Thus, $d(z_0,V_{ex}^S(\beta/2))=d(z_1,V_{ex}^S(\beta/2))=q$. Hence  $G[V_{ex}^S(\beta/2),V(F_0'\cup F_1')]$ is a complete bipartite graph. 

Define $\hat{G}:=G[V_{ex}^S(\beta/2),V_b(2\beta)]$. Recall that $d(w_i)=(1-\frac{1}{r})n-2$ for each $w_i\in V_{ex}^S(\beta/2)$. Together with \eqref{eq:neighbor}, one has 
$d(w_i,V_b(2\beta))= q-2$ for each $w_i\in V_{ex}^S(\beta/2)$, this implies  $e(\hat{G})=q(q-2).$ Thus, there exists a vertex $v_t\in V_b(2\beta)$ such that  $d(v_t,V_{ex}^S(\beta/2))\leq q-2$.  
Without loss of generality, assume that  
$v_t\in Y_0$.  Choose $y\in V(F_0')$. By Ore's condition one has
\begin{align*}
2\Big(1-\frac{1}{r}\Big)n-2\leq d(v_t)+d(y)\leq &\Big(\Big(1-\frac{2}{r}\Big)n-|V_b(2\beta)|+d(v_t,V_b(2\beta))+d(v_t,V_{ex}^S(\beta/2))+|F_1'|\Big)\\
&\ +\Big(\Big(1-\frac{2}{r}\Big)n-|Y_0|+|F_0'|-1+|V_{ex}^S(\beta/2)|\Big)\\
\leq& 2\Big(1-\frac{1}{r}\Big)n
-3+d(v_t,V_b(2\beta))-|Y_2|. 
\end{align*}
 It follows that $d(v_t,V_b(2\beta))\geq |Y_0|+1$. That is, there exists a vertex $v_{\ell}\in Y_1\cap A_{\ell}$ for some $\ell\in [r-2]$ such that $v_tv_{\ell}\in E(G)$. 

Obviously, there exists a copy of $K_r$, say $K$, containing $v_t,v_{\ell}$ such that $|V(K)\cap A_j|=1$ for each $j\in [r-2]\setminus \{t,\ell\}$ and $|V(K)\cap A_j|=2$ for each $j\in \{t,\ell\}$. Recall that $G[V_{ex}^S(\beta/2),V(F_0'\cup F_1')]$ is a complete bipartite graph. Hence, there are two vertices $z,z'\in V(F_i)$ for some $i\in \mathbb{Z}_2$ such that $G[\{w_t,w_{\ell},z,z'\}]$ forms a clique $K_4$. Moreover, there is a $K_r$ copy, say $K'$, containing $\{w_t,w_{\ell},z,z'\}$ such that $|V(K')\cap A_j|=1$ for each $j\in [r-2]\setminus \{t,\ell\}$. Let $\mathcal{T}':=(\mathcal{T}\setminus \{K_r\in \mathcal{T}: V(K_r)\cap \{v_t,v_{\ell},w_t,w_{\ell}\}\ne \emptyset\})\cup\{K,K'\}$. Since $E(G[F_0',F_1'])=\emptyset$, every $K_r$ in $\mathcal{T}$ covering some $w_i \in V_{ex}^S(\beta/2)$ must have two vertices within either $F_0'$ or $F_1'$. 
It follows that  $G[B\setminus V(\mathcal{T}')]$ consists of two components with even order.  Therefore,  
Proposition \ref{PMatching} implies that there exists a perfect matching in $G[B\setminus V(\mathcal{T}')].$

By the above argument, we obtain that when $r-s=2$, either $G$ is the extremal graph in \ref{EX2}, or there exists a $K_r$-tiling $\mathcal{T}'$ satisfying \ref{L2}, as desired. 
\end{proof}

\section{Concluding remarks}

\subsection{Algorithmic aspects of the proof}

We point out those parts of the proof where translation into an algorithm is nontrivial. 

\medskip
\textbf{Non-extremal case:} To construct the desired absorbing set, we first note that for almost all $r$-set in $G$, $\Omega(n^{r^2})$ absorbers can be found in $O(n^{r^2})$ time.  In our proof, the existence of the absorbing set is established by probabilistic arguments, which naturally lead to a randomized construction. However, we can directly apply the result of Garbe and Mycroft \cite[Proposition 4.7]{GM2018}, which provides an $O(n^{4r^2})$-time deterministic algorithm via the conditional expectation method.

For the almost cover, the algorithm of Alon et al. \cite{ADLR1994} yields a regular partition that satisfies the Regularity lemma in $O(n^{2.376})$ time. As the order of the reduced graph is bounded, Claims \ref{a-almost} and \ref{reduced-blow} take constant time. The blow-up process via Lemmas \ref{regular-subgraph} and \ref{reduce-original} takes $O(n^{r+1})$ time.  Thus, for the non-extremal case, the overall complexity of constructing a $K_r$-tiling is $n^{O_r(1)}$.


\medskip
\textbf{Extremal case:} For the extremal case, an $(r,s)$-partition as in Claim \ref{rs-partition} can be found in $O(n^{2.376})$ time using Algorithm 2 of Gan, Han, and Hu \cite{GHH2024}. The remaining steps are completed by a greedy algorithm, yielding an overall time complexity of $n^{O_r(1)}$.

When $r$ is a constant, Theorem \ref{FVersion} can be achieved in polynomial time. Therefore, Conjecture~\ref{cKKMS2008} holds for $k \ge c n$, where $n:=|G|$ is sufficiently large and $c > 0$ is any fixed constant.

\subsection{Further research}

In this paper, we prove that the Chen-Lih-Wu conjecture holds when $n$ is sufficiently large and $k\ge cn$ for any positive constant $c$. In fact, we establish a stronger result under the Ore-type condition. Our proof necessitates the condition $k = \Omega(n)$ due to the following requirements: in the non-extremal case, $n \ge r^{2^{c_0 r}}$ for Lemma \ref{almost-cover} and Lemma \ref{absorbing}; in the extremal case, $n \ge r^{2^{c_1 r}}$ due to the parameter relation $\gamma_i \le \gamma_{i+1}^5$ from \eqref{eq:p1}-\eqref{eq:p3}. Given that $n \le rk$, these bounds are satisfied for sufficiently large $n$ if $k = \Omega(n)$, thereby establishing Conjectures \ref{CLW} and \ref{cKK2008} under this condition.  A natural and interesting direction for future work is to consider the case when $k = o(n)$. 



Our paper focuses on the problem of equitable coloring under Ore-type conditions. Another natural direction for future research is to extend this work to degree sequence conditions. Inspired by the degree sequence version of the Hajnal-Szemer\'{e}di conjecture proposed by Balogh, Kostochka, and Treglown \cite{BKT2013}, we formulate a corresponding conjecture for equitable coloring, thereby introducing a new perspective rooted in degree sequences.


\begin{conj}
    Let $n,k\in \mathbb{N}$ such that $k\le n$. Suppose that $G$ is an $n$-vertex graph with degree sequence $d_1\ge  \cdots \ge d_n$ such that 
    \begin{itemize}
        \item $d_i< 2k-i$ for all $i< k$;
        \item $d_{k+1}<k$.
    \end{itemize}
Then $G$ contains an equitable $k$-coloring.
\end{conj}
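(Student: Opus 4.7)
The plan is to transport the framework built in this paper for the Ore-type setting to the degree-sequence setting. Since an equitable $k$-coloring of $G$ is equivalent to a $K_r$-factor of $\overline{G}$ (with $r = n/k$, after reducing to the case $k \mid n$ via the clique-padding trick of Section~1.2), the hypotheses on $d_i(G)$ become, after complementation, the degree-sequence lower bounds
\[
d^{\overline{G},\,\mathrm{inc}}_i \ge (r-2)k + i \ \text{for all } i < k, \qquad d^{\overline{G},\,\mathrm{inc}}_{k+1} \ge (r-1)k.
\]
So the conjecture is equivalent to: every graph $H := \overline{G}$ on $n = rk$ vertices satisfying the two displayed inequalities either has a $K_r$-factor or is an ``extremal'' graph (an appropriate analogue of \ref{EX1} and \ref{EX2}). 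The first step is therefore to identify the correct extremal classification; I would guess it still consists essentially of the complements of $K_{k+1}$ and $K_{k,k}$ (odd $k$), possibly enlarged by a small new family exhibiting the phenomenon described below.

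In the \emph{non-extremal} regime, where $H$ contains no $\gamma$-independent set of size $n/r$, I would mirror the absorbing-method framework of Section~\ref{section4}. The absorbing step is essentially local: the set $S := \{v : d(v) < (r-1)k\}$ still induces a clique under the degree-sequence condition by an analogue of Fact~\ref{clique}, so the counting argument producing $\Omega(n^{r^2})$ absorbers for almost every $r$-set carries over with only routine changes. For the almost cover, I would apply the degree form of the Regularity Lemma and verify that the reduced graph $R$ inherits a degree-sequence condition of the same shape (with $k$ replaced by $|R|/r$). The iterated blow-up argument of Lemma~\ref{almost-cover} relies only on the existence of $K_r$'s in $R$ and on the absence of large almost-independent sets in $R[t]$; both ingredients follow from a degree-sequence strengthening of Proposition~\ref{SuperT}, yielding an almost-perfect $K_r$-tiling in $H$.

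The main obstacle will be the \emph{extremal case}. The degree-sequence hypothesis is strictly weaker than the Ore-type condition, and crucially it leaves $d_k$ unconstrained: the conjecture permits a single ``outlier'' vertex of essentially arbitrary degree sitting between the constrained ranges $i<k$ and $i\ge k+1$. Consequently, both the good-partition lemma (Lemma~\ref{Lpartition}) and the covering lemma (Lemma~\ref{K_r-tiling}) must be redeveloped so as to absorb this exceptional vertex in a controlled way. The hardest step will be classifying all extremal configurations under the weaker hypothesis: one must show that no new family arises beyond the known obstructions (and their outlier-vertex perturbations), then handle each with the rebalancing trick from the proof of Lemma~\ref{K_r-tiling}, finally invoking Lemma~\ref{GHH2024} on the remaining balanced multipartite skeleton to produce the $K_r$-factor in $H$. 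Since the paper only treats $k = \Omega(n)$, a parallel open problem (and perhaps the more delicate one) is to remove the large-$k$ restriction; doing so would require replacing the Regularity-Lemma-based almost cover step with a sparser argument, e.g.\ a suitable random or iterative embedding, which I expect to be the principal technical barrier to a full resolution.
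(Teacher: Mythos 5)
The statement you have chosen is not a theorem of the paper: it is an open conjecture proposed by the authors at the very end of the concluding remarks, so there is no proof in the paper to compare against, and your submission accordingly reads not as a proof but as a programmatic sketch of how one might attack the problem. Your translation to the complement is correct: writing $r=n/k$ (after clique-padding so that $k\mid n$), the hypotheses $d_i<2k-i$ for $i<k$ and $d_{k+1}<k$ become lower bounds $d^{\overline G,\mathrm{inc}}_i\ge (r-2)k+i$ for $i<k$ and $d^{\overline G,\mathrm{inc}}_{k+1}\ge (r-1)k$, so the conjecture is indeed equivalent to a degree-sequence analogue of Theorem~\ref{FVersion}, and the Balogh--Kostochka--Treglown conjecture is the natural anchor. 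Your high-level plan---reuse the absorbing framework and the regularity blow-up argument in the non-extremal regime, redo the good-partition and covering lemmas in the extremal regime---is the natural one and is, in spirit, exactly what the authors are suggesting.

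However, one concrete claim in your discussion is wrong, and it colors your assessment of where the difficulty lies. You assert that the hypotheses ``leave $d_k$ unconstrained'' and permit a ``single outlier vertex of essentially arbitrary degree.'' That is not the case: since the sequence is sorted decreasingly, $d_k\le d_{k-1}$, and the $i=k-1$ hypothesis gives $d_{k-1}<2k-(k-1)=k+1$, i.e.\ $d_{k-1}\le k$. Hence $d_k\le k$ automatically. Moreover $d_1<2k-1$ bounds the maximum degree by $2k-2$, so no vertex of ``essentially arbitrary'' degree can exist. The only slack the hypotheses grant is that $d_k$ may equal $k$ while $d_{k+1}\le k-1$, which is a very mild relaxation, not a wildcard vertex. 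The genuinely new difficulty relative to the Ore-type theorem is subtler: the degree-sequence condition no longer forces the set $S$ of low-degree vertices (in $\overline G$) to induce a clique, because the Ore pairwise bound that underlies Fact~\ref{clique} is replaced by a one-sided tail condition on the sorted sequence; this breaks several steps that the paper's Lemma~\ref{Lpartition} and Lemma~\ref{K_r-tiling} use critically (e.g.\ the construction and control of $V_{ex}^S$). Identifying the correct extremal configurations and replacing the clique structure of $S$ are the real obstacles, and as you note the problem remains open in the paper; your proposal does not close that gap.
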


\section*{Acknowledgements}
We extend our sincere gratitude to Jie Han for bringing the algorithm problem to our attention. 

\bibliographystyle{abbrv}
\bibliography{equitable-coloring}

\end{document}